\newcommand\myshade{85}
\colorlet{mylinkcolor}{violet}
\colorlet{mycitecolor}{YellowOrange}
\colorlet{myurlcolor}{Aquamarine}
\def\namedlabel#1#2{\begingroup
    #2%
    \def\@currentlabel{#2}%
    \phantomsection\label{#1}\endgroup
}
\newcommand{\texorpdfstring}[2]{#1}
\newcommand{\namedlabel}[2]{\label{#1}}
\theoremstyle{plain}
\newtheorem{theorem}{Theorem}[section]
\newtheorem{lemma}[theorem]{Lemma}
\newtheorem{corollary}[theorem]{Corollary}
\newtheorem{proposition}[theorem]{Proposition}
\theoremstyle{definition}
\newtheorem{definition}[theorem]{Definition}
\theoremstyle{remark}
\newtheorem{question}[theorem]{Question}
\newtheorem*{remark*}{Remark}
\newtheorem{remark}[theorem]{Remark}
\newtheorem{claim}{Claim}[theorem]
\newcommand{\N}{\mathbb N}
\newcommand{\R}{\mathbb R}
\newcommand{\inter}{[0, 1]}
\newcommand{\Can}{2^{\N}}
\newcommand{\card}[1]{\lvert #1 \rvert}
\newcommand{\fhi}{\varphi}
\newcommand{\mL}{\mathcal{L}}
\newcommand{\F}{Fra\"iss\'e }
\newcommand{\bl}{\left[ \begin{smallmatrix}}
\newcommand{\br}{\end{smallmatrix} \right]}
\newcommand{\harpoone}[1]{%
	\begin{tikzpicture}[#1]%
	\draw (0, -0.5pt) -- (0, 5pt);%
	\draw (0, 5pt) -- (1.5pt, 3.5pt);%
	\end{tikzpicture}%
}%
\newcommand{\restr}[1]{\, _{\harpoone{}\, #1}}
\newcommand{\hide}[1]{}
\newcommand{\quot}[2]{{\raisebox{.1em}{$#1\!$}\left/\raisebox{-.1em}{$#2$}\right.}}
\newcommand{\symdif}{\mathbin{\triangle}}
\DeclarePairedDelimiter{\set}{\{}{\}}
\DeclarePairedDelimiter{\Card}{\lvert}{\rvert}
\DeclarePairedDelimiterX\setnew[2]{\{}{\}}{#1 \nonscript\;\delimsize \vert \nonscript\; #2}
\DeclareMathOperator{\diam}{diam}
\DeclareMathOperator{\homeo}{Homeo}
\DeclareMathOperator{\aut}{Aut}
\DeclareMathOperator{\interior}{int}
\DeclareMathOperator{\closure}{cl}
\newcommand{\binR}{\mathbin{R}}
\newcommand{\forests}{\mathcal F}
\newcommand{\diamonds}{\forests_{0}}
\newcommand{\prespace}{\mathbb F}
\newcommand{\fus}{F}
\newcommand{\fue}{\gamma}
\newcommand{\quotofpre}{\quot{\prespace}{R^{\prespace}}}
\newcommand{\altprespace}{\quot{\prespace}{\sim}}
\newcommand{\cappello}[1]{\llbracket #1 \rrbracket}
\newcommand{\mini}{m}
\newcommand{\maxi}{M}
\newcommand{\basis}{X}
\newcommand{\pwlin}[2]{P_{#1}^{#2}}
\newcommand{\dist}{d}
\DeclareMathOperator{\Exp}{\mathcal K}
\DeclareMathOperator{\branches}{MC}
\title{Fences, their endpoints, and projective Fraïssé theory}
\date{}
\author{Gianluca Basso}
\address{D\'epartement des Opérations \\ Universit\'e de Lausanne \\ Quartier UNIL-Chambronne B\^atiment Anthropole \\ 1015 Lausanne \\ Switzerland}
\curraddr{Institut Camille Jordan, Universit\'e Claude Bernard Lyon 1, Universit\'e de Lyon, 43, boulevard du 11 novembre 1918, 69622 Villeurbanne cedex, France}
\email{basso@math.univ-lyon1.fr}
\author{Riccardo Camerlo}
\address{Dipartimento di matematica \\ Universit\`a di Genova \\ Via Dodecaneso 35 \\ 16146 Ge\-no\-va\\ Italy}
\email{camerlo@dima.unige.it}
\subjclass[2020]{Primary 03E15. Secondary 54F50, 54F65}
\keywords{Compact metrizable spaces, topological structures, projective Fraïssé limits}
\begin{document}

\begin{abstract}
We introduce a new class of compact metrizable spaces, which we call fences, and its subclass of smooth fences.
We isolate two families $\forests, \diamonds$ of Hasse diagrams of finite partial orders and show that smooth fences are exactly the spaces which are approximated by projective sequences from $\diamonds$.
We investigate the combinatorial properties of Hasse diagrams of finite partial orders and show that $\forests, \diamonds$ are projective Fraïssé families with a common projective Fraïssé limit.
We study this limit and characterize the smooth fence obtained as its quotient, which we call a \F fence.
We show that the \F fence is a highly homogeneous space which shares several features with the Lelek fan, and we examine  the structure of its spaces of endpoints.
Along the way we establish some new facts in projective Fraïssé theory.
\end{abstract}

\maketitle

\section{Introduction}
In this paper we introduce and begin the study of a new class of topological spaces, which we call \emph{fences}.
These are the compact metrizable spaces whose connected components are either points or arcs.
Among them, we define  the subclass of smooth fences and characterize them as those fences admitting an embedding in $\Can \times [0, 1]$.

A major tool for our study are projective Fra\"iss\'e families of topological structures, for a given language $ \mathcal L $, and their limits --- called projective Fra\"iss\'e limits.
These were introduced by Irwin and Solecki in \cite{Irwin2006}.
In that paper, the authors focus on a particular example, where $ \mathcal L =\{ R\}$ contains a unique binary relation symbol such that its interpretation on the limit is an equivalence relation, and the quotient is a pseudo-arc.
The characterization of all spaces that can be obtained, up to homeomorphism, as quotients $ \quot{ \mathbb L }{R^{ \mathbb L }} $, where $( \mathbb L , R^{ \mathbb L})$ is the projective Fra\"iss\'e limit of a projective Fra\"iss\'e family of finite topological
$\set{R}$-structures
is settled in \cite{Camerl2010}.
In \cite{Basso} it is noted that, if we admit infinite languages, then every compact metrizable space can be obtained as such a quotient of a projective \F limit; some other examples for finite languages are also given.
In this article we provide a new example: we focus on a family $\forests$ of structures --- finite partial orders whose Hasse diagram is a forest --- which we show (\Cref{fraisseforests}) is projective Fraïssé; its limit $\prespace$ admits a  quotient $\quotofpre$ which is a smooth fence.
This space does not seem to appear in the literature and we call it the \emph{\F fence}.

We isolate a cofinal subclass $\diamonds$ of $\forests$ and we show that
smooth fences are exactly those spaces which are  quotients of projective limits of sequences from $\diamonds$ (\Cref{thmpidiamondfence,smoothfencespidiamond}).
This result creates a bridge between the combinatorial world and the topological one, which we exploit in \Cref{weakcharacthm} to obtain a characterization of the \F fence by isolating a topological property which yields the amalgamation property for $\diamonds$.

Our spaces, some of their properties, and the techniques we use have their analogs in the theory of fans.
A \emph{fan} is an arcwise connected and hereditarily unicoherent compact space that has at most one ramification point.
A fan with ramification point $t$ is \emph{smooth} if for any sequence $(x_{n})_{n \in \N}$ converging to $x$, the sequence $([t, x_{n}])_{n \in \N}$ of arcs connecting $t$ to $x_{n}$ converges to $[t, x]$.
Smooth fans where introduced in \cite{MR0227944} and have been extensively studied in continuum theory.
A point $x$ in a topological space $X$ is an \emph{endpoint} if whenever $x$ belongs to an arc $[a, b] \subseteq X$, then $x = a$ or $x = b$ (note that under this definition points whose connected component is a singleton are endpoints).
A \emph{Lelek fan} is a smooth fan with a dense set of endpoints.
Such a fan was first constructed in \cite{MR133806} and was later proven to be unique up to homeomorphism in \cite{MR1002079} and \cite{MR991691}.
In a series of papers (\cites{Bartos2015, Bartos2017, MR3939568}) Barto\v{s}ov\'{a}  and Kwiatkowska have studied the Lelek fan and the dynamics of its homeomorphism group by realizing it as a quotient of a projective Fraïssé limit of a particular class of ordered structures.

Besides the fact that both can be obtained as quotients of projective \F limits of some class of ordered structures, the \F fence and the Lelek fan share several other features:
\begin{itemize}
\item Both are as homogeneous as possible, namely they are $\nicefrac{1}{3}$-homogeneous (see \cite{MR3712972} for the Lelek fan and \Cref{onethirdhomo} for the \F fence).
\item
Both are universal in the respective classes with respect to embeddings that preserve endpoints (see \cite{Dijkstra2010} for the Lelek fan and \Cref{thm:FraisseFenceUniversal} for the \F fence).
\item For both, the set of endpoints is dense (see \Cref{allendpointsaredense} for the \F fence).
In fact, the Lelek fan is defined as the unique smooth fan with a dense set of endpoints; the \F fence too has a characterization in terms of denseness of endpoints (see \Cref{weakcharacthm}).
\item The set of endpoints of the Lelek fan is homeomorphic to the complete Erdős space (\cite{MR1391294}), a homogeneous, almost zero-dimensional, $1$-dimensional cohesive space.
Among the subspaces of the set of endpoints of the \F fence there is a homogenous, almost zero-dimensional, $1$-dimensional space $ \mathfrak M $ which is not cohesive (\Cref{riassunto}(iv)).
\end{itemize}
A space with the properties mentioned for $ \mathfrak M $ was constructed in \cite{Dijkstra2006} as a counterexample to a question by Dijkstra and van Mill.
This raises the question of whether the two examples are homeomorphic and whether they can be regarded as a non-cohesive analog of the complete Erdős space.

To obtain our results, we establish combinatorial criteria which are of general interest in the context of projective \F theory.
\Cref{lemmafour} characterizes which projective sequences of structures in a language containing a binary relation symbol $\set{R}$ have limit on which $R$ is an equivalence relation, and \Cref{singletonsdenseiffirreducible} gives conditions under which the resulting quotient map is irreducible.
The irreducibility condition entails a correspondence between structures in the projective sequence and regular quasi-partitions of the quotient, which in turn aids the combinatorial-topological translation.

Here is the plan of the paper.
We begin in \Cref{defandnot} with recalling some notions and proving some technical lemmas which will lay the basis of this work.
In \Cref{secmain} we introduce the topological structures that constitute the main combinatorial objects of our study, prove that the relevant classes $ \forests $ and $ \diamonds $ are projective Fraïssé and investigate the properties of the projective limits of $ \diamonds $.
We define fences and characterize smooth fences in \Cref{sec:fences}, where we also display the relation linking them to $ \diamonds $.
Finally in \Cref{sec:theprojlimit} we characterize topologically the quotient of the projective \F limit of $\forests$, explore its homogeneity and universality features and investigate its spaces of endpoints.

\subsection*{Acknowledgments}
We would like to thank Aleksandra Kwiatkowska and Lionel Nguyen Van Thé for their comments and suggestions.
We are also grateful to the anonymous referee for a careful reading of the paper and some useful suggestions.
The first author's work was conducted as a doctoral student at Univerisité de Lausanne and Università di Torino, and partially within the framework of the LABEX MILYON (ANR-10- LABX0070) of Université de Lyon, within the program ``Investissements d'Avenir'' (ANR-11- IDEX-0007) operated by the French National Research Agency (ANR).
The research of the second author was partially supported by PRIN 2017NWTM8R - ``Mathematical logic: models, sets, computability''.

\section{Basic terminology and definitions} \label{defandnot}
Let $X$ be a topological space.
If $A$ is a subset of $X$, then $\interior_{X}(A), \closure_{X}(A), \partial_{X}(A)$ denote the interior, closure, and boundary of $A$ in $X$, respectively.
We drop the subscript whenever the ambient space is clear from context.
A closed set is regular if it coincides with the closure of its interior.
We denote by $\Exp(X) = \setnew*{K \subseteq X}{K \text{ compact}}$ the space of compact subsets of $X$, with the Vietoris topology.
This is the topology generated by the sets $ \setnew{K\in \Exp (X)}{K\subseteq O} $ and $\setnew{K \in \Exp(X) }{K \cap O\neq \emptyset} $, for $O$ varying among the open subsets of $X$.
If $X$ is compact metrizable, so is $\Exp(X)$.
Let $\homeo(X)$ denote the group of homeomorphisms of $X$.

By \emph{mesh} of a covering of a metric space, we indicate the supremum of the diameters of its elements.

We collect here the definitions of some basic topological concepts we need.
\begin{definition}
\leavevmode
\begin{itemize}
\item A space is \emph{almost zero-dimensional} if each point has a neighborhood basis consisting of closed sets that are intersection of clopen sets.
\item A space is $X$ \emph{cohesive} if each point has a neighborhood which does not contain any nonempty clopen subset of $X$.
\item The \emph{quasi-component} of a point is the intersection of all its clopen neighborhoods.
A space is \emph{totally separated} if the quasi-component of each point is a singleton.
\item A space is $n$-\emph{homogeneous} if for every two sets of $n$ points there is a homeomorphism sending one onto the other.
\item A space $X$ is $\nicefrac{1}{n}$-\emph{homogeneous} if the action of $\homeo(X)$ on $X$ has exactly $n$ orbits.
\item A space is $h$-\emph{homogeneous} if it is homeomorphic to each of its nonempty clopen subsets.
\end{itemize}
\end{definition}

When we talk about dimension, we mean the inductive dimension.

\subsection{Topological structures}

We recall here some basic definitions, mainly from \cites{Irwin2006, Camerl2010}, sticking to relational first order languages, since we will not use other kinds of languages in this paper.

Let thus a relational first order language $ \mathcal L $ be given.
A \emph{topological} $ \mathcal L $-\emph{structure} is a zero-dimensional compact metrizable space that is also an $ \mathcal L $-structure such that the interpretations of the relation symbols are closed sets.
In particular, the topology on finite topological $\mathcal L $-structures is discrete.
We will usually suppress the word ``topological'' when referring to finite topological $\mathcal L $-structures.

An \emph{epimorphism} between topological $ \mathcal L $-structures $A, B$ is a continuous surjection $ \fhi :A\to B$ such that
\[
r^B=\underset{n \text{ times}}{\underbrace{ \fhi \times \ldots \times \fhi}}\, [r^A]
\]
for every $n$-ary relation symbol $r \in \mL$: in other words, $r^B(b_1,\ldots ,b_n)$ if and only if there exist $a_1,\ldots ,a_n\in A$ such that
\[
\fhi (a_1)=b_1,\ldots , \fhi (a_n)=b_n,\quad r^A(a_1,\ldots ,a_n).
\]
An \emph{isomorphism} is a bijective epimorphism, so in particular it is a homeomorphism between the supports.
An isomorphism of $A$ onto $A$ is an \emph{automorphism} and we denote by $\aut(A)$ the group of automorphisms of $A$.
An epimorphism $ \fhi :A\to B$ \emph{refines} a covering $ \mathcal U $ of $A$ if the preimage of any element of $B$ is included in some element of $ \mathcal U $.
If $ \mathcal G , \mathcal G'
 $ are families of topological structures such that $ \mathcal G'
 \subseteq \mathcal G $ and for all $A\in \mathcal G$ there exist $ B\in \mathcal G'
$ and an epimorphism $\fhi: B \to A $, we say that $ \mathcal G'
 $ is \emph{cofinal} in $ \mathcal G $.

A family $ \mathcal G $ of topological $ \mathcal L $-structures is a \emph{projective Fra\"iss\'e family} if the following properties hold:
\begin{enumerate}
\item[\namedlabel{itm:JPP}{(JPP)}]
(joint projection property) for every $A, B\in \mathcal G $ there are $C\in \mathcal G $ and epimorphisms $C\to A$, $C\to B$;
\item[\namedlabel{itm:AP}{(AP)}]  (amalgamation property) for every $A, B, C\in \mathcal G $ and epimorphisms $ \fhi_1:B\to A$, $ \fhi_2:C\to A$ there are $D\in \mathcal G $ and epimorphisms $\psi_1:D\to B$, $\psi_2:D\to C$ such that $ \fhi_1\psi_1= \fhi_2\psi_2$.
\end{enumerate}
Given a family $ \mathcal G $ of topological $ \mathcal L $-structures, a topological $ \mathcal L $-structure $ \mathbb L $ is a \emph{projective Fra\"iss\'e limit} of $ \mathcal G $ if the following hold:
\begin{enumerate}
\item[\namedlabel{itm:Lone}{(L1)}]
	\label{Lone}
(projective universality) for every $A\in \mathcal G $ there is some epimorphism $ \mathbb L \to A$;
\item[\namedlabel{itm:Ltwo}{(L2)}]
for any clopen covering $ \mathcal U $ of $ \mathbb L $ there are $A\in \mathcal G $ and an epimorphism $ \mathbb L \to A$ refining $ \mathcal U $.
\item[\namedlabel{itm:Lthree}{(L3)}]
	\label{Lthree}
(projective ultrahomogeneity) for every $A\in \mathcal G $ and epimorphisms $ \fhi_1, \fhi_2: \mathbb L \to A$ there exists an automorphism  $\psi \in \aut(\mathbb L )$ such that $ \fhi_2= \fhi_1\psi $.
\end{enumerate}

Note that in the original definition of a projective Fra\"iss\'e limit in \cite{Irwin2006} item \ref{itm:Ltwo} was replaced by a different but equivalent property.

If $ \mathcal G $ is a projective Fra\"iss\'e family of finite $ \mathcal L $-structures and $\mathbb L$ satisfies \ref{itm:Lone} and \ref{itm:Ltwo}, then \ref{itm:Lthree} holds if and only if the following extension property holds:
\begin{enumerate}
\item[\namedlabel{itm:Lthreeprime}{(L3$'$)}]
for any $A, B\in \mathcal G $ and epimorphisms $\varphi: B\to A$, $\psi : \mathbb L \to A$ there exists an epimorphism $\chi : \mathbb L \to B$ such that $\varphi\chi =\psi $.
\end{enumerate}
The proof is the same as in \cite{panagi2017}*{Lemma 3}.

In \cite{Irwin2006} it is proved that every nonempty, at most countable, projective Fra\"iss\'e family of finite $ \mathcal L $-structures has a projective Fra\"iss\'e limit, which is unique up to isomorphism.

If $ \mathcal G $ is a class of topological $ \mathcal L $-structures, a \emph{projective sequence} in $ \mathcal G $ is a sequence $(A_n, \fhi_n^m)_{n\in \N , m\ge n}$, where:
\begin{itemize}
\item $A_n\in \mathcal G $;
\item $ \fhi_n^{n+1}:A_{n+1}\to A_n$ is an epimorphism, for each $n\in \N $;
\item $ \fhi_n^m= \fhi_n^{n+1}\cdots \fhi_{m-1}^m:A_m\to A_n$ for $n<m$, and $ \fhi_n^n:A_n\to A_n$ is the identity.
\end{itemize}
The \emph{projective limit} for such a sequence is the topological $ \mathcal L $-structure $ \mathbb A $, whose universe is $ \mathbb A = \setnew{u\in \prod_{n\in \N }A_n}{\forall n\in \N \ u(n)= \fhi_n^{n+1}(u(n+1))} $ and such that $r^{ \mathbb A }(u_1, \ldots , u_j)\Leftrightarrow\forall n\in \N \ r^{A_n}(u_1(n), \ldots , u_j(n))$, for every $j$-ary relation symbol $r\in \mathcal L $.
We denote by $ \fhi_n: \mathbb A \to A_n$ the $n$-th projection map: this is an epimorphism.

A \emph{fundamental sequence} for $\mathcal G$ is a projective sequence $(A_n, \fhi_n^m)$  such that the following properties hold:
\begin{itemize}
\item[\namedlabel{itm:Fone}{(F1)}] $\{ A_n\}_{n\in \N }$ is cofinal in $\mathcal G$;
\item[\namedlabel{itm:Ftwo}{(F2)}]
for any $n$, any $A, B\in \mathcal G $ and any epimorphisms $ \theta_1 :B\to A$, $ \theta_2:A_n\to A$, there exist $m\geq n$ and an epimorphism $\psi :A_m\to B$ such that $ \theta_1\psi= \theta_2 \fhi_n^m$.
\end{itemize}
To study projective Fra\"iss\'e limits it is enough to consider fundamental sequences, due to the following fact whose details can be found in \cite{Camerl2010}.

\begin{proposition} \label{pFfpFlfs}
Let $ \mathcal G $ be a nonempty, at most countable (up to isomorphism) family of finite $ \mathcal L $-structures.
Then the following are equivalent.
\begin{enumerate}
\item $ \mathcal G $ is a projective Fra\"iss\'e family;
\item $ \mathcal G $ has a projective Fra\"iss\'e limit;
\item $ \mathcal G $ has a fundamental sequence.
\end{enumerate}
If these conditions hold and $ \mathcal G_0$ is cofinal in $ \mathcal G $ then $\mathcal G_{0}$ is a projective Fra\"iss\'e family and the projective Fra\"iss\'e limits of $\mathcal G _{0}, \mathcal G $, and of the fundamental sequence coincide.
A projective Fra\"iss\'e limit for them is the projective limit of the fundamental sequence.
\end{proposition}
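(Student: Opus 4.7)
The plan is to establish the cycle $(1) \Rightarrow (3) \Rightarrow (2) \Rightarrow (1)$, and then deduce the cofinality assertion.

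For $(1) \Rightarrow (3)$, I would enumerate $\mathcal G = \{B_k\}_{k\in \N}$ and, since each $B_k$ is finite and the language is relational, enumerate the countably many triples $(B, A, \theta_1 \colon B \to A)$ with $A, B \in \mathcal G$. I then build the fundamental sequence $(A_n, \fhi_n^m)$ recursively, alternating between two kinds of tasks handled by a standard diagonal bookkeeping. At cofinality stages I apply \ref{itm:JPP} to produce an $A_{n+1}$ equipped with epimorphisms onto both $A_n$ and some $B_k$ not yet handled, so that \ref{itm:Fone} holds. At amalgamation stages, given a previously recorded pair $(\theta_1 \colon B \to A,\, \theta_2 \colon A_j \to A)$ with $j \leq n$, I apply \ref{itm:AP} to $\theta_1$ and $\theta_2 \fhi_j^n$ to produce $A_{n+1}$ with epimorphisms $\fhi_n^{n+1} \colon A_{n+1} \to A_n$ and $\psi \colon A_{n+1} \to B$ satisfying $\theta_1 \psi = \theta_2 \fhi_j^{n+1}$. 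Diagonalising over all tasks yields \ref{itm:Ftwo}.

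For $(3) \Rightarrow (2)$, let $\mathbb A$ be the projective limit with projections $\fhi_n \colon \mathbb A \to A_n$. Property \ref{itm:Lone} is immediate: given $A \in \mathcal G$, cofinality \ref{itm:Fone} yields some $n$ and an epimorphism $\theta \colon A_n \to A$, whence $\theta \fhi_n$ works. For \ref{itm:Ltwo}, note that the family $\{\fhi_n^{-1}(a)\}_{n,\,a\in A_n}$ is a basis of clopens of $\mathbb A$, so by compactness any clopen cover $\mathcal U$ is refined by $\{\fhi_n^{-1}(a)\}_{a \in A_n}$ for some $n$, and $\fhi_n$ is the required refining epimorphism. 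For \ref{itm:Lthree}, given $\fhi_1, \fhi_2 \colon \mathbb A \to A$ with $A \in \mathcal G$, I apply the equivalent form \ref{itm:Lthreeprime} via a back-and-forth: using \ref{itm:Ftwo} alternately on $\fhi_1$ and $\fhi_2$, I construct an increasing pair of sequences of indices $n_0 \leq n_1 \leq \cdots$ and commuting epimorphisms $\psi_k \colon A_{n_{k+1}} \to A_{n_k}$ that stitch the two projection systems together; the inverse limit of these isomorphisms is the desired $\psi \in \aut(\mathbb A)$.

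For $(2) \Rightarrow (1)$, I would deduce \ref{itm:JPP} and \ref{itm:AP} directly from \ref{itm:Lone}, \ref{itm:Ltwo}, and the extension form \ref{itm:Lthreeprime}. Given $A, B \in \mathcal G$, I take epimorphisms $\mathbb L \to A$ and $\mathbb L \to B$ from \ref{itm:Lone} and apply \ref{itm:Ltwo} to the clopen cover refining both of their point-preimages; the resulting epimorphism $\mathbb L \to C$ gives \ref{itm:JPP}. For \ref{itm:AP}, given $\fhi_1 \colon B \to A$ and $\fhi_2 \colon C \to A$, one uses \ref{itm:Lone} and \ref{itm:Lthreeprime} to obtain epimorphisms from $\mathbb L$ onto $B$ and $C$ whose composites with $\fhi_1$ and $\fhi_2$ agree, then applies \ref{itm:Ltwo} once more to obtain a common finite factor $D$. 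Finally, the cofinality remark follows because a fundamental sequence for $\mathcal G$ can be thinned (using \ref{itm:Fone} for $\mathcal G_0$) to one for $\mathcal G_0$, and the two projective limits agree. The main obstacle is the book\-keep\-ing in $(1) \Rightarrow (3)$ to ensure that \ref{itm:Ftwo} is satisfied for \emph{every} pair of epimorphisms simultaneously with cofinality, and the careful back-and-forth in the \ref{itm:Lthree} portion of $(3) \Rightarrow (2)$; both are routine but notationally delicate, which is why the paper defers to \cite{Camerl2010} for the full details.
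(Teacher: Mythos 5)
The paper gives no proof of this proposition --- it is stated with the remark that the details can be found in \cite{Camerl2010} --- so there is no in-paper argument to measure yours against. Your outline is the standard Fra\"iss\'e-theoretic proof and is correct in its essentials: the diagonal bookkeeping for $(1)\Rightarrow(3)$, the verification of \ref{itm:Lone} and \ref{itm:Ltwo} on the projective limit via the clopen basis $\{\fhi_n^{-1}(a)\}$, the back-and-forth for \ref{itm:Lthree}, and the derivation of \ref{itm:JPP} and \ref{itm:AP} from \ref{itm:Lone}, \ref{itm:Ltwo}, \ref{itm:Lthreeprime} are all the expected steps. One point you should make explicit for \ref{itm:Lthree}: every epimorphism $\mathbb A \to A$ factors through some $\fhi_n$ as an epimorphism $A_n\to A$ (refine the preimage partition and check the induced finite map is an epimorphism); this is what lets the back-and-forth start. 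Also, the finite-stage maps $\psi_k$ you produce are epimorphisms, not isomorphisms --- only the induced map on the limit is an automorphism. Finally, your appeal to \ref{itm:Lthreeprime} in $(2)\Rightarrow(1)$ is not circular, since \ref{itm:Lthree} implies \ref{itm:Lthreeprime} using only \ref{itm:Lone} (compose an epimorphism $\mathbb L\to B$ with the automorphism given by \ref{itm:Lthree}), even though the paper states the equivalence under the blanket hypothesis that $\mathcal G$ is already a projective Fra\"iss\'e family.

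The one place where your sketch is genuinely too loose is the final cofinality claim. A fundamental sequence for $\mathcal G$ cannot simply be ``thinned'' to one for $\mathcal G_0$, because its terms need not belong to $\mathcal G_0$ at all. You must interleave: use cofinality to choose $B_n\in\mathcal G_0$ with an epimorphism $\theta_n\colon B_n\to A_n$, then use \ref{itm:Ftwo} to find $m_n>n$ and $\psi_n\colon A_{m_n}\to B_n$ with $\theta_n\psi_n=\fhi_n^{m_n}$, and extract from the interleaved system a projective sequence in $\mathcal G_0$ with the same limit, which one then checks is fundamental for $\mathcal G_0$. (Alternatively, and more quickly, verify \ref{itm:Lone}--\ref{itm:Lthree} for $\mathcal G_0$ directly on $\mathbb L$, using \ref{itm:Lthreeprime} for $\mathcal G$ together with cofinality to upgrade the epimorphism produced by \ref{itm:Ltwo} to one onto a member of $\mathcal G_0$.) Either repair is routine, but as written this step does not go through.
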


If $\mathcal G$ is a projective Fraïssé family, one can check whether a given projective sequence is fundamental for $\mathcal G$ with the following.

\begin{proposition}\label{provefundseq}
Let $\mathcal G$ be a projective Fraïssé family of topological $\mL$-structures.
Let $(A_{n}, \fhi_{n}^m)$ be a projective sequence in $\mathcal G$.
Assume that for each $A\in \mathcal G $, $n \in \N $, and epimorphism $\theta:A \to A_{n}$, there exist $m\ge n$ and an epimorphism $\psi: A_{m} \to A$ such that $\theta\psi= \fhi^{m}_{n}$.
Then $(A_n, \fhi_n^m)$ is a fundamental sequence for $ \mathcal G $.
\end{proposition}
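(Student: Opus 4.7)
The plan is to deduce the two conditions \ref{itm:Fone} and \ref{itm:Ftwo} of a fundamental sequence from the given hypothesis together with the projective \F properties \ref{itm:JPP} and \ref{itm:AP} of $\mathcal G$. The key observation is that the hypothesis is exactly \ref{itm:Ftwo} restricted to the case $A = A_n$, $\theta_2 = \id_{A_n}$; so the strategy is to reduce both \ref{itm:Fone} and the general form of \ref{itm:Ftwo} to this restricted form by first pre-amalgamating the relevant diagram inside $\mathcal G$ and then invoking the hypothesis on the side that lands in $A_n$.

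For \ref{itm:Ftwo}, starting from $A, B \in \mathcal G$ and epimorphisms $\theta_1 \colon B \to A$ and $\theta_2 \colon A_n \to A$, I would apply \ref{itm:AP} to obtain some $D \in \mathcal G$ with epimorphisms $\chi_1 \colon D \to B$ and $\chi_2 \colon D \to A_n$ satisfying $\theta_1 \chi_1 = \theta_2 \chi_2$. Applying the hypothesis to the epimorphism $\chi_2 \colon D \to A_n$ gives some $m \geq n$ and an epimorphism $\psi' \colon A_m \to D$ with $\chi_2 \psi' = \fhi_n^m$, and then $\psi := \chi_1 \psi'$ satisfies $\theta_1 \psi = \theta_1 \chi_1 \psi' = \theta_2 \chi_2 \psi' = \theta_2 \fhi_n^m$, which is precisely the conclusion of \ref{itm:Ftwo}. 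For \ref{itm:Fone}, an analogous argument with \ref{itm:JPP} in place of \ref{itm:AP} will work: given $A \in \mathcal G$, use \ref{itm:JPP} on $A$ and $A_0$ to produce some $C \in \mathcal G$ with epimorphisms $\alpha \colon C \to A$ and $\beta \colon C \to A_0$, and then the hypothesis applied to $\beta$ yields an epimorphism $A_m \to C$ whose composition with $\alpha$ is the desired epimorphism $A_m \to A$.

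There is no significant obstacle here: once the correspondence between the hypothesis and the restricted form of \ref{itm:Ftwo} is spotted, the rest of the argument is a short diagram chase. The only minor care needed is to keep track of the direction of the arrows in the epimorphism-based projective setup, and to ensure that each application of \ref{itm:AP} or \ref{itm:JPP} is made so that the newly produced structure genuinely sits in $\mathcal G$ and the hypothesis can then be invoked on its projection to some $A_n$ already in the sequence.
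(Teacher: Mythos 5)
Your proposal is correct and follows essentially the same route as the paper's proof: amalgamate (resp.\ jointly project) to land one leg of the diagram on $A_n$ (resp.\ $A_0$), then invoke the hypothesis on that leg and compose. No gaps.
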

\begin{proof}
\ref{itm:Fone} Let $A\in \mathcal G $, by \ref{itm:JPP} there exist $A' \in \mathcal G$, and epimorphisms $\fhi: A' \to A$ and $\fhi': A' \to A_{0}$.
By hypothesis there are $n$ and an epimorphism $\theta:A_{n} \to A'$ such that $\fhi' \theta = \fhi^{n}_{0}$.
Then $\fhi \theta$ is an epimorphism $A_{n} \to A$, as wished.

\ref{itm:Ftwo} Let $A, B\in \mathcal G $ and epimorphisms $ \theta_1 :B\to A$, $ \theta_2:A_n\to A$.
By \ref{itm:AP} there exist $C\in \mathcal G$ and epimorphisms $\rho_{1}:C\to B$ and $\rho_{2}:C\to A_{n}$ such that $\theta_1\rho_{1}=\theta_{2}\rho_2$.
By hypothesis, there exist $m\ge n$ and an epimorphism $\psi': A_{m} \to C$ such that $\rho_{2}\psi'= \fhi^{m}_{n}$.
Then $\psi = \rho_{1}\psi':A_{m} \to B$ is such that $ \theta_{1}\psi= \theta_{2} \fhi^{m}_{n}$.
\end{proof}

Notice that the converse of \Cref{provefundseq} holds as well.

\subsection{Fine projective sequences} \label{fineprojectivesequences}

In the sequel, whenever we denote a language with a subscript, like in $ \mathcal L_R$, we mean that the language contains a distinguished binary relation symbol represented in the subscript.
The interpretation of $R$  in an $\mL_R$-topological structure is expected to be reflexive and symmetric.
These properties are preserved under projective limits.
A \emph{prespace} is any $ \mL_R$-topological structure $A$ where the interpretation of $R$ is also transitive, that is, an equivalence relation; in this case, we say that $A$ is a prespace of $ \quot{A}{R^A} $.
Since $R^{A}$ is a closed equivalence relation, the quotient map $p:A\to \quot{A}{R^A} $ is closed.
Notice that $ \quot{A}{R^A} $ is then endowed with an $( \mL_R\setminus \set{R} )$-structure, where $r^{A/R^A}=p\times\ldots\times p[r^A]$, for any $r\in \mL_R\setminus \set{R} $; all such relations are closed.

\begin{definition}
A projective sequence $(A_n, \fhi_n^m)$ of finite $\mL_{R}$-structures and epimorphisms is \emph{fine} whenever its projective limit is a prespace.
If $(A_{n}, \fhi_{n}^m)$ is a fine projective sequence in $ \mL_R$ with projective limit $ \mathbb A $ and $X$ is a compact metrizable space homeomorphic to $ \quot{ \mathbb A }{R^{ \mathbb A }} $, we say that  $(A_{n}, \fhi_{n}^m)$ \emph{approximates} $X$.
\end{definition}

Given a reflexive graph (that is, a reflexive and symmetric relation) $R$ on some set, denote by $d_R$ the distance on the graph, where $d_R(a, b)=\infty $ if $a, b$ belong to distinct connected components of the graph.
Note that if $R, S$ are reflexive graphs and $\varphi $ is a function between them such that $x\binR y\Rightarrow \varphi(x) \mathbin{S} \varphi(y)$ for all $x, y$, then the inequality $d_S(\varphi (x), \varphi (y))\le d_R(x, y)$ holds for every $x, y$.

We can determine whether a sequence is fine by checking that the $R$-distance of points which are not $R$-related tends to infinity. More precisely:

\begin{lemma} \label{lemmafour}
Let $(A_{n}, \fhi_{n}^m)$ be a projective sequence of finite $ \mathcal L_R$-structures, with projective limit $ \mathbb A $.
Assume that $R^{A_n}$ is reflexive and symmetric for every $n\in \N $.
The projective sequence is fine if and only if for all $n \in \N$ and $a, b \in A_{n}$ with $d_{R^{A_n}}(a, b) = 2$, there is $m>n$ such that if $a' \in (\fhi^{m}_{n})^{-1}(a), b' \in (\fhi^{m}_{n})^{-1}(b)$ then $d_{R^{A_m}}(a', b') \ge 3$.
\end{lemma}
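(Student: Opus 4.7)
The statement is an equivalence, so I would prove both directions, with the harder one being the ``only if'' part. Throughout, fineness of the sequence means that the interpretation $R^{\mathbb{A}}$ on the limit is transitive (reflexivity and symmetry are automatic), so I will work with transitivity of $R^{\mathbb{A}}$.

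For the easier direction ($\Leftarrow$), I would argue directly. Assume the distance condition. Take $u, v, w\in \mathbb A$ with $u R^{\mathbb A} v$ and $v R^{\mathbb A} w$, and fix $n\in\N$; I want $u(n)\binR^{A_n} w(n)$. Since $u(n)\binR^{A_n} v(n)\binR^{A_n} w(n)$, we have $d_{R^{A_n}}(u(n),w(n))\le 2$. If the distance is $\le 1$ we are done. Otherwise it equals $2$, so the hypothesis yields $m>n$ such that every preimage pair $(a',b')$ of $(u(n),w(n))$ has $d_{R^{A_m}}(a',b')\ge 3$. In particular $d_{R^{A_m}}(u(m),w(m))\ge 3$, contradicting $u(m)\binR^{A_m} v(m)\binR^{A_m} w(m)$.

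For the harder direction ($\Rightarrow$), I would argue by contrapositive: assume there exist $n$ and $a,b\in A_n$ with $d_{R^{A_n}}(a,b)=2$ such that for every $m>n$ one can find $a_m\in (\fhi_n^m)^{-1}(a)$, $b_m\in(\fhi_n^m)^{-1}(b)$, and $c_m\in A_m$ with $a_m\binR^{A_m} c_m\binR^{A_m} b_m$. I want to produce threads $u,v,w\in\mathbb A$ with $u\binR^{\mathbb A} v$, $v\binR^{\mathbb A} w$, but $\fhi_n(u)=a$ and $\fhi_n(w)=b$, which will witness failure of transitivity on $\mathbb A$ since $a$ and $b$ are not $R^{A_n}$-related. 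The natural tool is a König / compactness argument: consider the tree $T$ whose level-$j$ nodes are tuples $\bigl((a'_k,c'_k,b'_k)\bigr)_{n\le k\le n+j}$ such that $a'_k,c'_k,b'_k\in A_k$, $\fhi_n^k(a'_k)=a$, $\fhi_n^k(b'_k)=b$, $a'_k\binR^{A_k} c'_k\binR^{A_k} b'_k$, and the tuples are coherent with respect to the bonding maps $\fhi_k^{k+1}$. The failure assumption, applied at $m=n+j$ and projected downward via $\fhi_k^{n+j}$, supplies a node at every level, and the tree is finitely branching because each $A_k$ is finite; by König's lemma $T$ has an infinite branch, whose three coordinates (extended below level $n$ by $\fhi_k^n$) produce the desired $u,v,w\in\mathbb A$.

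The main obstacle, and the only nontrivial step, is precisely the extraction of the coherent triple of threads in the contrapositive direction: one must see that the presence of length-$2$ paths fibered over $(a,b)$ at every finite stage assembles into an honest path of length $2$ inside $\mathbb A$, which requires the finiteness of the structures and the standard compactness of inverse limits. The rest of the argument is bookkeeping about preservation of $R$ under epimorphisms (if $a'\binR^{A_m} b'$ then $\fhi(a')\binR^{A_n}\fhi(b')$), which is why having $d_{R^{A_n}}(a,b)=2$ at the base level rules out distances $0$ or $1$ upstairs.
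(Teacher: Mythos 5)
Your proof is correct. The easy direction is the same argument as the paper's (the paper phrases it contrapositively, as ``not fine implies the condition fails at the level where the projections of a bad triple $x\binR z\binR y$ become distinct and unrelated,'' but the content is identical). In the harder direction the two arguments diverge in technique: the paper also argues by contraposition, but instead of a tree it lifts each level-$m$ witness $a_m\binR^{A_m} c_m\binR^{A_m} b_m$ to points $x_m, y_m, z_m, z'_m$ of $\mathbb A$ with $x_m\binR^{\mathbb A}z_m$ and $z'_m\binR^{\mathbb A}y_m$, extracts convergent subsequences, and uses that $R^{\mathbb A}$ is a closed relation (together with the fact that $z_m,z'_m$ lie in a common fiber of shrinking diameter, so they share a limit) to obtain $x\binR^{\mathbb A}z\binR^{\mathbb A}y$ with $x,y$ unrelated. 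Your K\H{o}nig's lemma argument on the tree of coherent triples is a purely combinatorial substitute: it uses only the finiteness of the $A_k$ and the forward preservation of $R$ under the bonding maps, and avoids invoking the topology of $\mathbb A$ (closedness of $R^{\mathbb A}$, metric convergence); the price is the bookkeeping needed to set up the tree and to check that the branch, extended below level $n$ by the $\fhi^n_k$, really yields threads with $u(k)\binR^{A_k}v(k)$ at every level, which you gesture at but should state. Both routes are sound; the paper's is shorter given the topological machinery already in place, yours is more self-contained.
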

\begin{proof}
Let $a, b \in A_{n}$ with $d_{R^{A_n}}(a, b) = 2$, say $a \binR^{A_n}c \binR^{A_n}b$.
If for each $m>n$ there are $a_{m} \in (\fhi^{m}_{n})^{-1}(a), b_{m} \in (\fhi^{m}_{n})^{-1}(b)$ with $d_{R^{A_m}}(a_{m}, b_{m}) = 2$, say $a_{m}\binR^{A_m}c_{m}\binR^{A_m}b_{m}$, let
\[
x_m\in \fhi_m^{-1}(a_m), \quad y_m\in \fhi_m^{-1}(b_m), \quad z_m, z'_m\in \fhi_m^{-1}(c_m),
\]
with $x_m\binR^{ \mathbb A }z_m, z'_m\binR^{ \mathbb A }y_m$.
Passing to a suitable subsequence, let
\[
x=\lim_{h\rightarrow\infty }x_{m_h}, \quad y=\lim_{h\rightarrow\infty }y_{m_h}, \quad z=\lim_{h\rightarrow\infty }z_{m_h}=\lim_{h\rightarrow\infty }z'_{m_h},
\]
so that $x\binR^{ \mathbb A }z\binR^{ \mathbb A }y$.
However, $x, y$ are not $R^{ \mathbb A }$-related (otherwise $a \binR^{A_n}b$), so $(A_{n}, \fhi_{n}^m)$ is not fine.

On the other hand, if $(A_{n}, \fhi_{n}^m)$ is not fine there are $x, y\in \mathbb A $ such that $d_{R^{ \mathbb A }}(x, y) = 2$, say $x\binR^{ \mathbb A }z\binR^{ \mathbb A }y$, for $x, y, z$ distinct points.
There is $n \in \N$ such that for all $m \ge n$ the points $\fhi_{m}(x), \fhi_m(y), \fhi_m(z)$ are  distinct and $\neg(\fhi_{m}(x) \binR^{A_m}\fhi_{m}(y))$, so $d_{R^{A_m}}(\fhi_{m}(x), \fhi_{m}(y)) = 2$.
Therefore the property does not hold for $\fhi_n(x), \fhi_n(y)$.
\end{proof}

\begin{definition} \label{rconnectedness}
Let $A$ be a topological $\mathcal L_{R}$-structure and $B \subseteq A$.
We say $B$ is $R$-\emph{connected} if for any two clopen sets $U, U' \subseteq A$ such that $U\cap B, U'\cap B$ partition $B$, there are $x \in U\cap B, x' \in U' \cap B$ such that $x \binR^Ax'$.
\end{definition}

Notice that if $A$ is a finite $\mathcal L_{R}$-structure and $R^A$ is symmetric, $R$-connectedness coincides with the usual notion of connectedness for the graph $R^{A}$.

\begin{lemma} \label{oldparagraph}
Let $A$ be a prespace.
Then the image of an $R$-connected closed subset $B \subseteq A $ under the quotient map $p: A \to  \quot{A }{R^{A }}$ is closed and connected.
\end{lemma}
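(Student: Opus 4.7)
The plan is to handle closedness and connectedness separately, and for the latter to reduce a purported disconnection of $p(B)$ to a forbidden clopen partition of $B$ in $A$ by using zero-dimensionality.

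First, closedness of $p(B)$ is essentially free: the excerpt already notes that since $R^{A}$ is a closed equivalence relation on a compact space, the quotient map $p\colon A\to \quot{A}{R^{A}}$ is closed, so the image of the closed set $B$ is closed.

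For connectedness, I will argue by contradiction. Suppose $p(B)$ admits a proper separation into two nonempty disjoint relatively closed subsets $V_{1}, V_{2}$. Setting $C_{i}=p^{-1}(V_{i})\cap B$, I get two disjoint nonempty closed subsets of $A$ whose union is $B$. Now comes the only real move in the argument: since $A$ is a topological $\mL_{R}$-structure, it is zero-dimensional compact metrizable, so the disjoint closed sets $C_{1}, C_{2}$ can be separated by a clopen set $U\subseteq A$ with $C_{1}\subseteq U$ and $C_{2}\subseteq A\setminus U$. Taking $U'=A\setminus U$, the pair $(U, U')$ is a pair of clopen subsets of $A$ with $U\cap B=C_{1}$ and $U'\cap B=C_{2}$ partitioning $B$.

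By $R$-connectedness of $B$, there must exist $x\in C_{1}$ and $x'\in C_{2}$ with $x\binR^{A}x'$. But each $p^{-1}(V_{i})$ is $R^{A}$-saturated, so from $x\in p^{-1}(V_{1})$ and $x\binR^{A}x'$ we deduce $p(x')=p(x)\in V_{1}$, contradicting $x'\in p^{-1}(V_{2})$ and $V_{1}\cap V_{2}=\emptyset$. Hence no such separation exists and $p(B)$ is connected.

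No step is really a hurdle; if there is a subtle point, it is just to remember that $R$-connectedness is phrased in terms of clopen partitions of the ambient space $A$, so one must explicitly invoke the zero-dimensionality of $A$ to upgrade the closed partition $B=C_{1}\sqcup C_{2}$ produced by pulling back the separation of $p(B)$ to a clopen one to which \Cref{rconnectedness} directly applies.
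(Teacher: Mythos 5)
Your proof is correct and follows essentially the same route as the paper's: closedness from the closedness of $p$, and connectedness by pulling back a purported separation of $p[B]$ to a closed partition of $B$, separating it by clopen sets using zero-dimensionality, and contradicting $R$-connectedness via the saturation of the preimages. The only cosmetic difference is that you use a clopen set and its complement where the paper takes two disjoint clopen sets containing the two pieces.
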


\begin{proof}
The set $p[B]$ is closed as $p$ is a closed map.
If $p[B]$ were disconnected, let $C, C'$ be disjoint, nonempty, closed subsets of $ \quot{A }{R^{A }} $ such that $p[B]=C\cup C'$.
Then $p^{-1}(C)\cap B, p^{-1}(C')\cap B$ are disjoint, nonempty, closed subsets of $A $ whose union is $B$.
Let $U, U'$ be disjoint clopen subsets of $A $ with $p^{-1}(C)\cap B\subseteq U, p^{-1}(C')\cap B\subseteq U'$.
By the assumption, there are $u\in p^{-1}(C)\cap B, u'\in p^{-1}(C')\cap B$ with $u\binR^{A }u'$, contradicting the disjointness of $C, C'$.
\end{proof}

For the remainder of the section we fix a fine projective sequence of finite $ \mathcal L_R$-structures $(A_{n}, \fhi_{n}^m)$ with projective limit $ \mathbb A $ and with quotient map $p: \mathbb A \to \quot{ \mathbb A }{R^{ \mathbb A }} $.

\begin{lemma}\label{generalprop}
\leavevmode
\begin{enumerate}
\item The mesh of the sequence $\left(\setnew{ \fhi_n^{-1}(a)}{a \in A_{n}} \right)_{n \in \N}$ tends to $0$.
In particular, the sets $ \fhi_n^{-1}(a)$ for $n\in \N , a\in A_n$ form a basis for the topology of $ \mathbb A $.
\item The mesh of the sequence $\left(\setnew{p[ \fhi_n^{-1}(a)]}{a \in A_{n}} \right)_{n \in \N}$ tends to $0$.
\end{enumerate}
\end{lemma}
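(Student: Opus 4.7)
The plan is to handle the two parts in sequence, using compactness and the definition of the projective limit topology.

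For part (1), I would fix a compatible metric $\rho$ on $\mathbb{A}$ (it exists since $\mathbb{A}$, being a projective limit of finite discrete spaces, is a closed subset of $\prod_{n} A_n$ and hence zero-dimensional compact metrizable). Suppose for contradiction that the mesh does not tend to $0$: then there exist $\varepsilon > 0$, an increasing sequence $n_k \to \infty$, elements $a_{n_k} \in A_{n_k}$, and points $x_k, y_k \in \varphi_{n_k}^{-1}(a_{n_k})$ with $\rho(x_k, y_k) \ge \varepsilon$. By compactness, pass to a subsequence along which $x_k \to x$ and $y_k \to y$; clearly $\rho(x, y) \ge \varepsilon$, so $x \neq y$. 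On the other hand, fix any $m \in \mathbb{N}$: for $n_k \ge m$ we have $\varphi_m(x_k) = \varphi_m^{n_k}(\varphi_{n_k}(x_k)) = \varphi_m^{n_k}(a_{n_k}) = \varphi_m^{n_k}(\varphi_{n_k}(y_k)) = \varphi_m(y_k)$, so by continuity $\varphi_m(x) = \varphi_m(y)$ for every $m$. This forces $x = y$ in the projective limit, a contradiction. The sets $\varphi_n^{-1}(a)$ are clopen by continuity of $\varphi_n$ into a discrete space, and together with the mesh-to-$0$ statement this yields the basis claim.

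For part (2), I would invoke uniform continuity of the quotient map. Because $R^{\mathbb{A}}$ is closed in $\mathbb{A}\times\mathbb{A}$ and $\mathbb{A}$ is compact Hausdorff, $\quot{\mathbb{A}}{R^{\mathbb{A}}}$ is compact Hausdorff, and since $\mathbb{A}$ is second-countable the quotient is second-countable Hausdorff compact, hence metrizable. Fix a compatible metric $\rho'$ on it. The map $p$ is continuous from a compact metric space to a compact metric space and therefore uniformly continuous: for every $\varepsilon > 0$ there is $\delta > 0$ such that $\rho(x, y) < \delta$ implies $\rho'(p(x), p(y)) < \varepsilon$. Consequently, for every subset $U \subseteq \mathbb{A}$ with $\diam_\rho(U) < \delta$ we have $\diam_{\rho'}(p[U]) \le \varepsilon$. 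By part (1), for all sufficiently large $n$ the mesh of $\{\varphi_n^{-1}(a) : a \in A_n\}$ is below $\delta$, so the mesh of $\{p[\varphi_n^{-1}(a)] : a \in A_n\}$ is at most $\varepsilon$, and the claim follows.

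I do not anticipate a real obstacle here; the argument is essentially standard. The only point requiring some care is pinning down metrizability of the quotient, which is automatic once one notes that $R^{\mathbb{A}}$ is closed (so the quotient is a compact Hausdorff second countable space). Given that, part (1) reduces to a straightforward separation-of-points argument in the projective limit, and part (2) is an immediate consequence of uniform continuity.
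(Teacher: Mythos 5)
Your proof is correct. Part (2) is exactly the paper's argument (uniform continuity of $p$; your extra remark on metrizability of the quotient is already covered earlier in the paper, where it is noted that $R^{\mathbb A}$ being a closed equivalence relation makes $p$ a closed map). Part (1) reaches the same contradiction as the paper but by a slightly different compactness mechanism: you extract convergent subsequences of witness pairs $x_k, y_k$ at distance $\ge\varepsilon$ inside the bad fibers and conclude that the limits $x\neq y$ have equal coordinates $\fhi_m(x)=\fhi_m(y)$ for all $m$, which is impossible in the projective limit. The paper instead applies K\"onig's lemma to the finitely-branching tree $T=\setnew{\fhi_{n'}^{n}(a_n)}{n'<n}$ of bad nodes, obtains an infinite branch $u=(b_0,b_1,\dots)\in\mathbb A$, and observes that the nested sets $\fhi_n^{-1}(b_n)$ converge in $\Exp(\mathbb A)$ to $\bigcap_n\fhi_n^{-1}(b_n)=\set{u}$ while each has diameter $\ge\varepsilon$. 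The two arguments are equally elementary and interchangeable here; the tree formulation has the minor advantage of producing a single coherent thread of bad fibers (a device reused elsewhere in the paper, e.g.\ in \Cref{limitconnected}-type limit arguments), whereas your version avoids any appeal to K\"onig's lemma at the cost of a double subsequence extraction. Both correctly handle the only delicate points: that diameters of compact fibers are attained, and that points of $\mathbb A$ are determined by their coordinates.
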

\begin{proof}
(1) Suppose that there is $\varepsilon > 0$ such that for infinitely many $n \in \N$, there is $a_{n} \in A_{n}$ with $\diam( \fhi_n^{-1}(a_n))\ge \varepsilon$.
Fix such $a_{n}$'s and consider the forest $T= \setnew{ \fhi_{n'}^n(a_n)}{n'<n} $, so that $ \diam ( \fhi_n^{-1}(b))\ge\varepsilon $ for every $b\in A_n$ in the forest.
Let $u=(b_0, b_1, \ldots )\in \mathbb A $ be an infinite branch in $T$.
Since
\[
n<n'\Rightarrow \fhi_{{n'}}^{-1}(b_{n'}) \subseteq \fhi_{n}^{-1}(b_{n})
\]
it follows that the sequence $ \fhi_n^{-1}(b_n)$ converges in $ \Exp ( \mathbb A )$ to $K=\bigcap_{n\in \N } \fhi_n^{-1}(b_n)$ with $\diam(K) \ge \varepsilon$.
But $\bigcap_{n\in \N } \fhi_n^{-1}(b_n)= \set{u} $, a contradiction.

(2) By (1) and the fact that function $p$ is uniformly continuous.
\end{proof}

\begin{lemma} \label{limitconnected}
If $B_{n} \subseteq A_{n}$, for $n \in \N$, are $R$-connected subsets and $(\fhi_{n}^{-1}(B_{n}))_{n \in \N}$ converges in $\Exp( \mathbb A )$ to $K$, then $K$ is $R$-connected.
\end{lemma}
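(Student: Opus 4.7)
My plan is to argue by contradiction: suppose $K$ fails $R$-connectedness in $\mathbb{A}$, push the witnessing clopen partition down to a finite level $n_{0}$, and exploit $R$-connectedness of each $B_{m}$ for $m \ge n_{0}$ to produce, via the epimorphism property of $\fhi_{m}$ and a standard compactness argument, an $R^{\mathbb{A}}$-related pair across the partition of $K$, contradicting the assumption.

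More concretely, if $K$ is not $R$-connected, fix clopen $U, U' \subseteq \mathbb{A}$ with $U \cap K$ and $U' \cap K$ both nonempty and partitioning $K$, and such that no $x \in U \cap K$, $x' \in U' \cap K$ satisfy $x R^{\mathbb{A}} x'$; replacing $U'$ by $\mathbb{A} \setminus U$ we may assume $\mathbb{A} = U \sqcup U'$. By \Cref{generalprop}(1) and compactness of $U$ there exist $n_{0} \in \N$ and $V \subseteq A_{n_{0}}$ with $U = \fhi_{n_{0}}^{-1}(V)$, and hence $U' = \fhi_{n_{0}}^{-1}(V')$ for $V' = A_{n_{0}} \setminus V$. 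For each $m \ge n_{0}$, the clopen sets $W_{m} = (\fhi_{n_{0}}^{m})^{-1}(V)$ and $W'_{m} = (\fhi_{n_{0}}^{m})^{-1}(V')$ partition $A_{m}$, hence also $B_{m}$. If $W_{m} \cap B_{m} = \emptyset$ for infinitely many $m$, then $\fhi_{m}^{-1}(B_{m}) \subseteq U'$ along that subsequence, and Vietoris convergence forces $K \subseteq U'$, contradicting $U \cap K \ne \emptyset$; symmetrically, $W'_{m} \cap B_{m}$ cannot be empty infinitely often. Thus, from some point on, $R$-connectedness of $B_{m}$ supplies $b_{m} \in W_{m} \cap B_{m}$ and $b'_{m} \in W'_{m} \cap B_{m}$ with $b_{m} R^{A_{m}} b'_{m}$.

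The final step is the limit argument. Since $\fhi_{m}\colon \mathbb{A} \to A_{m}$ is an epimorphism, $R^{A_{m}} = (\fhi_{m} \times \fhi_{m})[R^{\mathbb{A}}]$, so each such pair lifts to $x_{m}, x'_{m} \in \mathbb{A}$ with $x_{m} R^{\mathbb{A}} x'_{m}$, $\fhi_{m}(x_{m}) = b_{m}$ and $\fhi_{m}(x'_{m}) = b'_{m}$; in particular $x_{m} \in \fhi_{m}^{-1}(B_{m}) \cap U$ and $x'_{m} \in \fhi_{m}^{-1}(B_{m}) \cap U'$. Extracting a convergent subsequence $x_{m_{k}} \to x$, $x'_{m_{k}} \to x'$ by compactness of $\mathbb{A}$, the closedness of $U$, $U'$, and of $R^{\mathbb{A}}$ together with the Vietoris convergence $\fhi_{m}^{-1}(B_{m}) \to K$ yield $x \in U \cap K$, $x' \in U' \cap K$, and $x R^{\mathbb{A}} x'$, the desired contradiction. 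The only genuinely new ingredient is the epimorphism lifting, which produces a single pair in $R^{\mathbb{A}}$ rather than merely compatible pairs at each finite level; everything else is standard compactness and continuity.
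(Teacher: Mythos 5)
Your proof is correct and follows essentially the same route as the paper's: push the clopen partition down to a finite level, invoke $R$-connectedness of $B_m$ there, and lift the resulting $R^{A_m}$-edge back to $\mathbb A$ via the epimorphism property. Your closing compactness argument, which places the witnesses $x, x'$ inside $K$ itself (as \Cref{rconnectedness} literally requires), is a welcome extra step that the paper's proof leaves implicit, since it only exhibits a related pair in $U$ and $U'$.
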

\begin{proof}
Let $U, U'$ be clopen, nonempty subsets of $ \mathbb A $, with some positive distance $\delta$, such that $U\cap K, U'\cap K $ partition $K$.
Consider the open neighborhood $O = \setnew{ C \in \Exp( \mathbb A )}{C \subseteq U \cup U', C \cap U \neq \emptyset, C \cap U' \neq \emptyset}$ of $K$ in $\Exp(\mathbb A)$.
Let $n \in \N$ be such that $ \fhi_{n}^{-1}(B_{n}) \in O$, and $\diam( \fhi_n^{-1}(a))<\delta$ for each $a \in A_{n}$: such a $n$ exists by \Cref{generalprop}.
Then each $ \fhi_{n}^{-1}(a)$ for $a \in B_{n}$ is either contained in $U$ or in $U'$, as the distance between the two clopen sets is greater than $ \diam ( \fhi_n^{-1}(a))$, and $U, U'$ each contain at least one such set, since $ \fhi_n^{-1}(B_{n})$ has nonempty intersection with both $U$ and $U'$.
It follows that $\fhi_n[U]\cap B_n, \fhi_n[U']\cap B_n$ partition $B_{n}$.
But $B_{n}$ is $R$-connected, so there are $a \in B_{n} \cap \fhi_n[U], a' \in B_{n} \cap \fhi_n[U']$ such that $a \binR^{A_n}a'$, and thus there exist $x \in \fhi_n^{-1}(a) \subseteq U, x' \in \fhi_n^{-1}(a') \subseteq U'$ such that $x \binR^{ \mathbb A }x'$.
So $K$ is $R$-connected.
\end{proof}

\begin{corollary} \label{quotlimitconnected}
If $B_{n} \subseteq A_{n}$ are $R$-connected subsets and $(p[\fhi^{-1}_{n}(B_{n})])_{n \in \N}$ converges in $\Exp( \quot{ \mathbb A }{ R^{ \mathbb A }} )$ to some $K$, then $K$ is connected.
\end{corollary}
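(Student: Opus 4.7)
The plan is to reduce the statement to the two preceding lemmas. Lemma \ref{oldparagraph} tells us that $p$ sends closed $R$-connected subsets of $\mathbb A$ to closed connected subsets of $\quot{\mathbb A}{R^{\mathbb A}}$, so it suffices to exhibit a closed $R$-connected $K'\subseteq\mathbb A$ with $p[K']=K$. Lemma \ref{limitconnected} produces such a set from any Vietoris limit of the preimages $\fhi_n^{-1}(B_n)$, so the only missing ingredient is to lift the given convergence of the projected sets to a convergence upstairs in $\Exp(\mathbb A)$ and to identify the resulting limit with $K$.

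First I will exploit compactness: since $\mathbb A$ is compact metrizable, so is $\Exp(\mathbb A)$, and therefore the sequence $(\fhi_n^{-1}(B_n))_{n\in\N}$ admits a convergent subsequence $(\fhi_{n_k}^{-1}(B_{n_k}))_{k\in\N}$ with some limit $K'\in\Exp(\mathbb A)$. Applying Lemma \ref{limitconnected} to this subsequence --- its proof carries over unchanged because it only requires choosing a single sufficiently large index at which the mesh condition and the defining neighborhood $O$ are in force --- yields that $K'$ is $R$-connected.

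Next I will identify $K=p[K']$ by continuity of the induced Vietoris map $p_{*}\colon\Exp(\mathbb A)\to\Exp(\quot{\mathbb A}{R^{\mathbb A}})$, $C\mapsto p[C]$, which is standard for continuous maps between compact metrizable spaces. This gives $p[\fhi_{n_k}^{-1}(B_{n_k})]\to p[K']$ in $\Exp(\quot{\mathbb A}{R^{\mathbb A}})$; since the full sequence $(p[\fhi_{n}^{-1}(B_n)])_{n\in\N}$ converges to $K$ by hypothesis, the subsequence does as well, and Hausdorffness of $\Exp(\quot{\mathbb A}{R^{\mathbb A}})$ forces $K=p[K']$. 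Lemma \ref{oldparagraph} applied to the closed $R$-connected set $K'$ then yields that $K=p[K']$ is closed and connected.

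The only real obstacle is the mild bookkeeping in lifting the given convergence from $\Exp(\quot{\mathbb A}{R^{\mathbb A}})$ back to $\Exp(\mathbb A)$: since we have no a priori upstairs limit, we must pass to a subsequence and then use the continuity of $p_{*}$ to pin down the projected limit as the prescribed $K$. After this step the result is an immediate concatenation of Lemmas \ref{limitconnected} and \ref{oldparagraph}.
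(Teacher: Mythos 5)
Your proof is correct and takes essentially the same route as the paper: the paper likewise passes to a subsequence of $(\fhi_{n}^{-1}(B_{n}))_{n\in\N}$ converging in $\Exp(\mathbb A)$ to some $L$, identifies $K=p[L]$ using continuity of the induced map on hyperspaces together with the assumed convergence of the full projected sequence, and then concludes by \Cref{limitconnected} and \Cref{oldparagraph}.
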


\begin{proof}
Let $n_k$ be an increasing sequence of natural numbers such that $ \fhi_{n_k}^{-1}(B_{n_k})$ converges in $ \Exp ( \mathbb A )$, say $\lim_{k\rightarrow\infty } \fhi_{n_k}^{-1}(B_{n_k})=L$.
Then \[
\lim_{n\rightarrow\infty }p[ \fhi_n^{-1}(B_n)]= \lim_{k\rightarrow\infty }p[ \fhi_{n_k}^{-1}(B_{n_k})]=p[L],
\]
whence $K=p[L]$.
Now apply \Cref{limitconnected,oldparagraph}.
\end{proof}

\subsection{Irreducible functions and regular quasi-partitions}

Given topological spaces $X$, $Y$, a continuous map $f:X\to Y$ is \emph{irreducible} if $f[K] \neq Y$ for all proper closed subsets $K \subset X$.

We recall some basic results on irreducible closed surjective maps between compact metrizable spaces, whose proofs can be found in \cite{MR785749}.
Let $f:X\to Y$ be such a map.
Given $A\subseteq X$, let $f^{\#}(A) = \setnew{y\in Y}{f^{-1}(y) \subseteq A}$.
If $O\subseteq X$ is an open set, then $f^{\#}(O)$ is open and $f^{-1}(f^{\#}(O))$ is dense in $O$.
If $C \subseteq X$ is a regular closed set, then $C = \closure(f^{-1}(f^{\#}(\interior(C))))$, and $f[C]= \closure(f^{\#}(\interior(C)))$, so in particular the image of a regular closed set is regular.
The preimage of any point by $f$ is either an isolated point or has empty interior.
If $C, C'$ are regular closed and $f[C] = f[C']$ then $C = C'$;
if $\interior(C \cap C') = \emptyset$ then $\interior(f[C] \cap f[C']) = \emptyset$.

\begin{definition}
	\label{def:regular quasi-partition}
A covering $ \mathcal C $ of a topological space is a \emph{regular quasi-partition} if the elements of $ \mathcal C $ are nonempty, regular closed sets and $\forall A, B\in \mathcal C \ (A\ne B\Rightarrow A\cap B\subseteq\partial (A)\cap\partial (B))$.
\end{definition}

\begin{lemma}
	\label{quasi-part}
If $X, Y$ are compact metrizable spaces and $f:X\to Y$ is an irreducible closed surjective map, then the image $f \mathcal C = \setnew{f[C] }{C \in \mathcal C}$ of a regular quasi-partition $\mathcal C$ of $X$ is a regular quasi-partition of $Y$, and the map $C \mapsto f[C]$ is a bijection between $\mathcal C$ and $f \mathcal C$.
\end{lemma}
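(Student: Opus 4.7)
The plan is to verify the defining conditions of a regular quasi-partition for $f\mathcal{C}$ one by one, leaning on the recalled properties of irreducible closed surjections. First I would dispatch the easy clauses: each $f[C]$ is nonempty since $C$ is, each is regular closed by the recalled fact that the image of a regular closed set under an irreducible closed surjection is regular closed, and $f\mathcal{C}$ covers $Y$ because $\mathcal{C}$ covers $X$ and $f$ is surjective. Injectivity of $C \mapsto f[C]$ is immediate from the recalled fact that, for regular closed sets, $f[C] = f[C']$ forces $C = C'$.

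The main content is to establish, for distinct $C, C' \in \mathcal{C}$, the inclusion $f[C] \cap f[C'] \subseteq \partial(f[C]) \cap \partial(f[C'])$. I would fix $y \in f[C] \cap f[C']$ and argue by contradiction that $y \notin \interior(f[C])$; the symmetric argument with the roles of $C$ and $C'$ exchanged then yields $y \in \partial(f[C'])$ as well. So assume $y \in \interior(f[C])$, and choose $x \in C'$ with $f(x) = y$. Setting $V = \interior(f[C])$, the preimage $f^{-1}(V)$ is an open neighborhood of $x$ in $X$. Since $C'$ is regular closed, $x \in C' = \closure(\interior(C'))$, so every neighborhood of $x$ meets $\interior(C')$; in particular $W := f^{-1}(V) \cap \interior(C')$ is a nonempty open subset of $X$.

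Now I would invoke irreducibility of $f$: because $X \setminus W$ is a proper closed subset of $X$, the set $f[X \setminus W]$ misses some $y' \in Y$, and for such $y'$ we have $f^{-1}(y') \subseteq W$. On the one hand, $y' \in f[W] \subseteq V \subseteq f[C]$, so there is $x'' \in C$ with $f(x'') = y'$. On the other hand, $x'' \in f^{-1}(y') \subseteq W \subseteq \interior(C')$. Thus $x'' \in C \cap \interior(C') \subseteq C \cap C' \subseteq \partial(C')$, contradicting $x'' \in \interior(C')$, since $\partial(C')$ and $\interior(C')$ are disjoint.

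The one genuine subtlety is the boundary clause just sketched: one must combine irreducibility with the regularity of the elements of $\mathcal{C}$ in order to transport a configuration taking place in the interior of $f[C]$ back to a preimage point that is simultaneously forced into $\interior(C')$ and into $C \cap C' \subseteq \partial(C')$. Every other step is a direct consequence of the recalled properties of irreducible closed surjections and the definition of a regular quasi-partition.
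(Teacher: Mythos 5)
Your proof is correct. The overall skeleton matches the paper's: the easy clauses are dispatched by the recalled properties of irreducible closed surjections, and the boundary clause is proved by assuming $y \in \interior(f[C])$ and deriving a contradiction. Where you diverge is in the mechanism of that contradiction. The paper stays downstairs in $Y$: from $y \in \interior(f[C])$ and the regularity of $f[C']$ it finds a nonempty open subset of $f[C]\cap f[C']$, then invokes the quoted fact that $\interior(C\cap C')=\emptyset$ implies $\interior(f[C]\cap f[C'])=\emptyset$ to conclude $\interior(C)\cap\interior(C')\ne\emptyset$. You instead work upstairs in $X$: you build the nonempty open set $W=f^{-1}(\interior(f[C]))\cap\interior(C')$ and apply the raw definition of irreducibility (equivalently, the nonemptiness of $f^{\#}(W)$) to manufacture a point of $C\cap\interior(C')$, contradicting $C\cap C'\subseteq\partial(C')$. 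In effect you re-derive from scratch the one instance of the quoted image-of-disjoint-interiors lemma that is needed, which makes your argument more self-contained; the paper's version is shorter because it leans on the toolbox it has already recalled. Both arguments are sound, and all the small steps you use (nonemptiness of $\interior(C')$ from regularity, $f[W]\subseteq \interior(f[C])$, surjectivity to get $f^{-1}(y')\ne\emptyset$) check out.
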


\begin{proof}
The fact that $C\mapsto f[C]$ is a bijection is one of the basic properties of irreducible closed surjective maps between compact metrizable spaces.
The same for the fact that each $f[C]$ is a regular closed set.

Assume now that $C, C'\in \mathcal C $, and let $y\in f[C]\cap f[C']$.
We show that $y\notin \interior (f[C])$, and similarly $y\notin \interior (f[C'])$.
If toward contradiction $y\in \interior (f[C])$, let $O$ be open with $y\in O\subseteq f[C]$.
Since $y\in f[C']$ and $f[C']$ is regular closed, there is $y'\in O\cap \interior (f[C'])$, so that there exists an open set $V$ with $y'\in V\subseteq f[C]\cap f[C']$.
It follows that $ \interior (f[C]\cap f[C'])\ne\emptyset$, whence $ \interior (C\cap C')\ne\emptyset $, by irreducibility of $f$, and then $ \interior (C)\cap \interior (C')\ne\emptyset $, against $ \mathcal C $ being a regular quasi-partition.
\end{proof}

Recall that we have fixed a fine projective sequence of finite $ \mathcal L_R$-structures $(A_{n}, \fhi_{n}^m)$ with projective limit $ \mathbb A $ and with quotient map $p: \mathbb A \to \quot{ \mathbb A }{R^{ \mathbb A }} $.

\begin{lemma} \label{singletonsdenseiffirreducible}
The following are equivalent:
\begin{enumerate}
\item The set $M$ of points of $ \mathbb A $ whose $R^{ \mathbb A }$-equivalence class is a singleton is dense.
\item For each $n \in \N$ and $a \in A_{n}$ there are $m>n$ and $b \in A_{m}$ such that if $b' \binR^{A_m}b$ then $\fhi^{m}_{n}(b')=a$.
\item The quotient map $p: \mathbb A \to \quot{ \mathbb A }{R^{ \mathbb A }} $ is irreducible.
\end{enumerate}
\end{lemma}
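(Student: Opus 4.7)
The plan is to prove the cycle $(1) \Rightarrow (3) \Rightarrow (2) \Rightarrow (1)$, each of which is relatively short given the setup already developed.

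For $(1) \Rightarrow (3)$: this is immediate. Given a proper closed $K \subset \mathbb A$, the open set $\mathbb A \setminus K$ meets $M$ by density, so there is $x \notin K$ with $p^{-1}(p(x)) = \set{x}$. Then $p(x) \notin p[K]$, witnessing that $p[K] \ne \quot{\mathbb A}{R^{\mathbb A}}$.

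For $(3) \Rightarrow (2)$: Fix $n$ and $a \in A_n$. Set $U = \fhi_n^{-1}(a)$, a nonempty open set. By irreducibility, $p^{\#}(U) \ne \emptyset$, so choose $x \in \mathbb A$ with $p^{-1}(p(x)) \subseteq U$. I claim that for $m$ sufficiently large, $b := \fhi_m(x)$ works. If not, there is a subsequence $m_k \to \infty$ and $b'_k \in A_{m_k}$ with $b'_k \binR^{A_{m_k}} \fhi_{m_k}(x)$ but $\fhi_n^{m_k}(b'_k) \ne a$. Since $\fhi_{m_k}: \mathbb A \to A_{m_k}$ is an epimorphism, this edge lifts to an $R^{\mathbb A}$-edge $u_k \binR^{\mathbb A} v_k$ with $\fhi_{m_k}(u_k) = \fhi_{m_k}(x)$ and $\fhi_{m_k}(v_k) = b'_k$. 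By \Cref{generalprop}(1), $u_k \to x$; passing to a subsequence, $v_k \to v$. Closedness of $R^{\mathbb A}$ yields $x \binR^{\mathbb A} v$, so $v \in p^{-1}(p(x)) \subseteq U$ and hence $\fhi_n(v) = a$. But $\fhi_n(v_k) = \fhi_n^{m_k}(b'_k) \ne a$ for all $k$, contradicting continuity of $\fhi_n$ into the discrete set $A_n$.

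For $(2) \Rightarrow (1)$: Let $U \subseteq \mathbb A$ be a nonempty open set. By \Cref{generalprop}(1), pick $n_0$ and $a_0 \in A_{n_0}$ with $\fhi_{n_0}^{-1}(a_0) \subseteq U$. Iteratively apply (2) to construct a chain $n_0 < n_1 < \cdots$ and $a_k \in A_{n_k}$ so that every $R^{A_{n_{k+1}}}$-neighbor of $a_{k+1}$ is sent to $a_k$ by $\fhi_{n_k}^{n_{k+1}}$ (using reflexivity, this forces $\fhi_{n_k}^{n_{k+1}}(a_{k+1}) = a_k$). Set $C_k = \fhi_{n_k}^{-1}(a_k)$: a nested sequence of nonempty clopen subsets of $U$ whose diameters tend to $0$ by \Cref{generalprop}(1), so $\bigcap_k C_k = \set{x}$ for some $x \in U$. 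If $x' \binR^{\mathbb A} x$, then for every $k$, $\fhi_{n_{k+1}}(x') \binR^{A_{n_{k+1}}} a_{k+1}$, whence $\fhi_{n_k}(x') = a_k$ by construction; thus $x' \in \bigcap_k C_k = \set{x}$, so $x \in M$.

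The main delicacy is in $(3) \Rightarrow (2)$, where the key is lifting an edge in $A_{m_k}$ to an edge in $\mathbb A$ with a prescribed endpoint only approximately, and then recovering the exact endpoint $x$ by closedness of $R^{\mathbb A}$ and the shrinking mesh of the fibers $\fhi_m^{-1}(\fhi_m(x))$.
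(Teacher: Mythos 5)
Your proof is correct and follows essentially the same route as the paper: the same cycle $(1)\Rightarrow(3)\Rightarrow(2)\Rightarrow(1)$, with arguments for $(1)\Rightarrow(3)$ and $(2)\Rightarrow(1)$ that are virtually identical to the paper's. The only variation is in $(3)\Rightarrow(2)$, where the paper works with the nonempty, open, $R^{\mathbb A}$-invariant set $p^{-1}(p^{\#}(\fhi_n^{-1}(a)))$ and picks a basic clopen fiber $\fhi_m^{-1}(b)$ inside it, while you fix a single point whose class is contained in $\fhi_n^{-1}(a)$ and run a sequential compactness argument using the shrinking mesh of the fibers and the closedness of $R^{\mathbb A}$; both are valid and yield the same conclusion.
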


\begin{proof}
$(1)\Rightarrow (3)$.
Let $K \subset \mathbb A $ be a proper closed subset.
Then there is $x \in M\setminus K$, so that $p(x)\notin p[K]$.
Thus $p$ is irreducible.

$(3)\Rightarrow (2)$.
Let $n \in \N$ and $a \in A_{n}$.
By irreducibility of $p$,
\[
O=p^{-1}(p^{\#}(\fhi_{n}^{-1}(a)))= \setnew{x\in \mathbb A }{[x]_{R^{ \mathbb A }}\subseteq \fhi_n^{-1}(a)}
\]
is an open, nonempty, and $R^{ \mathbb A }$-invariant set contained in $ \fhi_{n}^{-1}(a)$.
Let $m>n$ and $b \in A_{m}$ be such that $ \fhi_{m}^{-1}(b) \subseteq O$, which exist since such sets are a basis for the topology on $ \mathbb A $.
If $b' \binR^{A_m}b$, there are $x \in \fhi_{m}^{-1}(b), x' \in \fhi_{m}^{-1}(b')$ such that $x \binR^{ \mathbb A }x'$.
But $x \in \fhi_{m}^{-1}(b) \subseteq O$, which is $R^{ \mathbb A }$-invariant, so also $x' \in O$.
It follows that $ \fhi_{n}(x') = a$ and thus $\fhi^{m}_{n}(b') = a$, for $\fhi_{n}= \fhi^{m}_{n}\fhi_{m}$.

$(2)\Rightarrow (1)$.
Since $\setnew{\fhi_{n}^{-1}(a)}{n \in \N, a \in A_{n}}$ is a basis for the topology on $ \mathbb A $ it suffices to fix $n \in \N$ and $a \in A_{n}$ and prove that there is $x \in M$ with $\fhi_n(x) = a$.
We construct a sequence $n_i$ and elements $b_{i}\in A_{n_i}$ by induction.
Let $n_{0} = n$ and $b_{0} = a$.
Given $b_{i} \in A_{n_{i}}$, by hypothesis there are $m>n_{i}$ and $b \in A_{m}$ such that whenever $b' \binR^{A_m}b$ it follows that $\fhi^{m}_{n_{i}}(b')=b_{i}$.
Set $n_{i+1} = m$ and $b_{i+1} = b$.
Thus $\fhi^{n_{i+1}}_{n_{i}}(b_{i+1}) = b_{i}$ for each $i$, so there exists $x \in \mathbb A $ such that $\fhi_{n_{i}} (x) = b_{i}$, for each $i \in \N$.
In particular $\fhi_{n}(x) = a$.
Let $y \binR^{ \mathbb A }x$; if towards contradiction $y \neq x$ then there is $i \in \N$ such that $\fhi_{n_{i}}(y) \neq \fhi_{n_{i}}(x) = b_{i}$.
But $\fhi_{n_{i+1}}(y) \binR^{A_{n_{i+1}}}\fhi_{n_{i+1}}(x)= b_{i+1}$, so $ \fhi_{n_{i}}(y)= \fhi^{n_{i+1}}_{n_{i}} \fhi_{n_{i+1}}(y)= b_{i}$ by construction of $b_{i+1}$, a contradiction.
\end{proof}

If $ \fhi : \mathbb A \to A$ is an epimorphism onto a finite $ \mL_R$-structure $A$ and $a\in A$, we let
\[
\cappello{a}_{ \fhi }=p[ \fhi^{-1}(a)], \qquad \cappello{A}_{ \fhi }= \setnew{ \cappello{a}_{ \fhi }}{a\in A} .
\]

If the quotient map $p: \mathbb A \to \quot{ \mathbb A }{R^{ \mathbb A }}$ is irreducible, then $ \cappello {A}_{ \fhi }$ is a regular quasi-partition of $ \quot{ \mathbb A }{R^{ \mathbb A }} $ by \Cref{quasi-part}, and the function
\[
a \in A\mapsto \cappello{a}_{ \fhi }\in \cappello{A}_{ \fhi }
\]
is a bijection.

\begin{lemma} \label{clmboundary}
Suppose that the quotient map $p: \mathbb A \to \quot{ \mathbb A }{R^{ \mathbb A }}$ is irreducible.
For every $n \in \N$, $a\in A_{n}$,
\begin{multline*}
\partial ( \cappello{a}_{ \fhi_n}) = \setnew{x \in \cappello{a}_{\fhi_n}}{ \exists a'\ne a, a' \binR^{A_n}a, x \in \cappello{a'}_{\fhi_n}} = \\
=\setnew{x\in \cappello{a}_{\fhi_n}}{\exists a'\ne a, x\in \cappello{a'}_{\fhi_n}} .
\end{multline*}
Moreover, regardless of the irreducibility of $p$, if $p$ is at most $2$-to-$1$ then for each $x$ there are at most two $a\in A_n$ such that $x\in \cappello{a}_{ \fhi_n}$.
\end{lemma}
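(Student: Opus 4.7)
The plan is to label the three sets appearing in the two displayed equalities as
\[
E_0=\partial(\cappello{a}_{\fhi_n}), \quad E_1=\setnew{x\in \cappello{a}_{\fhi_n}}{\exists a'\ne a,\ a' \binR^{A_n}a,\ x\in \cappello{a'}_{\fhi_n}}, \quad E_2=\setnew{x\in \cappello{a}_{\fhi_n}}{\exists a'\ne a,\ x\in \cappello{a'}_{\fhi_n}},
\]
and prove the circular chain $E_1 \subseteq E_2 \subseteq E_0 \subseteq E_1$. The inclusion $E_1 \subseteq E_2$ is immediate.

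For $E_2 \subseteq E_0$, I would invoke the regular quasi-partition setup assembled just before the lemma: since $p$ is irreducible, \Cref{quasi-part} applied to the partition $\setnew{\fhi_n^{-1}(a)}{a\in A_n}$ of $\mathbb A$ makes $\cappello{A_n}_{\fhi_n}$ a regular quasi-partition of $\quot{\mathbb A}{R^{\mathbb A}}$. By the definition of regular quasi-partition, any two distinct members $\cappello{a}_{\fhi_n}, \cappello{a'}_{\fhi_n}$ meet only on their common boundary, so any $x$ witnessing $E_2$ automatically lies in $\partial(\cappello{a}_{\fhi_n})$.

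For $E_0 \subseteq E_1$, which is the one place a small argument is needed, I would argue as follows. Let $x \in \partial(\cappello{a}_{\fhi_n})$. Since $\cappello{a}_{\fhi_n}$ is closed and $x$ is not in its interior, pick a sequence $x_k \to x$ with $x_k \notin \cappello{a}_{\fhi_n}$. The collection $\cappello{A_n}_{\fhi_n}$ covers $\quot{\mathbb A}{R^{\mathbb A}}$ (as $\fhi_n$ and $p$ are surjective), and $A_n$ is finite, so after passing to a subsequence there is a fixed $a' \neq a$ with $x_k \in \cappello{a'}_{\fhi_n}$ for all $k$; closedness gives $x \in \cappello{a'}_{\fhi_n}$. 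To see $a' \binR^{A_n} a$, use $x \in \cappello{a}_{\fhi_n} \cap \cappello{a'}_{\fhi_n}$ to pick $y, y' \in p^{-1}(x)$ with $\fhi_n(y)=a$ and $\fhi_n(y')=a'$; these are $R^{\mathbb A}$-equivalent, and applying the epimorphism $\fhi_n$ transports this to $a \binR^{A_n} a'$.

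For the second assertion — valid without irreducibility — I would simply observe that if three pairwise distinct $a_1,a_2,a_3 \in A_n$ all satisfy $x \in \cappello{a_i}_{\fhi_n}$, then for each $i$ we can choose $y_i \in p^{-1}(x)$ with $\fhi_n(y_i)=a_i$; the $y_i$ are pairwise distinct (their $\fhi_n$-images differ) and all lie in $p^{-1}(x)$, contradicting the hypothesis that $p$ is at most $2$-to-$1$. I do not anticipate any genuine obstacle in the proof; the only subtlety is remembering to finite-pigeonhole the $a_k$ in the $E_0 \subseteq E_1$ step, which gives the crucial closedness-upgrade from ``$x_k \in \cappello{a'}_{\fhi_n}$ for all $k$'' to ``$x \in \cappello{a'}_{\fhi_n}$''.
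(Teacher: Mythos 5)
Your proof is correct and follows essentially the same route as the paper's: the inclusion $E_2\subseteq E_0$ via the regular quasi-partition property from \Cref{quasi-part}, the inclusion $E_0\subseteq E_1$ by lifting $x$ to points of $p^{-1}(x)$ in $\fhi_n^{-1}(a)$ and $\fhi_n^{-1}(a')$ and pushing the $R^{\mathbb A}$-relation down through the epimorphism $\fhi_n$, and the $2$-to-$1$ statement by counting distinct preimages. Your explicit finite-pigeonhole argument for finding the fixed $a'$ is just a spelled-out version of the paper's appeal to the closedness of the finitely many sets $\cappello{a'}_{\fhi_n}$.
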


\begin{proof}
Let $x\in\partial ( \cappello{a}_{ \fhi_n})$, so that $x = p(u)$ for some $u \in \fhi_{n}^{-1}(a)$.
As each $ \cappello{a'}_{ \fhi_n}$ is closed, this implies that there exists $a'\in A_{n}, a'\ne a$ such that $x \in \cappello{a'}_{ \fhi_n}$, so that there is $v\in \fhi_{n}^{-1}(a')$ with $u \binR^{ \mathbb A }v$; in turns, this entails that $a \binR^{A_n}a'$.

Let now $x\in \cappello{a}_{ \fhi_n}$, and assume that there exists $a'\in A_{n}$, with $a'\ne a, x\in \cappello{a'}_{ \fhi_n}$.
Since $ \cappello{a}_{ \fhi_n}\cap \cappello{a'}_{ \fhi_n}\subseteq\partial ( \cappello{a}_{ \fhi_n})\cap\partial ( \cappello{a'}_{ \fhi_n})$, it follows that $x\in\partial ( \cappello{a}_{ \fhi_n})$.

The last statement is a direct consequence of the definition of $ \cappello{a}_{ \fhi_n}$.
\end{proof}

\section{Finite Hasse forests} \label{secmain}

Henceforth fix $\mL_R= \set{R, \le}$, where $\le$ is a binary relation symbol.
A \emph{Hasse partial order} (\emph{HPO}) is a topological $ \mL_R$-structure $P$ such that
\begin{itemize}
\item $\le^P$ is a partial order, that is, it is reflexive, anti-symmetric and transitive;
\item $a \mathbin{R^P} b$ if and only if $a=b$ or $a, b$ are one the immediate $\le^P$-successor of the other, that is:
	\begin{itemize}
	\item $a \le^P b $ and whenever $a\le^P c \le^P b$ it holds that $c= a$ or $c=b$; or
	\item $b \le^P a $ and whenever $b \le^P c \le^P a$ it holds that $c= a$ or $c=b$.
	\end{itemize}
\end{itemize}
Indeed, if $P$ is a HPO, the relation $R^P$ is the Hasse diagram of $\le^P$.
Where clear we shall write $a\le b$ instead of $a\le^Pb$, and similarly for $a < b$ and $a\binR b$.
When $a\le b$ we also let $[a, b]=\{ c\in P\mid a\le c\le b\} $.
If $\le^P$ is total, we say that $P$ is a \emph{Hasse linear order} (or \emph{HLO}).

If $P, P'$ are HPOs we denote by $P\sqcup P'$ the HPO where the support and the interpretations of $\le $ and $R$ are the disjoint unions of the corresponding notions in $P, P'$.

\begin{definition}
A \emph{Hasse forest} (\emph{H-forest}) is a HPO whose Hasse diagram has no cycles, and we denote by $ \forests $ the family of all finite H-forests.
\end{definition}

\begin{definition}
For an HPO $P$, denote by $ \branches(P)$ the set of maximal chains of $P$ with respect to the partial order $\le^P$.
\end{definition}

Notice that if $P\in \forests $ and $B\in \branches(P)$ then $B$ is the unique maximal chain to which both $\min B$ and $\max B$ belong.
Indeed, if $B' \in \branches(P)$ is such that $\min B, \max B \in B'$ then $\min B' = \min B$ and $\max B' = \max B$ by the maximality of $B$, so if $B\neq B'$ there would be two $R^P$-paths joining $\min B$ and $\max B$.

In \cite{Bartos2015} it is shown\footnote{Albeit with a different language, it is easy to see that a continuous surjection is an epimorphism with one such language iff it is so with the other, thus ensuring that the limit is the same.} that the class of all finite H-forests with a minimum is a projective \F family whose limit's quotient with respect to $R$ is the Lelek fan.
In \cite{Basso} it is shown that the class of all finite HLOs is a projective \F family whose limit's quotient is the arc.
Here we prove that, though the family of all finite HPOs is not a projective \F family, the family of all finite H-forests is.

We begin by describing a smaller yet cofinal family which plays a central role in the rest of the paper.
\begin{definition}
	\label{def: pidiamond}
Let $\diamonds$ be the collection of all $P\in \forests $ whose maximal chains are pairwise disjoint.
In other words, the elements of $\diamonds$ are the finite disjoint unions of finite HLOs.
\end{definition}

Notice that if $P \in \diamonds$ and $Q \subseteq P$ is $\le^P$-convex --- that is, whenever $b, b' \in Q$ and $a \in P$ are such that $b \le^{P} a \le^{P} b'$, then $a \in Q$ --- then $Q$ with the induced $\mL_{R}$-structure is in $\diamonds$.

\begin{proposition} \label{propcof}
$\diamonds$ is cofinal in the family of all finite HPOs.
\end{proposition}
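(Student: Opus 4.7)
The plan is to exhibit, for any finite HPO $P$, an explicit element of $\diamonds$ together with an epimorphism onto $P$. The natural candidate is
\[
Q = \bigsqcup_{B \in \branches(P)} B,
\]
where each maximal chain $B$ of $P$ is taken with the $\mL_R$-structure it inherits from $P$, and the map $\fhi : Q \to P$ sends a copy of $b \in B$ to $b$. Each $B$ inherits a total order, so it is an HLO, and since the different summands in the disjoint union are $\le^Q$-incomparable, each $B$ remains a maximal chain of $Q$. Thus the maximal chains of $Q$ are pairwise disjoint, i.e.\ $Q \in \diamonds$.

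I then have to verify that $\fhi$ is an epimorphism. Continuity and surjectivity are trivial (every point of $P$ lies in some maximal chain). For the order relation, forward: if $a \le^P b$, then $\{a,b\}$ is a chain and extends to some $B \in \branches(P)$, so lifting $a,b$ to the copy of $B$ in $Q$ gives the required witnesses; backward: elements of distinct summands are $\le^Q$-incomparable, so any $\le^Q$-related pair lies in a single copy of some $B$, and there $\le^Q$ agrees with $\le^P$.

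The key step is the analogous verification for $R$, and here the main subtlety is the backward direction: if $(a,B) \binR^Q (a',B)$ with $a\neq a'$, one must argue that $a,a'$ are already $R^P$-related, not merely consecutive \emph{within} $B$. This is where maximality of $B$ is crucial: if there were some $c \in P$ with $a <^P c <^P a'$, then $B \cup \{c\}$ would still be a chain, contradicting $B \in \branches(P)$. Hence consecutive elements of $B$ are consecutive in $P$, so $a \binR^P a'$. For the forward direction of $R$, given $a \binR^P a'$ with $a\neq a'$, the two-element chain $\{a,a'\}$ extends to some $B \in \branches(P)$ and the same observation (immediate successors in $P$ remain immediate successors in $B$) gives the required $R^Q$-related lift.

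The only real obstacle is being precise about the equivalence between immediate successors in a maximal chain and in the ambient HPO; the rest is routine. No additional combinatorial work is needed, and the argument gives both an $\mL_R$-epimorphism and membership in $\diamonds$ simultaneously, finishing the proof.
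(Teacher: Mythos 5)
Your proposal is correct and follows essentially the same route as the paper: the paper also takes the disjoint union of the maximal chains of $P$ (each with its induced structure) and maps each copy isomorphically back onto the corresponding chain. The only difference is that you spell out the verification that consecutive elements of a maximal chain are $R^P$-related, which the paper leaves implicit.
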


\begin{proof}
Let $P$ be a finite HPO.
If $ \branches(P)=\{ B_1, \ldots , B_m\} $, let $P'=B'_1\sqcup\ldots\sqcup B'_m$ where every $B'_j$ is isomorphic to $B_j$ with the induced structure.
Then there is an epimorphism $\varphi: P' \to P$, given by letting $\varphi $ be an isomorphism from $B'_j$ onto $B_j$ for $1\le j\le m$.
\end{proof}

\begin{proposition}
The family of all finite HPOs is not a projective Fraïssé family.
\end{proposition}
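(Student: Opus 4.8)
The plan is to disprove the amalgamation property \ref{itm:AP}: since membership in a projective Fraïssé family is the conjunction of \ref{itm:JPP} and \ref{itm:AP}, a single failure of \ref{itm:AP} suffices. The joint projection property causes no trouble, as $\diamonds$ is cofinal (\Cref{propcof}) and finite disjoint unions of chains always admit a common refinement, so one reduces \ref{itm:JPP} for arbitrary HPOs to the corresponding property inside $\diamonds$. Concretely, I would fix a finite HPO $A$ whose Hasse diagram contains a cycle --- the precise feature separating general HPOs from the forests in $\forests$ --- together with two epimorphisms $\varphi_1\colon B\to A$ and $\varphi_2\colon C\to A$ refining $A$ in incompatible ways, and argue that no amalgam exists.

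The heart of the matter is that an HPO is more than a finite poset: the relation $R^{D}$ of any putative amalgam $D$ must be \emph{exactly} the covering relation of $\le^{D}$, and $\le^{D}$ must be antisymmetric. These two constraints are automatically compatible for forests but clash around a cycle, and this is what I would exploit. Being an epimorphism, $\psi_1\colon D\to B$ must realize every covering pair of $B$ as the image of a genuine covering pair of $D$ (the $R$-clause of the definition of epimorphism), whereas $\psi_2\colon D\to C$ forces, through the $\le$-clause, the existence of suitable witnessing comparabilities in $D$. The goal is to arrange $A,B,C$ so that one and the same pair of elements of $D$ is simultaneously compelled to be a covering pair --- to witness a cover of one factor --- and to admit a strictly intermediate element --- to witness a chain of the other factor. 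In an HPO these are mutually exclusive, since $R^{D}$ coincides with the covering relation of $\le^{D}$, and the resulting contradiction rules out every amalgam. The cyclic structure of $A$ is essential here: over a forest the analogous configuration can always be resolved by refining independent branches, which is exactly why $\forests$ does satisfy \ref{itm:AP} while the full class does not.

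The delicate point, and the step I expect to be the main obstacle, is that the epimorphism conditions are purely \emph{existential}: they demand only that each relation of the factor be the image of the corresponding relation of $D$, not that $\psi_1,\psi_2$ reflect the orders. An amalgam therefore enjoys considerable freedom --- it may collapse a cover (sending an edge of the Hasse diagram of $D$ to a loop) or place the required witnesses on unrelated elements --- so the contradiction cannot be forced on the canonical pullback alone. To close the argument one must remove this freedom by rigidifying the configuration, using the incomparabilities of $A$ and the compatibility constraint $\varphi_1\psi_1=\varphi_2\psi_2$ to pin down \emph{which} elements of $D$ must carry the conflicting cover and intermediate point. Establishing that \textbf{no} HPO amalgam survives, including degenerate and collapsing ones, rather than merely that the obvious candidate fails, is the crux of the proof.
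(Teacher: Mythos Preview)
Your plan is on the right track and is in fact the same strategy the paper uses: pick a small HPO whose Hasse diagram has a cycle, cover it by two HPOs that break the cycle in incompatible ways, and show that no amalgam exists. But what you have written is a plan, not a proof: you never actually exhibit $A,B,C,\varphi_1,\varphi_2$, and you explicitly leave the crucial step --- ruling out \emph{every} HPO amalgam, not just the obvious one --- as ``the main obstacle'' to be handled later. Until that obstacle is actually overcome, the argument is incomplete.

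The paper executes the plan with the smallest possible cycle: $S$ is the four-element ``diamond'' $a<b,c<d$ with $b,c$ incomparable, and $P,Q$ are obtained from $S$ by splitting one edge of the cycle into two disconnected halves (the $b$-edge in $P$, the $c$-edge in $Q$). The key simplification you are missing, and which dissolves the ``freedom of the amalgam'' problem you worry about, is that by cofinality of $\diamonds$ (\Cref{propcof}) it suffices to rule out amalgams $F\in\diamonds$, i.e.\ disjoint unions of chains. In such an $F$, any witness to $a_0<d_0$ in $P$ lies on a single chain $B$ with $i<i'$, $\theta(i)=a_0$, $\theta(i')=d_0$; since $P$ has no chain through $b_0$ from $a_0$ to $d_0$, one must have $\theta[B]=\{a_0,c_0,d_0\}$. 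Pick $j\in B$ with $\theta(j)=c_0$; the commuting square forces $\rho(j)\in\{c_1,c'_1\}$, but $\rho(i)=a_1$, $\rho(i')=d_1$, and neither $c_1$ nor $c'_1$ sits between $a_1$ and $d_1$ in $Q$. This is a clean order contradiction, and the reduction to $\diamonds$ is what makes it so: once the amalgam is a disjoint union of chains, there is no room for the ``collapsing or unrelated witnesses'' you were concerned about.
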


\begin{proof}
We show that the family of all finite HPOs lacks amalgamation.
Let
\begin{align*}
S= & \{ a, b, c, d\}, \\
P= & \{ a_0, b_0, b'_0, c_0, d_0\}, \\
Q= & \{ a_1, b_1, c_1, c'_1, d_1\},
\end{align*}
be ordered as follows (see \Cref{fig:SPQ}).
\begin{itemize}
\item For $S$: $a=\min S, d=\max S$, and $b, c$ are incomparable.
\item For $P$: $a_0<b_0, a_0<c_0<d_0, b'_0<d_0$, and no other order comparabilities hold, except for reflexivity and transitivity.
\item For $Q$: $a_1<b_1<d_1, a_1<c_1, c'_1<d_1$, and no other order comparabilities hold, except for reflexivity and transitivity.
\end{itemize}

\begin{figure}[ht]
\centering
    \begin{tabularx}{\textwidth}{*{3}{>{\centering\arraybackslash}X}}
\begin{tikzpicture}[
every edge/.style = {draw=black, thick},
 vrtx/.style args = {#1/#2}{%
      circle, draw, fill=black, inner sep=0pt,
      minimum size=2mm, label=#1:#2}
                    ]
\node (A) [vrtx=right/$a$]     at ( 0, 0) {};
\node (B) [vrtx=left/$b$]     at (-1, 1) {};
\node (C) [vrtx=right/$c$]    at ( 1, 1) {};
\node (D) [vrtx=right/$d$]    at ( 0,2) {};
\path   (A) edge (B)
        (A) edge (C)
        (B) edge (D)
        (C) edge (D);
\end{tikzpicture}
\caption*{$S$}
&
\begin{tikzpicture}[
every edge/.style = {draw=black, thick},
 vrtx/.style args = {#1/#2}{%
      circle, draw, fill=black, inner sep=0pt,
      minimum size=2mm, label=#1:#2}
                    ]
\node (A) [vrtx=right/$a_{0}$]     at ( 0, 0) {};
\node (B) [vrtx=left/$b_{0}$]     at (-0.72, 0.9) {};
\node (B1) [vrtx=left/$b'_{0}$]     at (-0.72, 1.6) {};
\node (C) [vrtx=right/$c_{0}$]    at ( 1, 1.25) {};
\node (D) [vrtx=right/$d_{0}$]    at ( 0,2.5) {};

\path   (A) edge (B)
        (A) edge (C)
        (B1) edge (D)
        (C) edge (D);
\end{tikzpicture}
\caption*{$P$}
&
\begin{tikzpicture}[
every edge/.style = {draw=black, thick},
 vrtx/.style args = {#1/#2}{%
      circle, draw, fill=black, inner sep=0pt,
      minimum size=2mm, label=#1:#2}
                    ]
\node (A) [vrtx=right/$a_1$]     at ( 0, 0) {};
\node (B) [vrtx=left/$b_1$]     at ( -1, 1.25) {};
\node (C) [vrtx=right/$c_1$]     at (0.72, 0.9) {};
\node (C1) [vrtx=right/$c'_1$]    at ( 0.72, 1.6) {};
\node (D) [vrtx=right/$d_1$]    at ( 0,2.5) {};

\path   (A) edge (B)
        (A) edge (C)
        (B) edge (D)
        (C1) edge (D);
\end{tikzpicture}
\caption*{$Q$}
    \end{tabularx}
\caption{}
\label{fig:SPQ}
\end{figure}

Define $\varphi :P\to S, \psi :Q\to S$ by letting:
\begin{align*}
 & \varphi (a_0)=\psi (a_1)=a, \\
 & \varphi (b_0)=\varphi (b'_0)=\psi (b_1)=b, \\
 & \varphi (c_0)=\psi (c_1)=\psi (c'_1)=c, \\
 & \varphi (d_0)=\psi (d_1)=d.
\end{align*}
Then $\varphi , \psi $ are epimorphisms.
To show that there is no amalgamation, by \Cref{propcof} it is enough to show that there is no $F \in\diamonds$ with epimorphisms $\theta: F\to P, \rho :F\to Q$ such that $\varphi\theta =\psi\rho $.
Otherwise, as $a_0<d_0$, there must be $B\in \branches(F) $ and $i, i'\in B$, with $i<i'$, such that $\theta (i)=a_0, \theta (i')=d_0$, so that $\theta [B]=\{ a_0, c_0, d_0\} $; moreover $\rho (i)=a_1, \rho (i')=d_1$.
If $ j \in B$ is such that $\theta ( j )=c_0$, then $i< j <i'$ and $\rho ( j )\in\{ c_1, c'_1\} $, since $\fhi \theta = \psi \rho$.
If $\rho ( j )=c_1$, this contradicts $ j \le i'$, as $\rho( j ) \not \leq \rho(i')$; similarly, if $\rho ( j )=c'_1$, this contradicts $i\le j $.
\end{proof}

Let us turn to the proof of the central result of the section.

\begin{theorem}\label{fraisseforests}
The family $\forests$ of all finite H-forests is a projective \F family.
\end{theorem}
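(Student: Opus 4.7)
The plan is to verify the joint projection property (JPP) and the amalgamation property (AP) for $\forests$; in both cases cofinality of $\diamonds$ (from \Cref{propcof}) will allow me to replace the source structures by disjoint unions of chains.

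For (JPP), given $P, Q \in \forests$, I would first apply cofinality to obtain $P', Q' \in \diamonds$ with epimorphisms onto $P, Q$, decompose $P' = \bigsqcup_{i} P'_{i}$ and $Q' = \bigsqcup_{j} Q'_{j}$, and set $F = \bigsqcup_{(i,j)} F_{ij}$ where each $F_{ij}$ is a finite HLO of size $\max(|P'_{i}|, |Q'_{j}|)$. Any order-preserving surjection between finite HLOs is an $\mathcal{L}_{R}$-epimorphism, so the natural maps $F \to P' \to P$ and $F \to Q' \to Q$ verify (JPP).

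The substantive work is (AP). Given $\varphi_{1}\colon B \to A$ and $\varphi_{2}\colon C \to A$, I reduce to $B, C \in \diamonds$, writing $B = \bigsqcup_{i} B_{i}$ and $C = \bigsqcup_{j} C_{j}$. The key preliminary observation is that $L_{i} := \varphi_{1}(B_{i})$ is a \emph{contiguous} sub-chain of some maximal chain $K_{i}$ of $A$: consecutive elements of $B_{i}$ are $R^{B}$-related, so their images are $R^{A}$-related (i.e.\ equal or Hasse-adjacent in $A$), and $L_{i}$ is traced by a monotone walk through adjacent elements of the Hasse diagram of $A$, which must lie inside a single maximal chain contiguously. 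Combined with the epimorphism property --- which forces, for every maximal chain $K$ of $A$, some $B_{i}$ with $L_{i} = K$ and some $C_{j}$ with $\varphi_{2}(C_{j}) = K$ --- this enables the following construction. For each $i$, pick $j(i)$ with $\varphi_{2}(C_{j(i)}) = K_{i}$. The set $C'_{j(i)} := C_{j(i)} \cap \varphi_{2}^{-1}(L_{i})$ is a contiguous sub-chain of $C_{j(i)}$, and I build $D_{i, j(i)}$ as a chain of length $\sum_{k \in L_{i}} \max(|\varphi_{1}^{-1}(k) \cap B_{i}|, |\varphi_{2}^{-1}(k) \cap C'_{j(i)}|)$ equipped with order-preserving surjections onto $B_{i}$ and $C'_{j(i)}$ that agree when composed with $\varphi_{1}$ and $\varphi_{2}$. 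Performing the dual construction for each $j$ and taking the disjoint union yields $D \in \diamonds \subseteq \forests$ with induced maps $\psi_{1}\colon D \to B$, $\psi_{2}\colon D \to C$ satisfying $\varphi_{1}\psi_{1} = \varphi_{2}\psi_{2}$. Each $B_{i}$ is surjected upon by $D_{i, j(i)}$ (and each $C_{j}$ by $D_{i(j), j}$), and the corresponding chains witness the $\le$- and $R$-relations on $B_{i}$ and $C_{j}$; hence $\psi_{1}, \psi_{2}$ are epimorphisms.

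The main technical point to control is the simultaneous requirement that $D$ lies in $\forests$ and that consecutive elements of each chain of $D$ are sent by both $\psi_{1}$ and $\psi_{2}$ to $R$-related elements of $B$ and $C$. Keeping $D \in \diamonds$ avoids any cycle in the Hasse diagram, while the contiguity of $L_{i}$ in $K_{i}$ is exactly what guarantees that $\varphi_{2}^{-1}(L_{i}) \cap C_{j(i)}$ is contiguous in $C_{j(i)}$, so that transitions between fibers in $D_{i, j(i)}$ do not skip intermediate elements of $C_{j(i)}$ and thus preserve $R^{C}$. This is also the obstruction that makes the naive pullback $B \times_{A} C$ unusable: a $2 \times 2$ fiber over $A$ already produces a cycle in its Hasse diagram, putting $B \times_{A} C$ outside $\forests$.
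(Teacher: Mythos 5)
Your proposal is correct and follows essentially the same route as the paper's proof: reduce to amalgamating chain-by-chain, pairing each maximal chain $C$ of one structure with a maximal chain of the other whose image under the epimorphism covers $\varphi[C]$ (and intersecting with the preimage of $\varphi[C]$, which is convex, hence again an HLO), amalgamating the resulting pairs of finite HLOs, taking the disjoint union in $\diamonds$, and verifying the epimorphism condition via the chain-covering criterion of \Cref{iffchain}. The only cosmetic differences are that you pre-reduce $B, C$ to $\diamonds$ by cofinality rather than working with the (possibly overlapping) maximal chains of general H-forests, and that you carry out the HLO amalgamation explicitly where the paper cites the amalgamation property for finite HLOs from an earlier work.
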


First, we note the following simple but useful observation.

\begin{lemma} \label{lemchain}
Let $P, P'\in \forests $, and let $\varphi :P\to P'$ be an epimorphism.
If  $B\in \branches(P)$, then there is $B'\in \branches(P')$ such that $\varphi [B]\subseteq B'$.
If $B'\in \branches(P')$, then there exists $B\in \branches(P)$ such that $\varphi [B]=B'$.
\end{lemma}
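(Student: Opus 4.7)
The first statement follows immediately from the epimorphism property $\leq^{P'} = \varphi \times \varphi[\leq^P]$, which forces $a \leq^P b \Rightarrow \varphi(a) \leq^{P'} \varphi(b)$: hence $\varphi[B]$ is a chain in $P'$, and every chain in a finite partial order extends to a maximal one.

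For the second statement, enumerate $B' = \{b'_0 <^{P'} \cdots <^{P'} b'_k\}$, so that $b'_0$ is minimal and $b'_k$ is maximal in $P'$ by maximality of $B'$. Since $b'_0 \leq^{P'} b'_k$, the epimorphism property produces $c_0, c_k \in P$ with $c_0 \leq^P c_k$, $\varphi(c_0) = b'_0$ and $\varphi(c_k) = b'_k$. The forest structure of $P$ then implies that the interval $[c_0, c_k]$ is a chain, say $c_0 = d_0 R^P d_1 R^P \cdots R^P d_m = c_k$ with $d_j <^P d_{j+1}$ for each $j$.

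Applying $\varphi$ to this chain produces a sequence in $P'$ whose consecutive terms are $R^{P'}$-related and non-decreasing in $\leq^{P'}$; at each step $\varphi(d_j)$ either stays fixed or advances to an immediate $\leq^{P'}$-successor. Every $\varphi(d_j)$ lies in the interval $[b'_0, b'_k]$ of $P'$, and since $P'$ is also a Hasse forest, this interval coincides with $B'$. Within $B'$ the only immediate $\leq^{P'}$-successor of $b'_i$ is $b'_{i+1}$, so the walk must pass through every $b'_i$ in order, yielding $\varphi[[c_0, c_k]] = B'$.

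To conclude, extend $[c_0, c_k]$ to a maximal chain $B$ of $P$ by adjoining immediate $\leq^P$-predecessors of $c_0$ and immediate $\leq^P$-successors of $c_k$. Minimality of $b'_0$ in $P'$ forces any predecessor of $c_0$ to map to $b'_0$, while maximality of $b'_k$ forces any successor of $c_k$ to map to $b'_k$, so $\varphi[B] = \varphi[[c_0, c_k]] = B'$. The delicate step is the third paragraph: showing that the monotone $R^{P'}$-walk arising from $[c_0, c_k]$ cannot skip elements of $B'$. This uses both that $P'$ is free of $R^{P'}$-cycles and that $R^{P'}$ is precisely the Hasse diagram of $\leq^{P'}$, so that a single $R^{P'}$-step can advance at most one rung along the chain $B'$.
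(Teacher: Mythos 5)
Your proposal is correct and follows essentially the same route as the paper: pick comparable preimages of $\min B'$ and $\max B'$, pass to a maximal chain of $P$ through them, and use the absence of cycles in the Hasse diagram of $P'$ to force the image onto $B'$. The only difference is presentational — you first work with the interval $[c_0,c_k]$ and make explicit the ``monotone $R$-walk cannot skip a rung'' argument that the paper compresses into the single sentence ``Since $P'$ is an H-forest and $\varphi$ respects $R$, it follows that $\varphi[B]=B'$,'' and you handle the adjoined predecessors and successors via the minimality of $b'_0$ and maximality of $b'_k$, where the paper instead observes directly that $\varphi(\min B)=\min B'$ and $\varphi(\max B)=\max B'$.
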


\begin{proof}
For the first statement, since $B\in \branches(P)$ and $\varphi $ is an epimorphism, then $\varphi [B]$ is a chain in $P'$, so $\varphi [B]$ is included in a maximal chain.

For the second assertion, fix $B'\in \branches(P')$.
Since $\min B'\le\max B'$ and $\varphi $ is an epimorphism, there are $a, b\in P$ such that $a\le b, \varphi (a)=\min B', \varphi (b)=\max B'$.
Let $B \in \branches (P)$ contain $a, b$. Since $\min B \leq a$ then $\fhi (\min B) \leq \min B'$, so $\fhi (\min B) = \min B'$; analogously, $\fhi(\max B) = \max B'$.
Since $P'$ is an H-forest and $ \fhi $ respects $R$, it follows that $\varphi[B]=B'$.
\end{proof}

We can also prove a sort of converse.
Given $ \mL_R$-structures $P, P'$ and a function $ \fhi :P\to P'$, we say that $ \fhi $ is $ \mL_R$-\emph{preserving} if $a\binR^Pb\Rightarrow \fhi(a)\binR^{P'} \fhi(b)$ and $a\leq^Pb\Rightarrow \fhi(a) \leq^{P'}\fhi(b)$, for every $a, b\in P$.

\begin{lemma} \label{iffchain}
Let $P, P'\in \forests$, and let $\varphi :P\to P'$ be an $ \mL_R$-preserving function.
If for each $B'\in \branches(P')$ there exists $B\in \branches(P)$ such that $\varphi [B]=B'$, then $\fhi$ is an epimorphism.
\end{lemma}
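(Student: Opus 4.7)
The plan is to verify the two clauses that, together with $\mL_R$-preservation, pin down an epimorphism: first that $\varphi$ is surjective, then that $\leq^{P'} \subseteq \varphi\times\varphi[\leq^P]$ and $R^{P'} \subseteq \varphi\times\varphi[R^P]$. Surjectivity is immediate: any $a'\in P'$ lies in some $B'\in \branches(P')$, and by hypothesis $B'=\varphi[B]$ for some $B\in \branches(P)$.

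For the order relation, suppose $a' \leq^{P'} b'$. Extend $\{a',b'\}$ to a maximal chain $B'\in \branches(P')$ and pick $B\in \branches(P)$ with $\varphi[B] = B'$. Choose $a,b\in B$ with $\varphi(a)=a',\varphi(b)=b'$. Since $B$ is a chain, $a$ and $b$ are comparable; using $\mL_R$-preservation, the alternative $b <^P a$ would force $b' \leq^{P'} a'$ and hence $a' = b'$, in which case reflexivity gives $a \leq a$. So we may take $a \leq^P b$, as required.

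The interesting case is the Hasse relation. Given $a' \binR^{P'} b'$ with $a'\neq b'$, say $a' <^{P'} b'$ with nothing strictly between. Fix a maximal chain $B'$ of $P'$ containing $a',b'$, and $B\in \branches(P)$ with $\varphi[B]=B'$. Consider $X=\varphi^{-1}(a')\cap B$ and $Y=\varphi^{-1}(b')\cap B$, both nonempty. By $\mL_R$-preservation, every element of $X$ lies strictly below every element of $Y$ in $B$ (otherwise $b' \leq^{P'} a'$). Let $x=\max X$ and $y=\min Y$. If some $z\in B$ satisfied $x <^P z <^P y$, then $a' \leq^{P'} \varphi(z) \leq^{P'} b'$ would force $\varphi(z)\in\{a',b'\}$, contradicting the choice of $x$ or $y$. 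So $x,y$ are consecutive in $B$, and by maximality of $B$ there is no element of $P$ strictly between them either --- otherwise we could enlarge $B$. Hence $x$ is an immediate $\leq^P$-predecessor of $y$, i.e.\ $x \binR^P y$, and $\varphi(x)=a',\varphi(y)=b'$.

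The only delicate point is this last argument, which relies on two separate consequences of maximality: consecutiveness of preimages inside $B$ (handled by the absence of intermediate elements between $a'$ and $b'$ in $B'$ together with $\mL_R$-preservation) and the upgrade from "consecutive in $B$" to "Hasse-adjacent in $P$" (handled by maximality of $B$ in $P$). Reflexivity of $R^P$ covers the excluded case $a'=b'$ via surjectivity.
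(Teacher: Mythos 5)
Your proof is correct and follows essentially the same route as the paper's: surjectivity from the chain-covering hypothesis, comparability of preimages along a single maximal chain for $\leq$, and taking $\max(B\cap\varphi^{-1}(a'))$ and $\min(B\cap\varphi^{-1}(b'))$ for the Hasse relation. You merely spell out the two maximality arguments (consecutiveness in $B$, then Hasse-adjacency in $P$) that the paper leaves implicit.
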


\begin{proof}
The function $\fhi$ is clearly surjective.
Let $a', b' \in P'$ be such that $a'\leq b'$ and let $B' \in \branches (P')$ with $a', b'\in B'$.
Let $B\in \branches(P)$ such that $\varphi [B]=B'$, then there are $a, b\in B$ such that $\fhi(a)=a', \fhi(b) =b'$ and $a \leq b$.
If $a' \binR b'$ with $a'<b'$, then $a, b$ can be chosen to be $R^P$-related by letting $a=\max (B \cap \fhi^{-1}(a'))$ and $b= \min (B \cap \fhi^{-1}(b'))$.
\end{proof}

\begin{proof}[Proof of \Cref{fraisseforests}]
Since for every $P \in \forests$ there is an epimorphism from $P$ to the H-forest consisting of a single point, it suffices to prove amalgamation. Let $P, Q, S \in \forests$ and epimorphisms $\varphi: P \to S$, $\psi: Q \to S$ be given.

For each $C \in \branches(P)$, by \Cref{lemchain} there is $D \in \branches(Q) $ such that $\psi[D] \supseteq \fhi[C]$.
Let $C' = \psi^{-1}(\fhi[C])\cap D$.
Since $C, \fhi[C], C'$ with the inherited relations are finite HLOs and $ \fhi \restr{C} , \psi\restr{C'} $ are, in particular, epimorphisms onto $ \fhi [C]$, by \ref{itm:AP} for HLOs \cite{Basso}*{Lemma 10} there exist a finite HLO $E_{C}$ and epimorphisms $\fhi'_{C}: E_{C} \to C$, $\psi'_{C} : E_{C} \to C'$ such that $ \fhi \restr{C} \fhi'_{C} = \psi \restr{C'} \psi'_{C}$.

Analogously, for each $C \in \branches(Q)$ there exists $D \in \branches (P)$ such that $\fhi[D] \supseteq \psi[C]$.
As above there exist a finite HLO $E_{C}$ and epimorphisms $\fhi'_{C} : E_{C} \to C'=\fhi^{-1}(\psi[C]) \cap D$ and $\psi'_{C}: E_{C} \to C$ such that $ \fhi \restr{C'} \fhi'_{C} = \psi \restr{C} \psi'_{C}$.

Define the $\mL_{R}$-structure:
\[
T = \bigsqcup\setnew*{E_{C}}{C \in \branches(P) \sqcup \branches(Q)} \in\diamonds,
\]
and $\fhi': T \to P, \psi': T \to Q$, where, for $x \in E_{C}$, $\fhi'(x) = \fhi'_{C}(x)$ and $\psi'(x) = \psi'_{C}(x)$.
By construction $\fhi \fhi' = \psi\psi' $.
Since $\fhi'_{C}, \psi'_{C}$ are epimorphisms then $\fhi', \psi'$ are $ \mL_R$-preserving.
Let $C \in \branches(P)$, then $\fhi'[E_{C}] = \fhi'_{C}[E_{C}] = C$.
Analogously if $C \in \branches(Q)$, then $\psi'[E_{C}] = \psi'_{C}[E_{C}] = C$.
By \Cref{iffchain}, $\fhi', \psi'$ are thus epimorphisms.
\end{proof}

By \Cref{fraisseforests}, \Cref{propcof} and \Cref{pFfpFlfs} it follows that:

\begin{corollary} \label{pfforests}
$\diamonds$ is a projective \F family with the same projective \F limit as $ \forests $.
\end{corollary}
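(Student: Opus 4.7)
The plan is that this corollary is a direct assembly of three earlier results, with no real mathematical content to add. First I would invoke \Cref{fraisseforests} to know that $\forests$ is a projective Fraïssé family. Next I would observe that, since $\diamonds\subseteq\forests\subseteq$ \{finite HPOs\}, the cofinality of $\diamonds$ in the full family of finite HPOs established in \Cref{propcof} a fortiori yields cofinality of $\diamonds$ in $\forests$: given $P\in\forests$, the epimorphism $\varphi\colon P'\to P$ from a $P'\in\diamonds$ produced in the proof of \Cref{propcof} still does the job here. Finally I would apply the second half of \Cref{pFfpFlfs}, which says precisely that if $\mathcal G$ is a projective Fraïssé family of finite $\mathcal L$-structures and $\mathcal G_0\subseteq\mathcal G$ is cofinal in $\mathcal G$, then $\mathcal G_0$ is itself a projective Fraïssé family and has the same projective Fraïssé limit as $\mathcal G$. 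Instantiating this with $\mathcal G=\forests$ and $\mathcal G_0=\diamonds$ gives both claims of the corollary simultaneously.

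There is no real obstacle — the only mildly non-automatic step is the cofinality check, and that reduces to noting that the witnesses in \Cref{propcof} already land in $\diamonds$ and are epimorphisms in $\forests$. No separate verification of \ref{itm:JPP} or \ref{itm:AP} for $\diamonds$ is required, since \Cref{pFfpFlfs} does that bookkeeping for us.
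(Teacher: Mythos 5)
Your proposal is correct and matches the paper's own derivation exactly: the corollary is stated as an immediate consequence of \Cref{fraisseforests}, \Cref{propcof}, and \Cref{pFfpFlfs}, with the cofinality of $\diamonds$ in $\forests$ following from its cofinality in all finite HPOs. Nothing further is needed.
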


\subsection{Projective limits of sequences in \texorpdfstring{$\diamonds$}{F0} }
In the next section we determine the spaces which are approximable by fine projective sequences from $\diamonds$.
For this, we establish some properties of projective sequences in $\diamonds$ and their limits which are of use later.
For the remainder of the section let $(P_n, \fhi_n^m)$ be a fine projective sequence in $\diamonds$ with projective limit $ \mathbb P $, and $p: \mathbb P \to \quot{ \mathbb P }{R^{ \mathbb P }} $ be the quotient map.
Notice that $\le^{ \mathbb P }$ is an order relation.

\begin{lemma} \label{connectedintervals}
Let $u, v\in \mathbb P $ with  $u\le v$.
Then $[u, v]$ is $R$-connected.
\end{lemma}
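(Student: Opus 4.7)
The plan is to realize $[u,v]$ as the limit in $\Exp(\mathbb{P})$ of preimages of $R$-connected subsets of the $P_n$, and then invoke \Cref{limitconnected}.

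For each $n$, set $B_n = \setnew{a \in P_n}{\fhi_n(u) \le^{P_n} a \le^{P_n} \fhi_n(v)}$. Because epimorphisms preserve $\le$, the elements $\fhi_n(u)$ and $\fhi_n(v)$ are comparable in $P_n$, and since $P_n \in \diamonds$ they lie in a common maximal chain; thus $B_n$ is an interval inside a single HLO-component, namely a finite sequence $\fhi_n(u) = a_0 <^{P_n} a_1 <^{P_n} \ldots <^{P_n} a_k = \fhi_n(v)$ of consecutive elements whose successive pairs are $R^{P_n}$-related by the very definition of the Hasse diagram of an HPO. Hence $B_n$ is $R$-connected.

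Next I would check that the sequence $\left(\fhi_n^{-1}(B_n)\right)_{n \in \N}$ is decreasing---given $w \in \fhi_{n+1}^{-1}(B_{n+1})$, applying the order-preserving $\fhi_n^{n+1}$ to $\fhi_{n+1}(u) \le \fhi_{n+1}(w) \le \fhi_{n+1}(v)$ yields $w \in \fhi_n^{-1}(B_n)$---and that $\bigcap_n \fhi_n^{-1}(B_n) = [u,v]$, which is immediate from the definition of $\le^{\mathbb{P}}$ on the projective limit (if $w$ lies in every $\fhi_n^{-1}(B_n)$ then $\fhi_n(u) \le^{P_n} \fhi_n(w) \le^{P_n} \fhi_n(v)$ for all $n$, which is exactly $u \le^{\mathbb{P}} w \le^{\mathbb{P}} v$). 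Since a decreasing sequence of closed subsets of a compact metric space converges in the Vietoris topology to its intersection, $\fhi_n^{-1}(B_n)$ converges to $[u,v]$ in $\Exp(\mathbb{P})$, and \Cref{limitconnected} then gives the desired $R$-connectedness.

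I do not foresee any real obstacle here: the only points needing attention are that in a $\diamonds$-structure the interval between two comparable elements stays inside a single HLO-component, and that consecutive elements in such a chain are connected in the Hasse diagram---both are immediate from the definitions. The rest is a direct application of \Cref{limitconnected}.
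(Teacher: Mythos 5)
Your proof is correct and follows exactly the paper's argument: realize $[u,v]$ as the decreasing limit in $\Exp(\mathbb P)$ of the sets $\fhi_n^{-1}([\fhi_n(u),\fhi_n(v)])$ and apply \Cref{limitconnected}. The only difference is that you spell out the routine verifications (that each $[\fhi_n(u),\fhi_n(v)]$ is an $R$-connected chain and that the intersection equals $[u,v]$) which the paper leaves implicit.
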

\begin{proof}
First notice that the sequence $ \fhi_n^{-1}([ \fhi_n(u), \fhi_n(v)])$ converges in $ \Exp ( \mathbb P )$ to $[u, v]$, since $\forall n\in \N $ $ \fhi_{n+1}^{-1}([ \fhi_{n+1}(u), \fhi_{n+1}(v)])\subseteq \fhi_n^{-1}([ \fhi_n(u), \fhi_n(v)])$ and
\[
\bigcap_{n\in \N } \fhi_n^{-1}([ \fhi_n(u), \fhi_n(v)])=[u, v].
\]
By \Cref{limitconnected} it is now enough to observe that every $[ \fhi_n(u), \fhi_n(v)]$ is $R$-connected.
\end{proof}

\begin{lemma} \label{fineclassesconvex}
The $R^{ \mathbb P }$-equivalence classes contain at most two elements; moreover, each class is totally ordered and convex with respect to $\le^{ \mathbb P }$.
\end{lemma}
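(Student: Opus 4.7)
The strategy is to exploit the simple local structure of the sequence: each $P_n \in \diamonds$ is a disjoint union of finite HLOs, so $R^{P_n}$ relates two distinct points exactly when they are immediate neighbours within a common chain; in particular any two distinct $R^{P_n}$-related points are $\le^{P_n}$-comparable. Combined with the facts that every $\fhi_n^m$ is an epimorphism (hence preserves both $R$ and $\le$) and that the relations on $\mathbb P$ are defined coordinatewise, this forces strong constraints on each $R^{\mathbb P}$-class.

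For the total-ordering claim, fix $u, v \in \mathbb P$ with $u \binR^{\mathbb P} v$ and $u \ne v$, and pick $n_0$ with $\fhi_{n_0}(u) \ne \fhi_{n_0}(v)$; the images are $R^{P_{n_0}}$-related and distinct, hence comparable, say $\fhi_{n_0}(u) < \fhi_{n_0}(v)$. I claim $\fhi_n(u) \le \fhi_n(v)$ at \emph{every} level $n$, which yields $u \le^{\mathbb P} v$. For $n < n_0$ this is immediate after applying the $\le$-preserving $\fhi_n^{n_0}$ to $\fhi_{n_0}(u) < \fhi_{n_0}(v)$. For $n > n_0$, if $\fhi_n(u) \ne \fhi_n(v)$ the images are still $R^{P_n}$-related hence comparable, and the reverse inequality $\fhi_n(v) < \fhi_n(u)$ would project to $\fhi_{n_0}(v) \le \fhi_{n_0}(u)$ via $\fhi_{n_0}^n$, contradicting $\fhi_{n_0}(u) < \fhi_{n_0}(v)$ and antisymmetry.

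For the cardinality bound and the convexity claim, suppose we are given three distinct points $u_1, u_2, u_3 \in \mathbb P$ which either all lie in a single $R^{\mathbb P}$-class, or satisfy $u_1 \binR^{\mathbb P} u_3$ with $u_1 \le^{\mathbb P} u_2 \le^{\mathbb P} u_3$. In the first case the total-ordering step lets us rearrange so that $u_1 < u_2 < u_3$; in the second case we already have that chain. Pick $n$ large enough to separate all three images. Inside $P_n$ we then have $\fhi_n(u_1) < \fhi_n(u_2) < \fhi_n(u_3)$ and $\fhi_n(u_1) \binR^{P_n} \fhi_n(u_3)$ (since $R$ is preserved forward); but in a finite HLO an $R$-related pair of distinct points must be immediate neighbours, contradicting the fact that $\fhi_n(u_2)$ lies strictly between them. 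This single contradiction closes both assertions at once.

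The only delicate point is the consistency-of-direction argument in the first paragraph --- namely, once $\fhi_{n_0}(u) < \fhi_{n_0}(v)$ is established at some level, ruling out the reverse order at a later level --- which is handled by the projection-plus-antisymmetry trick above. Everything else reduces to separating finitely many points at a high enough stage and invoking the elementary fact that $R$-related distinct points in a finite HLO are neighbours in the chain.
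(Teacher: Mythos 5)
Your proof is correct and follows essentially the same route as the paper: project the finitely many relevant points to a level $n$ where they separate, and use that in a structure from $\diamonds$ distinct $R$-related points are immediate neighbours in a chain. The only cosmetic difference is that you derive the two-element bound from the betweenness contradiction (via the total-ordering step), whereas the paper rules out three distinct pairwise $R^{P_n}$-related points directly; both are valid, and your more explicit treatment of why the order direction is consistent across levels fills in a step the paper leaves terse.
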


\begin{proof}
Let $u, v, w\in \mathbb P $ be $R^{ \mathbb P }$-related elements.
If $u, v, w$ were all distinct, there would exist $n\in \N $ such that $ \fhi_n(u), \fhi_n(v), \fhi_n(w)$ are all distinct and pairwise $R^{P_n}$-related, which is impossible, since $P_{n} \in \diamonds$.

If $u\binR v$, then $ \fhi_n(u)\binR \fhi_n(v)$ for every $n$; in particular, $ \fhi_n(u), \fhi_n(v)$ are $\le^{P_n}$ comparable for every $n$.
It follows that either $\forall n\in \N \ \fhi_n(u)\le \fhi_n(v)$ or $\forall n\in \N \ \fhi_n(v)\le \fhi_n(u)$, whence either $u\le v$ or $v\le u$.

Finally, if $u\binR v$ but $u<w<v$ for some $u, v, w\in \mathbb P $, let $n\in \N $ be such that $ \fhi_n(u), \fhi_n(v), \fhi_n(w)$ are distinct.
Then both $ \fhi_n(u)\binR  \fhi_n(v)$ and $ \fhi_n(u)< \fhi_n(w)< \fhi_n(v)$, which is a contradiction.
\end{proof}

\begin{lemma}\label{alleqclassless}
If $u, v \in \mathbb P$ are not $R^{ \mathbb P }$-related and $u \le v$ holds, then whenever $u' \binR  u, v' \binR  v$, the relation $u' \le v'$ holds.
\end{lemma}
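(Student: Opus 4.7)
The plan is to pass to a sufficiently high level of the projective sequence, where the finiteness of $P_n$ and the fact that $P_n\in\diamonds$ makes it a disjoint union of chains force the inequality, and then transfer the conclusion back down using that epimorphisms preserve $\le$.

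First I would locate a level $N$ at which $\fhi_N(u)$ and $\fhi_N(v)$ are at $R^{P_N}$-distance at least $3$. Because $u$ and $v$ are not $R^{\mathbb P}$-related, there is $n_0$ with $d_{R^{P_{n_0}}}(\fhi_{n_0}(u),\fhi_{n_0}(v))\ge 2$; if that distance equals $2$, \Cref{lemmafour} (applied to our fine sequence) yields $N>n_0$ at which the corresponding distance is at least $3$, and if it was already at least $3$ I just take $N=n_0$. Since epimorphisms between reflexive symmetric graphs are non-expanding with respect to $d_R$, the bound $d_{R^{P_n}}(\fhi_n(u),\fhi_n(v))\ge 3$ then persists for every $n\ge N$.

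Next, for $n\ge N$ set $a=\fhi_n(u)$, $b=\fhi_n(v)$, $a'=\fhi_n(u')$, $b'=\fhi_n(v')$. As $a\le b$ in $P_n$ and $P_n$ is a disjoint union of finite HLOs, $a$ and $b$ lie in a common chain $B\in\branches(P_n)$. Within such a chain, $R^{P_n}$ is exactly the adjacency relation of the linear order, so $a'\in B$ is either $a$ itself or one of its two chain-neighbors, and similarly $b'\in B$ is one of $\{b^-,b,b^+\}$. Because $a$ and $b$ are at least three chain-steps apart, the slot $a'$ can occupy sits strictly below the slot $b'$ can occupy, which forces $a'\le b'$ in $P_n$. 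Hence $\fhi_n(u')\le^{P_n}\fhi_n(v')$ for every $n\ge N$, and applying the epimorphism $\fhi_n^N$ (which preserves $\le$) then gives the same inequality for $n<N$. Therefore $u'\le^{\mathbb P} v'$.

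The main obstacle, and the reason the hypothesis that the sequence is fine is needed, is precisely the distance-$3$ step: a distance of $2$ between $\fhi_n(u)$ and $\fhi_n(v)$ would leave room for $a'$ and $b'$ to coincide, or even to be ordered the wrong way, so without \Cref{lemmafour} the chain argument would fail. Everything else is a routine translation between $\mathbb P$ and the projections $P_n$.
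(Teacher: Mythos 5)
Your proof is correct and follows essentially the same route as the paper's: project to a finite level, use that $P_n\in\diamonds$ is a disjoint union of chains so that $R^{P_n}$-neighbours are immediate order-neighbours, and transfer the inequality back through the $\fhi_n^m$. The one difference is your detour through \Cref{lemmafour} to boost the distance to $3$, and this step is not needed: the paper only uses that for $n$ large enough $\fhi_n(u),\fhi_n(v)$ are distinct and not $R^{P_n}$-related (distance $\ge 2$), which follows directly from $\neg(u\binR^{\mathbb P}v)$ without invoking fineness. Indeed, your worry in the last paragraph is unfounded: if $a<b$ lie in a common chain at distance exactly $2$, say $a\binR c\binR b$, then $a'\le a^{+}=c=b^{-}\le b'$ for any $R$-neighbours $a'$ of $a$ and $b'$ of $b$, so $a'\le b'$ always holds --- coincidence $a'=b'$ is harmless and the ``wrong order'' cannot occur. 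So the lemma does not actually depend on fineness in the way you suggest; your argument is valid but proves the statement under a hypothesis you did not need to secure.
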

\begin{proof}
For $n \in \N$ big enough, $\fhi_{n}(u), \fhi_{n}(v)$ are distinct and not $R^{P_n}$-related.
Since $\fhi_{n}(u) \le \fhi_{n}(v)$, $P_{n} \in \diamonds$, and $R^{P_n}$-related distinct elements are one the immediate $\le^{P_n}$-successor of the other and viceversa, it follows that $\fhi_{n}(u') \le \fhi_{n}(v')$.
This inequality holding eventually, the relation $u'\le v'$ is established.
\end{proof}

\begin{corollary} \label{corollarytwentyfour}
The relation $\le^{\mathbb P/R^{\mathbb P}} =p\times p[\le^{ \mathbb P }]$ on $ \quot{ \mathbb P }{R^{ \mathbb P }} $ defined by letting $x\le^{\mathbb P/R^{\mathbb P}} y$ if there are $u\in p^{-1}(x), v\in p^{-1}(y)$ with $u\le v$, is a closed order relation.
\end{corollary}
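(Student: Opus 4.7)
The plan is to check reflexivity, antisymmetry and transitivity of $\le^{\mathbb P/R^{\mathbb P}}$ in turn, and then handle closedness separately. Reflexivity is trivial by taking $u=v$, and closedness is essentially for free: the interpretation $\le^{\mathbb P}$ is closed in the compact space $\mathbb P\times\mathbb P$, so its image under the continuous map $p\times p$ is compact and thus closed, and by the very definition of $\le^{\mathbb P/R^{\mathbb P}}$ this image is exactly the relation in question.

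The two nontrivial items are where \Cref{alleqclassless} does the real work. For antisymmetry I would suppose $x\le^{\mathbb P/R^{\mathbb P}} y$, $y\le^{\mathbb P/R^{\mathbb P}} x$ and $x\ne y$, aiming at a contradiction. Choose witnesses $u_{1}\le^{\mathbb P} v_{1}$ and $v_{2}\le^{\mathbb P} u_{2}$ with $p(u_{i})=x$, $p(v_{i})=y$; since $x\ne y$, $u_{1}$ and $v_{1}$ are not $R^{\mathbb P}$-related, while $u_{1}\binR u_{2}$ and $v_{1}\binR v_{2}$, so \Cref{alleqclassless} promotes $u_{1}\le^{\mathbb P} v_{1}$ to $u_{2}\le^{\mathbb P} v_{2}$; together with $v_{2}\le^{\mathbb P} u_{2}$ and antisymmetry of $\le^{\mathbb P}$ this forces $u_{2}=v_{2}$, contradicting $x\ne y$.

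For transitivity I would proceed similarly. Given $x\le^{\mathbb P/R^{\mathbb P}} y\le^{\mathbb P/R^{\mathbb P}} z$ with the three points pairwise distinct, pick witnesses $u_{1}\le^{\mathbb P} v_{1}$ and $v_{2}\le^{\mathbb P} w_{2}$ with $p(v_{1})=p(v_{2})=y$, so $v_{1}\binR v_{2}$. As $u_{1}$ and $v_{1}$ lie in different $R^{\mathbb P}$-classes, \Cref{alleqclassless} upgrades $u_{1}\le^{\mathbb P} v_{1}$ to $u_{1}\le^{\mathbb P} v_{2}$, and then transitivity of $\le^{\mathbb P}$ yields $u_{1}\le^{\mathbb P} w_{2}$, witnessing $x\le^{\mathbb P/R^{\mathbb P}} z$. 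I do not foresee any real obstacle: \Cref{fineclassesconvex,alleqclassless} already encode the rigidity of $R^{\mathbb P}$-classes under $\le^{\mathbb P}$ that the corollary merely reads off at the quotient level.
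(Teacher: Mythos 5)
Your proof is correct and follows essentially the same route as the paper's: reflexivity is immediate, transitivity is obtained exactly as in the paper by applying \Cref{alleqclassless} to move the inequality across the $R^{\mathbb P}$-class of the middle point, and closedness comes from taking the image of the closed relation $\le^{\mathbb P}$ under $p\times p$. The only cosmetic difference is in antisymmetry, where you invoke \Cref{alleqclassless} together with antisymmetry of $\le^{\mathbb P}$ while the paper appeals to \Cref{fineclassesconvex}; both are valid one-line arguments.
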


\begin{proof}
That $\le^{ \mathbb P /R^{ \mathbb P }}$ is closed is observed at the beginning of \Cref{fineprojectivesequences}.
Moreover:
\begin{itemize}
\item
$\le^{\mathbb P/R^{\mathbb P}} $ is reflexive by the reflexivity of $\le^{ \mathbb P }$.
\item
If $x\le^{\mathbb P/R^{\mathbb P}} y\le^{\mathbb P/R^{\mathbb P}} z$ with $x\ne y\ne z$, let
\begin{align*}
 & u\in p^{-1}(x), \\
 & v, v'\in p^{-1}(y), \\
 & w\in p^{-1}(z),
\end{align*}
with $u\le v \binR v'\le w$; by \Cref{alleqclassless} it follows that $u\le v'$, so that $u\le w$ and finally $x\le^{\mathbb P/R^{\mathbb P}} z$.
\item
If $x\le^{\mathbb P/R^{\mathbb P}} y\le^{\mathbb P/R^{\mathbb P}} x$, there are
\begin{align*}
 & u, u'\in p^{-1}(x), \\
 & v, v'\in p^{-1}(y),
\end{align*}
with $u\le v \binR v'\le u'$; by \Cref{fineclassesconvex} it follows that $u \binR v$, and finally $x=y$.
\end{itemize}
\end{proof}

\begin{lemma}
	\label{branchesareclopen}
If $B \in \branches(P_{n})$ then $\bigcup_{a\in B} \cappello{a}_{ \fhi_n}$ is a clopen subset of $\quot{ \mathbb P }{R^{ \mathbb P }} $.
\end{lemma}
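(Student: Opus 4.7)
The plan is to show that $p^{-1}\bigl(\bigcup_{a \in B} \cappello{a}_{\fhi_n}\bigr)$ is clopen in $\mathbb{P}$, from which the conclusion follows because $p$ is a quotient map of a compact space. Rewriting the union, we have
\[
\bigcup_{a \in B} \cappello{a}_{\fhi_n} \;=\; \bigcup_{a \in B} p[\fhi_n^{-1}(a)] \;=\; p\bigl[\fhi_n^{-1}(B)\bigr],
\]
so $p^{-1}\bigl(\bigcup_{a \in B} \cappello{a}_{\fhi_n}\bigr)$ is precisely the $R^{\mathbb{P}}$-saturation of $\fhi_n^{-1}(B)$.

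The heart of the argument is then the observation that, because $P_n \in \diamonds$, the branch $B$ is already $R^{P_n}$-saturated in $P_n$. Indeed, distinct maximal chains in $P_n$ are pairwise disjoint by the definition of $\diamonds$, and any two distinct $R^{P_n}$-related elements are comparable (one is the immediate $\le^{P_n}$-successor of the other), hence lie in a common chain and therefore in a common maximal chain. So if $a \in B$ and $c \binR^{P_n} a$, then $c$ and $a$ share a maximal chain, which by disjointness of branches forces $c \in B$.

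Lifting this to $\mathbb{P}$ is immediate: if $x \in \fhi_n^{-1}(B)$ and $y \binR^{\mathbb{P}} x$, then $\fhi_n(y) \binR^{P_n} \fhi_n(x) \in B$, and by the previous paragraph $\fhi_n(y) \in B$, so $y \in \fhi_n^{-1}(B)$. Therefore the $R^{\mathbb{P}}$-saturation of $\fhi_n^{-1}(B)$ equals $\fhi_n^{-1}(B)$ itself. Since $B$ is a finite subset of the discrete space $P_n$ and $\fhi_n$ is continuous, $\fhi_n^{-1}(B)$ is clopen, completing the proof. There is no real obstacle here; the only substantive point is the saturation property of branches in $\diamonds$, which follows directly from \Cref{def: pidiamond}.
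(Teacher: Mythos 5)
Your proof is correct and follows essentially the same route as the paper: both arguments reduce to the observation that $B$, being a maximal chain of a structure in $\diamonds$, is $R^{P_n}$-saturated, so that $\fhi_n^{-1}(B)$ is a clopen $R^{\mathbb P}$-invariant set whose image under $p$ is therefore clopen. The only cosmetic difference is that you phrase the last step via the quotient topology (the preimage equals the set itself, hence is clopen), while the paper phrases it via openness of images of saturated open sets under the closed map $p$.
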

\begin{proof}
Since for each $a \in B$ the set $\fhi_{n}^{-1}(a)$ is clopen, it follows that $\bigcup_{a\in B} \fhi_{n}^{-1}(a)$ is clopen.
Let $u, v \in \mathbb P$ be such that $u \in \bigcup_{a\in B} \fhi_{n}^{-1}(a)$ and $u \binR^{\mathbb P} v$.
Then $\fhi_{n}(u) \binR^{P_{n}} \fhi_{n}(v)$, so $\fhi_{n}(v) \in B$, that is, $v \in  \bigcup_{a\in B} \fhi_{n}^{-1}(a)$.
It follows that $ \bigcup_{a\in B} \fhi_{n}^{-1}(a)$ is $R^{ \mathbb P }$-invariant, so $\bigcup_{a\in B} \cappello{a}_{ \fhi_n}= p [\bigcup_{a\in B} \fhi_{n}^{-1}(a)]$ is open, thus clopen.
\end{proof}

A converse of the above also holds.

\begin{lemma}
	\label{clopensareunionofbranches}
Let $C$ be a clopen subset of $\quot{ \mathbb P }{R^{ \mathbb P }}$.
There is $n \in \N$ such that for all $m \ge n$, there is $S \subseteq \branches(P_{m})$ for which $C = \bigcup_{a\in\bigcup S} \cappello{a}_{ \fhi_m}$.
\end{lemma}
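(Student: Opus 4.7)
The plan is to exploit the mesh-shrinking property of \Cref{generalprop}(2) and show that, for sufficiently fine levels $m$, every ``cell'' $\cappello{a}_{\fhi_m}$ is either inside $C$ or inside its complement, and moreover the set of cells contained in $C$ is automatically closed under the adjacency relation $R^{P_m}$; since $P_m \in \diamonds$ has pairwise disjoint maximal chains, such a set of cells is then a union of branches.

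In detail, fix a compatible metric on the compact metrizable quotient $\quot{\mathbb P}{R^{\mathbb P}}$. Since $C$ and its complement are both closed and disjoint, there is $\delta > 0$ with $d(C, \quot{\mathbb P}{R^{\mathbb P}} \setminus C) > \delta$, so that any subset of diameter smaller than $\delta$ is contained in one of them. Using \Cref{generalprop}(2), pick $n \in \N$ so that for every $m \geq n$ and every $a \in P_m$, $\diam(\cappello{a}_{\fhi_m}) < \delta$. Then for such $m$, defining
\[
T_m = \setnew{a \in P_m}{\cappello{a}_{\fhi_m} \subseteq C},
\]
the fact that $\setnew{\cappello{a}_{\fhi_m}}{a \in P_m}$ covers $\quot{\mathbb P}{R^{\mathbb P}}$ yields $C = \bigcup_{a \in T_m} \cappello{a}_{\fhi_m}$.

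Finally, I claim $T_m$ is a union of branches of $P_m$. Suppose $a \in T_m$ and $b \in P_m$ with $a \binR^{P_m} b$. As $\fhi_m$ is an epimorphism, there exist $u \in \fhi_m^{-1}(a)$ and $v \in \fhi_m^{-1}(b)$ with $u \binR^{\mathbb P} v$. Then $p(u) = p(v)$ belongs to both $\cappello{a}_{\fhi_m} \subseteq C$ and $\cappello{b}_{\fhi_m}$; in particular $\cappello{b}_{\fhi_m}$ meets $C$ and, by the diameter bound, is entirely contained in $C$, giving $b \in T_m$. Because $P_m \in \diamonds$ has pairwise disjoint maximal chains, the connected components of the graph $R^{P_m}$ coincide with $\branches(P_m)$; hence any subset of $P_m$ closed under $R^{P_m}$-adjacency is a union of branches. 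There is no serious obstacle here: the one subtlety to track is exactly this last identification of $R^{P_m}$-components with branches, which is precisely where the hypothesis $P_m \in \diamonds$ is indispensable and which explains why the statement is formulated in that setting.
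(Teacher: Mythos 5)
Your proof is correct, and its core coincides with the paper's: at a sufficiently fine level the cells $\cappello{a}_{\fhi_m}$ cannot straddle the boundary of $C$, and the collection of cells contained in $C$ is closed under $R^{P_m}$-adjacency (adjacent cells share a point of the quotient, since $R^{\mathbb P}$ is an equivalence relation), hence is a union of branches because for $P_m \in \diamonds$ the branches are exactly the connected components of the graph $R^{P_m}$. Where you differ is in how the level is found: the paper localizes upstairs, using that $p^{-1}(C)$ is compact and open and that the sets $\fhi_n^{-1}(a)$ form a basis of $\mathbb P$ (\Cref{generalprop}(1)) to write $p^{-1}(C)=\bigcup_{a\in B}\fhi_n^{-1}(a)$ exactly at a single level $n$, deduces that $B$ is a union of branches from the $R^{\mathbb P}$-invariance of $p^{-1}(C)$, and then passes to all $m\ge n$ by taking preimages of branches under $\fhi_n^m$; you localize downstairs, combining \Cref{generalprop}(2) with a Lebesgue-number argument based on the positive distance between $C$ and its complement, which yields all $m\ge n$ in one stroke. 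Both routes are equally elementary and every step of yours checks out, including the final identification of $\branches(P_m)$ with the $R^{P_m}$-components, which is indeed where $P_m\in\diamonds$ is used. The only loose ends are the degenerate cases $C=\emptyset$ and $C=\quot{\mathbb P}{R^{\mathbb P}}$, where the distance you invoke is not defined; these are of course trivial.
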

\begin{proof}
First notice that it is enough to show that there are some $n\in \N $ and $S\subseteq \branches (P_n)$ for which $C=\bigcup_{a\in\bigcup S} \cappello{a}_{ \fhi_n}$.
Indeed, assuming this, let $m\ge n$.
Then $( \fhi_n^m)^{-1}(\bigcup S)=\bigcup T$ for some $T\subseteq \branches (P_m)$, and $C=\bigcup_{a\in\bigcup T} \cappello{a}_{ \fhi_m}$.

Since $p^{-1}(C)$ is compact and open and the sets $ \setnew{ \fhi_n^{-1}(a)}{n\in \N , a\in A_n} $ form a basis for the topology of $ \mathbb P $, there exist $n\in \N $ and a subset $B\subseteq P_n$ such that $p^{-1}(C)=\bigcup_{a\in B} \fhi_n^{-1}(a)$, so that $B= \fhi_n[p^{-1}(C)]$.

We prove that $B=\bigcup S$ for some $S\subseteq \branches (P_n)$.
If this were not the case, there would exist $a, a'\in P_n$ with $a, a'$ consecutive with respect to $\le^{P_n}$ and $a\in B, a'\notin B$; in particular, $a \binR a'$.
If $u, u'\in \mathbb P $ are such that $ \fhi_n(u)=a, \fhi_n(u')=a', u \binR u'$, then $u\in p^{-1}(C), u'\notin p^{-1}(C)$ contradict the fact that $p^{-1}(C)$ is $R^{ \mathbb P }$-invariant.
The proof is concluded by observing that:
\[
C=p(p^{-1}[C])=p[\bigcup_{a\in B} \fhi_n^{-1}(a)]=\bigcup_{a\in B} \cappello{a}_{ \fhi_n}.
\]
\end{proof}

\section{Fences}\label{sec:fences}

\begin{definition} \label{deffence}
A \emph{fence} is a compact metrizable space whose connected components are either points or arcs.
A fence $Y$ is \emph{smooth} if there is a closed partial order $\preceq$ on $Y$ whose restriction to each connected component of $Y$ is a total order.
\end{definition}

We call \emph{arc components} of a fence the connected components which are arcs, and \emph{singleton components} those which are points.
We denote by $\mathrm{E}(Y)$ the set of endpoints of a fence $Y$; equivalently, $ \mathrm{E}(Y)$ is the set of endpoints of the connected components of $Y$.
The \emph{Cantor fence} is the space $\Can \times [0, 1]$; it is a smooth fence, as witnessed by the product of equality on $ \Can $ and the usual ordering of $[0, 1]$: we denote this order by $\trianglelefteq $.

\Cref{smoothembedd} below establishes that smooth fences are, up  to homeomorphism, the compact subspaces of the Cantor fence.
It may be confronted with \cite{MR1020278}*{Proposition 4}, stating that  smooth fans are, up to homeomorphism, the subcontinua of the Cantor fan, which is the fan obtained by identifying in the Cantor fence the set $ \Can \times\{ 0\} $ to a point.

Recall that if $X$ is a topological space and $f: X \to [0, 1]$ is a function, then $f$ is \emph{lower semi-continuous} (l.s.c.) if $ \setnew{x\in X}{f(x)\leq y}$ is  closed for each $y \in [0, 1]$ and is \emph{upper semi-continuous} (u.s.c.) if $ \setnew{x\in X}{f(x)\geq y}$ is closed for each $y \in [0, 1]$.

Let $X$ be a zero-dimensional, compact, metrizable space and $m, M: X \to [0, 1]$ be two functions.
We say that $(\mini , \maxi)$ is a \emph{fancy pair} if
\begin{itemize}
\item
$m$ is l.s.c.;
\item
$M$ is u.s.c.;
\item
$m(x) \le M(x)$, for all $x \in X$.
\end{itemize}

If $(\mini, \maxi)$ is a fancy pair of functions on $X$, let $D_{\mini}^{\maxi} = \setnew{(x, y) \in X \times [0, 1]}{\mini(x) \le y \le \maxi(x)}$.
Then $D_{\mini}^{\maxi}$ is a closed subset of $X \times [0, 1]$.
Indeed, let $(x_{n}, y_{n}) \in D_{\mini}^{\maxi}$, and $(x, y) = \lim (x_{n}, y_{n})$.
Then for each $\varepsilon > 0$, there exists $n \in \N$ such that for all $m>n$,
\[
\mini(x) - \varepsilon < \mini(x_{m}) \le y_{m} \le \maxi(x_{m}) < \maxi(x) + \varepsilon,
\]
so $\mini(x) \le y \le \maxi(x)$, thus $(x, y) \in D_{\mini}^{\maxi}$.

\begin{theorem} \label{smoothembedd}
Let $Y$ be a fence.
Then the following are equivalent:
\begin{enumerate}
\item
$Y$ is a smooth fence.
\item
	\label{itm:stronglycomp}
\sloppy
There exists a closed partial order $\preceq $ on $Y$ whose restriction to each connected component is a total order and such that two elements are $\preceq$-comparable if and only if they belong to the same connected component.
\item There is a continuous injection $f:Y\to \Can \times [0, 1]$.
\item There is a continuous injection $f:Y\to \Can \times [0, 1]$ such that for each $x \in \Can$, the set $f[Y]\cap ( \set{x} \times [0, 1])$ is connected (possibly empty).
\item
There is a closed, non-empty, subset $X$ of $ \Can $ and a fancy pair $(\mini, \maxi)$ of functions on $X$ such that $Y$ is homeomorphic to $D_{\mini}^{\maxi}$.
\end{enumerate}
\end{theorem}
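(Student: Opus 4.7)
My plan is to close the cycle $(5) \Rightarrow (4) \Rightarrow (3) \Rightarrow (1) \Rightarrow (2) \Rightarrow (5)$. The first three arrows are essentially by inspection. Indeed, for $(5) \Rightarrow (4)$ the fibers of $D_{\mini}^{\maxi}$ over $X$ are the closed intervals $\{x\} \times [\mini(x), \maxi(x)]$; $(4) \Rightarrow (3)$ is immediate; and for $(3) \Rightarrow (1)$, given a continuous injection $f \colon Y \to \Can \times [0, 1]$, each connected component of $Y$ projects under the first coordinate to a connected subset of $\Can$, hence to a single point, so $f$ sends it into a fiber $\{x\} \times [0,1]$; the image is then a compact connected subset of an arc, so a point or an arc, and the pullback of $\trianglelefteq$ along $f$ is a closed partial order on $Y$ total on every such fiber and therefore on every component.

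For $(1) \Rightarrow (2)$ I would start from a closed partial order $\preceq$ witnessing smoothness and define
\[
\preceq' \;=\; \setnew{(x, y) \in Y \times Y}{x \preceq y \text{ and } x, y \text{ lie in the same connected component of } Y}.
\]
The relation $\preceq'$ is obviously reflexive, antisymmetric, transitive, total on each component, and two elements are $\preceq'$-comparable exactly when they share a component. The only nontrivial step is closedness: if $(x_n, y_n) \in \preceq'$ converges to $(x, y)$, then $x \preceq y$ follows from closedness of $\preceq$, while the $\preceq$-intervals $I_n = \setnew{z \in Y}{x_n \preceq z \preceq y_n}$ are sub-arcs of the component containing $x_n$ and $y_n$, in particular connected closed sets. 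Extracting a subsequence along which the $I_n$ converge in $\Exp(Y)$ yields a connected closed limit containing both $x$ and $y$, so $x$ and $y$ lie in a common component.

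For $(2) \Rightarrow (5)$ the relation $\sim \;=\; \preceq \cup \succeq$ is a closed equivalence relation whose classes are precisely the connected components of $Y$. The quotient $Y/\sim$ is therefore a compact metrizable space, and it is zero-dimensional because any clopen subset of $Y$ containing a component is automatically $\sim$-saturated (this is the usual fact that components equal quasi-components in compact Hausdorff spaces). Embedding $Y/\sim$ into $\Can$ produces a closed subspace $X \subseteq \Can$ and a continuous surjection $\pi \colon Y \to X$ whose fibers are the components of $Y$. The rest of the argument rests on producing a continuous $g \colon Y \to [0, 1]$ that is strictly $\preceq$-increasing on every arc component; with such a $g$ in hand, the map $y \mapsto (\pi(y), g(y))$ is a continuous injection from the compact space $Y$ into $\Can \times [0, 1]$, hence a homeomorphism onto its image, which is exactly $D_{\mini}^{\maxi}$ with $\mini(x) = g(\min \pi^{-1}(x))$ and $\maxi(x) = g(\max \pi^{-1}(x))$; lower semi-continuity of $\mini$ and upper semi-continuity of $\maxi$ then follow from a short subsequential compactness argument using the closedness of $\preceq$ and the monotonicity of $g$.

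The main obstacle is producing $g$. I would appeal to Nachbin's separation theorem for spaces with a closed partial order: for every pair $x \prec y$ in $Y$ the closed sets $\setnew{z}{z \preceq x}$ and $\setnew{z}{y \preceq z}$ are disjoint by antisymmetry, and therefore separable by a continuous $\preceq$-monotone function $Y \to [0, 1]$. Since the monotone continuous functions form a closed, hence separable, subspace of $C(Y, [0, 1])$ with the supremum norm, I can fix a countable sup-dense family $(g_n)$ and set $g = \sum_n 2^{-n} g_n$, which is continuous and monotone. For any comparable pair $x \prec y$, approximating the Nachbin-separating function uniformly by some $g_n$ gives $g_n(x) < g_n(y)$, and hence $g(x) < g(y)$; in particular $g$ is strictly increasing on every arc component, as required.
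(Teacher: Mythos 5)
Your proposal is correct in outline but takes a genuinely different route from the paper. The paper's only non-trivial arrow is $(1)\Rightarrow(4)$: it combines a Kuratowski map $f_{0}:Y\to\Can$ separating components with Carruth's theorem, which embeds $(Y,\preceq)$ order-preservingly into $[0,1]^{\N}$, and uses $f_{1}(x)=d(\mathbf{0},h(x))$ as the second coordinate; all the remaining implications are then obtained by transporting $\trianglelefteq$ back to $Y$. You instead make $(2)\Rightarrow(5)$ the workhorse: the first coordinate comes from the quotient by the closed equivalence relation ``comparable $=$ same component,'' and the second from Nachbin's separation theorem together with a separability-and-summation trick producing one continuous monotone $g$ that is strictly increasing on every comparable pair. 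This is a legitimate alternative --- Nachbin's theorem does for you exactly what Carruth's embedding does for the paper --- and your construction of $g$, of $\pi$, and of the fancy pair is sound. The price of your cycle is that you must prove $(1)\Rightarrow(2)$ directly, which the paper gets for free from $(1)\Rightarrow(4)\Rightarrow(2)$.

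That direct argument is the one spot needing repair. You assert that $I_{n}=\setnew{z\in Y}{x_{n}\preceq z\preceq y_{n}}$ is a sub-arc of the component containing $x_{n},y_{n}$, but hypothesis (1) does not forbid order comparabilities across distinct components, so $I_{n}$ may contain points of other components and need not be connected. The fix is immediate: use the component $C_{n}$ of $x_{n}$ itself (or $I_{n}\cap C_{n}$, which is a sub-arc because a closed total order on an arc is one of its two natural orders). These are continua containing $x_{n}$ and $y_{n}$, and a limit in $\Exp(Y)$ of continua is a continuum, so $x$ and $y$ do lie in a common component. With that correction the cycle $(5)\Rightarrow(4)\Rightarrow(3)\Rightarrow(1)\Rightarrow(2)\Rightarrow(5)$ closes.
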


\begin{proof}
The implications $(2)\Rightarrow (1)$ and $(4)\Rightarrow (3)$ are immediate.
The implications $(3)\Rightarrow (1)$ and $(4)\Rightarrow (2)$ follow by copying on $Y$ the restriction of the order $\trianglelefteq $ on the Cantor fence to the image of $Y$ under the embedding.

For $(4)\Rightarrow (5)$, let $X = \pi_{1}[f[Y]]$ be the projection of $f[Y]$ on $\Can$ and, for $x \in X$, let $\mini(x)= \min \setnew{y \in [0, 1]}{ (x, y) \in f[Y]}$ and $\maxi(x)= \max \setnew{y \in [0, 1]}{ (x, y) \in f[Y]}$.
Clearly $\mini(x) \le \maxi(x)$, for all $x \in X$, and $\mini, \maxi$ are l.s.c, u.s.c., respectively, since $f[Y]$ is closed.
Then $(\mini, \maxi)$ is a fancy pair of functions on $X$ and $D_{\mini}^{\maxi} = f[Y]$.

For $(5)\Rightarrow (4)$, suppose that there are a closed, non-empty, subset $X$ of $ \Can $ and a fancy pair $(\mini, \maxi)$ of functions on $X$ such that there is a homeomorphism $f: Y \to D_{\mini}^{\maxi}$.
Then $f$ is the required injection.

It thus remains to establish $(1)\Rightarrow (4)$.
By \cite{Kuratowski1968}*{§46, V, Theorem 3}, there is a continuous map $f_{0} : Y \to \Can$ such that $f_{0}(x)=f_{0}(x')$ if and only if $x, x'$ belong to the same connected component.

By \cite{Carruth1968}, any compact metrizable space with a closed partial order can be embedded continuously and order-preservingly in $[0, 1]^{\N}$ with the product order.
Let $h: Y \to [0, 1]^{\N}$ be such an embedding.
Let $f_{1}: Y \to [0, 1]$ be defined by $f_{1}(x) = d(\mathbf{0}, h(x))$, where $d$ is the product metric on $[0, 1]^{\N}$ and $\mathbf{0} = (0, 0, \dots )$.
Then $f_{1}$ is the composition of two continuous functions, so it is continuous, and its restriction to each connected component of $Y$ is injective, since $d(\mathbf{0}, x) < d(\mathbf{0}, y)$ whenever $x$ is less than $y$ in the product order on  $[0, 1]^{\N}$.

Let $f: Y \to \Can \times [0, 1]$ be defined by $f(x) = (f_{0}(x), f_{1}(x))$.
Then $f$ is the continuous embedding which we were seeking.
\end{proof}

Note that if $\preceq $ is the closed order on $Y$ used for embedding $Y$ into the Cantor fence, the embedding $f$ of $(1)\Rightarrow (4)$ in the preceding proof also embeds  $\preceq $ in $\trianglelefteq $.

For later use, we say that an order relation on the fence $Y$ is \emph{strongly compatible} if it satisfies \eqref{itm:stronglycomp} of \Cref{smoothembedd}.
For example, $\trianglelefteq $ is a strongly compatible order on the Cantor fence.

\begin{remark}
Condition \eqref{itm:stronglycomp} in \Cref{smoothembedd} implies that the ternary relation $T$ on a smooth fence $Y$, defined by $T(x, y, x')$ if and only if $x=y=x'$ or $y$ belongs to the arc with endpoints $x, x'$, is closed.
We do not know if requiring that this relation is closed is equivalent or strictly weaker than the conditions in \Cref{smoothembedd}.
\end{remark}

\subsection{Smooth fences and \texorpdfstring{$\diamonds$}{F0}}

We turn to proving that smooth fences are exactly the spaces which can be approximated by fine projective sequences in $\diamonds$.
One direction is \Cref{thmpidiamondfence}, the other is \Cref{smoothfencespidiamond}.

\begin{theorem} \label{thmpidiamondfence}
Let $(P_n, \fhi_n^m)$ be a fine projective sequence in $\diamonds$, with projective limit $ \mathbb P $ and let $p: \mathbb P \to \quot{ \mathbb P }{R^{ \mathbb P }} $ be the quotient map.
Then $ \quot{ \mathbb P }{R^{ \mathbb P }} $ is a smooth fence.

The connected components of $ \quot{ \mathbb P }{R^{ \mathbb P }} $ are the maximal chains of the order $\le^{ \mathbb P /R^{ \mathbb P }}$.
They are the sets of the form $p[B]$, where $B$ is a maximal chain in $ \mathbb P $; in particular, if $B$ has more than two elements, then $p[B]$ is an arc.
\end{theorem}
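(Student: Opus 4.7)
The plan is to use $\le^{\mathbb P/R^{\mathbb P}}$ from \Cref{corollarytwentyfour} as the closed partial order witnessing smoothness, and to identify each connected component of $\quot{\mathbb P}{R^{\mathbb P}}$ with an image $p[B]$ for a maximal $\le^{\mathbb P}$-chain $B$. The starting observation is that, for $x = p(u)$, the element $\fhi_n(u)$ lies in a unique branch $B_n\in\branches(P_n)$: any other preimage $u'$ of $x$ satisfies $\fhi_n(u')R^{P_n}\fhi_n(u)$, and disjointness of branches in $P_n\in\diamonds$ forces $\fhi_n(u), \fhi_n(u')$ into the same branch. By \Cref{branchesareclopen} each $\bigcup_{a\in B_n}\cappello{a}_{\fhi_n}$ is a clopen neighborhood of $x$, and \Cref{clopensareunionofbranches} shows these sets form a clopen basis at $x$ as $n$ varies. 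Since $\quot{\mathbb P}{R^{\mathbb P}}$ is compact metrizable, the quasi-component of $x$ equals its connected component $C$, and closedness of $p$ lets the intersection commute with $p$, giving
\[
C = \bigcap_n p[\fhi_n^{-1}(B_n)] = p\Bigl[\bigcap_n \fhi_n^{-1}(B_n)\Bigr] = p[B^*],
\]
where $B^* := \bigcap_n \fhi_n^{-1}(B_n)$.

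The heart of the argument is to verify that $B^*$ is a maximal $\le^{\mathbb P}$-chain. Given $y, z\in B^*$, the points $\fhi_n(y), \fhi_n(z)\in B_n$ are $\le^{P_n}$-comparable for every $n$. The sets $S=\{n:\fhi_n(y)\le\fhi_n(z)\}$ and $T=\{n:\fhi_n(z)\le\fhi_n(y)\}$ are downward closed in $\N$ (the bonding maps preserve $\le$) and their union is $\N$, so one equals $\N$, yielding comparability of $y, z$ in $\mathbb P$. For maximality, if $v\in\mathbb P$ is comparable with every element of $B^*$, fix $w\in B^*$; then $\fhi_n(v)$ is $\le^{P_n}$-comparable with $\fhi_n(w)\in B_n$, and since distinct branches in $P_n\in\diamonds$ have no $\le^{P_n}$-comparable pair, $\fhi_n(v)\in B_n$, so $v\in B^*$. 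Conversely, if $B$ is any maximal chain in $\mathbb P$, the branch $B_n$ containing $\fhi_n[B]$ is unique by disjointness, the induced $B^*\supseteq B$ is a chain by the argument above, and maximality of $B$ forces $B=B^*$. Hence the connected components are exactly the sets $p[B]$ for $B$ a maximal $\le^{\mathbb P}$-chain; invoking \Cref{alleqclassless} and \Cref{fineclassesconvex}, any two $\le^{\mathbb P/R^{\mathbb P}}$-comparable points in the quotient admit preimages lying in a common maximal chain of $\mathbb P$, so these are precisely the maximal chains of $\le^{\mathbb P/R^{\mathbb P}}$.

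Finally, each $p[B]$ is compact, connected, and linearly ordered by $\le^{\mathbb P/R^{\mathbb P}}$, since $u, v \in B$ with $u\le v$ directly gives $p(u)\le^{\mathbb P/R^{\mathbb P}}p(v)$. When $|B|\le 2$, connectedness forces $|p[B]|=1$: for $B=\{u, v\}$ with $u<v$, the alternative $\neg(u R^{\mathbb P}v)$ produces the disconnected pair $\{p(u), p(v)\}$. When $|B|\ge 3$, \Cref{fineclassesconvex} bounds the $R^{\mathbb P}$-class sizes by $2$, so $|p[B]|\ge 2$; the classical fact that a compact connected metric space with a closed total order and more than one element is homeomorphic to $[0, 1]$ shows $p[B]$ is an arc. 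Combined with \Cref{corollarytwentyfour}, this realizes $\quot{\mathbb P}{R^{\mathbb P}}$ as a fence whose closed partial order is total on each component, hence a smooth fence by \Cref{deffence}. The main technical hurdle is verifying that $B^*$ is a maximal chain, which marries the disjointness of branches in $\diamonds$ with the downward-closed subset argument used to transfer comparability through the inverse limit.
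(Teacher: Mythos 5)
Your proof is correct, but it reaches the identification of the connected components by a genuinely different route than the paper. The paper splits the work in two: $\le^{\mathbb P/R^{\mathbb P}}$-incomparable points are separated by the clopen branch-sets of \Cref{branchesareclopen}, while comparable points are joined inside $p[[u,v]]$, whose connectedness rests on the $R$-connectedness of order intervals (\Cref{connectedintervals}) together with \Cref{oldparagraph}. You instead compute the component of $x$ directly as its quasi-component: \Cref{clopensareunionofbranches} makes the sets $p[\fhi_n^{-1}(B_n)]$ cofinal among clopen neighborhoods of $x$ (cofinal, rather than a basis, is the accurate statement, and is all you actually use), so the quasi-component equals $\bigcap_n p[\fhi_n^{-1}(B_n)] = p[B^*]$ --- note that the interchange of $p$ with the decreasing intersection is a compactness fact, not a consequence of $p$ being closed --- and quasi-components coincide with components in compact metrizable spaces. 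Connectedness of each component is then automatic, the $R$-connectedness machinery is bypassed entirely, and the burden shifts to the combinatorial verification that $B^*=\bigcap_n\fhi_n^{-1}(B_n)$ is a maximal chain, which your downward-closed-set argument and the disjointness of branches in $\diamonds$ handle correctly. Your route exhibits each component explicitly as the image of an inverse limit of branches and is more self-contained; the paper's route has the advantage that the lemmas it leans on (\Cref{limitconnected}, \Cref{quotlimitconnected}) are reused later, for instance in \Cref{notzerodim}. For the arc step the two proofs coincide in substance: both reduce to the fact that a nondegenerate compact connected metrizable space carrying a closed total order is an arc; the paper derives this carefully (completeness, density, separability, Rosenstein's characterization), whereas you cite it as classical, which is the one place where you are less detailed than the paper.
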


\begin{proof}
The relation $\le^{\mathbb P/R^{\mathbb P}} $ on $ \quot{ \mathbb P }{R^{ \mathbb P }} $ is a closed order by \Cref{corollarytwentyfour}.

If $x\not \le^{\mathbb P/R^{\mathbb P}} y\not \le^{\mathbb P/R^{\mathbb P}} x$, pick $u\in p^{-1}(x), v\in p^{-1}(y)$ and let $n\in \N $ be such that $ \fhi_n(u)\nleq \fhi_n(v)\nleq \fhi_n(u)$.
This implies that $ \fhi_n(u), \fhi_n(v)$ belong to distinct maximal chains $B, B'$, respectively, of $P_n$.
By \Cref{branchesareclopen}, $p[ \fhi_n^{-1}(B)], p[ \fhi_n^{-1}(B')]$ are clopen subsets of $ \quot{ \mathbb P }{R^{ \mathbb P }} $ separating $x$ and $y$, so $x, y$ belong to distinct connected components of $ \quot{ \mathbb P }{R^{ \mathbb P }} $.

If $x\le^{\mathbb P/R^{\mathbb P}} y$, let $u, v\in \mathbb P $ with $u\in p^{-1}(x), v\in p^{-1}(y), u\le v$.
Since $[u, v]$ is $R$-connected by \Cref{connectedintervals}, from \Cref{oldparagraph} it follows that $p[[u, v]]$ is a connected subset of $ \quot{ \mathbb P }{R^{ \mathbb P }} $ containing $x, y$.
Therefore $x, y$ belong to the same connected component.

These two facts show that the connected components of $ \quot{ \mathbb P }{R^{ \mathbb P }} $ are the maximal chains of $\le^{ \mathbb P /R^{ \mathbb P }}$ or, equivalently, the sets of the form $p[B]$, where $B$ ranges over the maximal chains of $ \mathbb P $.
If in particular $B$ has more than two points, then $p[B]$ is not a singleton by \Cref{fineclassesconvex}.

Thus it remains to show that the non-singleton connected components of $ \quot{ \mathbb P }{R^{ \mathbb P }} $ are arcs.
So let $K$ be a non-singleton connected component of $ \quot{ \mathbb P }{R^{ \mathbb P }} $.
By the above, the restriction of $\le^{\mathbb P/R^{\mathbb P}} $ to $K$ is a closed total order, so it is complete as an order by \cite{Basso}*{Lemma 15}, and has a minimum and a maximum that are distinct.
Moreover, it is dense as $K$ is connected, so it is a separable order as open intervals are open subsets in the topology of $K$.
Using \cite{Rosens1982}*{Theorem 2.30}, the restriction of $\le^{\mathbb P/R^{\mathbb P}} $ to $K$ is an order of type $1+\lambda +1$, where $\lambda $ is the order type of $ \R $; as the sets of the form $ \setnew{x\in K}{x<^{ \mathbb P /R^{ \mathbb P }}z} $ and $ \setnew{x\in K}{z<^{ \mathbb P /R^{ \mathbb P }}x} $ are open subsets of $K$, this means that there is a continuous bijection $K\to [0, 1]$, which is therefore a homeomorphism.
\end{proof}

The converse of \Cref{thmpidiamondfence} is proved in \Cref{smoothfencespidiamond},
for which we need the following lemma and definition.

\begin{lemma}\label{usualclaimnew}
Let $X$ be a zero-dimensional compact metrizable space and $(\mini, \maxi)$ a fancy pair of functions on $X$.
For each $\varepsilon>0$ and each clopen partition $\mathcal U$ of $\basis$ there is a clopen partition
$\mathcal W $ refining $\mathcal U$, such that for all $U \in \mathcal W $
there is $x_{U} \in U$ such that:
\begin{equation} \label{anotherpropertyone}
\mini (x_U)- \min \setnew{ \mini (x)}{x\in U} <\varepsilon ,\qquad\max \setnew{ \maxi (x)}{x\in U} - \maxi (x_U)<\varepsilon .
\end{equation}
\end{lemma}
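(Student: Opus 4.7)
The plan is to reduce to the case of continuous $\mini$ and $\maxi$ by sandwiching each of them between continuous functions that differ by less than $\varepsilon/2$, then exploit uniform continuity to construct $\mathcal W$ as a small-mesh clopen partition on which \emph{any} point of each piece will serve as $x_W$. Since the conclusion is local to each element of $\mathcal U$, I may work on each $U \in \mathcal U$ separately, so I assume $\mathcal U = \set{\basis}$.

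The key technical step is to produce continuous $\mini', \maxi' \colon \basis \to [0, 1]$ satisfying $\mini' \leq \mini < \mini' + \varepsilon/2$ and $\maxi' - \varepsilon/2 < \maxi \leq \maxi'$ pointwise on $\basis$. For $\mini$, I would use the Moreau envelopes $\mini_n(x) = \inf_{y \in \basis}(\mini(y) + n \cdot d(x, y))$: these are Lipschitz (hence continuous), satisfy $\mini_n \leq \mini$, are increasing in $n$, and converge pointwise to $\mini$ because $\mini$ is l.s.c. Since $\mini - \mini_n$ is again l.s.c., the sets $\setnew{x \in \basis}{\mini(x) - \mini_n(x) < \varepsilon/2}$ are open, they form an increasing cover of $\basis$, and compactness supplies an index $n_0$ for which $\mini' := \mini_{n_0}$ works. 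The function $\maxi'$ is built symmetrically from $\sup_y(\maxi(y) - n \cdot d(x, y))$, using u.s.c.\ of $\maxi$.

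By uniform continuity of $\mini'$ and $\maxi'$ on compact $\basis$, I would pick $\delta > 0$ such that $d(x, y) < \delta$ forces $|\mini'(x) - \mini'(y)|, |\maxi'(x) - \maxi'(y)| < \varepsilon/2$. Since $\basis$ is zero-dimensional, compact, and metrizable, it admits a finite clopen partition $\mathcal W$ of mesh less than $\delta$, which trivially refines $\set{\basis}$. Then for any $W \in \mathcal W$ and any $x_W \in W$, the oscillation of $\mini'$ on $W$ being below $\varepsilon/2$ together with $\mini < \mini' + \varepsilon/2$ yields $\mini(x_W) < \mini'(x_W) + \varepsilon/2 < \min_W \mini' + \varepsilon \leq \min_W \mini + \varepsilon$, and symmetrically $\maxi(x_W) > \max_W \maxi - \varepsilon$, as required.

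The main subtlety the argument bypasses is that of forcing a single point in $W$ to simultaneously approximate $\min_W \mini$ and $\max_W \maxi$. A direct approach using only the semi-continuities---covering $\basis$ by clopens $V_x$ on which $\mini$ is controlled near $\mini(x)$ from above and $\maxi$ near $\maxi(x)$ from below, and then disjointifying a finite subcover---breaks down because the designated center $x$ of $V_x$ can be absorbed into an earlier piece of the disjointification, and the remaining points of its piece offer no guarantee of being simultaneous near-minimizers of $\mini$ and near-maximizers of $\maxi$. Passing through continuous approximations restores uniform, rather than merely pointwise, control, after which any point of a small-mesh piece works as $x_W$.
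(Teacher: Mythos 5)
Your reduction to the continuous case fails at its key technical step: in general there is \emph{no} continuous $\mini'$ on $\basis$ with $\mini'\le \mini<\mini'+\varepsilon/2$. If such a $\mini'$ existed for every $\varepsilon$, then $\mini$ would be a uniform limit of continuous functions and hence continuous, whereas it is only assumed lower semi-continuous. Concretely, let $\basis=\set{0}\cup\setnew{2^{-n}}{n\in\N}$, $\mini(0)=0$, $\mini(2^{-n})=1$, $\maxi\equiv 1$: this is a fancy pair, yet any continuous $\mini'\le\mini$ has $\mini'(0)\le 0$, so $\mini'(2^{-n})<1/2$ for large $n$, violating $\mini<\mini'+1/2$. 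The precise error in your argument is the assertion that $\setnew{x\in\basis}{\mini(x)-\mini_n(x)<\varepsilon/2}$ is open because $\mini-\mini_n$ is l.s.c.: openness of strict sublevel sets $\setnew{x}{g(x)<c}$ is the defining property of \emph{upper} semi-continuity, while $\mini-\mini_n$ is lower semi-continuous, so these sets need not be open and the Dini-type compactness argument does not apply. The symmetric construction of $\maxi'$ fails for the same reason, and with it the entire strategy: as the example above shows (where, for $\varepsilon=1/2$, the designated point of any piece containing $0$ must be $0$ itself, the exact minimizer of $\mini$), no argument concluding that \emph{any} point of a small-mesh piece works can succeed. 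You correctly identify in your last paragraph that the real difficulty is forcing one point to be simultaneously a near-minimizer of $\mini$ and a near-maximizer of $\maxi$, but the continuous approximation you invoke to dispose of it does not exist.

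The paper's proof confronts that difficulty directly using only the two one-sided semi-continuities. Working inside one element of $\mathcal U$ at a time, if no single point satisfies \eqref{anotherpropertyone} on the whole piece, then the set of $\varepsilon/2$-near-minimizers of $\mini$ (closed, by l.s.c.) is contained in the set where $\maxi$ is at least $\varepsilon/2$ below its maximum (open, by u.s.c.); zero-dimensionality interpolates a clopen set between them, and the construction recurses inside it. Termination is guaranteed because the maximum of $\maxi$ drops by $\varepsilon/2$ at each recursion, and the complementary piece is then handled symmetrically with the minimum of $\mini$ rising by $\varepsilon/2$. Any repair of your proposal would have to replace the uniform continuous approximation with a "peeling" argument of this kind.
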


\begin{proof}
By dealing with one element of $ \mathcal U $ at a time, it is enough to show that given a zero-dimensional compact metrizable space $X$, a fancy pair $( \mini , \maxi )$, and $\varepsilon >0$, there is a clopen partition $ \mathcal W = \set*{W_0,\ldots ,W_k}$ of $X$ such that for all $U\in \mathcal W $ there is $x_U\in U$ for which \eqref{anotherpropertyone} holds.

For any clopen set $U \subseteq X$, let
\[
m_U=\min \setnew{ \mini (x)}{x\in U} ,\qquad M_U=\max \setnew{ \maxi (x)}{x\in U} .
\]
If there exists $x_X\in X$ satisfying \eqref{anotherpropertyone}, then we are done by letting $k=0,W_0=X$.
Otherwise, let $U_0= \setnew{x\in X}{ \maxi (x)<M_X- \frac{\varepsilon }2 } $.
This is an open set, and since there is no $x_{X}$ satisfying \eqref{anotherpropertyone}, it contains the closed, non-empty, set $C_0= \setnew{x\in X}{ \mini (x)\le m_X+ \frac{\varepsilon }2 } $.
By the zero-dimensionality of $X$ and the compactness of $C_0$, let $V_0$ be clopen such that $C_0\subseteq V_0\subseteq U_0$.
Notice that
\[
m_{V_0}=m_X,\qquad M_{V_0}<M_X- \frac{\varepsilon }2 .
\]
If there exists $x_{V_0}\in V_0$ such that \eqref{anotherpropertyone} holds, then set $W_0=V_0$.
Otherwise repeat the process within $V_0$, to find a clopen set $V_1$ with $C_0\subseteq V_1\subseteq V_0$ and
\[
m_{V_1}=m_{V_0}=m_X,\qquad M_{V_1}<M_{V_0}- \frac{\varepsilon }2 <M_X-\varepsilon .
\]
Thus this process must stop, yielding finally a clopen subset $W_0$ such that $C_0\subseteq W_0\subseteq U_0$ and there exists $x_{W_0}\in W_0$ for which \eqref{anotherpropertyone} holds.

Now start the process over again within $X'=X\setminus W_0$, which is non-empty by case assumption.
Since $C_0\subseteq W_0\subseteq U_0$, it follows that
\[
m_X+ \frac{\varepsilon }2 <m_{X'},\qquad M_{X'}=M_X.
\]

If there exists $x_{X'}\in X'$ satisfying \eqref{anotherpropertyone}, we are done by letting $k=1,W_1=X'$.
Otherwise we eventually produce a clopen subset $W_1$ of $X'$ containing $C_1= \setnew{x\in X'}{ \mini (x)\le m_{X'}+ \frac{\varepsilon }2 } $, contained in $U_1= \setnew{x\in X'}{ \maxi (x)<M_{X'}- \frac{\varepsilon }2 } $, and such that there exists $x_{W_1}\in W_1$ satisfying \eqref{anotherpropertyone}.
Set $X''=X\setminus (W_0\cup W_1)$ and notice that
\[
m_X+\varepsilon <m_{X'}+ \frac{\varepsilon }2 <m_{X''}, \qquad M_{X''}=M_X.
\]
Thus the process eventually stops, providing the desired partition $ \mathcal W $.
\end{proof}


\begin{theorem}\label{smoothfencespidiamond}
Let $Y$ be a smooth fence with a strongly compatible order $\preceq$.
Then there exists a fine projective sequence of structures $(P_n, \fhi_n^m)$ from $\diamonds$ approximating $Y$ in such a way that, denoting by $ \mathbb P $ the projective limit:
\begin{itemize}
\item[a)]
the quotient map $p: \mathbb P \to \quot{ \mathbb P }{R^{ \mathbb P }} $ is irreducible;
\item[b)]
there is a homeomorphism $g: \quot{ \mathbb P }{R^{ \mathbb P }} \to Y$ that is also an order isomorphism between $\le^{\mathbb P/R^{\mathbb P}} $ and $\preceq$.
\item[c)]
for each $n \in \N$, $a, a' \in P_{n}$, it holds that $a \le^{P_{n}} a'$ if and only if there are $x \in \interior(\cappello{a}_{\fhi_{n}}), x' \in \interior(\cappello{a'}_{\fhi_{n}})$, $g(x)\preceq g(x')$.
\end{itemize}
\end{theorem}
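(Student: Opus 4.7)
The strategy is to encode finer and finer box-decompositions of $Y$ as a projective sequence in $\diamonds$. By \Cref{smoothembedd} and the remark after its proof, I may identify $Y$ with a set $D_{\mini}^{\maxi} \subseteq \basis \times [0,1]$, where $(\mini,\maxi)$ is a fancy pair on a closed subset $\basis \subseteq \Can$, in such a way that $\preceq$ coincides with the restriction of $\trianglelefteq$. In particular, two points of $Y$ are $\preceq$-comparable iff they share the same $\basis$-coordinate, and $Y$ fibers over $\basis$ with vertical arcs $\set{x}\times[\mini(x),\maxi(x)]$ (possibly degenerate).

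Fix $\varepsilon_n \downarrow 0$. Iteratively applying \Cref{usualclaimnew}, construct clopen partitions $\mathcal W_n$ of $\basis$ with $\mathcal W_{n+1}$ refining $\mathcal W_n$ and mesh less than $\varepsilon_n$, with each $U\in\mathcal W_n$ equipped with a representative $x_U \in U$ almost-achieving both $\min_U \mini$ and $\max_U \maxi$. Also choose refining finite subdivisions $T_n = \set{0 = t^n_0 < \cdots < t^n_{K_n} = 1} \subseteq T_{n+1}$ of $[0,1]$ of mesh less than $\varepsilon_n$. For each $U\in\mathcal W_n$ and $i < K_n$ set $C^n_{(U,i)} := (U \times [t^n_i, t^n_{i+1}]) \cap Y$ and call $(U,i)$ \emph{good} if this box has nonempty interior in $Y$. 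Let $P_n$ be the $\mL_R$-structure whose universe is the set of good pairs, with $(U,i)\le^{P_n}(U',j)$ iff $U = U'$ and $i \le j$, and $(U,i)\binR^{P_n}(U',j)$ iff $U = U'$, $\lvert i-j\rvert \le 1$, and $C^n_{(U,i)}\cap C^n_{(U,j)} \ne \emptyset$; the good pairs within each $U$-family form a finite disjoint union of HLOs, so $P_n \in \diamonds$. Define $\fhi^{n+1}_n\colon P_{n+1}\to P_n$ by sending $(U',j)$ to $(U,i)$, where $U'\subseteq U\in\mathcal W_n$ and $[t^{n+1}_j, t^{n+1}_{j+1}] \subseteq [t^n_i, t^n_{i+1}]$. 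Preservation of $\le$ and $R$ is immediate; surjectivity on elements and relations is verified by choosing interior points of the target box and locating them at the finer resolution.

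Fineness holds by \Cref{lemmafour}, since refinement inserts new vertices between non-adjacent points in each chain. Irreducibility follows from condition~(2) of \Cref{singletonsdenseiffirreducible}: any interior point $(x,y) \in C^n_{(U,i)}$ sits, for $m$ large enough, inside a box $C^m_{(U',j)}$ whose $R^{P_m}$-neighbors at level $m$ all project into $(U,i)$. Define $g\colon \quot{\mathbb P}{R^{\mathbb P}} \to Y$ by $g([u]) = \bigcap_n C^n_{u_n}$; this yields a single point of $Y$ after quotienting $R^{\mathbb P}$, because adjacent box-chains collapse to a shared horizontal boundary that in turn collapses to a point as meshes tend to zero. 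The map is well-defined on $R^{\mathbb P}$-classes, continuous, surjective (boxes cover $Y$), and injective (non-$R^{\mathbb P}$-related classes eventually lie in disjoint boxes), hence a homeomorphism between compact metrizable spaces. It intertwines $\le^{\mathbb P/R^{\mathbb P}}$ with $\preceq$ and satisfies property~(c), since the interior of $C^n_{(U,i)}$ consists essentially of the pairs $(x,y)\in Y$ with $x \in U$ and $y\in (t^n_i, t^n_{i+1})$.

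The main obstacle is the bookkeeping needed to ensure that the chain structure of $P_n$ lifts compatibly through refinement, so that $\fhi^{n+1}_n$ is indeed surjective on $R^{P_n}$: adjacent good pairs $(U,i),(U,i+1)$ at level $n$ must have $R^{P_{n+1}}$-adjacent preimages at level $n+1$. This is precisely where the tight control on $\mini(x_U)$ and $\maxi(x_U)$ afforded by \Cref{usualclaimnew} is used, to guarantee that the bridging horizontal fibers $(U\times\set{t^n_{i+1}})\cap Y$ persist and are themselves captured by refined good pairs, so that the regular quasi-partition structure of the box decomposition is preserved across levels.
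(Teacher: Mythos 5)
There is a genuine gap. You work directly with $Y=D_{\mini}^{\maxi}$ and only arrange, via \Cref{usualclaimnew}, that each piece $U$ of the partition has a representative $x_U$ \emph{approximately} achieving $\min_U\mini$ and $\max_U\maxi$. That is not enough to make your $P_n$ an element of $\diamonds$, nor to make the inclusion maps epimorphisms. Concretely, take $\mini=\maxi=f$ with $f$ continuous on $X=\Can$, equal to $0.1$ on a clopen half $X_0$ and to $0.9$ on the complementary half $X_1$; all components of $Y$ are singletons. At an early stage a piece $U$ meets both halves, so the good indices over $U$ split into two blocks (near height $0.1$ and near height $0.9$) separated by indices $j$ with $(U\times[t^n_j,t^n_{j+1}])\cap Y=\emptyset$. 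Under your definition all good pairs over $U$ are $\le^{P_n}$-comparable, so the top box of the lower block and the bottom box of the upper block are immediate $\le^{P_n}$-successors of one another yet not $R^{P_n}$-related (their boxes are disjoint); hence $R^{P_n}$ is not the Hasse diagram of $\le^{P_n}$ and $P_n$ is not an HPO. Moreover, once $\mathcal W_{n+1}$ refines $\{X_0,X_1\}$, no $U'\subseteq U$ contains points whose fibers meet both blocks, so no $\le^{P_{n+1}}$-comparable pair maps onto that comparable pair: $\fhi^{n+1}_n$ is not surjective on $\le^{P_n}$, i.e.\ not an epimorphism. Your claimed property (c) fails for the same pair, since the two blocks lie over distinct connected components and a strongly compatible order makes such points $\preceq$-incomparable.

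The missing idea is the paper's preliminary straightening of $Y$: one builds homeomorphisms $\beta_n$ of $X\times[0,1]$ (piecewise-linear rescalings of the fibers, glued using the Katětov--Tong insertion theorem) converging to a homeomorphism $\beta$ so that in $Y'=\beta[Y]$ the representative fiber $K_U=(\{x_U\}\times[0,1])\cap Y'$ \emph{exactly} spans $[\mini'_U,\maxi'_U]$ for every $U$ in every $\mathcal U_n$. Then every box over $U$ meeting $Y'$ also meets $K_U$, so the $U$-family of boxes is a genuine chain witnessed by the single connected component through $x_U$; $P_n$ becomes a $\le$-convex substructure of a product structure, hence lies in $\diamonds$, and the branch-lifting criterion \Cref{iffchain} applies to the inclusions because every branch of $P_n$ is covered by the branch of $P_m$ containing $K_U$. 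One also needs to choose the horizontal subdivision points outside the countable set of values $\mini_U,\maxi_U$, so that the endpoints of $K_U$ fall in interiors of boxes --- your $T_n$ do not guarantee this, and it matters for the interior-based statements in (c) and for irreducibility. Exact, rather than approximate, attainment by $x_U$ is what makes the construction go through; \Cref{usualclaimnew} is only the first step toward it.
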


\begin{proof}
By \Cref{smoothembedd} and the remark following it, we can assume that $Y=D_{\mini}^{\maxi}$ for a closed, non-empty $\basis \subseteq \Can $ and a fancy pair $(\mini, \maxi)$ of functions on $\basis$, such that $\preceq$ coincides with the product order $\trianglelefteq$ on $\basis \times [0, 1]$.
We can furthermore assume that $\mini(x) >0, \maxi(x)<1$ for all $x \in \basis$.
Let $\dist$ be the product metric on $\basis \times [0, 1]$.

We first define a homeomorphic copy $Y' = D_{\mini'}^{\maxi'}$ of $Y$ in $X \times (0, 1)$ and a sequence $(\mathcal{U}_{n})_{n \in \N}$ of partition of $\basis$ such that for any $n \in \N$ and $U \in \mathcal U_{n}$, there is $x_{U} \in U$ such that:
\[
\mini'(x_{U}) = \min\setnew{ \mini'(x)}{x\in U}, \qquad
\maxi'(x_{U}) = \max\setnew{ \maxi'(x)}{x\in U}.
\]
This allows us to find a sequence of coverings of $Y'$ which in turn give rise to the $P_{n}$'s.

Let $\mathcal U_{0} = \set{\basis}$ be the trivial clopen partition of $\basis$ and $\beta_{0}: \basis \times \inter \to \basis \times \inter$ be the identity.
Suppose one has defined a clopen partition $\mathcal U_{n}$ of $\basis$ and a homeomorphism $\beta_{n}: \basis \times \inter \to \basis \times \inter$.
Let $\mini^{n}, \maxi^{n}$ be such that $D_{\mini^{n}}^{\maxi^{n}} = \beta_{n}[Y]$.
For any clopen set $U \subseteq X$, denote
\[
\mini_U^n=\min_{x \in U} \mini^{n}(x),\qquad \maxi_U^n= \max_{x \in U} \maxi^{n}(x).
\]
Let $\mathcal U_{n+1}$ refine $\mathcal U_{n}$, have mesh less than $ \frac 1{n+1} $, and satisfy \Cref{usualclaimnew} for $\beta_{n}[Y]$ and $\varepsilon = \nicefrac{1}{2^{n+1}}$.
For each $U \in \mathcal U_{n+1}$ fix $x_{U}$ given by \Cref{usualclaimnew}, additionally we can ask that if $\mini^{n}\restr{U} \ne  \maxi^{n}\restr{U}$, then $\mini^{n}(x_U) < \maxi^{n}(x_U)$.

For any $\ell \in \N$ and any two increasing sequences of real numbers $0< a_{0} < \cdots < a_{\ell -1}< 1$ and $0< b_{0} < \cdots < b_{\ell -1}< 1$, let $\pwlin{ \vec a}{\vec b} : \inter \to \inter$ be the piecewise linear function mapping $0 \mapsto 0, 1 \mapsto 1, a_{i} \mapsto b_{i}$ for each $i < \ell$:
\[
\pwlin{ \vec a}{\vec b}(y)=
\begin{cases}
	\dfrac{b_{0}}{a_{0}}y & \text{ if } y \le a_{0}, \\
	\dfrac{b_{i+1} - b_i}{a_{i+1} - a_i}y + \dfrac{b_{i}a_{i+1}-a_{i}b_{i+1}}{a_{i+1} - a_i} & \text{ if } a_{i} < y \le a_{i+1},  i <\ell -1, \\
	\dfrac{1-b_{\ell-1}}{1-a_{\ell-1}}y + \dfrac{b_{\ell-1} - a_{\ell-1}}{1-a_{\ell-1}} & \text{ if }  y > a_{\ell-1}.
\end{cases}
\]
Note that, for fixed $\ell $, this is a continuous function of the variables $a_0,\ldots ,a_{\ell -1},y$.

If for each $x \in U$, $\mini^{n}(x) = \maxi^{n}(x)$, then $\mini^{n} \restr{U} = \maxi^{n} \restr{U}: U \to [0, 1]$ is a continuous function, as it is both l.s.c.\ and u.s.c..
If follows that if we fix $x_{U} \in U$ and define $\alpha_{U}: U \times \inter \to U \times \inter$ as $\alpha_{U} (x, y) = \left(x,  \pwlin{\mini^{n}(x)}{\mini^{n}(x_{U})}(y)\right)$, then $\alpha_{U}$ is a homeomorphism.
Notice that, in this case, $\alpha_U$ sends $\beta_n[Y]\cap (U\times [0,1])$ onto $U\times \set*{ \mini^n(x_U)} $; in particular, if $\beta_n[Y]\cap (U\times [0,1])=U\times \set*{ \mini^n(x_U)} $, then $\alpha_U$ is the identity.

If, on the other hand, $x_{U} \in U$ is such that $\mini^{n}(x_{U}) < \maxi^{n}(x_{U})$, we define the functions $f_{U}, g_{U}, f'_{U}, g'_{U}: U \to (0, 1)$ as follows:
\begin{align*}
f_{U}(x) &= \begin{cases}
	\mini^{n}_{U} & \text{ if } x \ne x_{U} \\
	\mini^{n}(x_{U}) & \text{ if } x = x_{U}
	\end{cases} \\
g_{U}(x) &= \min \set*{\mini^{n}(x), \mini^{n}(x_{U}) }\\
f'_{U}(x) &= \begin{cases}
	\maxi^{n}_{U} & \text{ if } x \ne x_{U} \\
	\maxi^{n}(x_{U}) & \text{ if } x = x_{U}
	\end{cases} \\
g'_{U}(x) &= \max \set*{\maxi^{n}(x), \maxi^{n}(x_{U}) }
\end{align*}

It is immediate by their definitions that $f_{U}, g'_{U}$ are u.s.c., $g_{U}, f'_{U}$ are l.s.c., and that:
\[
\mini^{n}_{U} \le f_{U} \le g_{U} \le \mini^{n}(x_{U}) < \maxi^{n}(x_{U}) \le g'_{U} \le f'_{U} \le \maxi^{n}_{U}.
\]
By the Katětov–Tong insertion theorem there are $h_{U}, h'_{U}: U \to (0, 1)$ continuous, such that $f_{U} \le h_{U} \le g_{U}$ and $g'_{U} \le h'_{U} \le f'_{U}$.

We define $\alpha_{U}: U \times \inter \to U \times \inter$ to be:
\[
\alpha_{U}(x, y) = \left( x, \pwlin{h_{U}(x) , h'_{U}(x)}{\mini^{n}_{U}, \maxi^{n}_{U}}(y) \right).
\]
Then $\alpha_{U}$ is a homeomorphism.

Define $\alpha_{n} = \bigsqcup_{U \in \mathcal U_{n+1}} \alpha_{U}$, so  $\alpha_{n} \in \homeo(\basis \times \inter)$.
Finally let $\beta_{n+1} = \alpha_{n} \beta_{n}$ and $ \mini^{n+1}, \maxi^{n+1}$ be such that $\beta_{n+1}[Y]=D_{ \mini^{n+1}}^{ \maxi^{n+1} }$.
Notice that for any $U \in \mathcal U_{n+1}$
\begin{equation} \label{boundsarethesame}
\mini^{n+1}(x_{U}) = \mini_U^n= \mini^{n+1}_{U}\quad \text{and} \quad \maxi^{n+1}(x_{U}) = \maxi_U^n= \maxi^{n+1}_{U}.
\end{equation}
Let $(x, y), (x, y') \in \beta_{n}[Y]$, and suppose that $x \in U \in \mathcal U_{n+1}$, $y \le y'$.
Then $\mini^{n}_{U} \le  h_{U}(x)  \le y \le y' \le h'_{U}(x) \le \maxi^{n}_{U}$ so:
\[
\pwlin{h_{U}(x) , h'_{U}(x)}{\mini^{n}_{U}, \maxi^{n}_{U}}(y') -\pwlin{h_{U}(x) , h'_{U}(x)}{\mini^{n}_{U}, \maxi^{n}_{U}}(y) =
\dfrac{\maxi_{U}^{n} - \mini_{U}^{n}}{h'_{U}(x) - h_{U}(x)} (y'-y) \ge y' - y,
\]
that is, $\dist( (x, y), (x, y') ) \le \dist( \alpha_{U}(x, y),  \alpha_{U}(x, y'))$.
It follows that for $(x, y), (x, y') \in Y$:
\begin{equation}
	\label{eq:noncontracting}
\dist( (x, y), (x, y') ) \le \dist( \beta_{n+1}(x, y),  \beta_{n+1}(x, y')).
\end{equation}

We prove that the sequence $(\beta_{n})_{n \in \N}$ is Cauchy with respect to the supremum metric $\dist_{\sup}$.
Indeed, for each $n$, $\dist_{\sup}(\mathrm{id}, \alpha_{n}) < \nicefrac{1}{2^{n+1}}$ by the definition of the points $x_U$.
By right invariance of the supremum metric and the triangle inequality, whenever $n<m$,
\begin{multline*}
\dist_{\sup}( \beta_{n}, \beta_{m})= \dist_{\sup}( \beta_{n}, \alpha_{m-1} \cdots \alpha_{n} \beta_{n}) = \dist_{\sup}(\mathrm{id}, \alpha_{m-1} \cdots \alpha_{n})\le \\
\le \dist_{\sup}(\mathrm{id}, \alpha_{m-1} ) + \cdots + \dist_{\sup}(\mathrm{id}, \alpha_{n}) < \sum_{i = n+1}^{m} \nicefrac{1}{2^{i}} < \nicefrac{1}{2^{n}}.
\end{multline*}
It follows that for each $\varepsilon$, there is $n$ such that for each $m>n$, $\dist_{\sup}(\beta_{n}, \beta_{m}) < \varepsilon$.

Since the space of continuous functions from $X \times \inter$ in itself with the supremum metric is complete, the sequence $(\beta_{n})_{n \in \N}$ has a limit, which we denote by $\beta$.
Since it is the limit of surjective functions, $\beta$ is surjective.
We prove that it is injective on $Y$, that is, that its restriction to $Y$ is a homeomorphism onto $Y'=\beta [Y]$.

Let $(x, y), (x', y') \in Y $.
If $x \neq x'$, then $\beta(x, y) \neq \beta(x', y')$ as $\beta$ is the identity on the first coordinate.
So suppose $x=x'$.
Since \eqref{eq:noncontracting} holds for each $n \in \N$, we have that $\dist( (x, y), (x, y') ) \le \dist( \beta(x, y),  \beta(x, y'))$, so $\beta$ is injective on $Y$.

By \eqref{boundsarethesame} it follows that $Y'\subseteq X\times [ \mini_X, \maxi_X]\subseteq X\times (0,1)$.
Notice that $x \trianglelefteq x'$ if and only if $\beta(x) \trianglelefteq \beta(x')$.
Let $\mini', \maxi'$ be such that $D_{\mini'}^{\maxi'} = Y'$.
As above, for any clopen set $U \subseteq X$, denote $\mini'_U=\min_{x \in U} \mini'(x)$ and $\maxi'_U= \max_{x \in U} \maxi'(x)$.
For any $n \in \N$ and $U \in \mathcal U_{n+1}$, $\mini'(x_{U}) = \mini'_{U}$ and $\maxi'(x_{U}) = \maxi'_{U}$.
This is clear if $\mini^{n} \restr{U} = \maxi^{n} \restr{U}$.
Otherwise, we have seen that $\mini^{n+1}(x_{U}) = \mini^{n+1}_{U}$.
Assume that $ \mini^r(x_U)= \mini_U^r$ for some $r\ge n+1$.
Given any $U'\in \mathcal U_{r+1}$ with $U'\subseteq U$, by \eqref{boundsarethesame} it follows that $ \mini^r(x_U)\le \mini_{U'}^r= \mini_{U'}^{r+1}$, whence $ \mini^r(x_U)= \mini_U^r= \mini_U^{r+1}$ and, in particular, $\forall r\ge n+1$ we have $ \mini^r(x_U)= \mini^{n+1}(x_U)= \mini_U^r$, which allows to conclude $ \mini'(x_U)= \mini^{n+1}(x_U)= \mini'_U$.
Similarly, $ \maxi'(x_U)= \maxi'_U$.

Let
$
K_{U} = \setnew*{(x_{U}, y)}{\mini'_{U} \le y \le \maxi'_{U}} =  (\set{x_{U}} \times \inter) \cap Y'.
$

Let $x_{0} = 0, x_{1}=1$.
Let $\Theta = \setnew*{x_{\nicefrac{m}{2^{n}}} }{n\ge 1, 1\le m<2^{n} }$ be a countable dense subset of $(0, 1)\setminus \setnew{\mini_{U}, \maxi_{U}}{U \in \mathcal U_{n}, n \in \N}$, indexed in such a way that $x_{p} < x_{q}$ if and only if $p<q$.

For $n\ge 0$, let:
\[
\mathcal I_{n} = \setnew*{\left[x_{\nicefrac{m}{2^{n}}}, x_{\nicefrac{(m+1)}{2^{n}}}\right]}{0\leq m\le 2^{n}-1} .
\]
Then define:
\[
 \mathcal C_{n} = \setnew*{U \times I}{U \in \mathcal U_{n}, I \in \mathcal I_{n}}.
\]

Notice that for each $n$:
\begin{enumerate}
\item $ \mathcal C_n$ is a regular quasi-partition of $ X \times [0, 1]$,
\item \label{uniquerefinement} $\forall C\in \mathcal C_{n+1}\ \exists !C'\in \mathcal C_n\ C\subseteq C'$.
\end{enumerate}
The mesh of $\mathcal C_{n}$ tends to $0$ as $n$ grows, since $\Theta $ is dense and the mesh of $ \mathcal U_n$ goes to $0$.
Endow each $\mathcal C_{n}$ with the discrete topology and give $\mathcal C_{n}$ an $\mathcal L_R$-structure by letting
\begin{itemize}
\item
$C \binR^{\mathcal C_{n}} C'$ if and only if $C \cap C' \neq \emptyset$,
\item
$C \le^{\mathcal C_{n}} C'$ if and only if there are $x \in \interior(C), x' \in \interior(C')$ with $x\trianglelefteq x'$.
\end{itemize}
Then $\mathcal C_{n} \in \diamonds$.
Notice that $C, C'$ are $\le^{\mathcal C_{n}}$-comparable if and only if $\pi_{1}[C] = \pi_{1}[C']$, where $\pi_1$ is the projection onto $X$.

For each $n$, define
\[
P_{n} = \setnew*{C \in \mathcal C_{n}}{C\cap Y' \neq \emptyset}
\]
and have it inherit the $\mathcal L_{R}$-structure of $\mathcal C_{n}$.

\begin{claim}
$P_n= \setnew*{C\in \mathcal C_n}{C\cap K_{\pi_1[C]}\ne\emptyset } $.
\end{claim}
\begin{proof}
If $C\in \mathcal C_n$ is such that $C\cap Y'\ne\emptyset $, let $(x,y)\in C\cap Y'$.
As $\mini'(x_{\pi_1[C]}) = \mini'_{\pi_1[C]}$ and $\maxi'(x_{\pi_1[C]}) = \maxi'_{\pi_1[C]}$, it follows that $(x_{\pi_1[C]},y)\in C\cap K_{\pi_1[C]}$.
\end{proof}

If $U\in \mathcal U_n$, the projections of endpoints of $K_{U}$ on the second coordinate
do not belong to $\Theta $.
This implies that if $C\cap Y'\ne\emptyset $, then actually $ \interior (C)\cap K_{\pi_1[C]}\ne\emptyset $.

\begin{claim}
	\label{cl:inFzero}
$P_{n} \in \diamonds$ and $C \le^{P_{n}} C'$ if and only if there are $x \in \interior (C)\cap K_{\pi_1[C]}, x' \in \interior (C')\cap K_{\pi_1[C']}$, such that $x \trianglelefteq x'$.
\end{claim}
\begin{proof}[Proof of the claim]
Let $C, C' \in P_{n}$, they are $\le^{P_{n}}$-comparable if and only if $U = \pi_{1}[C]=\pi_{1}[C']$, so if and only if $C\cap K_{U} \neq \emptyset$, $C'\cap K_{U} \neq \emptyset$, if and only if $ \interior (C)\cap K_U\ne\emptyset , \interior (C')\cap K_U\ne\emptyset $.
In particular $C \le^{P_{n}} C'$ if and only if there are $x \in \interior (C)\cap K_{U}, x'\in \interior (C')\cap K_{U}$, with $x \trianglelefteq x'$.

So suppose $C, C' \in P_{n}$ and $D \in \mathcal C_{n}$ with $C \le^{\mathcal C_{n}} D \le^{\mathcal C_{n}} C'$.
Then $K_{\pi_1[D]}\cap D\ne\emptyset $, so $D\in P_n$.
Therefore $P_{n}$ is a $\le^{ \mathcal C_n}$-convex substructure of $\mathcal C_{n}$, so $P_{n} \in \diamonds$.
\end{proof}

For each $n \in \N$ and $m\ge n$, let $\fhi^{m}_{n}:P_{m} \to P_{n}$ be the inclusion map, that is $\fhi^{m}_{n}(C) = D$ if and only if $C \subseteq D$.
Notice that this is well defined as $\forall C\in \mathcal C_{m}\ \exists ! D\in \mathcal C_n\ C\subseteq D$ and
\[
C \in P_{m}\Rightarrow C\cap Y' \neq \emptyset\Rightarrow D\cap Y' \neq \emptyset\Rightarrow D\in P_n.
\]
Clearly $\fhi^{m}_{n} = \fhi^{n+1}_{n} \cdots \fhi^{m}_{m-1}$ for $n<m$.

\begin{claim}
Each $\fhi^{m}_{n}$ is an epimorphism.
\end{claim}
\begin{proof}[Proof of the claim]
We prove that $\fhi^{m}_{n}$ is $\mL_{R}$-preserving.
Indeed, notice that $C \cap C' \neq \emptyset$ implies that $\fhi^{m}_{n}(C) \cap \fhi^{m}_{n}(C') \neq \emptyset$, so $C \binR^{P_{m}} C'$ implies $\fhi^{m}_{n}(C) \binR^{P_{n}} \fhi^{m}_{n}(C')$.
Moreover, if $x \in \interior(C) \cap K_U$ then $x \in \interior(\fhi^{m}_{n}(C))\cap K_U$, so $C \le^{P_{m}} C'$ implies $\fhi^{m}_{n}(C) \le^{P_{n}} \fhi^{m}_{n}(C')$.

Let $B \in \branches(P_{n})$ and let $U \in \mathcal U_{n}$ be such that $C\cap K_U\ne\emptyset $ for every $C \in B$.
Let $B' \in \branches(P_{m})$ be such that $K_{U} \subseteq \bigcup B'$.
Then $\fhi^{m}_{n}[B']=B$.
We conclude by \Cref{iffchain}.
\end{proof}

We have thus established that  $(P_{n}, \fhi_{n}^m)$ is a projective sequence.
Let $ \mathbb P $ denote its projective limit.

\begin{claim}
	\label{isfine}
The projective sequence $ (P_{n}, \fhi_{n}^m) $ is fine.
\end{claim}

\begin{proof}[Proof of the claim]
Relation $R^{ \mathbb P }$ is reflexive and symmetric, since all $R^{P_n}$ are.

To conclude use \Cref{lemmafour}, the fact that the mesh of $(P_n)$ goes to $0$, and the fact that elements of $P_n$ are $R^{P_n}$-related if and only if their distance is $0$.
\end{proof}

Then $ \quot{ \mathbb P }{R^{ \mathbb P }}$ is homeomorphic to $Y'$.
Indeed, let $f: \mathbb P \to Y'$ be the continuous
map defined by letting $f( (C_{n})_{n\in \N } )$ be the unique element of $ \bigcap_{n \in \N } C_n$.
Notice that $f$ is well defined since the mesh of the $P_{n}$'s goes to $0$, and $\bigcap_{n \in \N } C_n \subseteq Y'$ as $C_{n} \cap Y' \neq \emptyset$, for each $n$, and $Y'$ is closed.
Moreover $f$ is surjective, since each $P_{n}$ is a covering of $Y'$.
Also $f((C_{n})_{n\in \N } ) = f((C'_{n})_{n\in \N } )$ if and only if $\bigcap_{n \in \N } C_n = \bigcap_{n \in \N } C'_{n}$ if and only if $C_{n} \binR^{P_{n}} C'_{n}$ for each $n$, if and only if $(C_{n})_{n\in \N } \binR^{ \mathbb P } (C'_{n})_{n\in \N }$, so $f$ induces a homeomorphism $g': \quot{ \mathbb P }{R^{ \mathbb P }}\to Y'$.
Then $g = \beta^{-1} g': \quot{ \mathbb P }{R^{ \mathbb P }} \to Y$ is the desired homeomorphism.

Finally, we prove the statements a), b), and c).

a)
To apply \Cref{singletonsdenseiffirreducible}, it is enough to prove that for every $n\in \N , D\in P_n$, the set $ \fhi_n^{-1}(D)$ contains a point whose $R^{ \mathbb P }$-equivalence class is a singleton.
Since $Q=\bigcap_{m\in \N} \bigcup_{C\in P_m}( \interior(C)\cap Y')$ is dense in $Y'$, let $x\in Q\cap \interior (D)$.
Then for each $m$ there is exactly one $C_m\in P_m$ to which $x$ belongs, so $f^{-1}(x) = \set{ (C_m)_{m\in \N}}$ and the point $(C_m)_{m\in \N }$ is not $R^{ \mathbb P }$-related to any other point; moreover $(C_m)_{m\in \N }\in \fhi_n^{-1}(D)$.

b) We prove that function $g$ defined above is an isomorphism of the orders $\le^{\mathbb P/R^{\mathbb P}} , \preceq$.

Let $x, y\in \quot{ \mathbb P }{R^{ \mathbb R }} $ be distinct and such that $x\le^{\mathbb P/R^{\mathbb P}} y$.
Let $u\in p^{-1}(x), v\in p^{-1}(y)$.
Then $u, v$ are distinct and $u\le v$.
Moreover $\bigcap_{n\in \N } \fhi_n(u)= \set{g(x)}$, $\bigcap_{n\in \N } \fhi_n(v)= \set{g(y)} $.
By the definition of $ \fhi_n(u)\le^{P_n} \fhi_n(v)$ it follows that there exist $w_n\in \interior ( \fhi_n(u)), z_n\in \interior (\fhi_n(v))$ such that $w_n\preceq z_n$.
Since $\lim_{n\rightarrow\infty }w_n=g(x)$, $\lim_{n\rightarrow\infty }z_n=g(y)$, we conclude $g(x)\preceq g(y)$.

If $x, y\in \quot{ \mathbb P }{R^{ \mathbb R }} $ are $\le^{\mathbb P/R^{\mathbb P}} $-incomparable, if $u\in p^{-1}(x), v\in p^{-1}(y)$ it follows that $u, v$ are $\le^{ \mathbb P }$-incomparable.
Consequently, there exists $n\in \N $ such that $ \fhi_n(u), \fhi_n(v)$ are $\le^{P_n}$-incomparable, implying that $g(x), g(y)$ are $\preceq$-incomparable.

c)
This follows by point b) and \Cref{cl:inFzero}.

\end{proof}

As mentioned in the introduction, in \cite{Bartos2015} the Lelek fan is obtained as a quotient of the projective Fraïssé limit of a subclass of $\forests$.
In particular, the Lelek fan is approximable by a fine projective sequence from $\forests$.
We therefore raise the following question, an answer to which would involve proving analogs of \Cref{thmpidiamondfence,smoothfencespidiamond} for $\forests$.
\begin{question}
What is the class of spaces which are approximable by fine projective sequences from $\forests$?
\end{question}

\subsection{Spaces of endpoints of smooth fences}

Given a smooth fence $Y$ and a strongly compatible order $\preceq$ on $Y$, let $ \mathfrak L_{\preceq} (Y), \mathfrak U_{\preceq} (Y)$ be the space of $\preceq $-minimal points of $Y$ and the space of $\preceq $-maximal points of $Y$, respectively.
By the definition of a strongly compatible order, in these sets are contained all endpoints of $Y$:
\[
\mathrm{E} (Y)= \mathfrak L_{\preceq} (Y) \cup \mathfrak U_{\preceq} (Y).
\]
Notice that $x \in \mathfrak L_{\preceq} (Y) \cap \mathfrak U_{\preceq} (Y)$ if and only if $\set{x}$ is a connected component of $Y$.
When the order $\preceq$ is clear from context we suppress the mention of it in $ \mathfrak L_{\preceq} (Y)$ and $\mathfrak U_{\preceq} (Y)$.

\begin{remark}
	\label{rmk:endpointsgraphs}
By \Cref{smoothembedd}, $Y$ is homeomorphic to $D^{\maxi}_{\mini}$ for some fancy pair $(\mini, \maxi)$ of functions with domain a closed subset of $\Can$.
It follows that $\mathfrak L_{\preceq} (Y), \mathfrak U_{\preceq} (Y)$ are homeomorphic to the graphs of $\mini, \maxi$, respectively.
\end{remark}

In this subsection we establish some topological properties of spaces of endpoints of smooth fences.
In particular, we concentrate on the spaces $\mathfrak L_{\preceq} (Y)$, $\mathfrak U_{\preceq} (Y)$, $\mathfrak L_{\preceq} (Y) \cap \mathfrak U_{\preceq} (Y)$.
We therefore fix a smooth fence $Y$ and a strongly compatible order $\preceq$.
By \Cref{smoothfencespidiamond} we can assume that $Y= \quot{ \mathbb P }{R^{ \mathbb P }}$ for some fine projective sequence $(P_n, \fhi_n^m)$ in $\diamonds$ with projective limit $ \mathbb P $, and that $\preceq$ is $\le^{ \mathbb P /R^{ \mathbb P }}$. Let $p: \mathbb P \to \quot{ \mathbb P }{R^{ \mathbb P }} $ be the quotient map.

\begin{lemma}\label{endpointprop}
A point $u \in \mathbb P $ is $\le^{ \mathbb P }$-maximal if and only if for each $n \in \N$ there exists $m>n$ such that $ \fhi_{n}^{m}\left(\max \setnew{a \in P_{m}}{ \fhi_{m}(u) \le a} \right)= \fhi_{n}(u)$.
Analogously, $u \in \mathbb P $ is $\le^{ \mathbb P }$-minimal if and only if for each $n \in \N$ there exists $m>n$ such that $ \fhi_{n}^{m}\left(\min \setnew{a \in P_{m}}{a \le \fhi_{m}(u)} \right)= \fhi_{n}(u)$.
\end{lemma}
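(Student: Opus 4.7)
The plan is to establish each direction separately, and only for maxima since the minima case is symmetric. A preliminary observation: because $P_m \in \diamonds$, every element lies in a unique maximal chain, so the set $\setnew{a \in P_m}{ \fhi_m(u) \le a}$ is a finite chain and the element $b_m := \max \setnew{a \in P_m}{ \fhi_m(u) \le a}$ is well-defined. Moreover, because $\fhi_n^m$ is order preserving, the condition $\fhi_n^m(b_m) = \fhi_n(u)$ is equivalent to saying that $\fhi_n^m$ sends every $a\in P_m$ with $\fhi_m(u)\le a$ to $\fhi_n(u)$.

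For the easy direction ($\Leftarrow$), I would suppose the condition holds and take any $v \in \mathbb P$ with $u \le v$. For each $m$ we have $\fhi_m(u) \le \fhi_m(v)$, hence $\fhi_m(v) \le b_m$ by maximality of $b_m$ in the chain. Fixing $n$ and choosing $m>n$ as in the hypothesis, applying $\fhi_n^m$ gives $\fhi_n(u) \le \fhi_n(v) \le \fhi_n^m(b_m) = \fhi_n(u)$, so $\fhi_n(u)=\fhi_n(v)$ for every $n$, whence $u=v$. Thus $u$ is $\le^{\mathbb P}$-maximal.

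For the converse ($\Rightarrow$), I would argue by contraposition: assume there exists $n$ such that for every $m > n$ one has $\fhi_n^m(b_m) \ne \fhi_n(u)$, and produce $v > u$ in $\mathbb P$. For each such $m$ pick $v_m \in \fhi_m^{-1}(b_m)$; by compactness of $\mathbb P$, extract a subsequence $v_{m_k}$ converging to some $v \in \mathbb P$. For a fixed level $\ell$, the values $\fhi_\ell(v_{m_k}) = \fhi_\ell^{m_k}(b_{m_k})$ dominate $\fhi_\ell(u)$ (since $\fhi_{m_k}(u) \le b_{m_k}$ and $\fhi_\ell^{m_k}$ is order preserving), so $\fhi_\ell(v)\ge \fhi_\ell(u)$, giving $u \le v$ in $\mathbb P$. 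Taking $\ell=n$, the values $\fhi_n(v_{m_k}) = \fhi_n^{m_k}(b_{m_k}) \ne \fhi_n(u)$ are eventually constantly equal to $\fhi_n(v)$, so $\fhi_n(v) \ne \fhi_n(u)$ and $v \ne u$. This contradicts maximality of $u$.

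The minimum case follows by dualizing: replace $\le^{P_m}$ with its reverse throughout and use $\min$ in place of $\max$; equivalently, one can observe that the statement for minima is just the statement for maxima applied to the fine projective sequence with reversed partial orders, which is again in $\diamonds$. The only real subtlety is the compactness extraction in ($\Rightarrow$), which is routine given that each $P_\ell$ is finite so projections stabilize along any subsequence.
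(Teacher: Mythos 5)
Your proposal is correct and follows essentially the same route as the paper: the backward direction is identical, and the forward direction likewise extracts a convergent subsequence from preimages of the $b_m$ and derives a contradiction with maximality. The only (harmless) difference is that you verify $u\le^{\mathbb P}v$ coordinate-wise via stabilization of the projections, whereas the paper picks auxiliary points $u_m\in\fhi_m^{-1}(\fhi_m(u))$ with $u_m\le v_m$ and invokes closedness of the order.
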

\begin{proof}
Suppose $u$ is $\le^{ \mathbb P }$-maximal and fix $n \in \N$.
For $m>n$, let $b_{m} =\max \setnew{a \in P_{m}}{ \fhi_{m}(u) \le a}$.
If for every $m>n$ it holds that $ \fhi_{n}^{m}(b_m)> \fhi_{n}(u)$, let $v_{m} \in \fhi_{m}^{-1}(b_{m}), u_{m} \in \fhi_{m}^{-1}( \fhi_{m}(u))$ be such that $u_{m} \le v_{m}$.
A subsequence $v_{m_k}$ converges to some $v$.
It follows that $u \le^{ \mathbb P } v$, as $u=\lim_{m\rightarrow\infty }u_{m}$ and the order is closed, and $u \neq v$ as $ \fhi_n(v_{m})\ne \fhi_{n}(u)$, for any $m > n$, a contradiction with the maximality of $u$.

Conversely, let $u \in \mathbb P $ be such that for each $n \in \N$ there exists $m>n$ such that $ \fhi_{n}^{m}\left(\max \setnew{a \in P_{m}}{ \fhi_{m}(u) \le a} \right)= \fhi_{n}(u)$ and let $u \le^{ \mathbb P } v$.
Fix $n$, with the objective of showing $ \fhi_{n}(u) = \fhi_{n}(v)$.
Let $m>n$ satisfy the hypothesis;
notice that it implies that $ \fhi_{n}^{m}[\setnew{a \in P_{m}}{ \fhi_{m}(u) \le a}] =\{ \fhi_{n}(u)\} $.
From $u \le v$ it follows that $ \fhi_{m}(u) \le \fhi_{m}(v)$ so $ \fhi_{n}(v) = \fhi^{m}_{n} \fhi_{m}(v) =  \fhi_{n}(u)$.

The case of $u$ $\le^{ \mathbb P }$-minimal is symmetrical.
\end{proof}

\begin{corollary}\label{upperlower}
Given $x \in \mathfrak U \left (
Y
\right ) $ and any open neighborhood $O$ of $x$ in $ Y $, for $m$ big enough the following holds: if $B_m\in \branches(P_m)$ is such that $x\in\bigcup_{a\in B_m} \cappello{a}_{ \fhi_m}$, then $\cappello{\max B_m}_{ \fhi_m}\subseteq O$.
Consequently, $\lim_{m\rightarrow\infty } \cappello{\max B_m}_{ \fhi_m}= \set{x}$.

The same holds for $x \in \mathfrak L \left (
Y
\right )$, upon changing $\max$ to $\min$.
\end{corollary}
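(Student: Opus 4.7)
The plan is to prove the stronger statement $\lim_{m\to\infty}\cappello{\max B_m}_{\fhi_m}=\set{x}$ in $\Exp(Y)$; the neighborhood assertion is then just the unpacking of Vietoris convergence to the singleton $\set{x}$, so I would treat only the case $x\in\mathfrak U(Y)$, the $\mathfrak L$ case being symmetric.

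First I would lift $x$ to a $\le^{\mathbb P}$-maximal $u\in p^{-1}(x)$. This is possible because $\preceq$-maximality of $x$ forces every $\le^{\mathbb P}$-successor of any point of $p^{-1}(x)$ to lie in $p^{-1}(x)$, and by \Cref{fineclassesconvex} the class $p^{-1}(x)$ is a chain of at most two elements whose $\le^{\mathbb P}$-top is thus $\le^{\mathbb P}$-maximal in $\mathbb P$. For such a $u$, the branch $B_m$ referenced in the statement is precisely the unique maximal chain of $P_m$ containing $\fhi_m(u)$ (any $v\in p^{-1}(x)$ has $\fhi_m(v)$ equal or $R^{P_m}$-related to $\fhi_m(u)$, hence in the same branch), and $\max B_m$ coincides with $b_m:=\max\setnew{a\in P_m}{\fhi_m(u)\le a}$.

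The main step is to upgrade \Cref{endpointprop} to an eventual statement: for every $n\in\N$, the equality $\fhi_n^m(b_m)=\fhi_n(u)$ holds for all $m$ sufficiently large. Monotonicity delivers this: if $m'\ge m$ then $\fhi_m^{m'}(b_{m'})\ge\fhi_m(u)$ puts $\fhi_m^{m'}(b_{m'})$ into the set defining $b_m$, so $\fhi_m^{m'}(b_{m'})\le b_m$ and hence $\fhi_n^{m'}(b_{m'})\le\fhi_n^m(b_m)$; meanwhile $\fhi_n^m(b_m)\ge\fhi_n(u)$ always holds. Starting from an $m_0$ witnessing \Cref{endpointprop} at level $n$, these inequalities pinch $\fhi_n^m(b_m)=\fhi_n(u)$ for all $m\ge m_0$. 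This propagation step is the crux of the argument, since \Cref{endpointprop} only supplies a cofinal set of good $m$'s rather than a tail.

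Finally I would conclude in $\Exp(Y)$. By \Cref{generalprop}(2), $\diam(\cappello{\max B_m}_{\fhi_m})\to 0$, so every Vietoris cluster point of $(\cappello{\max B_m}_{\fhi_m})_m$ is a singleton $\set{y}$. To identify $y$, pick $w_m\in\fhi_m^{-1}(b_m)$ and pass to a subsequence $w_{m_i}\to w\in\mathbb P$ realising the cluster point, so $y=\lim p(w_{m_i})=p(w)$; the eventual equalities from the previous step give $\fhi_n(w)=\fhi_n(u)$ for every $n$, hence $w=u$ and $y=x$. Uniqueness of the cluster point together with compactness of $\Exp(Y)$ upgrades this to convergence of the full sequence to $\set{x}$.
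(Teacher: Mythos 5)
Your proof is correct and follows essentially the same route as the paper's: lift $x$ to the $\le^{\mathbb P}$-maximal point $u=\max p^{-1}(x)$, identify $\max B_m$ with $\max\setnew{a\in P_m}{\fhi_m(u)\le a}$, apply \Cref{endpointprop}, and propagate the equality $\fhi_n^m(\max B_m)=\fhi_n(u)$ to all sufficiently large $m$ by the monotonicity pinching argument (a step the paper asserts without spelling out). The only cosmetic difference is the order of the conclusions: you first establish the Vietoris limit via \Cref{generalprop}(2) and a compactness argument and then unpack it, whereas the paper obtains the containment $\cappello{\max B_m}_{\fhi_m}\subseteq\cappello{\fhi_n(u)}_{\fhi_n}\subseteq O$ directly and reads off the limit as a consequence.
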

\begin{proof}
Let $u = \max p^{-1}(x)$ and $n \in \N$ be such that $ \cappello{ \fhi_{n}(u)}_{ \fhi_n}\subseteq O$.
By \Cref{endpointprop} there is $m>n$ such that $ \fhi_{n}^{m}(\max B_m)= \fhi_{n}(u)$, for $B_m\in \branches(P_{m})$ with $ \fhi_{m}(u) \in B_m$.
This implies that for all $m'\ge m$ if $B_{m'}\in \branches(P_{m'})$ is such that $ \fhi_{m'}(u)\in B_{m'}$ then $ \fhi_n^{m'}(\max B_{m'})= \fhi_n(u)$.
It follows that eventually $\cappello{\max B_m}_{ \fhi_m}\subseteq \cappello{ \fhi_{n}(u)}_{ \fhi_n}\subseteq O$.
\end{proof}

\begin{corollary}\label{branchforcomponent}
For any connected component $K \subseteq Y$
and any open neighborhood $O$ of $K$ in $ Y $, there are $m \in \N , B \in \branches(P_{m})$ such that $ K \subseteq \bigcup_{a\in B} \cappello{a}_{ \fhi_m}\subseteq O$.
\end{corollary}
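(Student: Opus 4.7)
The plan is to identify $K$ with the image of a maximal chain in $\mathbb P$ and to exhibit a concrete decreasing basis of clopen neighborhoods of $K$ whose intersection is exactly $K$, after which a compactness argument closes the case. By \Cref{thmpidiamondfence}, I can write $K=p[B^*]$ for some maximal chain $B^*\in \branches(\mathbb P)$. For each $m\in\N$, the image $\fhi_m[B^*]$ is a chain in $P_m$; since $P_m\in\diamonds$ has pairwise disjoint maximal chains, there is a unique $B_m\in \branches(P_m)$ containing $\fhi_m[B^*]$. Set $C_m=\bigcup_{a\in B_m}\cappello{a}_{\fhi_m}$. By \Cref{branchesareclopen} each $C_m$ is clopen, and $K\subseteq C_m$ by construction.

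Next I will verify that $(C_m)_{m\in\N}$ is decreasing. For $m'\ge m$, the chain $\fhi_m^{m'}[B_{m'}]\subseteq P_m$ contains $\fhi_m[B^*]$, hence is included in $B_m$ by uniqueness; taking preimages under $\fhi_{m'}$ then gives $\fhi_{m'}^{-1}(B_{m'})\subseteq\fhi_m^{-1}(B_m)$, whence $C_{m'}\subseteq C_m$.

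The central step is to show $\bigcap_m C_m=K$. The key observation is that each $\fhi_m^{-1}(B_m)$ is $R^{\mathbb P}$-invariant: if $u\in\fhi_m^{-1}(B_m)$ and $u\binR^{\mathbb P}v$, then $\fhi_m(u)\binR^{P_m}\fhi_m(v)$, and since in $P_m\in\diamonds$ any two $R^{P_m}$-related elements lie in a common maximal chain, $\fhi_m(v)\in B_m$. Since $p^{-1}(y)$ is an $R^{\mathbb P}$-class, it is either contained in or disjoint from each $\fhi_m^{-1}(B_m)$, so a standard saturation argument yields $\bigcap_m C_m=p[\bigcap_m\fhi_m^{-1}(B_m)]$. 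To show $\bigcap_m\fhi_m^{-1}(B_m)=B^*$, fix $u$ in the intersection and any $v\in B^*$; for every $m$ the elements $\fhi_m(u),\fhi_m(v)$ lie in the same chain $B_m$, so are $\le^{P_m}$-comparable. The sets of indices where $\fhi_m(u)\le\fhi_m(v)$ and where $\fhi_m(v)\le\fhi_m(u)$ are each downward closed under the epimorphisms $\fhi_n^m$, so at least one is cofinite, and pulling back through all $\fhi_n^m$ gives $\le^{\mathbb P}$-comparability of $u$ and $v$. Maximality of $B^*$ then forces $u\in B^*$.

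Finally, from $\bigcap_m C_m=K\subseteq O$, the decreasing sequence of compact sets $C_m\cap(Y\setminus O)$ has empty intersection, so by the finite intersection property some $C_m\cap(Y\setminus O)=\emptyset$, i.e.\ $C_m\subseteq O$, as required. The main obstacle will be proving the equality $\bigcap_m C_m=K$, since a priori the intersection could contain extra equivalence classes; this is precisely where the hypothesis $P_m\in\diamonds$ is essential, both to guarantee uniqueness of $B_m$ and to produce the $R^{\mathbb P}$-invariance of $\fhi_m^{-1}(B_m)$ needed to collapse the intersection back down to $B^*$.
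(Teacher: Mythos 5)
Your argument is correct, and it is genuinely different from the one in the paper. The paper's proof is metric: it sets $\delta$ equal to the distance from $K$ to $Y\setminus O$, picks $n$ with the mesh of $\cappello{P_n}_{\fhi_n}$ below $\delta$, and then invokes \Cref{endpointprop} twice --- once for $\min p^{-1}(K)$ and once for $\max p^{-1}(K)$ --- to manufacture a single $m$ and $B\in\branches(P_m)$ with $\fhi_n(u)\le \fhi_n^m(a)\le \fhi_n(v)$ for all $a\in B$, so that each $\cappello{a}_{\fhi_m}$ sits inside a cell of $\cappello{P_n}_{\fhi_n}$ meeting $K$ and hence inside $O$. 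You instead take the canonical decreasing sequence of clopen sets $C_m$ determined by the unique branches $B_m\supseteq\fhi_m[B^*]$, prove the exact identity $\bigcap_m C_m=K$, and finish by compactness. What your route buys is the elimination of both the mesh estimate and \Cref{endpointprop}: the only nontrivial input is the purely order-theoretic verification that $\bigcap_m\fhi_m^{-1}(B_m)=B^*$, which you handle correctly via the downward-closedness of the two comparability index sets and the maximality of $B^*$ (together with the $R^{\mathbb P}$-saturation of $\fhi_m^{-1}(B_m)$, exactly as in \Cref{branchesareclopen}, to pass the intersection through $p$). What the paper's route buys is a quantitative statement --- it locates the branch at a level controlled by the mesh, which is reused in nearby arguments --- whereas your compactness step is non-effective; but as a proof of the corollary as stated, your version is complete and, if anything, more self-contained.
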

\begin{proof}
It can be assumed that $O\ne Y$.
Fix a compatible metric on $Y$ and let $\delta$ be the distance between $K$ and  $Y \setminus O$.
Let $u = \min p^{-1}(K), v = \max p^{-1}(K)$ and $n \in \N$ be such that the mesh of $ \cappello{P_n}_{ \fhi_n}$ is less than $\delta$, so that if $a \in P_{n}$ is such that $\cappello{a}_{ \fhi_n}\cap K \neq \emptyset$, then $\cappello{a}_{ \fhi_n}\subseteq O$.
By \Cref{endpointprop} there are $m'>n$ and $B' \in \branches(P_{m'})$ with $ \fhi_{m'}(u) \in B'$ and $ \fhi_{n}^{m'}(\min B')= \fhi_{n}(u)$.
By a second application of  \Cref{endpointprop}, there are $m>m', B \in \branches(P_{m})$ such that $ \fhi_{m}(v) \in B, \fhi_{m'}^{m}(\max B)= \fhi_{m'}(v)$, so $ \fhi_{n}^{m}(\max B)= \fhi_{n}(v)$.
Since $ \fhi^{m}_{m'}(\min B) \ge \min B'$, it follows that $ \fhi^{m}_{n}(\min B) \ge \fhi^{m'}_{n}(\min B') = \fhi_{n}(u)$ by virtue of $ \fhi^{m'}_{n}$ being an epimorphism.
If $a \in B$, then $\fhi_{n}(u) \le \fhi^{m}_{n}(a) \le \fhi_{n}(v)$, so $\cappello{\fhi^{m}_{n}(a) }_{ \fhi_n}\cap K \neq \emptyset$, hence $\cappello{a}_{ \fhi_m}\subseteq \cappello{\fhi^{m}_{n}(a)}_{ \fhi_n} \subseteq O$.
It follows that $\bigcup_{a\in B} \cappello{a}_{ \fhi_m}\subseteq O$.
\end{proof}

\begin{proposition}\label{singlezerodimensional}
Each point of $\mathfrak L (Y) \cap \mathfrak U (Y)$ has a basis of neighborhoods in $Y$ consisting of clopen sets.
In particular, the space $ \mathfrak L (Y)\cap \mathfrak U (Y)$ is zero-dimensional.
\end{proposition}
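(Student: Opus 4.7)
The plan is to reduce to the two earlier corollaries. First I would observe that a point $x$ lies in $\mathfrak L(Y)\cap \mathfrak U(Y)$ exactly when the connected component of $Y$ containing $x$ is the singleton $\set{x}$: since $\preceq$ restricts to a total order on each connected component and two elements are $\preceq$-comparable precisely when they share a component, $x$ being both $\preceq$-minimal and $\preceq$-maximal is equivalent to its component having no other element.

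Given such $x$ and an arbitrary open neighborhood $O$ of $x$ in $Y$, I would apply \Cref{branchforcomponent} to the singleton component $K=\set{x}$ to obtain $m\in\N$ and $B\in\branches(P_m)$ such that
\[
\set{x}\subseteq \bigcup_{a\in B}\cappello{a}_{\fhi_m}\subseteq O.
\]
By \Cref{branchesareclopen}, the set $\bigcup_{a\in B}\cappello{a}_{\fhi_m}$ is clopen in $Y$, so it provides a clopen neighborhood of $x$ contained in $O$. As $O$ was arbitrary, $x$ admits a neighborhood basis of clopen sets in $Y$.

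The zero-dimensionality of $\mathfrak L(Y)\cap\mathfrak U(Y)$ then follows immediately: the intersections with $\mathfrak L(Y)\cap\mathfrak U(Y)$ of the clopen neighborhoods of any of its points form a basis of clopen sets for the subspace topology. There is no real obstacle here—the entire argument is simply the combination of the characterization of $\mathfrak L(Y)\cap\mathfrak U(Y)$ as the set of singleton components with the clopen approximation results \Cref{branchforcomponent,branchesareclopen} from the previous subsection.
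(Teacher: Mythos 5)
Your proof is correct and follows essentially the same route as the paper: apply \Cref{branchforcomponent} to the component of $x$ (which is the singleton $\set{x}$, as the paper notes just after defining $\mathfrak L$ and $\mathfrak U$) and then \Cref{branchesareclopen} to get a clopen neighborhood inside $O$. You merely make explicit the observation that the component is a singleton, which the paper's application of \Cref{branchforcomponent} uses implicitly.
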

\begin{proof}

Let $x \in \mathfrak L (Y) \cap \mathfrak U (Y)$ and $O$ be an open neighborhood of $x$ in $Y$.
By \Cref{branchforcomponent} there exist $n \in \N$ and $B \in \branches(P_{n})$ such that $x \in \bigcup_{a\in B} \cappello{a}_{ \fhi_n}\subseteq O$.
By \Cref{branchesareclopen}, $\bigcup_{a\in B} \cappello{a}_{ \fhi_n}$ is clopen in $Y$ and so its trace in $\mathfrak L (Y) \cap \mathfrak U (Y)$ is clopen in $\mathfrak L (Y) \cap \mathfrak U (Y)$.
\end{proof}

Since $\mathfrak L (Y)$ and $\mathfrak U (Y)$ are homeomorphic to graphs of semi-continuous functions with a zero-dimensional domain, by \cite{Dijkstra2010}*{Remark 4.2} we have the following:

\begin{proposition} \label{propluazd}
The spaces $\mathfrak L (Y)$ and $\mathfrak U (Y)$ are almost zero-dimensional.
\end{proposition}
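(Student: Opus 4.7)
The plan is to reduce the statement to a direct invocation of \Cref{rmk:endpointsgraphs} together with the cited external result. First, by \Cref{smoothembedd} we may realize $Y$ as $D_{\mini}^{\maxi}$ for some fancy pair $(\mini, \maxi)$ of functions with domain a closed subset $X\subseteq \Can$, and assume that $\preceq$ is the restriction of $\trianglelefteq$ to $Y$. Under this identification the $\preceq$-minimal points of $Y$ are precisely those of the form $(x, \mini(x))$ with $x \in X$, and symmetrically $\mathfrak U(Y) = \setnew{(x, \maxi(x))}{x \in X}$. Thus $\mathfrak L(Y)$ and $\mathfrak U(Y)$ are homeomorphic (via projection on the first coordinate composed with the graph map) to the graphs of $\mini$ and $\maxi$ respectively, which is exactly the content of \Cref{rmk:endpointsgraphs}.

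Now $\mini$ is l.s.c.\ and $\maxi$ is u.s.c., and their common domain $X$ is a closed subspace of $\Can$, hence zero-dimensional, compact, and metrizable. \cite{Dijkstra2010}*{Remark 4.2} asserts precisely that the graph of a real-valued semi-continuous function whose domain is zero-dimensional (and separable metrizable) is almost zero-dimensional. Applying this to $\mini$ and $\maxi$ yields the conclusion.

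There is essentially no obstacle in this argument: the technical content of almost zero-dimensionality for such graphs is packaged entirely inside Dijkstra's Remark 4.2, and the only work on our side is the identification of $\mathfrak L(Y)$ and $\mathfrak U(Y)$ as graphs of $\mini, \maxi$, which is already recorded. If one wished to be self-contained, the main step one would need to reproduce from \cite{Dijkstra2010} is the construction, around a point $(x_0, \mini(x_0))$ of the graph of $\mini$ and for a prescribed $\varepsilon>0$, of clopen neighborhoods of $x_0$ in $X$ together with finitely many rational levels, whose intersection in $X \times [0,1]$ witnesses that $(x_0, \mini(x_0))$ has a closed neighborhood in the graph expressible as an intersection of clopen subsets of the graph; the l.s.c.\ assumption makes the sets $\setnew{(x,y)}{y \ge q, x \in U}$ and $\setnew{(x,y)}{y < q, x \in U}$ behave suitably in the graph. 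Symmetrically for $\maxi$ using upper semi-continuity.
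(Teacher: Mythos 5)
Your argument is correct and is exactly the paper's proof: the paper also deduces the proposition immediately from \Cref{rmk:endpointsgraphs} (identifying $\mathfrak L(Y)$ and $\mathfrak U(Y)$ with the graphs of $\mini$ and $\maxi$) together with \cite{Dijkstra2010}*{Remark 4.2}. The extra sketch of how one would make Dijkstra's remark self-contained is a reasonable bonus but not needed.
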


\begin{lemma}\label{maxarepolish}
The spaces $\mathfrak L (Y), \mathfrak U (Y)$ are Polish.
\end{lemma}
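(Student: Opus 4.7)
The plan is to invoke \Cref{rmk:endpointsgraphs}, which identifies $\mathfrak L(Y)$ and $\mathfrak U(Y)$ up to homeomorphism with the graphs of the functions $\mini$ (l.s.c.) and $\maxi$ (u.s.c.) respectively, on a closed subset $\basis \subseteq \Can$. Since a closed subset of $\Can$ is Polish and $[0,1]$ is Polish, the product $\basis \times [0,1]$ is Polish. By Alexandrov's theorem, it then suffices to show that both graphs are $G_\delta$ subsets of $\basis \times [0,1]$.

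For the l.s.c. case, I would introduce the auxiliary function $F: \basis \times [0,1] \to \R$ given by $F(x,y) = \mini(x) - y$. Since $\mini$ is l.s.c.\ in $x$ and $-y$ is continuous, $F$ is l.s.c.\ on the product. The graph of $\mini$ coincides with $\{F = 0\} = \{F \le 0\} \cap \{F \ge 0\}$. By l.s.c.\ of $F$, the set $\{F \le 0\}$ is closed, while
\[
\{F \ge 0\} = \bigcap_{n \in \N} \{F > -1/n\}
\]
is a $G_\delta$ as a countable intersection of open sets. Hence the graph of $\mini$ is $G_\delta$, and $\mathfrak L(Y)$ is Polish.

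For the u.s.c.\ case, I would run the symmetric argument: set $G(x,y) = \maxi(x) - y$, which is now u.s.c., so $\{G \ge 0\}$ is closed and $\{G \le 0\} = \bigcap_n \{G < 1/n\}$ is $G_\delta$, giving that the graph of $\maxi$ is $G_\delta$.

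I do not anticipate a substantive obstacle: the result is essentially the classical observation that graphs of semi-continuous real-valued functions on Polish spaces are $G_\delta$, combined with Alexandrov's theorem. The only care needed is to phrase the semi-continuity argument uniformly on the product $\basis \times [0,1]$, which is handled by the auxiliary functions $F, G$.
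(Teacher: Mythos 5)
Your proof is correct, but it takes a genuinely different route from the paper's. The paper works intrinsically on $Y$: it writes $\mathfrak U(Y)$ as the co-projection of $\setnew{(x,y)}{y \preceq x \lor (x \not\preceq y \land y \not\preceq x)}$, observes that this set is the union of a closed set and an open set (hence $G_\delta$) because $\preceq$ is closed, and uses compactness of $Y$ to see that co-projections of open sets are open and that co-projection commutes with countable intersections; this exhibits $\mathfrak U(Y)$ directly as a $G_\delta$ subset of $Y$. You instead route through the embedding of \Cref{rmk:endpointsgraphs}, reducing to the classical fact that the graph of a semi-continuous function into $[0,1]$ is $G_\delta$ in the product, and your verification of that fact (l.s.c.\ of $F(x,y)=\mini(x)-y$, then $\{F=0\}=\{F\le 0\}\cap\bigcap_n\{F>-1/n\}$) is sound. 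What the paper's argument buys is independence from the concrete realization $D_{\mini}^{\maxi}$ — it only needs that $\preceq$ is a closed order on a compact metrizable space — and it yields $\mathfrak U(Y)$ as a $G_\delta$ subset of $Y$ itself, which is what \Cref{stronglysigmacomplete} implicitly uses to see that $\mathfrak U(Y)\setminus\mathfrak L(Y)$ is $F_\sigma$ in a Polish space. Your argument recovers the same information (a Polish subspace of a Polish space is automatically $G_\delta$ by the converse of Alexandrov's theorem), so nothing is lost; it is simply a more computational, coordinate-based version of the same $G_\delta$ claim.
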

\begin{proof}
The set $\mathfrak U(Y) = \setnew{x \in Y}{\forall y \in Y, y \preceq x \lor (x \not \preceq y \land y \not \preceq x)}$ is the co-projection of $\setnew{(x, y)}{y \preceq x \lor (x \not \preceq y \land y \not \preceq x)}$, which is the union of a closed set and an open set of $Y^{2}$, since $\preceq$ is closed.
A union of a closed set and an open set is $G_{\delta}$ and since $Y$ is compact, the co-projection of an open set is open.
Finally, as co-projection and intersection commute, the co-projection of a $G_{\delta}$ is $G_{\delta}$.
We conclude that $\mathfrak U(Y)$ is a $G_{\delta}$ subset of $Y$, thus is Polish.

Similarly for $ \mathfrak L (Y)$.
\end{proof}

\begin{corollary}
\label{singlepolish}
The spaces $ \mathrm{E} (Y)$ and $\mathfrak L(Y) \cap \mathfrak U(Y)$ are Polish.
\end{corollary}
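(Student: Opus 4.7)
The plan is to deduce this directly from Lemma~\ref{maxarepolish}, using that $Y$ is a compact metrizable, hence Polish, space, together with the classical Alexandrov--Urysohn theorem: a subspace of a Polish space is Polish if and only if it is a $G_{\delta}$ subset. By Lemma~\ref{maxarepolish}, both $\mathfrak L(Y)$ and $\mathfrak U(Y)$ are Polish subspaces of $Y$, so both are $G_{\delta}$ in $Y$.

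The main step is then to invoke the standard fact that the class of $G_{\delta}$ subsets of any topological space is closed under countable intersections (obvious) and finite unions (via the identity $\bigcap_{k} U_{k} \cup \bigcap_{k} V_{k} = \bigcap_{k,\ell}(U_{k} \cup V_{\ell})$, valid for any countable open families $(U_{k})_{k\in\N}, (V_{k})_{k\in\N}$). Applying it to $\mathfrak L(Y)$ and $\mathfrak U(Y)$ shows that both $\mathrm{E}(Y) = \mathfrak L(Y) \cup \mathfrak U(Y)$ and $\mathfrak L(Y) \cap \mathfrak U(Y)$ are $G_{\delta}$ subsets of $Y$, hence Polish, again by Alexandrov--Urysohn.

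There is no real obstacle here, as the corollary is a formal consequence of Lemma~\ref{maxarepolish} and standard descriptive set theory. If a self-contained argument avoiding the Alexandrov--Urysohn theorem were preferred, one could simply mimic the proof of that lemma: $\mathfrak L(Y) \cap \mathfrak U(Y)$ is the co-projection in $Y \times Y$ of the set $\setnew{(x,y)}{y=x \vee (x \not\preceq y \wedge y \not\preceq x)}$, a union of a closed set and an open set, hence $G_{\delta}$; and $\mathrm{E}(Y)$ is the co-projection of the union of the two analogous sets expressing ``$x$ is $\preceq$-minimal'' and ``$x$ is $\preceq$-maximal''. Since co-projections along the compact factor $Y$ preserve the $G_{\delta}$ property (a $G_{\delta}$ has $F_{\sigma}$ complement; projections of $F_{\sigma}$ sets along compact factors are $F_{\sigma}$), both conclusions follow.
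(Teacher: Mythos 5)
Your proof is correct and follows exactly the route the paper intends: the corollary is stated without proof precisely because, once Lemma~\ref{maxarepolish} shows $\mathfrak L(Y)$ and $\mathfrak U(Y)$ are Polish (hence $G_{\delta}$ in the compact metrizable space $Y$), their union $\mathrm{E}(Y)$ and intersection $\mathfrak L(Y)\cap\mathfrak U(Y)$ are $G_{\delta}$ and therefore Polish. The only quibble is the attribution --- the characterization of Polish subspaces as $G_{\delta}$ sets is usually credited to Alexandrov (for the $G_{\delta}\Rightarrow$ Polish direction) and Lavrentiev/Mazurkiewicz for the converse, not Alexandrov--Urysohn --- but this does not affect the argument.
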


\begin{remark}
	\label{stronglysigmacomplete}
The spaces $\mathfrak L(Y) \setminus \mathfrak U(Y)$ and $\mathfrak U(Y) \setminus \mathfrak L(Y)$ are strongly $\sigma$-complete spaces (that is, they are union of countably many closed and completely metrizable subspaces), since they are $F_{\sigma }$ subsets of a Polish space.
\end{remark}

\section{The \F fence}
	\label{sec:theprojlimit}

We denote by $\prespace$ the projective \F limit of $ \forests $.
Recall from \Cref{pfforests} that $ \diamonds $ is a projective \F family, with the same projective \F limit as $ \forests $.
Therefore, we fix a fundamental sequence $(\fus_n, \fue_n^m)$ in $\diamonds$, with $F_0$ consisting of a single element.

\begin{proposition} \label{lemusualtwo}
The sequence $(\fus_n, \fue_n^m)$ is fine and the quotient map $p: \prespace \to \quot{ \prespace }{R^{ \prespace }}$ is irreducible.
\end{proposition}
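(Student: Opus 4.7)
The plan is to apply the two combinatorial criteria already established in the excerpt: \Cref{lemmafour} for fineness and \Cref{singletonsdenseiffirreducible} for irreducibility. Both verifications will use the defining property \ref{itm:Ftwo} of the fundamental sequence $(F_n, \gamma_n^m)$: for any epimorphism $\theta : F' \to F_n$ with $F' \in \diamonds$, there exist $m \ge n$ (and by composing with $\gamma_m^{m+1}$ if needed, $m > n$) together with an epimorphism $\psi : F_m \to F'$ such that $\theta \psi = \gamma_n^m$. In both arguments the main task is to design a suitable $F' \in \diamonds$ and epimorphism $\theta$ that forces the desired property through $\psi$.

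For fineness, fix $n$ and $a, b \in F_n$ with $d_{R^{F_n}}(a, b) = 2$. Since $F_n \in \diamonds$, the maximal chains are pairwise disjoint, so $a$ and $b$ must lie on the same maximal chain $B = x_1 < \cdots < x_k$ with $a = x_i$, $b = x_{i+2}$, and $c = x_{i+1}$ the common $R$-neighbor. I form $F' \in \diamonds$ from $F_n$ by leaving the other chains unchanged and replacing $B$ with a chain $y_1 < \cdots < y_{k+2}$, where $\theta$ sends $y_j \mapsto x_j$ for $j \le i$, sends $y_{i+1}, y_{i+2}, y_{i+3}$ all to $c$, and sends $y_{j} \mapsto x_{j-2}$ for $j \ge i+4$. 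A direct check shows $\theta$ is $\mL_R$-preserving, surjective, and that every covering relation in $F_n$ is witnessed, so $\theta$ is an epimorphism. Moreover $\theta^{-1}(a) = \{y_i\}$ and $\theta^{-1}(b) = \{y_{i+4}\}$, hence $d_{R^{F'}}(y_i, y_{i+4}) = 4$. Applying \ref{itm:Ftwo} with $\theta_1 = \theta$ and $\theta_2 = \mathrm{id}_{F_n}$ produces $m > n$ and $\psi : F_m \to F'$ with $\theta\psi = \gamma_n^m$. Any $a' \in (\gamma_n^m)^{-1}(a)$ satisfies $\psi(a') = y_i$ and similarly $\psi(b') = y_{i+4}$, and because $\psi$ is $\mL_R$-preserving the inequality $d_{R^{F_m}}(a',b') \ge d_{R^{F'}}(y_i, y_{i+4}) = 4 \ge 3$ holds, giving fineness by \Cref{lemmafour}.

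For irreducibility, fix $n$ and $a \in F_n$, and let $B = x_1 < \cdots < x_k$ be the maximal chain with $a = x_i$. Form $F' \in \diamonds$ by replacing $B$ with a chain $y_1 < \cdots < y_{k+2}$ in which $\theta$ sends $y_j \mapsto x_j$ for $j < i$, sends $y_i, y_{i+1}, y_{i+2}$ all to $a$, and sends $y_j \mapsto x_{j-2}$ for $j \ge i+3$, leaving the other chains untouched; again $\theta$ is easily seen to be an epimorphism. Set $c = y_{i+1}$; the $R^{F'}$-neighborhood of $c$ is exactly $\{y_i, y_{i+1}, y_{i+2}\}$, entirely inside $\theta^{-1}(a)$. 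Applying \ref{itm:Ftwo} as before yields $m > n$ and an epimorphism $\psi : F_m \to F'$ with $\theta\psi = \gamma_n^m$; pick $b \in \psi^{-1}(c)$ (nonempty by surjectivity of $\psi$). Any $b' \in F_m$ with $b' R^{F_m} b$ satisfies $\psi(b') R^{F'} c$, so $\psi(b') \in \{y_i, y_{i+1}, y_{i+2}\}$, and consequently $\gamma_n^m(b') = \theta\psi(b') = a$. Irreducibility then follows from \Cref{singletonsdenseiffirreducible}.

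The only delicate point is checking that the constructed $F'$ is genuinely in $\diamonds$ and that $\theta$ satisfies the epimorphism condition for both $\leq$ and $R$, in particular that the newly inserted equal-image edges do not spoil the Hasse relation; this is routine because the extra equal-image pairs $y_j R y_{j+1}$ get mapped to pairs of the form $(a, a)$ (or $(c, c)$), which are automatically $R^{F_n}$-related by reflexivity. Edge cases (e.g.\ $a$ at an end of $B$, or $B$ a singleton) are handled by the same stretching construction with obvious notational adjustments.
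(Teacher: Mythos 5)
Your proof is correct and follows essentially the same strategy as the paper's: both verifications apply \ref{itm:Ftwo} to an auxiliary structure obtained by duplicating a point of $\fus_n$ and then invoke \Cref{lemmafour} and \Cref{singletonsdenseiffirreducible}. The only (immaterial) differences are that you blow $c$ up into three points rather than two for fineness, and for irreducibility you stretch $a$ inside its own chain instead of adjoining a new disjoint three-point chain mapping to $a$ as the paper does.
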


\begin{proof}
Let $a, b \in \fus_{n}$ have $R^{\fus_{n}}$-distance $2$.
Say, without loss of generality, $a \binR^{\fus_{n}} c \binR^{\fus_{n}} b$ and $a <^{\fus_{n}} c <^{\fus_{n}} b$.
Consider $P\in\diamonds$ obtained by $\fus_{n}$ by blowing $c$ up to two points.
More precisely, let $c_0, c_1$ be two new elements, let $P =(\fus_{n} \setminus \set{c} )\cup \set{c_{0}, c_{1}}$, and define $\le^P, R^P$ by extending the corresponding relations on $\fus_n\setminus \set{c} $ requiring $a<^{P}c_{0}<^{P}c_{1}<^{P}b, a\binR^{P} c_{0}\binR^{P} c_{1} \binR^{P} b$.
Let $\fhi : P \to \fus_{n}$ be defined by:
\[
\fhi (d) = \begin{cases}
d & \text{if } d \in \fus_{n}, \\
c & \text{if } d\in \set{c_{0}, c_{1}}.
\end{cases}
\]
Then $ \fhi $ is an epimorphism by \Cref{iffchain}, and by \ref{itm:Ftwo} there exist $m>n$ and an epimorphism $\theta: \fus_{m} \to P$ such that $\fhi \theta = \fue^{m}_{n}$.
Let $a' \in (\fue^{m}_{n})^{-1}(a), b' \in (\fue^{m}_{n})^{-1}(b)$, then $\theta(a') = a, \theta(b')=b$.
If there was $c' \in \fus_{m}$ such that $a' \binR c' \binR b'$, then $\theta(c')$ should be $R^P$-connected to $a$ and $b$, but no such element exists in $P$.
By \Cref{lemmafour}, $(\fus_n, \fue_n^m)$ is therefore fine.

To prove irreducibility of the quotient map, by Lemma \ref{singletonsdenseiffirreducible} it suffices to show that for each $n \in \N$ and $a \in \fus_{n}$ there are $m>n$ and $b \in \fus_{m}$ such that $b' \binR  b$ implies $\fue^{m}_{n}(b') = a$.
To this end fix $n, a$ as above and define $P=\fus_{n} \sqcup \set{a_{0}, a_{1}, a_{2}}$ with $a_{0} \binR  a_{1} \binR  a_{2}$ and $a_{0} < a_1 < a_{2}$, so that $\set{a_{0}, a_{1}, a_{2}} \in \branches(P)$ and $P\in \diamonds$.
Let $\fhi: P\to \fus_{n}$ be the identity restricted to $\fus_{n}$ and $\fhi(a_{i})=a$ for $0\le i\le 2$.
By \Cref{iffchain}, $\fhi$ is an epimorphism and by \ref{itm:Ftwo} there exist $m>n$ and an epimorphism $\theta : \fus_{m} \to P$ such that $\fhi\theta = \fue^{m}_{n}$.
Let $b \in \theta^{-1}(a_{1})$ and $b' \binR  b$, then $\theta (b') \in \set{a_{0}, a_{1}, a_{2}}$, so $\fue^{m}_{n}(b)= a$.
\end{proof}

\subsection{A topological characterization of the \F fence}
The study of the quotient $ \quot{ \prespace }{R^{ \prespace }} $ is one of the main goals of this paper.
By \Cref{thmpidiamondfence}, $\quotofpre$ is a smooth fence.
We call \emph{\F fence} any space homeomorphic to $ \quotofpre $.

The following property of the \F fence is of crucial importance for its characterization.

\begin{lemma}\label{arcsproperty}
Let $ \fhi : \prespace \to P$ be an epimorphism onto some $P\in\diamonds$.
If $a, a' \in P$ with $a \le a'$, there is an arc component of $\quotofpre$ whose endpoints belong to $\interior(\cappello{a}_{ \fhi }), \interior( \cappello{a'}_{ \fhi })$, respectively.
\end{lemma}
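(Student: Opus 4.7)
The plan is to use the extension property \ref{itm:Lthreeprime} with a carefully built $Q \in \diamonds$. Since $P \in \diamonds$ and $a \le a'$, both elements lie in a single maximal chain of $P$; I would write $[a, a']_P$ as the Hasse chain $a = p_0 R^P p_1 R^P \cdots R^P p_k = a'$. Form $Q \in \diamonds$ by taking the disjoint union of $P$ with a fresh chain $C = \{d_0 < d_1 < \cdots < d_{k+2}\}$ in which $d_i R^Q d_{i+1}$, and define $\varphi : Q \to P$ to be the identity on the copy of $P$ and on $C$ via
\[
\varphi(d_0) = \varphi(d_1) = a, \qquad \varphi(d_i) = p_{i-1} \text{ for } 2 \le i \le k+1, \qquad \varphi(d_{k+2}) = a'.
\]
By construction $\varphi$ is $\mL_R$-preserving, and since $\varphi|_P$ is the identity, every element of $\branches(P)$ is the image of some element of $\branches(Q)$; \Cref{iffchain} then certifies that $\varphi$ is an epimorphism.

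Next, applying \ref{itm:Lthreeprime} to $\fhi$ and $\varphi$ yields an epimorphism $\chi : \prespace \to Q$ with $\varphi \chi = \fhi$. The second part of \Cref{lemchain} supplies $B \in \branches(\prespace)$ with $\chi[B] = C$, and in particular $|B| \ge k+3 \ge 3$; \Cref{thmpidiamondfence} then guarantees that $p[B]$ is an arc component of $\quotofpre$ whose endpoints are $p(\min B)$ and $p(\max B)$, while $\fhi(\min B) = \varphi(d_0) = a$ and $\fhi(\max B) = \varphi(d_{k+2}) = a'$ follow automatically.

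The main obstacle, and what the doubling at the ends of $C$ is designed to handle, is verifying that $p(\min B) \in \interior(\cappello{a}_\fhi)$ and $p(\max B) \in \interior(\cappello{a'}_\fhi)$. I would argue as follows: if $v \in \prespace$ satisfies $v \binR^\prespace \min B$ and $v \ne \min B$, then by \Cref{fineclassesconvex} the elements $v$ and $\min B$ are $\le^\prespace$-comparable, and the inequality $v < \min B$ is ruled out by the maximality of $B$; hence $v > \min B$, so $\chi(v) \ge d_0$ in $Q$ and $\chi(v) R^Q d_0$, which forces $\chi(v) \in \{d_0, d_1\}$ since $Q \in \diamonds$. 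Therefore $\fhi(v) = \varphi(\chi(v)) = a$, and since the $R^\prespace$-class of $\min B$ has at most two elements, every member of this class maps to $a$ under $\fhi$. This gives $p(\min B) \in \cappello{a}_\fhi$ while $p(\min B) \notin \cappello{b}_\fhi$ for $b \ne a$, so \Cref{clmboundary} yields $p(\min B) \in \interior(\cappello{a}_\fhi)$. The symmetric argument for $\max B$ relies on the doubling $\varphi(d_{k+1}) = \varphi(d_{k+2}) = a'$, and it is precisely these doublings at the two ends of $C$ that neutralize the possible two-element $R^\prespace$-classes at the extremes of $B$.
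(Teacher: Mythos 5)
Your argument is essentially the paper's own proof: the paper likewise doubles the two ends of the lifted chain (its $Q=P\sqcup\{b,c,d_1,\dots,d_\ell,b',c'\}$ with $b,c\mapsto a$ and $b',c'\mapsto a'$), applies \ref{itm:Lthreeprime} to obtain $\theta:\prespace\to Q$, and invokes \Cref{clmboundary} exactly as you do, using the doubled elements to show that everything $R^{\prespace}$-related to a point below the preimage of the bottom pair is sent to $a$ by $\fhi$. The one step you should not take for granted is the appeal to ``the second part of \Cref{lemchain}'' to produce $B\in\branches(\prespace)$ with $\chi[B]=C$: that lemma is stated and proved only for finite H-forests, and $\prespace$ is not one. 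The fact you need is true but requires an argument — for instance, pick $u\le u'$ with $\chi(u)=d_0$ and $\chi(u')=d_{k+2}$ (possible since $\chi$ is an epimorphism and $d_0\le d_{k+2}$), extend $\{u,u'\}$ to a maximal chain $B$, note that $\chi(\min B)\le d_0$ forces $\chi(\min B)=d_0$ because $d_0=\min C$ and $Q\in\diamonds$, and use the $R$-connectedness of $[u,u']$ (\Cref{connectedintervals}) to see that $\chi[[u,u']]$ is a graph-connected subset of the path $C$ containing both of its ends, hence all of $C$, so in particular $\card{B}\ge 3$. The paper sidesteps this by never naming the maximal chain: it simply observes that the arc with endpoints $p(u),p(u')$ lies in a connected component whose two extremities are images of points below $u$ and above $u'$, to which the interior computation already applies.
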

\begin{proof}
Let $a_1, \ldots , a_{\ell }\in P$ be such that
\begin{align*}
  a<a_1< &\ldots <a_{\ell }<a', \\
 a\binR a_1\binR &\ldots \binR a_{\ell }\binR a'.
\end{align*}
Notice that $\ell=0$ if $a\binR a'$, in particular when $a=a'$.

Let $Q=P\sqcup\{ b, c, d_1, \ldots , d_{\ell }, b', c'\}\in\diamonds$, where
\begin{align*}
  b<c<d_1< &\ldots <d_{\ell }<b'<c', \\
  b\binR c\binR d_1\binR & \ldots \binR d_{\ell }\binR b'\binR c'.
\end{align*}
Let $\psi :Q\to P$ be the epimorphism defined as the identity on $P$ and by letting
\[ \left \{ \begin{array}{l}
\psi (b)=\psi (c)=a, \\
\psi (d_1)=a_1, \\
\ldots \\
\psi (d_{\ell })=a_{\ell }, \\
\psi (b')=\psi (c')=a'.
\end{array} \right . \]
By \ref{itm:Lthreeprime} there is an epimorphism $\theta : \prespace \to Q$ such that $\fhi =\psi\theta $.
Let $u, u'\in \prespace $ with $\theta (u)=b, \theta (u')=c', u\le u'$.
Given any $v\in \prespace $ with $v\le u$, if $w\binR v$, then $\theta(w)$ is either $b$ or $c$, so $ \fhi (w)=a$;
similarly, for any $v'\in \prespace$ with $u'\le v'$, if $w'\binR v'$, then $ \fhi (w')=a'$.
So, by \Cref{clmboundary}, $p(v)\in \interior ( \cappello{a}_{ \fhi }), p(v')\in \interior ( \cappello{a'}_{ \fhi })$.
This implies that the arc with endpoints $p(u), p(u')$ is contained in a connected component of $ \quotofpre $ with endpoints in $ \interior ( \cappello{a}_{ \fhi }), \interior ( \cappello{a'}_{ \fhi })$, respectively.
\end{proof}

The following theorem gives a topological characterization of the \F fence.

\begin{theorem}\label{weakcharacthm}
A smooth fence $Y$ is a \F fence if and only if for any two open sets $O, O' \subseteq Y$ which meet a common connected component  there is an arc component of $Y$ whose endpoints belong to $O, O'$, respectively.
\end{theorem}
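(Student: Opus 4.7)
For the forward direction, suppose $Y = \quotofpre$. Given open sets $O, O'$ meeting a common connected component $K$, pick $x \in O \cap K$ and $x' \in O' \cap K$. By \Cref{thmpidiamondfence}, $K$ is a chain under $\le^{\prespace/R^{\prespace}}$, so $x$ and $x'$ are comparable; after relabeling assume $x \preceq x'$, and lift to $u \le u'$ in $\prespace$. By \Cref{generalprop}(2), for $n$ large enough $\cappello{\fus_n(u)}_{\fue_n} \subseteq O$ and $\cappello{\fus_n(u')}_{\fue_n} \subseteq O'$, while $\fus_n(u) \le \fus_n(u')$ in $\fus_n$. \Cref{arcsproperty} then produces an arc component whose endpoints lie in $\interior(\cappello{\fus_n(u)}_{\fue_n}) \subseteq O$ and $\interior(\cappello{\fus_n(u')}_{\fue_n}) \subseteq O'$.

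For the backward direction, assume $Y$ has the property. Invoke \Cref{smoothfencespidiamond} to realize $Y \cong \quot{\mathbb A}{R^{\mathbb A}}$, with $\mathbb A$ the projective limit of a fine projective sequence $(P_n, \fhi_n^m)$ from $\diamonds$ and irreducible quotient $p$. By \Cref{pFfpFlfs} it suffices to show $(P_n, \fhi_n^m)$ is a fundamental sequence for $\diamonds$; via the criterion of \Cref{provefundseq} this reduces to the following extension property: for every $P \in \diamonds$, $n \in \N$, and epimorphism $\theta: P \to P_n$, there exist $m \ge n$ and an epimorphism $\psi: P_m \to P$ with $\theta\psi = \fhi^m_n$.

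Decompose $P$ into its maximal chains $B_j = \{b^j_0 < \cdots < b^j_{\ell_j}\}$; setting $c^j_i = \theta(b^j_i)$, the $\mL_R$-preservation of $\theta$ forces $\theta[B_j]$ to be the full interval $[c^j_0, c^j_{\ell_j}]$ of some maximal chain of $P_n$, and since $\theta$ is an epimorphism, every maximal chain of $P_n$ is attained as some $\theta[B_j]$. For each $j$, apply the property of $Y$ with $O \subseteq \interior(\cappello{c^j_0}_{\fhi_n})$ and $O' \subseteq \interior(\cappello{c^j_{\ell_j}}_{\fhi_n})$ (pairwise disjoint across $j$) to extract pairwise distinct arc components $K_j$ of $Y$ with endpoints in these interiors. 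Choose $m$ sufficiently large that (using \Cref{generalprop}) the maximal chains $C^*_j$ of $P_m$ tracing the $K_j$ are pairwise distinct and each block $(\fhi^m_n)^{-1}(c) \cap C^*_j$ (for $c \in [c^j_0, c^j_{\ell_j}]$) has at least $|\theta^{-1}(c) \cap B_j|$ elements. Define $\psi$ on $C^*_j$ by partitioning it into $\ell_j+1$ consecutive blocks of the required sizes and mapping the $i$-th to $b^j_i$; extend $\psi$ to each remaining maximal chain $D$ of $P_m$ by selecting some $B_{j(D)}$ with $\theta[B_{j(D)}]$ equal to the maximal chain of $P_n$ containing $\fhi^m_n[D]$ and performing an analogous consistent partition within it. \Cref{iffchain} then certifies $\psi$ as an epimorphism.

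The principal obstacle is verifying the hypothesis of the property, namely that $\interior(\cappello{c^j_0}_{\fhi_n})$ and $\interior(\cappello{c^j_{\ell_j}}_{\fhi_n})$ meet a common connected component of $Y$. This is the analog of \Cref{arcsproperty} for the approximating sequence of $Y$, which cannot be invoked directly since $(P_n, \fhi_n^m)$ is not yet known to be fundamental. I plan to prove it by lifting $c^j_0 \le c^j_{\ell_j}$ to $\le^{\mathbb A}$-comparable points $u \le u'$ in $\mathbb A$ and leveraging density of singleton $R^{\mathbb A}$-classes (\Cref{singletonsdenseiffirreducible}) together with the boundary characterization \Cref{clmboundary} to arrange that $p(u), p(u')$ lie in the respective interiors while preserving $\le^{\mathbb A}$-comparability.
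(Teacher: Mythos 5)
Your overall architecture matches the paper's: the forward direction via \Cref{generalprop} and \Cref{arcsproperty} is exactly the paper's argument, and for the converse both you and the paper reduce, via \Cref{provefundseq}, to the extension property for the approximating sequence of \Cref{smoothfencespidiamond}, then build $\psi$ chain by chain with \Cref{epiconditionslemma1} and certify it with \Cref{iffchain}. The one genuine gap is precisely the step you flag as the principal obstacle, and the repair you propose does not work. Fineness together with irreducibility of $p$ does \emph{not} imply that $c\le^{P_n}c'$ forces some connected component to meet both $\interior(\cappello{c}_{\fhi_n})$ and $\interior(\cappello{c'}_{\fhi_n})$. Sketch of a counterexample: let $P_0=\{c_0<c_1<c_2\}$ be a three-element chain, and build $P_{n+1}$ from $P_n$ by refining the distinguished chain so that the fibre over every non-extremal element has at least two points while the fibres over the minimum and the maximum remain singletons (necessarily adjacent to the block above, respectively below, them), and by adding a singleton chain over each element of $P_n$. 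The singleton chains make condition (2) of \Cref{singletonsdenseiffirreducible} hold, and the sequence is fine by \Cref{lemmafour}; yet in the limit the unique arc component meets $\cappello{c_0}_{\fhi_0}$ in the single point $p(\min B)$, where $[\min B]_{R}=\{\min B,w\}$ with $\fhi_0(w)=c_1$, hence only in $\partial(\cappello{c_0}_{\fhi_0})$ (dually at the top), and every other component is a singleton contained in one cell. So no component meets both interiors although $c_0\le^{P_0}c_2$. This is exactly where "lift $c\le c'$ to $u\le u'$ and perturb into singleton classes" breaks down: the closed set of comparable pairs over $(c,c')$ can be disjoint from the set of pairs whose coordinates have singleton classes, and density of singleton classes in $\prespace$ gives no control along a fixed maximal chain.

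The fix is already contained in the theorem you invoke: item c) of \Cref{smoothfencespidiamond} states that $a\le^{P_n}a'$ holds if and only if there are $x\in\interior(\cappello{a}_{\fhi_n})$ and $x'\in\interior(\cappello{a'}_{\fhi_n})$ with $g(x)\preceq g(x')$, and since $\preceq$ is strongly compatible such $x,x'$ lie in a common connected component. That is the whole reason the paper works with the specific sequence constructed in \Cref{smoothfencespidiamond} rather than an arbitrary fine irreducible one. With that substitution your argument goes through, provided you also choose $m$ (as in \Cref{enoughspace}, via \Cref{branchforcomponent}) so large that the entire maximal chain of $P_m$ containing the trace of $K_j$ projects onto exactly $[c^j_0,c^j_{\ell_j}]$ with no overshoot --- this is the point where it matters that the endpoints of $K_j$ lie in the interiors of the extreme cells and not merely in the closed cells.
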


The following lemmas are used in the proof of \Cref{weakcharacthm}.

\begin{lemma}\label{epiconditionslemma1}
Let $A, B, B'$ be HLOs and let $\fhi: B \to A$ and $\psi:B'\to A$ be $\mathcal L_{R}$-preserving maps such that $\psi[B'] \subseteq \fhi[B]$.
Let $a_0= \psi(\min B'), a_1= \psi(\max B')$ and $r = \max\setnew*{\Card{\fhi^{-1}(a)}}{a\in A}$.
If $\Card{\psi^{-1}(a)} \ge r$ for each $a\in \psi[B'] \setminus \set{a_0, a_1}$, then there exists an $\mathcal L_{R}$-preserving map $\theta:B'\to B$ such that $\fhi \theta = \psi$.
Moreover:
\begin{enumerate}
\item
if $\psi[B'] = \fhi[B]$ and $\Card{\psi^{-1}(a_0)}, \Card{\psi^{-1}(a_1)} \geq r$, then $\theta$ can be chosen to be surjective;
\item
	\label{itm:minimalB}
if $\psi[B'] = \fhi[B]$ and $\Card{\fhi^{-1}(a_0)} = \Card{\fhi^{-1}(a_1)} = 1$, then $\theta$ can be chosen to be surjective;
\item
	\label{itm:fixpoint}
if $a \in A$, $b \in \fhi^{-1}(a), b' \in \psi^{-1}(a)$ and
\[
\min \big\{\Card{\setnew*{c \in B'}{\psi(c)=a,c<b'} }, \Card{\setnew*{c \in B'}{\psi(c)=a,c>b'} } \big\} \ge r-1,
\]
then $\theta$ can be chosen such that $\theta(b') = b$.
\end{enumerate}
\end{lemma}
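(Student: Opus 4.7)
My plan is to construct $\theta$ fiber-by-fiber over the common range. First I would record some structural preliminaries: since $\fhi$ and $\psi$ are $\mL_R$-preserving maps out of HLOs, each nonempty fiber of either is an interval of the source, and $\psi[B']$ is an interval of $A$ whose successive elements are $R$-related in $A$ (any step in $B'$ moves by at most one step in $A$). Enumerate $\psi[B']=\{c_0<\cdots<c_s\}$ with $c_0=a_0$ and $c_s=a_1$, and write $\psi^{-1}(c_i)=[\beta_i,\beta'_i]\subseteq B'$ and $\fhi^{-1}(c_i)=[\gamma_i,\gamma'_i]\subseteq B$. A key observation is that for $0\le i<s$ the elements $\gamma'_i,\gamma_{i+1}$ are consecutive in $B$: any strictly intermediate element would be forced by $\fhi$ into a value of $A$ strictly between $c_i$ and $c_{i+1}$, contradicting their definitions; hence $\gamma'_i\binR^{B}\gamma_{i+1}$, while $\beta'_i\binR^{B'}\beta_{i+1}$ is immediate.

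The basic building block I would use is that given HLOs $U=\{u_0<\cdots<u_{n-1}\}$ and $V=\{v_0<\cdots<v_{m-1}\}$ with $n\ge m\ge 1$, the rule $u_j\mapsto v_{\min(j,m-1)}$ is an $\mL_R$-preserving surjection sending $u_0\mapsto v_0$ and $u_{n-1}\mapsto v_{m-1}$; minor variants (or constant maps) handle the case where only one endpoint is prescribed. Applying this to each interior $i\in\{1,\dots,s-1\}$, the hypothesis $|\psi^{-1}(c_i)|\ge r\ge|\fhi^{-1}(c_i)|$ lets me take $\theta$ on $[\beta_i,\beta'_i]$ to be an $\mL_R$-preserving surjection onto $[\gamma_i,\gamma'_i]$ with $\beta_i\mapsto\gamma_i$ and $\beta'_i\mapsto\gamma'_i$. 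For the boundary fibers $i=0$ and $i=s$ (when $s\ge 1$) only one endpoint of the target is constrained, so a constant map or the building block suffices; when $s=0$ any constant map into $\fhi^{-1}(a_0)$ works. Concatenating these local pieces yields $\theta\colon B'\to B$ that is $\mL_R$-preserving on each fiber by construction and across fiber boundaries by the observations above, and $\fhi\theta=\psi$ is immediate.

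For the three refinements I would modify only the boundary fibers or the fiber containing $b'$. For (1), strengthen the choices on $[\beta_0,\beta'_0]$ and $[\beta_s,\beta'_s]$ to be $\mL_R$-preserving surjections onto $\fhi^{-1}(a_0)$ and $\fhi^{-1}(a_1)$ respecting the single prescribed endpoint, which is allowed by the extra hypothesis $|\psi^{-1}(a_j)|\ge r\ge|\fhi^{-1}(a_j)|$ for $j\in\{0,1\}$; combined with interior surjectivity, this makes $\theta$ surjective onto $\fhi^{-1}(\psi[B'])=B$. For (2) the boundary target fibers are singletons, so the boundary pieces of $\theta$ are automatically surjective. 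For (3), splitting the fiber containing $b'$, say $\psi^{-1}(a)=[\beta,\beta']$ and $\fhi^{-1}(a)=[\gamma,\gamma']$, I would apply the building block separately to $[\beta,b']\to[\gamma,b]$ and $[b',\beta']\to[b,\gamma']$, each time pinning $b'\mapsto b$ and imposing the ambient outer endpoint constraint when it exists; the $\ge r-1$ hypothesis on each side of $b'$ makes each source half of length $\ge r$ while each target half has length $\le r$, so the building block applies. The main bookkeeping obstacle is (3) when $a\in\{a_0,a_1\}$: if for instance $a=a_0$ and $b'>\min B'$ the above still works with the left piece having no outer endpoint constraint; if $b'=\min B'$ then $|\{c<b'\colon\psi(c)=a\}|\ge r-1$ forces $r\le 1$, so all fibers are singletons, $|\fhi^{-1}(a_0)|=1$, and $\theta(b')=b$ is automatic.
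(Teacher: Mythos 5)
Your proof is correct and follows essentially the same strategy as the paper's: define $\theta$ fiber by fiber, using monotone surjections $\psi^{-1}(a)\to\fhi^{-1}(a)$ on interior fibers, collapsing the boundary fibers to the appropriate extreme element (or mapping them surjectively when the extra cardinality hypotheses allow), and splitting the fiber of $b'$ at $b'$ for item (3). Your write-up merely makes explicit some details the paper leaves implicit (the interval structure of fibers, the adjacency of consecutive target fibers, and the degenerate cases $s=0$ and $b'=\min B'$).
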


\begin{proof}
For each $a\in \psi[B'] \setminus \set{a_0, a_1}$ let $\theta$ map $\psi^{-1}(a)$ to $\fhi^{-1}(a)$ surjectively and monotonically.
If $\psi[B'] = \fhi[B]$ and $\Card{\psi^{-1}(a_0)}, \Card{\psi^{-1}(a_1)} \geq r$, doing the same for $\psi^{-1}(a_0), \psi^{-1}(a_1)$ provides a map onto $B$.
Otherwise, map all of $\psi^{-1}(a_0)$ to the maximal element of $\fhi^{-1}(a_0)$, and all of $\psi^{-1}(a_1)$ to the minimal element of $ \fhi^{-1}(a_1)$.
In the hypothesis of point \eqref{itm:minimalB}, this produces a surjective map on $B$.

\sloppypar
As for point \eqref{itm:fixpoint}, map $\setnew*{c \in B'}{\psi(c)=a, c\le b'}$, $\setnew*{c \in B'}{\psi(c)=a, c\ge b'}$ monotonically onto $\setnew*{c \in B}{\fhi(c)=a, c\le b}$, $\setnew*{c \in B}{\fhi(c)=a, c\ge b}$, respectively, so in particular $\theta(b')=b$.
\end{proof}

\begin{lemma}
	\label{enoughspace}
Let $(P_n, \fhi_n^m)$ be a fine projective sequence in $\diamonds$, with projective limit $ \mathbb P $, and the quotient map $p: \mathbb P \to \quot{ \mathbb P }{R^{ \mathbb P }} $ be irreducible.
Let $J^1, \dots, J^{\ell }$ be connected components of $\quot{ \mathbb P }{R^{ \mathbb P }}$.
For each $n \in \N$ and $1\le i\le \ell$, let $J^{i}_{n} = \fhi_{n}[p^{-1}(J^{i})]$ and $B^{i}_{n} \in \branches(P_{n})$ be such that $J^{i}_{n} \subseteq B^{i}_{n}$.
For any $n, r \in \N$, if the endpoints of the $J^{i}$'s belong to $\bigcup_{a \in P_{n}} \interior(\cappello{a}_{ \fhi_n})$, there is $m_{0}>n$ such that, for each $m \ge m_{0}$ and $1\le i\le \ell$:
\begin{enumerate}[label=(\alph*)]
\item
$\fhi^{m}_{n}[B^{i}_{m}] = J^{i}_{n}$,
\item
	\label{it:greaterthanr}
if $J^i$ is an arc, then $\Card{J^{i}_{m} \cap (\fhi^{m}_{n})^{-1}(a)} > r$ for each $a \in J^{i}_{n}$.
\end{enumerate}
\end{lemma}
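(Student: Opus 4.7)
The plan is to prove (a) and (b) separately. For (a), I would first note that $u_0^i := \min p^{-1}(J^i)$ and $u_1^i := \max p^{-1}(J^i)$ are $\le^{\mathbb P}$-minimal and maximal respectively, since $p^{-1}(J^i)$ is a maximal chain in $\mathbb P$ by \Cref{thmpidiamondfence}. \Cref{endpointprop} applied to both produces, for each $i$, some $m_i'$ such that for every $m \ge m_i'$ one has $\fhi_n^m(\min B_m^i) = a_0^i := \fhi_n(u_0^i)$ and $\fhi_n^m(\max B_m^i) = a_1^i := \fhi_n(u_1^i)$, using that $\fhi_m(u_0^i) \in B_m^i$ makes $\min B_m^i$ coincide with $\min \setnew{a \in P_m}{a \le \fhi_m(u_0^i)}$. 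Then the monotonicity and $R$-preservation of $\fhi_n^m$, applied to the chain $B_m^i$ whose consecutive elements are $R$-adjacent, force $\fhi_n^m[B_m^i]$ to equal the interval $[a_0^i, a_1^i]$ in $B_n^i$. The identification $J_n^i = [a_0^i, a_1^i]$ then follows by a connectedness argument: if some $a \in [a_0^i, a_1^i]$ were missing from $J_n^i$, the disjoint closed sets $\bigcup_{a' < a,\, a' \in B_n^i}\cappello{a'}_{\fhi_n}$ and $\bigcup_{a' > a,\, a' \in B_n^i}\cappello{a'}_{\fhi_n}$ (disjoint thanks to \Cref{clmboundary}) would give a clopen partition of $J^i$ inside the clopen set $\bigcup_{a' \in B_n^i}\cappello{a'}_{\fhi_n}$ of \Cref{branchesareclopen}, separating its endpoints.

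For (b), the key identity $J_m^i \cap (\fhi_n^m)^{-1}(a) = \fhi_m[B^i \cap \fhi_n^{-1}(a)]$ (where $B^i := p^{-1}(J^i)$), combined with the projective-limit fact that $\fhi_m$ eventually separates any prescribed finite subset of $\mathbb P$, reduces (b) to showing that $B^i \cap \fhi_n^{-1}(a)$ is infinite --- in fact uncountable --- for each $a \in J_n^i$. I would parameterize $J^i$ by $\le^{\mathbb P / R^{\mathbb P}}$ and, for $y \in J^i$, set $A(y) := \fhi_n(p^{-1}(y))$; by \Cref{fineclassesconvex} this is a subset of $B_n^i$ of cardinality at most $2$ whose elements are equal or $R$-adjacent, and $A$ varies monotonically in $y$ in the pointwise sense.

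The main technical point --- and the only place where the hypothesis on endpoints enters --- is to show that $\setnew{y \in J^i}{A(y) = \set{a}}$ is a non-degenerate subarc for every $a \in J_n^i$: by \Cref{clmboundary}, such a subarc lies in $\interior \cappello{a}_{\fhi_n}$, and its $p$-preimage contributes uncountably many points to $B^i \cap \fhi_n^{-1}(a)$. I would argue non-degeneracy from the semi-continuity of $\min A$ (lower) and $\max A$ (upper), both of which are monotone step functions on $J^i$. A purported direct transition of $A$ from $\set{a-1, a}$ to $\set{a, a+1}$ would force the value $\set{a-1, a+1}$ at the transition point, in violation of the ``consecutive elements'' constraint on $A$; likewise, a single-point visit to $\set{a}$ would force $A$ at that point to equal $\set{a-1, a+1}$ by the same semi-continuity analysis, again a contradiction. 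The hypothesis $p(u_0^i) \in \interior \cappello{a_0^i}_{\fhi_n}$ is exactly what rules out an instantaneous visit to $\set{a_0^i}$ at the initial moment $p(u_0^i)$ of the arc, since it guarantees $A(p(u_0^i)) = \set{a_0^i}$; the case of $a_1^i$ is symmetric. Taking $m_0$ to dominate the $m_i'$'s from (a) together with the finitely many separation thresholds --- one per pair $(i, a)$ with $J^i$ an arc and $a \in J_n^i$ --- completes the proof.
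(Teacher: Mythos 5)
Your argument is correct, and on the whole it follows the same skeleton as the paper's proof, but it is worth recording where the two diverge. For (a), the paper simply picks pairwise disjoint open neighborhoods $O_i\subseteq\bigcup_{a\in J^i_n}\cappello{a}_{\fhi_n}$ and invokes \Cref{branchforcomponent} to get $\bigcup_{a\in B^i_{m'}}\cappello{a}_{\fhi_{m'}}\subseteq O_i$, hence $\fhi^{m'}_n[B^i_{m'}]=J^i_n$ for all larger $m$; you instead unfold that lemma, applying \Cref{endpointprop} directly to $\min p^{-1}(J^i)$ and $\max p^{-1}(J^i)$ and finishing with a connectedness argument --- same engine, just inlined. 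The real difference is in (b). The paper's proof is a two-line metric argument: it asserts that $\cappello{a}_{\fhi_n}\cap J^i$ has more than one element (hence, being connected, positive diameter) and concludes because the mesh of $\cappello{P_m}_{\fhi_m}$ tends to $0$, so covering a set of fixed positive diameter requires more than $r$ pieces. You replace the mesh estimate by a cardinality argument (the fibre $p^{-1}(J^i)\cap\fhi_n^{-1}(a)$ is uncountable, and $\fhi_m$ is eventually injective on any prescribed finite set), which works equally well. More importantly, you actually prove the non-degeneracy of $\cappello{a}_{\fhi_n}\cap J^i$ that the paper only asserts, via the transition analysis of $A(y)=\fhi_n[p^{-1}(y)]$: a singleton value of $C_a=\cappello{a}_{\fhi_n}\cap J^i$ at an interior $a$ would force a point lying in $\cappello{a-1}_{\fhi_n}\cap\cappello{a+1}_{\fhi_n}$, contradicting \Cref{clmboundary}, and at the extremal $a$'s this is exactly what the endpoint hypothesis excludes. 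This makes explicit the one place where that hypothesis is consumed, which the paper's terse proof leaves implicit. Your appeal to ``semi-continuity of $\min A$ and $\max A$'' is informal, but the facts behind it (closedness of the $\cappello{a}_{\fhi_n}$, \Cref{clmboundary}, and convexity of $R^{\mathbb P}$-classes from \Cref{fineclassesconvex}) do support it, so there is no gap.
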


\begin{proof}
We can suppose that the $J^{i}$'s are distinct.
Let $O_1, \dots, O_{\ell }$ be pairwise disjoint open neighborhoods of $J^1, \dots, J^{\ell }$, respectively, such that $O_{i} \subseteq \bigcup_{a \in J^{i}_{n}} \cappello{a}_{ \fhi_n}$, for $1\le i\le\ell$.
By \Cref{branchforcomponent}, there is $m'>n$ such that for $1\le i\le\ell$, one has $\bigcup_{a \in B^{i}_{m'}} \cappello{a}_{ \fhi_{m'}} \subseteq O_{i}$, that is, $\fhi^{m'}_{n}[B^{i}_{m'}] = J^{i}_{n}$.
It follows that for all $m>m'$ and $1\le i\le\ell$, one has $\fhi^m_{n}[B^{i}_{m}] = J^{i}_{n}$.
For $1\le i\le\ell$ such that $J^{i}$ is an arc, and each $a \in J^{i}_{n}$, the set $\cappello{a}_{ \fhi_n}\cap J^{i}$ has more than one element; since the mesh of $\cappello{P_{m}}_{ \fhi_m}$ goes to $0$, there exists $m_{0}>m'$ such that for all $m>m_{0}$ condition \ref{it:greaterthanr} is satisfied.
\end{proof}

\begin{proof}[Proof of \Cref{weakcharacthm}]
For the forward implication, it suffices to prove the conclusion for $ \quotofpre $.
Let $O, O' \subseteq \quotofpre$ be open sets which meet a common connected component $K$.
Let $n \in \N, a, a' \in \fus_{n}$ be such that
\[
\cappello{a}_{\fue_n}\subseteq O, \quad \cappello{a'}_{\fue_n}\subseteq O', \quad \interior ( \cappello{a}_{\fue_n})\cap K\ne\emptyset\ne \interior ( \cappello{a'}_{\fue_n})\cap K.
\]
It follows that $a, a'$ are $\le^{\fus_n}$-comparable, so by \Cref{arcsproperty} there is an arc component $J$ of $ \quotofpre $ whose endpoints belong to $ \interior ( \cappello{a}_{\fue_n}), \interior ( \cappello{a'}_{\fue_n})$, respectively, and so to $O, O'$, respectively.

Conversely, assume that for any open sets $O,O'\subseteq Y$ meeting a common connected component there is an arc component of $Y$ whose endpoints belong to $O,O'$, respectively.
Let $(P_n, \fhi_n^m)$ be the projective sequence defined as in the proof of \Cref{smoothfencespidiamond}, and let $ \mathbb Y $ be its projective limit.

It is then enough to prove that $ \mathbb Y $ is a projective Fra\"iss\'e limit of $\diamonds$.
To this end, by \Cref{provefundseq}, we must prove that given $P \in \diamonds $ and an epimorphism $\fhi: P \to P_{n}$, there are $m\ge n$ and an epimorphism $\psi:P_{m} \to P$ such that $\fhi \psi = \varphi^{m}_{n}$.
Let $r = \max \setnew*{\lvert \fhi^{-1}(C) \rvert}{C\in P_{n}}$ and $B^1, \dots, B^{\ell }$ be an enumeration of $\branches(P)$.

From $\min B^{i} \le^{P} \max B^{i}$ it follows that $\fhi(\min B^{i}) \le^{P_n} \fhi(\max B^{i})$, for $1\le i\le\ell$.
There is a connected component of $Y$ which meets the interior of both $\cappello{\fhi(\min B^{i})}_{\fhi_{n}}$, $\cappello{\fhi(\max B^{i})}_{\fhi_{n}}$, so by hypothesis there is an arc component $J^{i}$ of $Y$ whose endpoints belong to $\interior \cappello{\fhi(\min B^{i})}_{\fhi_{n}}$,
$\interior \cappello{\fhi(\max B^{i})}_{\fhi_{n}}$, respectively.
Notice that if $j \neq i$ is such that $\fhi[B^{j}] = \fhi[B^{i}]$, one can find a connected component $J^{j}$ disjoint from $J^{i}$, by applying the hypothesis to a couple of open sets $O \subseteq \cappello{\fhi(\min B^{i})}_{\fhi_{n}}, O' \subseteq \cappello{\fhi(\max B^{i})}_{\fhi_{n}}$ which intersect $J^{i}$ but avoid its endpoints.

By \Cref{enoughspace} there is $m_{0}>n$  such that for all $m \ge m_{0}$ there are $A^1, \dots, A^{\ell } \in \branches(P_{m})$ distinct such that, for $1\le i\le\ell$, one has $ \fhi^{m}_{n}[A^{i}] = \fhi[B^{i}]$ and $\Card{A^{i} \cap ( \fhi^{m}_{n})^{-1}(U)} \ge r$ for each $U \in \fhi[B^{i}]$.

On the other hand, since $\varphi $ is an epimorphism, for $m$ big enough it holds that for all $A \in \branches(P_{m})$ there is $B_{A} \in \branches(P)$ such that $ \fhi^{m}_{n}[A] \subseteq \fhi[B_{A}]$ and,
for every $U\in \fhi_n^m [A]$, one has $\card{( \fhi_n^m)^{-1}(U)\cap A} \ge r$.

So fix such an $m$, greater or equal to $m_{0}$.
We construct an epimorphism $\psi:P_{m} \to P$ such that $\fhi \psi = \fhi^{m}_{n}$, by defining its restriction on each $A \in \branches(P_{m})$.
For $1\le i\le\ell$, we use \Cref{epiconditionslemma1} to construct an $\mathcal L_{R}$-preserving function $\psi_{i}$ from $A^{i}$ onto $B^{i}$ such that $\fhi\psi_{i}= \fhi^{m}_{n} \restr{A^{i}}$.
Then, for each $A \in \branches(P_{m}) \setminus \setnew{A^{i}}{1\le i\le\ell}$, we again use \Cref{epiconditionslemma1} to find an $\mathcal L_{R}$-preserving function $\psi_{A}$ from $A$ to $B_{A}$ such that $\fhi\psi_{A} = \fhi^{m}_{n}\restr{A}$.
Then, defining $\psi =\bigcup_{i=1}^{\ell }\psi_i\cup\bigcup_{A\in \branches (P_m)\setminus \setnew{A^{i}}{1\le i\le\ell}}\psi_A $, it follows that $\fhi\psi = \fhi^{m}_{n}$ and, by \Cref{iffchain}, $\psi$ is an epimorphism.
\end{proof}

\subsection{Homogeneity properties of the \F fence}
\sloppypar
In this section we study some homogeneity properties of the \F fence, describing in particular its orbits under homeomorphisms.
We denote by $\homeo_{\le}(\quotofpre)$ the subgroup of $ \homeo(\quotofpre)$ of homeomorphisms which preserve $\le^{\prespace/R^{\prespace}}$.

\begin{theorem}
	\label{homogeneityofcomponents}
Let $J^1, \dots, J^{\ell }, I^1, \dots, I^{\ell }$ be two tuples of distinct connected components of $\quotofpre$.
Suppose that $J^1, \ldots , J^k, I^1, \ldots , I^k$ are arcs and $J^{k+1}, \ldots , J^{\ell }$, $I^{k+1}, \ldots , I^{\ell } $ are singletons, for some $k$ with $0\le k\le\ell $.
For $1\le i \le k$, let $x^{i} \in J^{i}, y^{i} \in I^{i}$ be points which are not endpoints.
Then there is $h \in \homeo_{\le}(\quotofpre)$ such that $h[J^{i}] = I^{i}$, for $1\le i\le\ell$, and $h(x^{i}) = y^{i}$ for $1\le i \le k$.
\end{theorem}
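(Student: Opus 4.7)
I would lift the data to $\prespace$ and construct a $\le^\prespace$-preserving automorphism $\chi$ of $\prespace$ which descends to the desired $h$ on the quotient. By \Cref{thmpidiamondfence}, $B^i := p^{-1}(J^i)$ and $C^i := p^{-1}(I^i)$ are maximal chains of $\prespace$ (infinite if the component is an arc, a single $R^\prespace$-class otherwise). For $i \le k$, pick $u^i \in B^i$ with $p(u^i) = x^i$ and $v^i \in C^i$ with $p(v^i) = y^i$; since $x^i, y^i$ are non-endpoints, \Cref{endpointprop} guarantees that $u^i, v^i$ are neither $\le^\prespace$-minimal nor $\le^\prespace$-maximal in their chains. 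The aim is to produce $\chi \in \aut(\prespace)$ with $\chi[B^i] = C^i$ order-preservingly for every $i$ and $\chi(u^i) = v^i$ for $i \le k$; the induced map $h := p \circ \chi \circ p^{-1}$ is then a well-defined $\preceq$-preserving homeomorphism of $\quotofpre$ with the prescribed action.

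I would construct $\chi$ by a back-and-forth along the fundamental sequence $(F_n, \fue_n^m)$, building compatible pairs of epimorphisms $\fhi_n, \psi_n : \prespace \to F_n$ satisfying, for all $n$ from some point on: (i) the branches $A^i_n \in \branches(F_n)$ containing $\fhi_n[B^i]$ and $\psi_n[C^i]$ coincide and are pairwise distinct across $i$; (ii) $\fhi_n(u^i) = \psi_n(v^i)$ for $i \le k$; (iii) $\fue^{n+1}_n \circ \fhi_{n+1} = \fhi_n$ and analogously for $\psi$. The base case for large $n$ is available via \Cref{enoughspace} together with the irreducibility of $p$ (\Cref{lemusualtwo}), which separates the distinct components into distinct branches and leaves enough $R$-room around the $u^i, v^i$ to serve as designated non-endpoint elements. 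The inductive step matches refinements branch by branch, using the amalgamation in $\diamonds$ (\Cref{fraisseforests}) together with \Cref{epiconditionslemma1}, whose parts \ref{itm:minimalB} and \ref{itm:fixpoint} are designed to produce order-preserving extensions respecting prescribed fixed points and endpoint constraints.

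Once the back-and-forth is completed, projective ultrahomogeneity \ref{itm:Lthree}, applied via a diagonal extraction in the inverse limit, produces $\chi \in \aut(\prespace)$ with $\psi_n \circ \chi = \fhi_n$ for every $n$. Since the mesh of the $\fhi_n$-fibers tends to $0$ by \Cref{generalprop}, the approximate matching at each level forces $\chi(u^i) = v^i$ for $i \le k$ and $\chi[B^i] = C^i$ as a $\le^\prespace$-preserving bijection for every $i$. Passing to the quotient gives the desired $h \in \homeo_\le(\quotofpre)$.

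The main obstacle is the back-and-forth extension step: given a refinement $\fhi_{n+1}$ of $\fhi_n$ realising some branch decomposition of $F_{n+1}$, construct a refinement $\psi_{n+1}$ of $\psi_n$ compatible with $\fue^{n+1}_n$ and preserving all $u^i/v^i$ constraints. Branches carrying an arc with a designated interior point require \Cref{epiconditionslemma1}\ref{itm:fixpoint} to place the fixed point exactly; branches corresponding to singleton components invoke the minimal case \ref{itm:minimalB}; branches not carrying prescribed data must be filled in freely without spoiling the disjoint-chains shape required by $\diamonds$. Assembling these branch-by-branch extensions into global epimorphisms via \ref{itm:Ftwo} and the amalgamation of $\diamonds$ is where the bulk of the technical bookkeeping will lie.
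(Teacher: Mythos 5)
Your overall strategy --- lift to $\prespace$, run a back-and-forth along the fundamental sequence using \Cref{enoughspace} and \Cref{epiconditionslemma1}, and descend the resulting automorphism to the quotient --- is essentially the paper's. But there is a genuine gap at the very first step: you cannot simply ``pick $u^i \in p^{-1}(x^i)$'' and feed the chains $B^i, C^i$ into this machinery, because the relevant $R^{\prespace}$-classes need not be singletons. By \Cref{fineclassesconvex} an equivalence class may be a doubleton, and the lemmas you invoke carry singleton hypotheses in an essential way: \Cref{fixapoint} explicitly requires $p^{-1}(x)$ to be a singleton (this is what produces the ``$\ge r$ elements above and below $x^i_m$'' needed for \Cref{epiconditionslemma1}\eqref{itm:fixpoint}), and \Cref{enoughspace} requires the endpoints of the $J^i$'s to lie in $\bigcup_a \interior(\cappello{a}_{\fhi_n})$, which by \Cref{clmboundary} fails at a level $n$ where the endpoint's class projects onto two distinct $R^{P_n}$-related elements. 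If, say, $\max p^{-1}(J^i)$ is a doubleton straddling two elements of a branch, the bookkeeping identifying ``the branch carrying $J^i_n$'' and the endpoint-matching in \Cref{epiconditionslemma1} both break down. The paper resolves this by first proving that collapsing finitely many doubleton $R^{\prespace}$-classes yields a structure isomorphic to $\prespace$ (\Cref{identifyendpoints}), and only then running the back-and-forth; this reduction is a separate, nontrivial argument (it reverifies \ref{itm:Lone}, \ref{itm:Ltwo}, \ref{itm:Lthreeprime} for the quotient structure) and is absent from your plan.

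A secondary issue is the final assembly. Applying \ref{itm:Lthree} levelwise gives a different automorphism $\chi_n$ for each $n$, and a ``diagonal extraction'' does not obviously converge in $\aut(\prespace)$, which is not compact. The correct mechanism --- and the one the paper uses --- is that the interleaved finite-level maps $\fhi_j: \fus_{m_j} \to \fus_{n_j}$ and $\psi_j: \fus_{n_{j+1}} \to \fus_{m_j}$ with $\fhi_j\psi_j = \fue^{n_{j+1}}_{n_j}$ and $\psi_j\fhi_{j+1} = \fue^{m_{j+1}}_{m_j}$ directly induce epimorphisms $\fhi, \psi: \prespace \to \prespace$ satisfying $\fhi\psi = \psi\fhi = \id$; no appeal to \ref{itm:Lthree} is needed or appropriate at that stage. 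This is fixable within your framework, but as written the last step does not go through.
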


We obtain \Cref{homogeneityofcomponents} by proving in \Cref{backandforth} a strengthening of the converse of \Cref{provefundseq} for $( \fus_n, \fue_n^m)$ and using it in a back-and-forth argument which yields the desired homeomorphism.

\begin{lemma}
	\label{fixapoint}
Let $(P_n, \fhi_n^m)$ be a fine projective sequence in $\diamonds$, with projective limit $ \mathbb P $, and the quotient map $p: \mathbb P \to \quot{ \mathbb P }{R^{ \mathbb P }} $ be irreducible.
Let $x \in \quot{ \mathbb P }{R^{ \mathbb P }}$ be such that $p^{-1}(x)$ is a singleton which is neither $\le^{\mathbb P}$-minimal nor $\le^{\mathbb P}$-maximal.
For each $n \in \N$, let $\set{x_{n}} = \fhi_{n}[p^{-1}(x)]$.
For any $n, r \in \N$, there is $m_{0}>n$ such that for all $m>m_{0}$,
\[
\min \set{\Card{\setnew*{b \in P_{m}}{b<x_m, \fhi^{m}_{n}(b)=x_{n}} }, \Card{\setnew*{b \in P_{m}}{b>x_m, \fhi^{m}_{n}(b)=x_{n}} } } \ge r.
\]
\end{lemma}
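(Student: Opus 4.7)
The overall plan is to (i) identify $x$ as an interior point of an arc component $J$ of $\quot{\mathbb P}{R^{\mathbb P}}$, (ii) trap a subarc of $J$ through $x$ inside $\cappello{x_n}_{\fhi_n}$ using irreducibility of $p$, and (iii) lift $2r$ points from this subarc to $\mathbb P$ and push them down to $P_m$ for large $m$. For step~(i), let $u$ be the unique element of $p^{-1}(x)$ and let $B$ be the maximal $\le^{\mathbb P}$-chain of $\mathbb P$ containing $u$. By hypothesis there exist $v<u<w$ in $\mathbb P$, and by \Cref{fineclassesconvex} they all lie in $B$, so $|B|\ge 3$; then \Cref{thmpidiamondfence} identifies $J:=p[B]$ as an arc component of $\quot{\mathbb P}{R^{\mathbb P}}$ with endpoints $p(\min B)$ and $p(\max B)$, and since $u \ne \min B, \max B$ the point $x$ is a non-endpoint of $J$.

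For step~(ii), irreducibility of $p$ makes $\cappello{x_n}_{\fhi_n}$ regular closed, and because $p^{-1}(x)=\{u\}$ with $\fhi_n(u)=x_n$, \Cref{clmboundary} gives $x \in \interior(\cappello{x_n}_{\fhi_n})$. Hence $J \cap \interior(\cappello{x_n}_{\fhi_n})$ is an open neighborhood of $x$ in $J$, and since $x$ is a non-endpoint of $J$ it contains an open subarc around $x$; inside it I would pick $2r$ distinct points $y_{-r}<\cdots<y_{-1}<x<y_{1}<\cdots<y_{r}$, where $<$ denotes the restriction of $\le^{\mathbb P/R^{\mathbb P}}$ to $J$. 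For step~(iii), since each $y_i \in \cappello{x_n}_{\fhi_n}=p[\fhi_n^{-1}(x_n)]$ I pick $\tilde y_i \in \fhi_n^{-1}(x_n)$ with $p(\tilde y_i)=y_i$. The $2r+1$ points $\tilde y_{-r}, \ldots, \tilde y_{-1}, u, \tilde y_{1}, \ldots, \tilde y_{r}$ have pairwise distinct $p$-images and hence are pairwise $R^{\mathbb P}$-unrelated, so \Cref{alleqclassless} applied to consecutive pairs yields $\tilde y_{-r}<\cdots<\tilde y_{-1}<u<\tilde y_{1}<\cdots<\tilde y_{r}$ in $\mathbb P$. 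By \Cref{generalprop}, for all sufficiently large $m$ the map $\fhi_m$ sends these $2r+1$ points to distinct elements of $P_m$, straddling $x_m$ and all mapping to $x_n$ under $\fhi^m_n$ since $\fhi_n(\tilde y_i)=x_n$; any such $m$ serves as $m_{0}$.

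The main subtlety is step~(ii): one has to leverage both the irreducibility of $p$ (placing $x$ in the interior of $\cappello{x_n}_{\fhi_n}$) and the assumption that $u$ is neither $\le^{\mathbb P}$-minimal nor maximal (placing $x$ in the interior of the arc $J$) simultaneously, so that the local picture genuinely contains points of $\cappello{x_n}_{\fhi_n}$ on both sides of $x$ along $J$. The rest is routine bookkeeping passing between $\mathbb P$, the quotient, and the approximants $P_m$ via the lemmas of \Cref{fineprojectivesequences}.
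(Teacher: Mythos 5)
Your proof is correct, and it takes a genuinely different route from the paper's. The paper stays in the quotient and argues metrically: it first finds $n_{0}>n$ such that $x_{n_{0}}$ has $R^{P_{n_{0}}}$-neighbours $a<x_{n_{0}}<a'$, uses \Cref{clmboundary} to place $x$ in $\interior(\cappello{x_{n_{0}}}_{\fhi_{n_{0}}})$ and hence at positive distance from $\cappello{a}_{\fhi_{n_{0}}}$ and $\cappello{a'}_{\fhi_{n_{0}}}$, and then lets \Cref{generalprop}(2) shrink the mesh of $\cappello{P_m}_{\fhi_m}$ so that bridging either gap forces at least $r$ overlapping cells, i.e.\ at least $r$ elements of $(\fhi^{m}_{n})^{-1}(x_{n})$, on each side of $x_{m}$. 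You instead produce the witnesses explicitly: you use the arc structure of the component through $x$ (\Cref{thmpidiamondfence}) to pick $2r$ points of $\interior(\cappello{x_{n}}_{\fhi_{n}})$ straddling $x$, lift them into $\fhi_{n}^{-1}(x_{n})$, order the lifts via \Cref{alleqclassless}, and separate them at a finite stage with \Cref{generalprop}(1). Your version costs appeals to \Cref{thmpidiamondfence} and \Cref{alleqclassless} that the paper's proof avoids, but it makes the final count completely explicit, whereas the paper's four-line proof leaves the cell-counting implicit. Two cosmetic points, neither affecting correctness: the fact that $v<u<w$ lie in a common maximal chain is not what \Cref{fineclassesconvex} states --- it is immediate by extending the chain $\set{v,u,w}$ to a maximal one (and uniqueness of the maximal chain through $u$, which you tacitly use, holds because each $P_m$ lies in $\diamonds$); and the step $x\ne p(\min B),\,p(\max B)$ again uses that $p^{-1}(x)$ is a singleton, which is part of your hypotheses.
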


\begin{proof}
Since $p^{-1}(x)$ is neither $\le^{\mathbb P}$-minimal nor $\le^{\mathbb P}$-maximal, there is $n_{0}>n$ such that $x_{n_{0}}$ is neither $\le^{P_{n_{0}}}$-minimal nor $\le^{P_{n_{0}}}$-maximal.
Let $a, a'$ be the $R^{P_{n_0}}$-neighbors of $x_{n_{0}}$ different from $x_{n_0}$.
By \Cref{clmboundary} it follows that $x \in \interior(\cappello{x_{n_{0}}}_{\fhi_{n_{0}}})$, so $x$ has positive distance from $\cappello{a}_{\fhi_{n_{0}}}$ and $\cappello{a'}_{\fhi_{n_{0}}}$.
By \Cref{generalprop}(2), there is $m_0>n_{0}$ for which the thesis holds.
\end{proof}

\begin{lemma}
	\label{backandforth}
Let $J^1, \dots, J^{\ell }$ be distinct connected components of $ \quotofpre $, such that $J^1,\ldots ,J^k$ are arcs and $J^{k+1},\ldots ,J^{\ell }$ are singletons, where $0\le k\le\ell $.
Assume that $p^{-1}(x)$ is a singleton, for any $x$ endpoint of some $J^i$.
For $1\le i \le k$, let $x^{i} \in J^{i}$ be a point which is not an endpoint, such that $p^{-1}(x^{i})$ is a singleton.
For each $n \in \N$, call $J^{i}_{n} = \fue_{n}[p^{-1}(J^{i})]$, and $\set{x^{i}_{n}} = \fue_{n}[p^{-1}(x^{i})]$.
Let $P \in  \diamonds$, and $\fhi: P \to \fus_{n}$ an epimorphism.
For $1\le i\le\ell $, let $I^{i} \subseteq P$ be $R$-connected and such that $\fhi[I^{i}] = J^{i}_{n}$; assume moreover that if $J^i$ is a singleton, then $I^{i}$ is a singleton as well.
For $1\le i \le k$, let $y^{i} \in \fhi^{-1}(x^{i}_{n})$.
Then there exist $m>n$ and an epimorphism $\psi: \fus_{m} \to P$ such that:
\begin{itemize}
\item $\psi[J^{i}_{m}] = I^{i}$ for $1\le i\le\ell$;
\item $\psi(x^{i}_{m}) = y^{i}$ for $1\le i \le k$; and
\item $\fhi \psi = \fue^{m}_{n}$.
\end{itemize}
\end{lemma}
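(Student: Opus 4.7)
The plan is to pick $m$ large enough that the fundamental sequence $(\fus_n,\fue_n^m)$ supplies the combinatorial room needed, then assemble $\psi$ branch by branch via \Cref{epiconditionslemma1}, which is perfectly tailored here because every branch of a structure in $\diamonds$ is an HLO. Set
\[
r=\max\setnew*{\Card{\fhi^{-1}(a)\cap B}}{a\in \fus_n,\ B\in \branches(P)}.
\]
I would apply \Cref{enoughspace} to the components $J^{1},\dots,J^{\ell}$ with this $r$, and \Cref{fixapoint} to the points $x^{1},\dots,x^{k}$ (each $p^{-1}(x^{i})$ being a singleton which, as an interior point of an arc component, is neither $\le^{\prespace}$-minimal nor $\le^{\prespace}$-maximal), choosing $m$ big enough so that: the branches $B^{i}_{m}$ of $\fus_{m}$ containing $J^{i}_{m}$ are pairwise distinct and $\fue_{n}^{m}[B^{i}_{m}]=J^{i}_{n}$; for each $i\le k$, $\Card{J^{i}_{m}\cap (\fue_{n}^{m})^{-1}(a)}\ge r$ for every $a\in J^{i}_{n}$, and both $\Card{\setnew*{a\in\fus_{m}}{a<x^{i}_{m},\ \fue_{n}^{m}(a)=x^{i}_{n}}}$ and $\Card{\setnew*{a\in\fus_{m}}{a>x^{i}_{m},\ \fue_{n}^{m}(a)=x^{i}_{n}}}$ are at least $r-1$; and for every branch $B$ of $P$ there is a branch $A_{B}$ of $\fus_{m}$, disjoint from all the $B^{i}_{m}$ and from the $A_{B'}$ for $B'\ne B$, with $\fue_{n}^{m}[A_{B}]\supseteq \fhi[B]$ and each fiber $A_{B}\cap (\fue_{n}^{m})^{-1}(a)$ of size at least $r$.

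Next I would define $\psi$ one branch of $\fus_{m}$ at a time. For each $i\le \ell$, split $B^{i}_{m}$ as $C^{i}_{m}\cup J^{i}_{m}\cup E^{i}_{m}$ with $C^{i}_{m}=\setnew*{a\in B^{i}_{m}}{a<\min J^{i}_{m}}$ and $E^{i}_{m}=\setnew*{a\in B^{i}_{m}}{a>\max J^{i}_{m}}$. Since $\fue_{n}^{m}[B^{i}_{m}]=J^{i}_{n}$ and $\fue_{n}^{m}$ is monotone on the chain $B^{i}_{m}$, it collapses $C^{i}_{m}$ to $\{\min J^{i}_{n}\}$ and $E^{i}_{m}$ to $\{\max J^{i}_{n}\}$; I set $\psi(a)=\min I^{i}$ on $C^{i}_{m}$, $\psi(a)=\max I^{i}$ on $E^{i}_{m}$, and define $\psi\restr{J^{i}_{m}}\colon J^{i}_{m}\to I^{i}$ by applying \Cref{epiconditionslemma1}, items~(1) (to make it onto $I^{i}$) and \eqref{itm:fixpoint} (to send $x^{i}_{m}$ to $y^{i}$), to the pair $\fhi\restr{I^{i}}\colon I^{i}\to J^{i}_{n}$ and $\fue_{n}^{m}\restr{J^{i}_{m}}\colon J^{i}_{m}\to J^{i}_{n}$; the size estimates set up above are exactly those required by these items. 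For $i>k$ the component is a singleton and the assignment is immediate. For each branch $B$ of $P$ I would then apply \Cref{epiconditionslemma1}(1) to $\fhi\restr{B}$ and $\fue_{n}^{m}\restr{A_{B}}$ to define a surjection $\psi\restr{A_{B}}\colon A_{B}\to B$ over $\fhi$, and on any remaining branches of $\fus_{m}$ I would invoke \Cref{epiconditionslemma1} without surjectivity to obtain $\psi$ with $\fhi\psi=\fue_{n}^{m}$. Because branches in a member of $\diamonds$ are pairwise disjoint, these pieces assemble into a single $\mathcal L_{R}$-preserving map $\psi\colon\fus_{m}\to P$ satisfying $\fhi\psi=\fue_{n}^{m}$, $\psi[J^{i}_{m}]=I^{i}$, and $\psi(x^{i}_{m})=y^{i}$ for $i\le k$. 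Since every branch of $P$ equals some $\psi[A_{B}]$, \Cref{iffchain} turns $\psi$ into an epimorphism.

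The main technical point will be producing the distinct ``thick'' preimage branches $A_{B}$: for a single $m$, each branch of $\fus_{n}$ must host sufficiently many distinct branches of $\fus_{m}$ projecting onto it with large fibers, because several branches of $P$ may lie over the same branch of $\fus_{n}$. This can be arranged by iterating \Cref{lemchain} together with an inflation argument in the spirit of \Cref{enoughspace}; more cleanly, one first builds an auxiliary $Q\in\diamonds$ together with an epimorphism $\eta\colon Q\to P$ that already realizes the required multiplicities and the marked data $(I^{i},y^{i})$, then invokes \ref{itm:Ftwo} to obtain $m$ and an epimorphism $\fus_{m}\to Q$ over $\fue_{n}^{m}$, and composes with $\eta$ to get $\psi$.
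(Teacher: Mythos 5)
Your proposal follows essentially the same route as the paper's proof: the same $r$, the same use of \Cref{enoughspace} and \Cref{fixapoint} to reserve combinatorial room, \Cref{epiconditionslemma1} applied branch by branch on the distinguished chains (your explicit splitting of $B^{i}_{m}$ into $C^{i}_{m}\cup J^{i}_{m}\cup E^{i}_{m}$, with the tails sent to $\min I^{i}$ and $\max I^{i}$, is exactly what makes $\psi[J^{i}_{m}]=I^{i}$ come out), and \Cref{iffchain} to certify that $\psi$ is an epimorphism. The one point to be careful about is your closing ``cleaner'' variant: pulling back via \ref{itm:Ftwo} an auxiliary $Q$ that ``already realizes the marked data $(I^{i},y^{i})$'' cannot by itself control where the distinguished branches $J^{i}_{m}$ and points $x^{i}_{m}$ of $\fus_{m}$ are sent, since \ref{itm:Ftwo} gives no control over which branches of $\fus_{m}$ land where; what the paper actually does is take $P'$ to be the disjoint union of $\ell+1$ copies of $P$, pull back via \ref{itm:Ftwo} to a background map $\alpha\psi'\fue^{m}_{m'}$, reuse that map verbatim on every branch of $\fus_{m}$ other than the $B^{i}_{m}$ (so no thickness of the auxiliary branches is needed, avoiding your extra blow-up), and note that since at most $\ell$ branches are redirected, every branch of $P$ is still covered by an untouched branch mapping onto one of its $\ell+1$ copies. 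Your main construction only needs this multiplicity fact, so the argument goes through, but the shortcut as literally stated would not.
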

\begin{proof}
Let $r = \max \setnew{ \Card{\fhi^{-1}(a)}}{a \in \fus_{n}}$.
For $1\le i\le\ell$ and $m \in \N$, let $B^{i}_{m} \in \branches(\fus_{m})$ be such that $J^{i}_{m} \subseteq B^{i}_{m}$.
Let $P' \in \diamonds$ be the structure obtained as the disjoint union of $\ell+1$ copies of $P$ and $\alpha: P' \to P$ be the epimorphism whose restriction to each copy of $P$ is the identity.
By \ref{itm:Ftwo} there are $m'>n$ and an epimorphism $\psi': \fus_{m'} \to P'$ such that $\fhi \alpha \psi' = \fue^{m'}_{n}$.
By \Cref{clmboundary} the endpoints of $J^{i}$ belong to $\bigcup_{a \in \fus_{m'}} \interior(\cappello{a}_{ \fue_{m'}})$, for $1\le i\le\ell$, so we can apply \Cref{enoughspace} to find $m_{0}>m'$ such that for all $m>m_{0}$ and $1\le i\le\ell$ we obtain that $\fue^{m}_{m'}[B^{i}_{m}] = J^{i}_{m'}$ and, if $J^i$ is an arc, $\Card{(\fue^{m}_{n})^{-1}(a) \cap J^{i}_{m}}>r$ for each $a \in J^{i}_{n}$.
For $1\le i \le k$, $p^{-1}(x^{i})$ is a singleton and is neither $\le^{\prespace}$-minimal nor $\le^{\prespace}$-maximal, so by \Cref{fixapoint} there is $m_{1} > m_{0}$ such that for all $m>m_{1}$ and $1\le i \le k$,
\begin{equation}
	\label{spaceabovebelow}
\min \set{\Card{\setnew*{b \in \fus_{m}}{b<x_m^i,\fue^{m}_{n}(b)=x^{i}_{n}}}, \Card{\setnew*{b \in \fus_{m}}{b>x_m^i,\fue^{m}_{n}(b)=x^{i}_{n}} }} \ge r.
\end{equation}
Fix such an $m>m_{1}$.
We use \Cref{epiconditionslemma1} to define, for $1\le i\le\ell$, an epimorphism $\psi_{i}: B^{i}_{m} \to I^{i}$ such that $\psi_{i}[J^{i}_{m}] = I^{i}$, $ \fhi \psi_i=\fue_n^m \restr{B_m^i}$, and such that, moreover, $\psi_{i}(x^{i}_{m}) = y^{i}$ when $1\le i \le k$.
Let $\psi: \fus_{m} \to P$ be defined by
\[
\psi (b)= \left \{ \begin{array}{lcl}
\alpha\psi' \fue_{m'}^m(b) & \text{if} & b\notin\bigcup_{i=1}^{\ell }B_m^i \\
\psi_i(b) & \text{if} & b\in B_m^i
\end{array} \right .
.
\]
Then $\fhi \psi = \fue^{m}_{n}$ and $\psi$ is an epimorphism.
Indeed, $\psi$ is $\mL_{R}$-preserving by construction and for each $B \in \branches(P)$ there is $C \in \branches(\fus_{m})$ such that $\psi'\fue_{m'}^m[C]$ equals one of the copies of $B$ in $P'$, as there are more copies of $B$ in $P'$ than maximal chains of $\fus_{m}$ on which $\psi$ differs from $\alpha\psi'\fue_{m'}^m$.
\end{proof}

The connected components of \Cref{homogeneityofcomponents} might not satisfy the hypotheses of \Cref{backandforth}, since some of the endpoints may be non-singleton $R^{ \prespace }$-classes, so we cannot apply \Cref{backandforth} directly.
Therefore we first need the following lemma.

\begin{lemma}\label{identifyendpoints}
Let $\sim \subseteq R^{\prespace}$ be an equivalence relation on $\prespace$ which is the equality but on finitely many points.
Then $\altprespace$ with the induced $ \mL_R$-structure is isomorphic to $\prespace$.
\end{lemma}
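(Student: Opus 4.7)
The plan is to realize $\altprespace$ as the projective limit of a specific sequence in $\diamonds$ and to show that this sequence is a fundamental sequence for $\forests$; by uniqueness of projective Fraïssé limits this will give $\altprespace\cong\prespace$. By \Cref{lemusualtwo} the fundamental sequence $(\fus_n,\fue_n^m)$ fixed at the start of the section is fine, so by \Cref{fineclassesconvex} every $R^{\prespace}$-class has at most two elements; in particular each non-trivial $\sim$-class has exactly two elements, and I write them as $\set{x_1,y_1},\dots,\set{x_k,y_k}$ with all $2k$ points distinct. For $n$ large enough, say $n\ge n_0$, the images $\fue_n(x_1),\fue_n(y_1),\dots,\fue_n(x_k),\fue_n(y_k)$ are pairwise distinct in $\fus_n$. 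For such $n$ let $\sim_n$ be the equivalence relation on $\fus_n$ identifying $\fue_n(x_i)$ with $\fue_n(y_i)$ for each $i$; this contracts finitely many $R^{\fus_n}$-edges of the Hasse forest $\fus_n$, so $\fus_n':=\fus_n/\sim_n$ is still in $\diamonds$, and the bonding maps descend to epimorphisms $\bar\fue_n^m:\fus_m'\to\fus_n'$.

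The first step is to identify $\altprespace$ with $\mathbb{Y}:=\prolim(\fus_n',\bar\fue_n^m)$ as $\mL_R$-structures. Let $\pi_n:\fus_n\to\fus_n'$ denote the quotient. The coordinate-wise map $\prespace\to\mathbb{Y}$, $u\mapsto(\pi_n(\fue_n(u)))_n$, is a continuous $\mL_R$-preserving surjection which is constant on $\sim$-classes, so it factors through $q:\prespace\to\altprespace$ as a map $\bar\Phi:\altprespace\to\mathbb{Y}$. Injectivity follows by observing that if $u,v\in\prespace$ satisfy $\pi_n\fue_n(u)=\pi_n\fue_n(v)$ for all $n$ but $\fue_n(u)\ne\fue_n(v)$ for some $n\ge n_0$, then $\set{\fue_m(u),\fue_m(v)}=\set{\fue_m(x_j),\fue_m(y_j)}$ for some fixed $j$ and all large $m$, forcing $\set{u,v}=\set{x_j,y_j}$ and hence $u\sim v$; surjectivity follows by lifting any $(z_n)\in\mathbb{Y}$ through the inverse system of non-empty finite fibers $\pi_n^{-1}(z_n)$, which has non-empty limit by compactness.

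The main step is to show that $(\fus_n',\bar\fue_n^m)_{n\ge n_0}$ is a fundamental sequence for $\forests$. By \Cref{provefundseq} it suffices to prove: for every $A\in\forests$, $n\ge n_0$, and epimorphism $\theta:A\to\fus_n'$, there exist $m\ge n$ and an epimorphism $\psi:\fus_m'\to A$ with $\theta\psi=\bar\fue_n^m$. Using \ref{itm:AP} for $\forests$, I would amalgamate $\theta:A\to\fus_n'$ and $\pi_n:\fus_n\to\fus_n'$ to obtain $B\in\diamonds$ (the chain-by-chain construction in the proof of \Cref{fraisseforests} always lands in $\diamonds$) together with epimorphisms $\rho_1:B\to A$ and $\rho_2:B\to\fus_n$ satisfying $\theta\rho_1=\pi_n\rho_2$. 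Applying \ref{itm:Ftwo} to $\rho_2:B\to\fus_n$ and $\mathrm{id}_{\fus_n}$ yields $m\ge n$ and an epimorphism $\chi:\fus_m\to B$ with $\rho_2\chi=\fue_n^m$. The composition $\rho_1\chi:\fus_m\to A$ then satisfies $\theta\rho_1\chi=\pi_n\fue_n^m=\bar\fue_n^m\pi_m$, so if $\rho_1\chi$ factors through $\pi_m$ as some $\psi:\fus_m'\to A$, the identity $\theta\psi=\bar\fue_n^m$ follows from surjectivity of $\pi_m$.

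The hard part is arranging this factorization, that is, $\rho_1\chi(\fue_m(x_i))=\rho_1\chi(\fue_m(y_i))$ for every $i$. Since these two elements of $B$ are $R^B$-related with $\rho_2$-images $\fue_n(x_i)$ and $\fue_n(y_i)$, the factorization is guaranteed provided $B$ is built so that every $R^B$-edge whose $\rho_2$-image is a merged pair $\pi_n^{-1}(c)$ has $\rho_1$-constant image. The chain-by-chain amalgamation has enough flexibility for this: inside each HLO piece $E_C$, for every merged $z\in\fus_n'$ with $\pi_n^{-1}(z)=\set{c_1,c_2}$, one picks a single $a^*\in\theta^{-1}(z)$ in the branch of $A$ associated to $E_C$ and stages the non-decreasing surjections $\rho_1,\rho_2$ on the block of $E_C$ lying above $z$ so that the last element $\rho_2$-mapping to $c_1$ and the first element $\rho_2$-mapping to $c_2$---the two endpoints of the unique $R^{E_C}$-edge crossing the merged pair---both $\rho_1$-map to $a^*$; no other $R^{E_C}$-edges have $\rho_2$-image a merged pair. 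Performing this staging on every branch of $A$ and of $\fus_n$ produces an amalgamation with the required property, so $\psi$ is well-defined, \Cref{provefundseq} applies, and $\altprespace\cong\mathbb{Y}\cong\prespace$.
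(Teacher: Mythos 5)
Your proof is correct, but it takes a genuinely different route from the paper's. The paper verifies properties \ref{itm:Lone}, \ref{itm:Ltwo} and \ref{itm:Lthreeprime} for $\altprespace$ directly (reducing to a single identified pair): for \ref{itm:Lthreeprime} it doubles every point of the target $Q$ into a two-element chain $Q'$, lifts the resulting epimorphism to $\prespace$ via \ref{itm:Lthreeprime} for $\prespace$, and then collapses the image of the identified pair by a suitable $\chi':Q'\to Q$ --- the doubling creates exactly the slack needed for the composite to become constant on the identified pair; property \ref{itm:Ltwo} is handled by quotienting a refining structure. You instead push the identification down to the finite levels, obtaining a projective sequence $(\fus_n',\bar\fue_n^m)$ of quotients in $\diamonds$, and create the slack inside the amalgamation itself, by staging the two monotone surjections on each block of each $E_C$ so that the unique $R$-edge crossing a merged pair lies in a single fiber of the map towards $A$. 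Both arguments hinge on the same idea (make room so that the two identified points can be sent to the same place), but yours requires reopening the chain-by-chain amalgamation of \Cref{fraisseforests}, whereas the paper reuses \ref{itm:Lthreeprime} for $\prespace$ as a black box; in exchange, your argument produces an explicit presentation of $\altprespace$ as a projective limit of quotient structures in $\diamonds$. Two routine points you should still make explicit: (i) that $\bar\Phi$ is an isomorphism of topological $\mL_R$-structures and not merely a homeomorphism --- besides injectivity and surjectivity one must check that every tuple in $r^{\mathbb Y}$ lifts to a tuple in $r^{\prespace}$, which follows from the usual inverse-limit argument applied to the finite nonempty sets of witnesses $\setnew{(a,b)\in\fus_n\times\fus_n}{r^{\fus_n}(a,b),\ \pi_n(a)=z_n,\ \pi_n(b)=w_n}$; and (ii) that your modified amalgamation still consists of epimorphisms satisfying $\theta\rho_1=\pi_n\rho_2$, which follows from \Cref{iffchain} exactly as in the proof of \Cref{fraisseforests}, since the staged maps remain monotone surjections onto each maximal chain and the commutation holds block by block.
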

\begin{proof}
Let $\ell$ be the number of $\sim$-equivalence classes of cardinality greater than $1$, that is, by \Cref{fineclassesconvex}, of cardinality $2$.
Denote these equivalence classes by $\set{x_1, x'_1}, \dots, \set{x_{\ell }, x'_{\ell }}$.
To prove that  $\altprespace$ is isomorphic to $\prespace$ we show that $\altprespace$ satisfies properties \ref{itm:Lone}, \ref{itm:Ltwo} and \ref{itm:Lthreeprime}.
Inductively, it is enough to prove the assertion for $\ell =1$.
Notice also that the quotient map $q: \prespace \to \altprespace$ is an epimorphism.

Property \ref{itm:Lone} follows from \ref{itm:Lthreeprime} by considering, for any $P\in \diamonds$, epimorphisms from $\altprespace$ and $P$ to a structure in $\diamonds$ with one point.

To check that \ref{itm:Lthreeprime} holds, fix $P, Q \in \diamonds$ and epimorphisms $\psi: \altprespace \to P, \fhi: Q \to P$ with the objective of finding an epimorphism $\theta: \altprespace \to Q$ such that $\fhi \theta = \psi$.
Let $Q' \in \diamonds$ be the structure obtained from $Q$ by substituting each $a \in Q$ with a chain $ \set{a_0, a_1} $ of length $2$.
In other words:
\begin{itemize}
\item $Q'= \setnew{a_0, a_1}{a\in Q} $;
\item $R^{Q'}$ is the smallest reflexive and symmetric relation such that
\begin{itemize}
\item $a_{0} \binR^{Q'} a_1$ for every $a\in Q$,
\item $a_1 \binR^{Q'} a'_{0}$ whenever $a \binR^{Q} a'$, with $a <^{Q} a'$;
\end{itemize}
\item $a_i\le^{Q'}a'_j$ if and only if either $a=a', i\le j$, or $a<^Qa'$.
\end{itemize}
Let $\chi : Q' \to Q$ be the epimorphism $a_{i} \mapsto a$.
By \ref{itm:Lthreeprime} for $\prespace$ there exists $\theta': \prespace \to Q'$ such that $\fhi \chi \theta' = \psi q$.
Let $C=\theta'[\set{ x_1, x'_1}]$.
Let $\chi': Q' \to Q$ be defined as
\[
\chi'(a_{i})= \begin{cases}
	a & \text{ if } a_{i} \not \in C, \\
	\chi(\max C) & \text{ if } a_{i}  \in C.
\end{cases}
\]
Then $\chi'$ is an epimorphism using \Cref{iffchain}, which is applicable as $\forall a\in Q'\ \chi'(a_0)=a$.
Define $\theta (y)= \chi' \theta'(x)$ for any $x\in q^{-1}(y)$.
This is well defined as $\chi'\theta'(x_1)= \chi'\theta'(x'_1)$, and is the required epimorphism: continuity holds since for each $a \in Q$, the set $(\chi' \theta')^{-1}(a)$ is a clopen $\sim$-invariant subset of $\prespace$, so $q[(\chi' \theta')^{-1}(a)]=\theta^{-1}(a)$ is clopen in $\altprespace$.

For \ref{itm:Ltwo} let $ \set{V_1, \dots, V_r} $ be a clopen partition of $\altprespace$.
Consider the induced clopen partition $ \set{q^{-1}(V_1), \dots, q^{-1}(V_r)} $ of $\prespace$.
By \ref{itm:Ltwo} for $\prespace$, there exist $P' \in \diamonds$ and an epimorphism $\fhi': \prespace \to P'$ which refines the partition.
Let $P\in \diamonds$ be the quotient of $P'$ which identifies $a, a'$ if and only if $a = a'$ or $a, a' \in \fhi'[\set{x_1, x'_1}]$.
Then the quotient map $\psi:P' \to P$ is an epimorphism, so $\fhi (y)= \psi \fhi'(x)$ for any $x\in q^{-1}(y)$ is a well defined epimorphism.
Since $\psi \fhi'$ refines $ \set{q^{-1}(V_1),\ldots ,q^{-1}(V_r)} $, it follows that $\fhi$ refines $ \set{V_1, \dots, V_r} $.
\end{proof}

\begin{proof}[Proof of \Cref{homogeneityofcomponents}]
By \Cref{identifyendpoints}, up to considering an isomorphic structure, we can assume that the preimages of the endpoints of all the $J^{i}$'s and $I^{i}$'s under the quotient map $p: \prespace \to \quotofpre$ are singletons, as well as the preimages of the $x^{i}$'s and $y^{i}$'s.

For $1 \le i \le \ell$, let $J^{i}_{\infty} = p^{-1}(J^{i}), I^{i}_{\infty} = p^{-1}(I^{i})$;
for $1\le i \le k$, let $\set{x^{i}_{\infty}} = p^{-1}(x^{i}), \set{y^{i}_{\infty}} = p^{-1}(y^{i})$.
For each $n \in \N$, for $1 \le i \le \ell$, let $J^{i}_{n} = \fue_{n}[J^{i}_{\infty}], I^{i}_{n} = \fue_{n}[I^{i}_{\infty}]$; for $1\le i \le k$, let $x^{i}_{n}= \fue_{n}(x^{i}_{\infty}), y^{i}_{n}= \fue_{n}(y^{i}_{\infty})$.
When $J^i$ (equivalently, $I^i$) is a singleton, then $J_n^i, I_n^i$ are singletons for every $n\in \N $.

Let $n_{0} = m_{0} = 0$ and $\fhi_{0}: \fus_{m_{0}} \to \fus_{n_{0}}$ be the identity.
As $\fus_{0}$ consists of a single point, all the hypotheses of \Cref{backandforth} are satisfied where $n, P, I^i, y^{i}, \fhi $ of the lemma are $0, F_0, I_0^i, y^{i}_{0}, \fhi_0$, respectively.
Suppose that $n_{j}, m_{j}, \fhi_{j}: \fus_{m_{j}} \to \fus_{n_{j}}$ have been defined and are such that $\fhi_{j}[I^{i}_{m_{j}}] = J^{i}_{n_{j}}$ for $1 \le i  \le \ell$, and $\fhi_{j}(y^{i}_{m_{j}})=x^{i}_{n_{j}}$ for $1\le i \le k$.
By \Cref{backandforth} there exist $n_{j+1}> n_{j}$ and $\psi_{j}: \fus_{n_{j+1}} \to \fus_{m_{j}}$ such that $\fhi_{j}\psi_{j} = \fue^{n_{j+1}}_{n_{j}}$, $\psi_{j}[J^{i}_{n_{j+1}}] = I^{i}_{m_{j}}$, for $1 \le i \le \ell$, and $\psi_{j}(x^{i}_{n_{j+1}}) = y^{i}_{m_{j}}$, for $1\le i \le k$.
Now $\fus_{m_{j}}, \fus_{n_{j+1}}$ and $\psi_{j}$ satisfy the hypotheses of \Cref{backandforth} with the roles of the $I$'s and $J$'s reversed, so there exist $m_{j+1}>m_{j}$ and $\fhi_{j+1}: \fus_{m_{j+1}} \to \fus_{n_{j+1}}$ such that $\psi_{j}\fhi_{j+1} = \fue^{m_{j+1}}_{m_{j}}$, $\fhi_{j+1}[I^{i}_{m_{j+1}}] = J^{i}_{n_{j+1}}$ for $1 \le i \le \ell$, and $\fhi_{j+1}(y^{i}_{m_{j+1}})=x^{i}_{n_{j+1}}$, for $1\le i \le k$.

Let $\fhi, \psi: \prespace \to \prespace$ be the unique epimorphisms such that for each $j \in \N$, $\fue_{n_{j}} \fhi = \fhi_{j} \fue_{m_{j}}$ and $\fue_{m_{j}} \psi = \psi_{j} \fue_{n_{j+1}}$.
Then $\fhi \psi$ and $\psi \fhi$ are the identity, so $\fhi, \psi \in \aut(\prespace)$.
As for each  $j \in \N$, $\fue_{m_{j}} \psi[J^{i}_{\infty}] = \psi_{j} \fue_{n_{j+1}}[J^{i}_{\infty}] = \psi_{j}[J^{i}_{n_{j+1}}] = I^{i}_{m_{j}}$ for $1 \le i \le \ell$, it follows that $\psi[J^{i}_{\infty}] = I^{i}_{\infty}$;
from $\fue_{m_{j}} \psi(x^{i}_{\infty}) = \psi_{j} \fue_{n_{j+1}}(x^{i}_{\infty}) = \psi_{j}(x^{i}_{n_{j+1}}) = y^{i}_{m_{j}}$, it follows that $\psi(x^{i}_{\infty}) = y^{i}_{\infty}$, for $1\le i \le k$.
Let $h: \quotofpre \to \quotofpre $ be defined by $h(x)=p\psi (u)$ for any $u\in p^{-1}(x)$.
Then $h\in \homeo_{\le}(\quotofpre)$ and $h[J^{i}] = I^{i}$, for $1 \le i \le \ell$, and $h(x^{i}) = y^{i}$ for $1\le i \le k$.
\end{proof}

To lighten notation, let $ \mathfrak L = \mathfrak L_{\le^{ \mathbb F /R^{ \mathbb F }}} \left (
\quotofpre
\right ),
\mathfrak U = \mathfrak U_{\le^{ \mathbb F /R^{ \mathbb F }}} \left (
\quotofpre
\right )
$.

\begin{lemma}
	\label{landuhomeo}
There is $h \in \homeo(\quotofpre)$ which switches $\mathfrak U$ and $\mathfrak L$.
\end{lemma}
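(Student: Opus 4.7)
My plan is to obtain $h$ by descending to the quotient an order-reversing isomorphism of $\prespace$ with itself arising from the symmetry $\le \leftrightarrow \ge$. For any HPO $P$, let $P^{*}$ denote the $\mL_R$-structure with the same universe and $R$-interpretation as $P$ but with $\le^{P^{*}}$ equal to the reverse of $\le^{P}$. I would first verify three routine facts: $P \mapsto P^{*}$ is an involution on $\diamonds$ (being a disjoint union of chains is insensitive to the direction of $\le$); a map $\fhi : P \to Q$ is an epimorphism if and only if $\fhi : P^{*} \to Q^{*}$ is one (the defining clauses for $R$ and $\le$ are both symmetric under reversal); and the projective limit of the reversed sequence $(\fus_n^{*}, \fue_n^m)$ is the structure $\prespace^{*}$ obtained from $\prespace$ by reversing $\le^{\prespace}$ while keeping the universe and $R^{\prespace}$ unchanged.

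Next I would use these observations to check that $\prespace^{*}$ satisfies \ref{itm:Lone}, \ref{itm:Ltwo}, \ref{itm:Lthree} for $\diamonds$: each condition is formulated in terms of $\diamonds$, the underlying space, clopen coverings, and epimorphisms, all of which are preserved by the involution. By the uniqueness of the projective Fra\"iss\'e limit, there is an isomorphism $\phi : \prespace \to \prespace^{*}$; viewed as a self-map of the common underlying compact space, $\phi$ is a homeomorphism that preserves $R^{\prespace}$ and reverses $\le^{\prespace}$.

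Finally, since $\phi$ preserves $R^{\prespace}$, it descends to a continuous bijection $h : \quotofpre \to \quotofpre$ via $h \circ p = p \circ \phi$, and being a continuous bijection of a compact Hausdorff space it is a homeomorphism. Because $\le^{\prespace/R^{\prespace}} = p \times p[\le^{\prespace}]$ and $\phi$ reverses $\le^{\prespace}$, the map $h$ reverses $\le^{\prespace/R^{\prespace}}$; in particular it carries the set $\mathfrak U$ of $\le^{\prespace/R^{\prespace}}$-maximal points onto $\mathfrak L$ and vice versa. I expect the only even mildly delicate step to be the bookkeeping that checks the Fra\"iss\'e machinery for $\diamonds$ is invariant under $P \mapsto P^{*}$, but this is straightforward since the setup treats $\le$ and its reverse symmetrically.
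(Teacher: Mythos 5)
Your proposal is correct and follows essentially the same route as the paper: define the order-reversal involution $A\mapsto A^{*}$, observe that it preserves $\diamonds$ and epimorphisms, conclude that $\prespace^{*}$ satisfies \ref{itm:Lone}, \ref{itm:Ltwo}, \ref{itm:Lthree} and is hence isomorphic to $\prespace$, and descend the resulting isomorphism to the quotient. No substantive differences from the paper's argument.
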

\begin{proof}
For any  $\mL_{R}$-structure $A$, let $A^{*}$ be the $\mL_{R}$-structure with the same support as $A$, with $R^{A^{*}} = R^{A}$ and $u \le^{A^{*}} u'$ if and only if $u' \le^{A} u$.
Then $(A^{*})^{*} = A$ and a function $\fhi: B \to A$ is an epimorphism from $B$ to $A$ if and only if it is an epimorphism from $B^{*}$ to $A^{*}$.
Now, if $A \in \diamonds$, then $A^{*} \in \diamonds$, so it is straightforward to check that
\ref{itm:Lone}, \ref{itm:Ltwo}, \ref{itm:Lthree} hold for $\prespace^{*}$.
It follows that $\prespace^{*}$ is the projective Fraïssé limit of $\diamonds$ and thus that it is isomorphic to $\prespace$, via an isomorphism $\alpha: \prespace \to \prespace^{*}$.
Let $h: \quotofpre \to \quotofpre $ be defined by letting $h(x)=p\alpha (u)$ for any $u\in p^{-1}(x)$.
Then $h$ is the required homeomorphism.
\end{proof}

\begin{corollary}
	\label{onethirdhomo}
The \F fence is $\nicefrac{1}{3}$-homogeneous.
The orbits of the action of $\homeo(\quotofpre)$ on $\quotofpre$ are $\mathfrak L \cap \mathfrak U$, $ \mathfrak L \symdif \mathfrak U$, and $\quotofpre \setminus (\mathfrak L \cup \mathfrak U)$.
\end{corollary}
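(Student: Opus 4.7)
The plan is to show first that the three sets are topologically distinguishable --- so automatically permuted by $\homeo(\quotofpre)$ --- and then to invoke \Cref{homogeneityofcomponents} and \Cref{landuhomeo} to realize transitivity within each of them.

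First I would note that $\mathfrak L \cap \mathfrak U$ is exactly the set of points whose connected component in $\quotofpre$ is a singleton, that $\mathfrak L \cup \mathfrak U$ coincides with $\mathrm{E}(\quotofpre)$ by the very definition of a strongly compatible order, and hence that $\mathfrak L \symdif \mathfrak U$ is the set of endpoints of the arc components while $\quotofpre \setminus (\mathfrak L \cup \mathfrak U)$ is the set of non-endpoints, i.e.\ the interior points of arc components. All three descriptions depend only on the topology of $\quotofpre$, so the three sets are each $\homeo(\quotofpre)$-invariant and the action has at least three orbits.

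Transitivity on $\mathfrak L \cap \mathfrak U$ is immediate from \Cref{homogeneityofcomponents} applied with $\ell = 1$, $k = 0$, $J^1 = \{x\}$, $I^1 = \{y\}$ to any pair of singleton components. Transitivity on $\quotofpre \setminus (\mathfrak L \cup \mathfrak U)$ follows similarly, using $\ell = k = 1$ and $x^1 = x$, $y^1 = y$, since the theorem permits specifying the non-endpoint that is to be moved (this covers both the case where $x, y$ lie in the same arc component and the case where they lie in distinct ones).

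The main obstacle will be transitivity on $\mathfrak L \symdif \mathfrak U$: since \Cref{homogeneityofcomponents} only produces order-preserving homeomorphisms, it alone cannot send a $\le^{\prespace/R^{\prespace}}$-minimal endpoint to a maximal one. I would handle this by first composing, if necessary, with the homeomorphism provided by \Cref{landuhomeo}, which switches $\mathfrak L$ and $\mathfrak U$, to reduce to the case where both $x, y$ lie in (say) $\mathfrak L \setminus \mathfrak U$, hence are the minima of their respective arc components $J^1, I^1$. Choosing arbitrary non-endpoints $x^1 \in J^1$, $y^1 \in I^1$ and applying \Cref{homogeneityofcomponents} with $\ell = k = 1$ then produces $h \in \homeo_{\le}(\quotofpre)$ with $h[J^1] = I^1$; the order-preservation of $h$ automatically forces $h(x) = h(\min J^1) = \min I^1 = y$, completing the argument.
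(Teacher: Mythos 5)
Your proposal is correct and follows the same route as the paper: the paper's own proof simply observes that the three subspaces are invariant under homeomorphisms and then concludes by \Cref{homogeneityofcomponents} and \Cref{landuhomeo}. You have merely spelled out the details the paper leaves implicit — the topological characterizations of the three sets, and the reduction of the $\mathfrak L \symdif \mathfrak U$ case to $\mathfrak L \setminus \mathfrak U$ via \Cref{landuhomeo} before applying \Cref{homogeneityofcomponents} — all of which are accurate.
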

\begin{proof}
The above subspaces are clearly invariant under homeomorphisms.
We conclude by \Cref{homogeneityofcomponents} and \Cref{landuhomeo}.
\end{proof}

The \F fence also enjoys a different kind of homogeneity property, namely that of $h$-homogeneity.

\begin{proposition}
	\label{newspacehhomo}
The \F fence is $h$-homogeneous.
\end{proposition}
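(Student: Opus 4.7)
The plan is to reduce the claim to the topological characterization of the \F fence given by \Cref{weakcharacthm}. Let $C$ be a nonempty clopen subset of $\quotofpre$. I would first observe that $C$ is itself a smooth fence. Indeed, since $C$ is clopen, for each connected component $K$ of $\quotofpre$ the intersection $K\cap C$ is clopen in $K$, so by connectedness of $K$ either $K\subseteq C$ or $K\cap C=\emptyset$. Hence the connected components of $C$ are exactly those connected components of $\quotofpre$ which are contained in $C$, and in particular they are points or arcs. The restriction of the closed order $\le^{\prespace/R^{\prespace}}$ to $C$ is a closed order which totally orders each component of $C$ and for which two elements are comparable if and only if they lie in the same component of $C$; thus it is a strongly compatible order on $C$, showing $C$ is a smooth fence.

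Next I would verify that $C$ satisfies the characterizing property of \Cref{weakcharacthm}. Let $O,O'$ be open subsets of $C$ meeting a common connected component $K$ of $C$. Since $C$ is open in $\quotofpre$, the sets $O,O'$ are also open in $\quotofpre$; and since $C$ is clopen, $K$ is a connected component of $\quotofpre$ as well. Applying \Cref{weakcharacthm} to $\quotofpre$, there is an arc component $J$ of $\quotofpre$ whose endpoints lie in $O$ and $O'$ respectively. Because $J$ meets $C$ and $C$ is clopen, we have $J\subseteq C$, and $J$ is then an arc component of $C$ with endpoints in $O$ and $O'$. By \Cref{weakcharacthm} applied to $C$, we conclude that $C$ is a \F fence, and hence homeomorphic to $\quotofpre$.

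There is no real obstacle here beyond a careful check that the characterization transfers; the key observations are that clopen subsets of a fence remain fences (with the same components), that strong compatibility of the order is preserved by restriction, and that the ``arc component with prescribed endpoints'' property of $\quotofpre$ is inherited by $C$ precisely because components of $C$ coincide with components of $\quotofpre$ lying in $C$.
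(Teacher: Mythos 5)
Your proof is correct, but it takes a genuinely different route from the paper's. The paper works upstairs in the prespace: by \Cref{clopensareunionofbranches} the preimage $p^{-1}(U)$ of a nonempty clopen set $U$ is, at every level $n\ge n_0$, the preimage of a union $Q_n$ of maximal chains of $\fus_n$, and the restricted sequence $(Q_n,\fue_n^m\restr{Q_m})$ is shown (via \Cref{provefundseq}) to be itself a fundamental sequence for $\diamonds$; hence $p^{-1}(U)$ with the inherited $\mL_R$-structure is isomorphic to $\prespace$, which yields the homeomorphism (indeed an order-preserving one) of the quotients. You instead stay downstairs and reduce everything to the topological characterization of \Cref{weakcharacthm}: you check that a nonempty clopen $C$ is a smooth fence whose components are exactly the components of $\quotofpre$ contained in $C$, that the restricted order is strongly compatible, and that the ``arc component with endpoints in prescribed open sets'' property transfers from $\quotofpre$ to $C$ because $C$ is clopen (so an arc component of $\quotofpre$ meeting $C$ lies in $C$ and is an arc component of $C$). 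All of these steps check out, and \Cref{weakcharacthm} is proved before this proposition, so there is no circularity. What each approach buys: the paper's argument gives the slightly stronger conclusion that $p^{-1}(U)\cong\prespace$ as topological $\mL_R$-structures (so the homeomorphism can be taken in $\homeo_{\le}(\quotofpre)$), while yours is shorter modulo \Cref{weakcharacthm} and shows that the characterization theorem alone already implies $h$-homogeneity for any space satisfying it.
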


\begin{proof}
Fix a nonempty clopen subset $U$ of $\quotofpre$.
By \Cref{clopensareunionofbranches}, there is $n_{0} \in \N$ such that for all $n \ge n_{0}$, there is $S_{n} \subseteq \branches(P_{n})$ for which $U = \bigcup_{a\in\bigcup S_n}\cappello{a}_{\fue_n}$.
Let $Q_{n} = \bigcup S_{n}$.
We prove that $(Q_{n}, \fue^{m}_{n} \restr{Q_m})_{n \ge n_{0}}$ is a fundamental sequence in $\diamonds$, thus showing that $p^{-1}(U)$, with the $ \mL_R$-structure inherited from $ \prespace $, is isomorphic to $\prespace$, which yields the result.

Let $n\ge n_{0}$, $P \in \diamonds$ and $\fhi: P \to Q_{n}$.
Let $P' = P \sqcup (\fus_{n} \setminus Q_{n})$ and $\fhi' : P' \to \fus_{n}$ be $\fhi$ on $P$ and the identity on $\fus_{n} \setminus Q_{n}$.
Since $Q_{n}$ is $R^{P_n}$-invariant in $\fus_{n}$ and $\fhi$ is an epimorphism, so is $\fhi'$, by \Cref{iffchain}.
By \ref{itm:Ftwo} there are $m\ge n$ and an epimorphism $\psi': \fus_{m} \to P'$ such that $\fhi' \psi' = \fue^{m}_{n}$.
We see that $(\fue^{m}_{n})^{-1}(Q_{n}) = Q_{m}$.
Indeed, $\fue_{m}^{-1}(Q_{m}) = \fue_{n}^{-1}(Q_{n}) = p^{-1}(U)$, so $Q_{m} \subseteq (\fue^{m}_{n})^{-1}(Q_{n}) \subseteq \fue_{m}[\fue_{n}^{-1}(Q_{n})] = \fue_{m}[p^{-1}(U)] = Q_{m}$.
Therefore $(\psi')^{-1}(P)=  Q_{m}$, so $\psi = \psi' \restr{Q_{m}} : Q_{m} \to P$ is an epimorphism such that $\fhi \psi = \fue^{m}_{n} \restr{Q_{m}}$.
We conclude by \Cref{provefundseq}.
\end{proof}

\subsection{A strong universality property of the \F fence}

\Cref{smoothembedd} shows that any smooth fence embeds in the Cantor fence.
We show a stronger universality property for the \F fence, namely that any smooth fence embeds in the \F fence via a map which preserves endpoints.

\begin{theorem}
	\label{thm:FraisseFenceUniversal}
For any smooth fence $Y$ there is an embedding $f: Y \to \quotofpre$ such that $f[\mathrm{E}(Y)] \subseteq \mathrm{E}(\quotofpre)$.
Moreover, fixing a strongly compatible order $\preceq $ on $Y$, the embedding $f$ can be constructed so that $f[ \mathfrak L (Y)]\subseteq \mathfrak L ,f[ \mathfrak U (Y)]\subseteq \mathfrak U $.
\end{theorem}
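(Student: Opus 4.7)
The plan is to identify $Y$ with $\quot{\mathbb P}{R^{\mathbb P}}$ for a fine projective sequence in $\diamonds$ via \Cref{smoothfencespidiamond}, with the homeomorphism sending $\preceq$ to $\le^{\mathbb P/R^{\mathbb P}}$. Fix such an identification, with associated $(P_n,\fhi_n^m)$, limit $\mathbb P$, and irreducible quotient $p_{\mathbb P}:\mathbb P\to Y$; by padding we may assume $P_0$ is a singleton. I will construct an $\mathcal L_R$-embedding $j:\mathbb P\hookrightarrow\prespace$ sending $\le^{\mathbb P}$-maximal elements to $\le^{\prespace}$-maximal elements and $\le^{\mathbb P}$-minimal to $\le^{\prespace}$-minimal. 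Passing to quotients, $j$ then induces the desired $\bar j:Y\to\quotofpre$: a continuous injection between compact Hausdorff spaces, hence an embedding, and one satisfying $\bar j[\mathfrak L_\preceq(Y)]\subseteq\mathfrak L$ and $\bar j[\mathfrak U_\preceq(Y)]\subseteq\mathfrak U$, whence in particular $\bar j[\mathrm E(Y)]\subseteq\mathrm E(\quotofpre)$.

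The construction of $j$ proceeds by induction: produce integers $0=m_0<m_1<\cdots$, epimorphisms $\eta_n:F_{m_n}\to P_n$, and $\mathcal L_R$-preserving sections $j_n:P_n\to F_{m_n}$ (so $\eta_n j_n=\mathrm{id}_{P_n}$ and $j_n$ is an isomorphism onto $K_n:=j_n[P_n]\subseteq F_{m_n}$ as induced substructure), satisfying the commutativity relations $\fhi_n^{n+1}\eta_{n+1}=\eta_n\gamma_{m_n}^{m_{n+1}}$ and $\gamma_{m_n}^{m_{n+1}}j_{n+1}=j_n\fhi_n^{n+1}$, together with the \emph{endpoint condition}: for every $B\in\branches(P_n)$, $j_n(\max B)$ and $j_n(\min B)$ are, respectively, the maximum and the minimum of the chain of $F_{m_n}$ containing them. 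The base case is trivial. The assembled $j$ is an $\mathcal L_R$-embedding because the $j_n$'s are, and the preservation of $\le$-maximality follows from the endpoint condition via \Cref{endpointprop}: given $\le^{\mathbb P}$-maximal $u=(u_n)\in\mathbb P$ and $k\in\mathbb N$, choose $n$ with $m_n\ge k$ and use \Cref{endpointprop} applied to $\mathbb P$ to find $n'>n$ with $\fhi_n^{n'}(\max\{a\in P_{n'}:u_{n'}\le a\})=u_n$; the endpoint condition at level $n'$ identifies $j_{n'}(\max\{a\in P_{n'}:u_{n'}\le a\})$ with $\max\{b\in F_{m_{n'}}:j_{n'}(u_{n'})\le b\}$, and commutativity forces $\gamma_{m_n}^{m_{n'}}$ of this maximum to equal $j_n(u_n)=\gamma_{m_n}(j(u))$, verifying the criterion of \Cref{endpointprop} for $j(u)$ in $\prespace$; the argument for minimality is symmetric.

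The inductive step uses property \ref{itm:Ftwo} of the fundamental sequence $(F_n,\gamma_n^m)$ applied to an auxiliary structure $D\in\diamonds$ built from the amalgamation in $\diamonds$ (available by \Cref{fraisseforests}) of $\eta_n:F_{m_n}\to P_n$ with $\fhi_n^{n+1}:P_{n+1}\to P_n$, enriched so that $D$ carries a distinguished copy of $P_{n+1}$ whose chain-maxima and chain-minima occupy the extremal positions of the containing chains of $D$, and whose image in $F_{m_n}$ under the amalgam projection is exactly $j_n\fhi_n^{n+1}$. Applying \ref{itm:Ftwo} with $\theta_1$ the amalgam epimorphism $D\to F_{m_n}$ and $\theta_2$ the identity on $F_{m_n}$ yields $m_{n+1}$ and an epimorphism $F_{m_{n+1}}\to D$, from which $\eta_{n+1}$ and $j_{n+1}$ are read off; variants of \Cref{epiconditionslemma1}, in particular its point-fixing statement (3), are used to construct the sections chain-by-chain.

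The hard part will be constructing the auxiliary $D$ while simultaneously maintaining the endpoint condition through the step. When $\fhi_n^{n+1}$ sends the maximum of some chain $B\in\branches(P_{n+1})$ to a non-maximum element of its chain in $P_n$, the inductive hypothesis at level $n$ forces $j_n(\fhi_n^{n+1}(\max B))$ to sit strictly below the top of its $F_{m_n}$-chain, so the chain of $F_{m_{n+1}}$ hosting $j_{n+1}[B]$ must project under $\gamma_{m_n}^{m_{n+1}}$ onto only a \emph{proper} sub-chain of the relevant $F_{m_n}$-chain. The construction of $D$ must therefore accommodate both chains that fully lift existing ones and chains whose lifts stop early; such configurations are supplied by the amalgamation property of $\diamonds$, and the fundamentality of $(F_n,\gamma_n^m)$ guarantees that $D$ can be realized as a quotient of some $F_{m_{n+1}}$. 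Once this combinatorial bookkeeping is arranged, the rest of the proof is routine diagram-chasing.
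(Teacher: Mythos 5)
Your global strategy --- realize $Y$ as $\quot{\mathbb P}{R^{\mathbb P}}$ via \Cref{smoothfencespidiamond}, embed $\mathbb P$ into $\prespace$ as a closed substructure, pass to quotients, and transfer maximality/minimality through \Cref{endpointprop} --- is exactly the paper's. But the inductive scheme you propose has a genuine gap: the three requirements you place on $j_{n}$ are jointly unrealizable against a fundamental sequence that is fixed in advance. Since $j_{n}$ is injective and $R$-preserving, consecutive elements of a chain $B\in\branches(P_{n})$ must go to consecutive elements of $\fus_{m_{n}}$; combined with your endpoint condition ($j_{n}(\min B)$, $j_{n}(\max B)$ are the minimum and maximum of the ambient chain), this forces $j_{n}[B]$ to be an \emph{entire} maximal chain of $\fus_{m_{n}}$ of cardinality exactly $\card{B}$. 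Nothing guarantees that $\fus_{m_{n+1}}$ contains such chains: \ref{itm:Ftwo} hands you $\fus_{m_{n+1}}$ together with an epimorphism onto your auxiliary $D$, and \Cref{lemchain} only supplies a chain $C'$ of $\fus_{m_{n+1}}$ mapping \emph{onto} the distinguished copy of $B'$ in $D$, with no control on $\card{C'}$. Choosing one point per fiber so as to include $\min C'$ and $\max C'$ destroys $R$-preservation as soon as some fiber has more than one point, and then the induced map on quotients is not even well defined ($R^{\mathbb P}$-related points can be sent to non-$R^{\prespace}$-related points). The degenerate case $Y=\{\ast\}$ already exhibits the problem: your condition would require some $\fus_{m_{1}}$ to contain a one-element maximal chain, which a fundamental sequence for $\diamonds$ need not (and typically does not) have. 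Note also that \Cref{epiconditionslemma1}, including its point-fixing clause, only produces $\mathcal L_{R}$-preserving surjections from longer chains onto shorter ones; it cannot manufacture the injective sections your step requires.

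The paper circumvents precisely this obstruction by giving up on having an isomorphic copy of $P_{n}$ at each finite stage and running an epimorphic zigzag instead: it chooses $P'_{n+1}\subseteq\psi_{n}^{-1}(P_{j_{n}})\subseteq\fus_{i_{n+1}}$ \emph{minimal} such that $\psi_{n}\restr{P'_{n+1}}$ is an epimorphism \emph{onto} $P_{j_{n}}$ (so only the fibers over chain-endpoints are singletons, while interior fibers may be large), and then uses fineness of $(P_{n},\fhi_{n}^{m})$ to advance the $P$-index to some $j_{n+1}$ whose chains are long enough to surject back onto $P'_{n+1}$ via \Cref{epiconditionslemma1}\eqref{itm:minimalB}; the isomorphism $\mathbb P\cong\mathbb P'$ appears only in the limit. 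Moreover, the correct substitute for your endpoint condition is not that $j_{n}(\max B)$ occupy the top of its chain in $\fus_{m_{n}}$, but that everything above it in $\fus_{i_{n+1}}$ be collapsed by the bonding map one level down --- which is exactly what \Cref{endpointprop} needs, and which the paper arranges by taking the auxiliary structure to be the disjoint union $\fus_{i_{n}}\sqcup P_{j_{n}}$, so that the chain-maxima of the distinguished copy are genuinely $\le$-maximal there. If you replace your sections by this two-way system of epimorphisms and your endpoint condition by the collapsing condition, the rest of your argument (including the final verification via \Cref{endpointprop}) goes through.
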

\begin{proof}
By \Cref{smoothfencespidiamond} there is a projective sequence $(P_{n}, \fhi_{n}^m)$, with projective limit $\mathbb P$ such that $\quot{\mathbb P}{R^{\mathbb P}}$ is homeomorphic to $Y$, via $h: \quot{\mathbb P}{R^{\mathbb P}} \to Y$; moreover, $h$ is an isomorphism between $\le^{ \mathbb P /R^{ \mathbb P }}$ and $\preceq $.
Therefore it is enough to prove the assertion for $( \quot{ \mathbb P }{R^{ \mathbb P }} ,\le^{ \mathbb P /R^{ \mathbb P }})$.

Let $q: \mathbb P \to \quot{\mathbb P}{R^{\mathbb P}}$ be the quotient map.
We procede by induction to define a topological $ \mL_R$-structure $ \mathbb P'\subseteq \prespace $ isomorphic to $ \mathbb P $.
Let $a_{0} \in \fus_{0}$, $P'_{0} = \set{a_{0}} \subseteq \fus_{0}$, and $\theta_{0}: P_{0} \to P'_{0}$ be the unique epimorphism.

Suppose one has defined $i_{n}, j_{n} \in \N$, $P'_{n} \subseteq \fus_{i_{n}}$; assume also that, with the induced structure, $P'_n\in \diamonds $ and there is an epimorphism $\theta_{n}: P_{j_{n}} \to P'_{n}$.
Let $F'_{n} = \fus_{i_{n}} \sqcup P_{j_{n}}$ and $\theta'_{n}: F'_{n} \to \fus_{i_{n}}$ be the identity on $\fus_{i_{n}}$ and $\theta_{n}$ on $P_{j_{n}}$.
By \ref{itm:Ftwo} there are $i_{n+1} > i_{n}$ and an epimorphism $\psi_{n}: \fus_{i_{n+1}} \to F'_{n}$ such that $\fue^{i_{n+1}}_{i_{n}} = \theta'_{n} \psi_{n}$.
Then $\psi_n^{-1}(P_{j_n})$ is an $R^{F_{i_{n+1}}}$-invariant subset of $F_{i_{n+1}}$, that is, the union of a subset of $ \branches (F_{i_{n+1}})$.
Let $P'_{n+1} \subseteq \psi_n^{-1}(P_{j_n})$ be in $ \diamonds $, with respect to the induced $ \mL_R$-structure, and minimal, under inclusion, with the property that $\psi_{n} \restr{P'_{n+1}}$ is an epimorphism onto $P_{j_{n}}$.
This means that there is a bijection $g: \branches(P_{j_{n}}) \to \branches(P'_{n+1})$ such that  $\psi_{n}[g(A)] = A$ and $ \Card{\psi^{-1}(\min A)\cap g(A)} = \Card{\psi^{-1}(\max A)\cap g(A)} =1$, for any $A \in  \branches(P_{j_{n}})$.
Let $r = \max \setnew{ \card{ \psi_{n}^{-1}(a) \cap g(A)}}{a \in A, A \in \branches(P_{j_{n}})}$.

Since the sequence $(P_{n}, \fhi_{n}^m)$ is fine, by \Cref{lemmafour}, there is $j_{n+1}> j_{n}$ such that for all $a, b \in P_{j_{n}}$ with $d_{R^{P_{j_{n}}}}(a, b) = 2$, and all $a' \in (\fhi^{j_{n+1}}_{j_{n}})^{-1}(a), b' \in (\fhi^{j_{n+1}}_{j_{n}})^{-1}(b)$, it holds that $d_{R^{P_{j_{n+1}}}}(a', b') \ge r+1$; this means that if $B$ is an $R^{P_{j_{n+1}}}$-connected chain in $P_{j_{n+1}}$ and $c\in \fhi_{j_n}^{j_{n+1}}[B]\setminus\{\min \fhi_{j_n}^{j_{n+1}}[B],\max \fhi_{j_n}^{j_{n+1}}[B]\} $, then $ \Card{( \fhi_{j_n}^{j_{n+1}})^{-1}(c)\cap B} \ge r$.
We find an epimorphism $\theta_{n+1}: P_{j_{n+1}} \to P'_{n+1}$ by defining it on each maximal chain.
Fix $B \in \branches(P_{j_{n+1}})$.
Let $A \in \branches(P_{j_{n}})$ be such that $\fhi^{j_{n+1}}_{j_{n}}[B] \subseteq A$ and $B' \subseteq g(A)$ be the minimal subset such that  $\psi_{n}[B'] = \fhi^{j_{n+1}}_{j_{n}}[B]$.
Then $B, \fhi^{j_{n+1}}_{j_{n}}[B]$ and $B'$ satisfy the hypothesis of \Cref{epiconditionslemma1}\eqref{itm:minimalB}, so there is an epimorphism $\theta_{B}: B \to B'$ such that $\psi_{n}\theta_{B} = \fhi^{j_{n+1}}_{j_{n}} \restr{B}$.
Let $\theta_{n+1}=\bigcup_{B \in \branches(P_{j_{n+1}})}\theta_{B}$.
Then $\theta_{n+1}$ is an epimorphism by \Cref{iffchain}: for each $A \in \branches(P_{j_{n}})$, there is $B \in  \branches(P_{j_{n+1}})$ with $\fhi^{j_{n+1}}_{j_{n}}[B]=A$, so $\theta_{n+1}[B]\subseteq g(A)$, and by minimality of $g(A)$ it follows that $\theta_{n+1}[B] = g(A)$.
Note that $\psi_n \restr{P'_{n+1}} \theta_{n+1}= \fhi_{j_n}^{j_{n+1}}$.

The functions $ \fue_{i_n}^{i_{n+1}} \restr{P'_{n+1}} :P'_{n+1}\to P'_n$ are epimorphisms, so $\mathbb P' = \setnew{ u \in \prespace}{\forall n \in \N \ \fue_{i_{n}}(u) \in P'_{n}}$, with the induced $\mL_{R}$-structure is the limit of the projective sequence $(P'_n, \fue_{i_n}^{i_m} \restr{P'_m} )$.
Since $ \fue_{i_n}^{i_{n+1}} \restr{P'_{n+1}} \theta_{n+1}=\theta_n\psi_n \restr{P'_{n+1}} \theta_{n+1}=\theta_n \fhi_{j_n}^{j_{n+1}}$, let $\theta: \mathbb P \to \mathbb P'$ be the unique epimorphism such that for each $n \in \N$, $\fue_{i_{n}} \restr{ \mathbb P'} \theta = \theta_{n+1} \fhi_{j_{n+1}}$.
Similarly, as $ \fhi_{j_n}^{j_{n+1}}\psi_{n+1} \restr{P'_{n+2}} =\psi_n \restr{P'_{n+1}} \theta_{n+1}\psi_{n+1} \restr{P'_{n+2}} =\psi_n \restr{P'_{n+1}} \gamma_{i_{n+1}}^{i_{n+2}} \restr{P'_{n+2}} $, let $\psi: \mathbb P' \to \mathbb P$ be the unique epimorphism such that for each $n \in \N$, $\fhi_{j_{n}} \psi = \psi_{n} \restr{P'_{n+1}} \fue_{i_{n+1}} \restr{\mathbb P'}$.
Then $\theta \psi$ and $\psi \theta$ are the identity, so $\theta, \psi$ are isomorphisms.
Let $f: \quot{ \mathbb P}{R^{ \mathbb P }} \to \quotofpre$ be defined by letting $f(x)= p \theta (w)$ for any $w\in q^{-1}(x)$.
Then $f$ is an embedding.

We show that $\le^{\prespace}$-maximal (respectively, $\le^{\prespace}$-minimal) points of $\mathbb P'$ are $\le^{\prespace}$-maximal (respectively, $\le^{\prespace}$-minimal) in $\prespace$, thus concluding the proof.
To this end, let $u \in \mathbb P'$ be $\le^{\prespace}$-maximal in $\mathbb P'$ and fix $n \in \N$.
Let $a_m= \max \setnew{a \in P'_{m}}{ \fue_{i_{m}}(u) \le a}$;
by \Cref{endpointprop}, there is $m>n$ such that $\fue^{i_{m}}_{i_{n}}(a_{m})= \fue_{i_{n}}(u)$.
By minimality of $P'_{m}$, it follows that $\psi_{m-1}(a_m)$ is $\le^{F'_{m-1}}$-maximal, so for any $a \in \fus_{i_{m}}$ with $a_m\le a$, we have $\psi_{m-1}(a) = \psi_{m-1}(a_m)$, so $\fue^{i_{m}}_{i_{m-1}}(a) = \fue^{i_{m}}_{i_{m-1}}(a_m)$.
It holds therefore that $\fue^{i_{m}}_{i_{n}}(a) = \fue^{i_{m}}_{i_{n}}(a_m)= \fue_{i_{n}}(u)$.
By \Cref{endpointprop}, it follows that $u$ is $\le^{\prespace}$-maximal in $\prespace$.
The case for $\le^{\prespace}$-minimal points is analogous.
\end{proof}

Property \ref{itm:Lone} for $\prespace$ gives us another universality result for $\quotofpre$, namely \emph{projective universality}.

\begin{proposition}
For any smooth fence $Y$ with a strongly compatible order $\preceq$, there is a continuous surjection $f: \quotofpre \to Y$ such that $f \times f [\le^{\prespace/R^{\prespace}}] = \preceq$.
\end{proposition}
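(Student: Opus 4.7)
The plan is to lift the quotient representation of $Y$ to the level of prespaces. By \Cref{smoothfencespidiamond}, fix a fine projective sequence $(P_n, \fhi_n^m)$ in $\diamonds$ with projective limit $\mathbb P$ together with a homeomorphism $g\colon \quot{\mathbb P}{R^{\mathbb P}}\to Y$ that is also an order isomorphism between $\le^{\mathbb P/R^{\mathbb P}}$ and $\preceq$. It then suffices to produce an $\mL_R$-epimorphism $\psi\colon \prespace\to\mathbb P$: since $\psi$ respects $R$, the composition $p_{\mathbb P}\circ\psi$ factors through $p\colon \prespace\to\quotofpre$ as a continuous surjection $\bar\psi\colon \quotofpre\to\quot{\mathbb P}{R^{\mathbb P}}$, and $f:=g\circ\bar\psi$ will be the map we seek.

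To construct $\psi$, inductively define a compatible family of epimorphisms $\psi_n\colon \prespace\to P_n$ satisfying $\fhi_n^{n+1}\psi_{n+1}=\psi_n$: existence of $\psi_0$ is \ref{itm:Lone} for $\prespace$, and given $\psi_n$, applying \ref{itm:Lthreeprime} to the epimorphism $\fhi_n^{n+1}\colon P_{n+1}\to P_n$ yields the required $\psi_{n+1}$. The universal property of the projective limit then gives a continuous, $\mL_R$-preserving surjection $\psi(u)=(\psi_n(u))_{n\in\N}$.

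The main technical point is to verify that $\psi$ is a genuine epimorphism, namely that $\psi\times\psi$ sends $R^{\prespace}$ onto $R^{\mathbb P}$ and $\le^{\prespace}$ onto $\le^{\mathbb P}$ (and not merely to a subset). Given $v, v'\in\mathbb P$ with $v\le^{\mathbb P} v'$, and symmetrically for $R$, the sets
\[
K_n=\setnew*{(u, u')\in\prespace^2}{\psi_n(u)=v(n),\ \psi_n(u')=v'(n),\ u\le^{\prespace}u'}
\]
are closed, nonempty (because each $\psi_n$ is an epimorphism), and nested in $n$ by the compatibility relation $\fhi_n^{n+1}\psi_{n+1}=\psi_n$. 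Compactness of $\prespace^2$ then yields an element of $\bigcap_n K_n$, which witnesses the epimorphism property. This nested-intersection argument is the only nontrivial step; everything else is formal.

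With $\psi$ established as an epimorphism, the equality $f\times f[\le^{\prespace/R^{\prespace}}]=\preceq$ is immediate. The forward inclusion uses that $\psi$ preserves $\le$ and that $g$ is order-preserving; the reverse inclusion uses the $\le$-epimorphism property of $\psi$, applied to any pair $v\le^{\mathbb P} v'$ lying over $g^{-1}(y)\le^{\mathbb P/R^{\mathbb P}} g^{-1}(y')$, to lift any $\preceq$-comparable pair in $Y$ back to a $\le^{\prespace/R^{\prespace}}$-comparable pair in $\quotofpre$ with the prescribed $f$-values.
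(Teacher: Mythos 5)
Your proof is correct and follows essentially the same route as the paper: reduce via \Cref{smoothfencespidiamond} to producing an epimorphism $\prespace\to\mathbb P$ and then descend to the quotients. The only difference is that the paper simply cites Irwin--Solecki (Proposition 2.6 and Lemma 4.5(i) of \cite{Irwin2006}) for the existence of that epimorphism and its descent, whereas you reprove these facts inline with the standard (L1)/(L3$'$) induction and the nested-compact-sets argument, which is a valid self-contained substitute.
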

\begin{proof}
By \Cref{smoothfencespidiamond} we can assume that $Y= \quot{ \mathbb P }{R^{ \mathbb P }}$ for some fine projective sequence $(P_n, \fhi_n^m)$ in $\diamonds$ with projective limit $ \mathbb P $, and that $\preceq$ is $\le^{ \mathbb P /R^{ \mathbb P }}$.
Denote by $p: \prespace \to \quotofpre$ and $q: \mathbb P \to \quot{\mathbb P}{R^{\mathbb P}}$, the respective quotients maps.
By \cite{Irwin2006}*{Proposition 2.6}, there is an epimorphism $\varphi: \prespace \to \mathbb P$.
By \cite{Irwin2006}*{Lemma 4.5(i)} there is a continuous surjection $f: \quotofpre \to \quot{\mathbb P}{R^{\mathbb P}}$ such that $fp = q\varphi$.
It follows from the fact that $\varphi$ is an epimorphism that $f \times f [\le^{\prespace/R^{\prespace}}] = \preceq$.
\end{proof}

\begin{remark}
Property \ref{itm:Lthree} together with a strengthening of property \ref{itm:Ltwo}, give us \emph{approximate projective homogeneity} of the Fraïssé fence with respect to smooth fences.
Namely, for every smooth fence $Y$ with a strongly compatible order $\preceq$, any two continuous surjections $f_{0}, f_{1}: \quotofpre \to Y$ such that $f_{i} \times f [\le^{\prespace/R^{\prespace}}] = \preceq$, and any open cover $\mathcal V$ of $Y$, there is $h \in \homeo_{\le}(\quotofpre)$ such that $f_{0} h$ and $f_{1}$ are $\mathcal V$-close --- that is, for each $x \in \quotofpre$ there is $V \in \mathcal V$ such that $f_{0} h
(x), f_{1}(x) \in V$.
This was proved by the first author in his thesis, see \cite{BassoThesis}*{Corollary 4.5.2}.
\end{remark}

\subsection{Spaces of endpoints of the \F fence}

By \Cref{landuhomeo}, $ \mathfrak L $ and $ \mathfrak U$ are homeomorphic.
It also follows from that lemma that $\mathfrak U \setminus \mathfrak L , \mathfrak L \setminus \mathfrak U$ are homeomorphic.
We therefore state the results in this section solely in terms of $\mathfrak U, \mathfrak L \cap \mathfrak U$, and $\mathfrak U \setminus \mathfrak L$, the latter of which we denote by $\mathfrak M$.
In \Cref{riassunto} below we see that $\mathfrak L \cap \mathfrak U$ is homeomorphic to the Baire space $\N^{\N}$.

\begin{corollary}
	\label{homogeneous}
$\mathfrak M$ and $\mathfrak L \cap \mathfrak U$ are $n$-homogeneous for every $n\ge 1$.
\end{corollary}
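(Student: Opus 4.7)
The plan is to deduce both assertions directly from \Cref{homogeneityofcomponents}, after translating the point-homogeneity statement into a component-homogeneity statement. The key preliminary observation is the order-theoretic characterization of the two spaces inside $\quotofpre$: since the strongly compatible order $\preceq$ restricts to a total order on each connected component, a point lies in $\mathfrak L \cap \mathfrak U$ if and only if its connected component is a singleton, while a point of $\mathfrak M = \mathfrak U \setminus \mathfrak L$ is precisely the $\preceq$-maximum of some arc component.

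For $\mathfrak L \cap \mathfrak U$, I would take two $n$-element subsets $\{x_1,\dots,x_n\}$ and $\{y_1,\dots,y_n\}$ of $\mathfrak L \cap \mathfrak U$ and view $J^i=\{x_i\}$ and $I^i=\{y_i\}$ as $n$ distinct singleton connected components. Applying \Cref{homogeneityofcomponents} with $k=0$ and $\ell=n$ immediately yields an $h\in\homeo_{\le}(\quotofpre)$ with $h(x_i)=y_i$ for each $i$, which proves $n$-homogeneity.

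For $\mathfrak M$, given $n$ distinct points $x_1,\dots,x_n\in\mathfrak M$, each $x_i$ is the $\preceq$-maximum of a unique arc component $J^i$; since the $x_i$'s are distinct, so are the $J^i$'s, and analogously the arc components $I^i$ hosting $y_1,\dots,y_n$ are distinct. Choosing arbitrary non-endpoint interior points $x^i \in J^i$ and $y^i \in I^i$ so as to meet the hypotheses of \Cref{homogeneityofcomponents}, and applying that theorem with $k=\ell=n$, yields $h\in\homeo_{\le}(\quotofpre)$ with $h[J^i]=I^i$. The crucial (but immediate) point is that because $h$ preserves $\le^{\prespace/R^{\prespace}}$ and restricts to an order isomorphism of the totally ordered arc $J^i$ onto $I^i$, it must send the $\preceq$-maximum of $J^i$ to the $\preceq$-maximum of $I^i$; equivalently $h(x_i)=y_i$, independently of the auxiliary choices $x^i,y^i$. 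This gives $n$-homogeneity of $\mathfrak M$. There is no real obstacle here: the only thing to notice is that \Cref{homogeneityofcomponents} was stated in terms of interior points of arcs, but the $\le$-preservation of the resulting homeomorphism automatically transports endpoints to endpoints, so the auxiliary interior points play no role in the conclusion.
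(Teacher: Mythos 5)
Your proposal is correct and follows the same route as the paper, whose proof of this corollary is simply an appeal to \Cref{homogeneityofcomponents}; you have just spelled out the two applications (singleton components for $\mathfrak L \cap \mathfrak U$ with $k=0$, and arc components with their $\preceq$-maxima for $\mathfrak M$, using that an order-preserving homeomorphism carrying $J^i$ onto $I^i$ must match up maxima). The identifications you rely on — $\mathfrak L\cap\mathfrak U$ as the set of singleton components and $\mathfrak M$ as the set of maxima of arc components — are exactly those noted in the paper, so there is no gap.
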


\begin{proof}
From \Cref{homogeneityofcomponents}.
\end{proof}

\begin{proposition}
	\label{notzerodim}
$\mathfrak M$ is one-dimensional.
\end{proposition}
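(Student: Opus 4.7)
The plan is to establish $\dim \mathfrak M \le 1$ and $\dim \mathfrak M \ge 1$ separately.

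For the upper bound, since $\mathfrak M \subseteq \mathfrak U$, it suffices to show $\dim \mathfrak U \le 1$. By \Cref{propluazd}, $\mathfrak U$ is almost zero-dimensional; hence, by the classical theorem that every almost zero-dimensional separable metrizable space has topological dimension at most $1$ (see, e.g., \cite{Dijkstra2010} and the references therein), we obtain $\dim \mathfrak U \le 1$, and consequently $\dim \mathfrak M \le 1$.

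For the lower bound, my strategy is to exhibit a $1$-dimensional subspace of $\mathfrak M$ by embedding into it a suitable smooth fence via the universality theorem. Fix an upper semi-continuous function $\sigma: \Can \to (0, 1]$ whose graph is $1$-dimensional; such a $\sigma$ is furnished, for example, by the classical Lelek fan construction, where $\mathrm{graph}(\sigma)$ can be arranged to be homeomorphic to a $1$-dimensional subspace of complete Erdős space. Setting $Y = D_0^\sigma$, we obtain a smooth fence by \Cref{smoothembedd}, equipped with the strongly compatible order $\trianglelefteq$ inherited from $\Can \times [0, 1]$. Since $\sigma > 0$ everywhere, $\mathfrak L_\trianglelefteq(Y) \cap \mathfrak U_\trianglelefteq(Y) = \emptyset$, so $\mathfrak U_\trianglelefteq(Y) \setminus \mathfrak L_\trianglelefteq(Y) = \mathfrak U_\trianglelefteq(Y) = \mathrm{graph}(\sigma)$ is $1$-dimensional.

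By \Cref{thm:FraisseFenceUniversal}, there is an embedding $f: Y \to \quotofpre$ with $f[\mathfrak L_\trianglelefteq(Y)] \subseteq \mathfrak L$ and $f[\mathfrak U_\trianglelefteq(Y)] \subseteq \mathfrak U$. The key verification is that $f$ sends $\mathfrak U_\trianglelefteq(Y) \setminus \mathfrak L_\trianglelefteq(Y)$ into $\mathfrak M$. Indeed, for $y \in \mathfrak U_\trianglelefteq(Y) \setminus \mathfrak L_\trianglelefteq(Y)$, the arc component of $y$ in $Y$ is non-trivial, so its $f$-image is a non-trivial arc contained in the arc component $B$ of $f(y)$ in $\quotofpre$; since $f(y)$ is the maximum of $B$ (because $f(y) \in \mathfrak U$) and $B$ contains at least two points, $f(y)$ cannot also be the minimum of $B$, so $f(y) \notin \mathfrak L$ and thus $f(y) \in \mathfrak M$. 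Therefore $f[\mathfrak U_\trianglelefteq(Y) \setminus \mathfrak L_\trianglelefteq(Y)]$ is a $1$-dimensional subspace of $\mathfrak M$, yielding $\dim \mathfrak M \ge 1$. The main obstacle is the explicit construction of $\sigma$, which I would bypass by invoking the classical existence of one-dimensional upper semi-continuous graphs underlying the complete Erdős space.
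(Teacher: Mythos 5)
Your overall strategy is sound and genuinely different from the paper's. For the upper bound the paper simply observes that $\mathfrak M$ is a subset of the one-dimensional space $\quotofpre$; your route through almost zero-dimensionality (\Cref{propluazd} together with the theorem that almost zero-dimensional separable metrizable spaces have dimension at most one) also works but is heavier than necessary. For the lower bound the paper argues intrinsically: given $x\in\mathfrak M$ with arc component $J$ and an open $O\ni x$ with $J\not\subseteq\closure(O)$, it builds by induction (using \Cref{arcsproperty,upperlower,quotlimitconnected}) a point $y\in\closure_{\mathfrak M}(O\cap\mathfrak M)\setminus O$, so that $x$ has no small clopen neighbourhoods in $\mathfrak M$; that construction is reused in \Cref{nottotsep} to compute quasi-components, which your argument would not give. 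You instead import one-dimensionality from a known example via the endpoint-preserving universality of \Cref{thm:FraisseFenceUniversal}. Your key verification --- that $f$ carries $\mathfrak U_{\trianglelefteq}(Y)\setminus\mathfrak L_{\trianglelefteq}(Y)$ into $\mathfrak M$, because the image of a nondegenerate arc component forces the component of $f(y)$ in $\quotofpre$ to be a nondegenerate arc whose maximum $f(y)$ cannot also be its minimum --- is correct, and monotonicity of dimension for subspaces then yields $\dim\mathfrak M\ge 1$.

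The one genuine weak point is the input space. The u.s.c.\ function underlying the Lelek fan is \emph{not} strictly positive: density of the endpoints forces $\setnew{x}{\sigma(x)<\varepsilon}$ to be dense open for every $\varepsilon>0$, so $\sigma$ vanishes on a dense $G_\delta$, and the ``classical Lelek fan construction'' does not furnish a $\sigma:\Can\to(0,1]$ as you claim. (Strictly positive u.s.c.\ functions on $\Can$ with one-dimensional graph do exist --- the paper's own representation of $\quotofpre$ as $D_{\mini}^{\maxi}$ with $\mini>0$ provides one --- but invoking that here would be circular, and you give no other construction.) Fortunately your argument does not need strict positivity. Take $\sigma$ to be the genuine Lelek function and $Y=D_0^{\sigma}$, a smooth fence by \Cref{smoothembedd} with the order $\trianglelefteq$. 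Then $\mathfrak U_{\trianglelefteq}(Y)\setminus\mathfrak L_{\trianglelefteq}(Y)=\setnew{(x,\sigma(x))}{\sigma(x)>0}$ is precisely the endpoint set of the Lelek fan, hence homeomorphic to complete Erd\H{o}s space and one-dimensional \cite{MR1391294}, and the remainder of your argument applies to this set unchanged. With that repair the proof is complete.
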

\begin{proof}
As $\mathfrak M$ is a subset of a one-dimensional space, its dimension is at most one.
We now show that it is at least one.
Let $x \in \mathfrak M $ and $J$ be the arc component of $\quotofpre$ to which it belongs.
Let $O$ be an open neighborhood of $x$ in $\quotofpre$ such that $J \not \subseteq \closure(O)$.
Let $n_{0}$ be such that there are $B_{0} \in \branches(\fus_{n_{0}})$ and $a_{0} \in B_{0}$ with
\[
J \subseteq \bigcup_{a\in B_0} \cappello{a}_{\fue_{n_0}}, \quad \cappello{ \max B_{0}}_{\fue_{n_0}}\subseteq O, \quad \cappello{a_0}_{\fue_{n_0}}\subseteq \quotofpre \setminus \closure(O),
\]
which exists by \Cref{upperlower}.
Let $a'_{0} \in B_{0}$ be the minimum such that $\bigcup_{ a \ge a'_0} \cappello{a}_{\fue_{n_0}}\subseteq O$.
Notice that $a_{0} < a'_{0}$.

Suppose one has defined $n_i\in \N, B_i\in \branches (\fus_{n_i}), a_i, a'_i\in B_i$, with $a_i< a'_i$.
By \Cref{arcsproperty} there exists an arc component $J_{i}$ of $\quotofpre$ whose endpoints belong to $\interior( \cappello{a_i}_{\fue_{n_i}}), \interior( \cappello{a'_i}_{\fue_{n_i}})$, respectively.
By \Cref{upperlower} there are $n_{i+1}>n_{i}$ and $B_{i+1} \in \branches(\fus_{n_{i+1}})$ such that
\begin{align*}
  J_i\subseteq\bigcup_{a\in B_{i+1}} \cappello{a}_{\fue_{n_{i+1}}}\ &\subseteq \bigcup_{a\in B_i} \cappello{a}_{\fue_{n_{i}}}, \\
 \cappello{\max B_{i+1}}_{\fue_{n_{i+1}}} &\subseteq \cappello{a'_i}_{\fue_{n_i}}.
\end{align*}
Choose $a_{i+1} \in B_{i+1}$ such that $ \cappello{a_{i+1}}_{\fue_{n_{i+1}}}\subseteq \cappello{a_i}_{\fue_{n_i}}$ and
let $a'_{i+1} \in B_{i+1}$ be the minimum such that $ \bigcup_{ a \ge a'_{i+1}} \cappello{a}_{\fue_{n_{i+1}}}\subseteq O$, so in particular $a_{i+1} < a'_{i+1}$.
Since the mesh of $( \cappello{\fus_n}_{\fue_n})_{n \in \N}$ goes to $0$, we can furthermore choose $n_{i+1}$ so that $ \cappello{a'_{i+1}}_{\fue_{n_{i+1}}}\nsubseteq \cappello{a'_i}_{\fue_{n_i}}$, so that in particular $a'_{i+1}\ne\max B_{i+1}$.

Let $K = \bigcap_{i \in \N} \bigcup_{a\in B_i} \cappello{a}_{\fue_{n_i}}=\lim_{i\rightarrow\infty }\bigcup_{a\in B_i} \cappello{a}_{\fue_{n_i}}$.
By \Cref{quotlimitconnected}, $K$ is connected, call $y$ its maximum.
We prove that
\[
y \in  \mathfrak M \quad \text{and} \quad y \in \closure_{ \mathfrak M } \left (
O\cap \mathfrak M  \right ) \setminus O,
\]
which concludes the proof.

Since $\bigcup_{a\in B_i} \cappello{a}_{\fue_{n_i}}\cap \cappello{a_{0}}_{\fue_{n_0}}\ne\emptyset $ for each $i$, it follows that $K \cap \cappello{a_{0}}_{\fue_{n_0}}\neq \emptyset$, so $y \not \in \mathfrak L $.
Suppose there exists $y' \in \quotofpre$, $y<^{ \mathbb F /R^{ \mathbb F }}y'$.
Let $U$ be an open set containing $K$ while avoiding $y'$.
There thus is $i \in \N$ such that $\bigcup_{a\in B_i} \cappello{a}_{\fue_{n_i}}\subseteq U$.
For each $a' \in \fus_{n_{i}}$ with $y'\in \cappello{a'}_{\fue_{n_i}}$, it follows that $a' \not \in B_{i}$ as $ \cappello{a'}_{\fue_{n_i}}\not \subseteq U$.
But $y \le^{\mathbb P/R^{\mathbb P}} y'$ implies $a\le a'$ for some $a \in B_{i}$, a contradiction.
So $y \in \mathfrak M $.

Since $ \cappello {a'_i}_{\fue_{n_i}}\subseteq O$ and $\max J_i\in \interior \left (
\cappello{a'_i}_{\fue_{n_i}}
\right ) $
for each $i \in \N$, it follows that $y \in \closure_{ \mathfrak M } \left (
O\cap  \mathfrak M
\right ) $.
Suppose that $y \in O$.
Since $y$ has positive distance from $K \setminus O$, there exists $i \in \N$ such that $y \not \in\bigcup \setnew{\cappello{a}_{\fue_{n_i}}}{ a \in B_{i}, a \le a'_{i}}$, as $a'_{i}$ is the minimum element of $B_{i}$ such that $\bigcup_{ a \ge a'_{i}} \cappello{a}_{\fue_{n_i}}\subseteq O$, and the diameter of the $ \cappello{a'_i}_{\fue_{n_i}}$ goes to $0$.
It follows that $y \not \in \bigcup_{a\in B_{i+1}} \cappello{a}_{\fue_{n_{i+1}}}$ as $\bigcup_{a\in B_{i+1}} \cappello{a}_{\fue_{n_{i+1}}}\subseteq \bigcup \setnew{\cappello{a}_{\fue_{n_i}}}{ a \in B_{i}, a \le a'_{i}}$, so $y \not \in K$, a contradiction.
\end{proof}

\begin{corollary}
	\label{2orbits}
$\mathfrak U$ is $\nicefrac{1}{2}$-homogeneous.
In particular, the orbits of the action of $\homeo(\mathfrak U)$ on $\mathfrak U$ are  $\mathfrak L \cap \mathfrak U$ and $\mathfrak M$.
\end{corollary}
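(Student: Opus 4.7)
The plan is to identify the two orbits of the action $\homeo(\mathfrak U) \acts \mathfrak U$ as $\mathfrak L \cap \mathfrak U$ and $\mathfrak M$, which partition $\mathfrak U$ and are both nonempty by Corollary \ref{onethirdhomo} and Lemma \ref{landuhomeo}. The task then splits into (i) establishing that $\homeo(\mathfrak U)$ acts transitively on each of these two sets, and (ii) ruling out any homeomorphism of $\mathfrak U$ that moves a point of $\mathfrak L \cap \mathfrak U$ into $\mathfrak M$.

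For (i), homeomorphisms of $\mathfrak U$ will be produced by restricting elements of $\homeo_{\le}(\quotofpre)$: an order-preserving self-homeomorphism of $\quotofpre$ sends $\le^{\prespace/R^{\prespace}}$-maximal points to $\le^{\prespace/R^{\prespace}}$-maximal points and so preserves $\mathfrak U$ setwise. For $x, x' \in \mathfrak L \cap \mathfrak U$, both are singleton connected components of $\quotofpre$, so Theorem \ref{homogeneityofcomponents} (applied with $\ell = 1$ and $k = 0$) yields an $h \in \homeo_{\le}(\quotofpre)$ sending $x$ to $x'$. For $y, y' \in \mathfrak M$ lying on arc components $J, J'$, Theorem \ref{homogeneityofcomponents} (applied with $\ell = k = 1$, using arbitrary non-endpoint witnesses inside $J$ and $J'$) yields an $h$ with $h[J] = J'$, and order-preservation forces $h$ to send the maximum of $J$ to that of $J'$, i.e.\ $h(y) = y'$.

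For (ii), the plan is to distinguish the two subsets via the local topological invariant ``$\mathfrak U$ is zero-dimensional at $x$'', which is clearly preserved by $\homeo(\mathfrak U)$. Proposition \ref{singlezerodimensional} shows each $x \in \mathfrak L \cap \mathfrak U$ has a clopen neighborhood basis in $\quotofpre$, which restricts to a clopen basis in $\mathfrak U$, so the property holds throughout $\mathfrak L \cap \mathfrak U$. For a point $y \in \mathfrak M$, I will argue by contradiction: a hypothetical clopen neighborhood basis for $y$ in $\mathfrak U$ would, upon intersection with the subspace $\mathfrak M$, yield a clopen (in $\mathfrak M$) neighborhood basis at $y$. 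The $n$-homogeneity of $\mathfrak M$ from Corollary \ref{homogeneous} would then propagate this to every point of $\mathfrak M$, forcing $\dim \mathfrak M = 0$ and contradicting Proposition \ref{notzerodim}.

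The main conceptual hurdle is step (ii): Proposition \ref{notzerodim} is only a global statement about the dimension of $\mathfrak M$, and one must transfer it into a local statement about the ambient space $\mathfrak U$ at points of $\mathfrak M$. The transfer succeeds because clopen bases restrict well from $\mathfrak U$ to the subspace $\mathfrak M$, and the $n$-homogeneity of $\mathfrak M$ lifts a hypothetical local zero-dimensionality at one point of $\mathfrak M$ to a global zero-dimensionality of $\mathfrak M$.
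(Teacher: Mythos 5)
Your proposal is correct, and the first half (at most two orbits, via \Cref{homogeneityofcomponents} restricted to $\homeo_{\le}(\quotofpre)$) coincides with the paper's argument. Where you genuinely diverge is in showing the two candidate orbits cannot merge. The paper deduces this abstractly: $\mathfrak U$ is Polish (\Cref{maxarepolish}), not cohesive (\Cref{singlezerodimensional} plus density of $\mathfrak L\cap\mathfrak U$), and not zero-dimensional (\Cref{notzerodim}), and then invokes Proposition~2 of the cited Dijkstra paper, which says such a space cannot be homogeneous. You instead exhibit a concrete local invariant separating the two sets: points of $\mathfrak L\cap\mathfrak U$ admit clopen neighborhood bases in $\mathfrak U$ (by restriction from $\quotofpre$, \Cref{singlezerodimensional}), whereas no point of $\mathfrak M$ can, since a clopen basis in $\mathfrak U$ would trace to a clopen basis in the subspace $\mathfrak M$, and the $1$-homogeneity of $\mathfrak M$ (\Cref{homogeneous}) would then force $\mathfrak M$ to be zero-dimensional, contradicting \Cref{notzerodim}. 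Each step of this transfer is sound (clopen sets of $\mathfrak U$ restrict to clopen sets of $\mathfrak M$, and homeomorphisms carry clopen neighborhood bases to clopen neighborhood bases). What your route buys is self-containedness --- no external non-homogeneity criterion and no use of Polishness --- together with an explicit topological feature distinguishing the orbits; what the paper's route buys is brevity and the observation that non-homogeneity of $\mathfrak U$ follows from general structure theory of almost zero-dimensional spaces. One could even trim your argument slightly: since $\mathfrak M$ is not zero-dimensional, \emph{some} point of $\mathfrak M$ already lacks a clopen neighborhood basis in $\mathfrak M$, hence in $\mathfrak U$, and combined with the fact that $\mathfrak M$ lies in a single orbit this suffices without invoking \Cref{homogeneous}.
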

\begin{proof}
By \Cref{homogeneityofcomponents}, for any $x, x' \in \mathfrak M$, $y, y' \in \mathfrak L \cap \mathfrak U$ distinct, there is $h \in \homeo_{\le}(\quotofpre)$ such that $h(x)=x', h(y)= y'$.
Since $h \restr{\mathfrak U} \in \homeo(\mathfrak U)$, it follows that there are at most $2$ orbits of the action of $\homeo(\mathfrak U)$ on $\mathfrak U$.
Therefore it suffices to show that $ \mathfrak U $ is not homogeneous.
By \Cref{maxarepolish} the space $ \mathfrak U $ is Polish, by \Cref{singlezerodimensional} it is not cohesive and by \Cref{notzerodim} it is not zero-dimensional.
By \cite{Dijkstra2006}*{Proposition 2}, a Polish, non-cohesive, non-zero-dimensional space is not homogeneous.
\end{proof}

\begin{proposition}
	\label{allendpointsaredense}
$\mathfrak M$ and  $\mathfrak L \cap \mathfrak U $ are dense in  $\quotofpre$.
\end{proposition}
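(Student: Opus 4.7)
The plan is to establish density of $\mathfrak{M}$ and of $\mathfrak{L} \cap \mathfrak{U}$ separately. Fix a nonempty open $O \subseteq \quotofpre$. By \Cref{generalprop}(1) the sets $\fue_n^{-1}(a)$ form a basis for $\prespace$, so there exist $n \in \N$ and $a \in \fus_n$ with $\fue_n^{-1}(a) \subseteq p^{-1}(O)$, whence $\cappello{a}_{\fue_n} \subseteq O$; it will thus suffice to produce points of both kinds inside $\cappello{a}_{\fue_n}$.

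For $\mathfrak{M}$, I would apply \Cref{arcsproperty} to the epimorphism $\fue_n: \prespace \to \fus_n$ with the degenerate choice of comparable elements $a \le a$, producing an arc component $J$ of $\quotofpre$ whose two distinct endpoints both lie in $\interior(\cappello{a}_{\fue_n}) \subseteq O$. The $\le^{\prespace/R^{\prespace}}$-maximum endpoint of $J$ belongs to $\mathfrak{U}$; since $J$ is a non-degenerate arc, this endpoint differs from the minimum endpoint of $J$ and hence is not $\le^{\prespace/R^{\prespace}}$-minimal in $\quotofpre$, so it lies in $\mathfrak{U} \setminus \mathfrak{L} = \mathfrak{M}$.

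For $\mathfrak{L} \cap \mathfrak{U}$, the plan is to produce a singleton connected component of $\quotofpre$ inside $O$, which by \Cref{thmpidiamondfence} amounts to exhibiting $u \in \prespace$ with $\fue_n(u) = a$ whose maximal $\le^{\prespace}$-chain is $\{u\}$. I would build inductively a descending sequence of nonempty clopen $R^{\prespace}$- and $\le^{\prespace}$-invariant subsets $V_k = \theta_k^{-1}(b_k) \subseteq \prespace$, where $\theta_k: \prespace \to P_k$ is an epimorphism with $P_k \in \diamonds$ and $b_k$ is a singleton maximal chain of $P_k$ (an isolated element, with no $R^{P_k}$- or $\le^{P_k}$-neighbors other than itself). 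For the base, take $P_0 = \fus_n \sqcup \{b_0\}$ with $b_0$ isolated and $\fhi_0: P_0 \to \fus_n$ the identity on $\fus_n$ sending $b_0$ to $a$; property \ref{itm:Lthreeprime} yields $\theta_0$ with $\fue_n = \fhi_0 \theta_0$, so $V_0 \subseteq \fue_n^{-1}(a)$. At the inductive step, I would use \Cref{generalprop}(1) together with the fact that any clopen partition of $\prespace$ is refined by $\fue_m$ for $m$ sufficiently large (since the partition elements are at positive mutual distance and the mesh of $\fue_m$-fibers goes to $0$) in order to factor $\theta_k = \sigma_k \fue_{m_k}$ through an epimorphism $\sigma_k: \fus_{m_k} \to P_k$, with every fiber of $\fue_{m_k}$ of diameter less than $1/(k+1)$; then, picking any $c_k \in \sigma_k^{-1}(b_k)$ and setting $P_{k+1} = \fus_{m_k} \sqcup \{b_{k+1}\}$ with $b_{k+1}$ isolated and $\fhi_{k+1}: P_{k+1} \to \fus_{m_k}$ the identity on $\fus_{m_k}$ sending $b_{k+1}$ to $c_k$, a second application of \ref{itm:Lthreeprime} produces $\theta_{k+1}$ with $V_{k+1} \subseteq \fue_{m_k}^{-1}(c_k) \subseteq V_k$.

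By compactness and the shrinking diameters, $\bigcap_k V_k = \{u\}$ for a unique $u \in \prespace$. If $v \in \prespace$ is $R^{\prespace}$-related or $\le^{\prespace}$-comparable to $u$, then for each $k$ the value $\theta_k(v)$ is $R^{P_k}$- or $\le^{P_k}$-related to $\theta_k(u) = b_k$, and isolation of $b_k$ in $P_k$ forces $\theta_k(v) = b_k$, so $v \in V_k$ for every $k$, giving $v = u$. Thus $\{u\}$ is the maximal chain of $\prespace$ through $u$, so $p(u) \in \mathfrak{L} \cap \mathfrak{U}$, while $V_0 \subseteq \fue_n^{-1}(a)$ ensures $p(u) \in \cappello{a}_{\fue_n} \subseteq O$. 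The main obstacle is the bookkeeping to verify that $\sigma_k$ is indeed an epimorphism and that the inclusion $V_{k+1} \subseteq V_k$ holds at each stage; both are routine consequences of the factorization $\theta_k = \sigma_k \fue_{m_k}$ and the identity $\sigma_k(c_k) = b_k$.
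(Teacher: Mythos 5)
Your proposal is correct and follows essentially the same route as the paper: the density of $\mathfrak M$ comes from producing, via \Cref{arcsproperty} (which is exactly what underlies the forward direction of \Cref{weakcharacthm} that the paper cites), an arc component with both endpoints in the given open set, and the density of $\mathfrak L\cap\mathfrak U$ comes from the same inductive construction of adjoining an isolated singleton branch to a stage of the fundamental sequence and extending. The only differences are bookkeeping ones — you track nested clopen invariant preimages and argue directly that the limit point has no $R$- or $\le$-neighbors, whereas the paper obtains the factoring epimorphisms straight from the fundamental sequence and concludes via \Cref{endpointprop}.
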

\begin{proof}
It is easy too see that $\mathfrak M$ is dense in $\quotofpre$ by \Cref{weakcharacthm}.

To see that $\mathfrak L \cap \mathfrak U$ is dense, let $O$ be a nonempty open subset of $\quotofpre$ and let $n_{0} \in \N$, $a_{0} \in \fus_{n_{0}}$ be such that $\cappello{a_{0}}_{\fue_{n_0}}\subseteq O$.
We define a sequence $(a_{i})_{i \in \N}$ by induction.
Suppose that $n_{i}$ and $a_{i} \in \fus_{n_{i}}$ are defined and let $P_{i} = \fus_{n_{i}} \sqcup \set{b}$ and  $\fhi_{i}: P_{i} \to \fus_{n_{i}}$ be the identity on $\fus_{m}$ and $\fhi_{i}(b)=a_{i}$.
By \ref{itm:Lthreeprime} there are $n_{i+1}> n_{i}$ and an epimorphism $\psi_{i}: \fus_{n_{i+1}} \to P_{i}$ such that $\fhi_{i} \psi_{i} = \fue^{n_{i+1}}_{n_{i}}$.
By \Cref{lemchain}, there exists $B_{i} \in \branches(\fus_{n_{i+1}})$ such that $\psi_{i}[B_{i}] = \set{b}$, so $\fue^{n_{i+1}}_{n_{i}}[B_{i}] = \set{a_{i}}$.
Choose $a_{i+1} \in B_{i}$, so $\fue^{n_{i+1}}_{n_{i}}(a_{i+1})= a_{i}$.

Let $u \in \prespace$ be such that $\fue_{n_{i}}(u) = a_{i}$ for each $i \in \N$.
For each $i \in \N$, we have that $\fue_{n_{i+1}}(u) \in B_{i}$ and $\fue^{n_{i+1}}_{n_{i}} (\max B_{i}) = \fue^{n_{i+1}}_{n_{i}} (\min B_{i}) = a_{i} = \fue_{n_{i}}(u)$.
By \Cref{endpointprop}, $u$ is both $\le^{\prespace}$-minimal and $\le^{\prespace}$-maximal.
It follows that $p(u) \in \mathfrak L \cap \mathfrak U$.
Since $\fue_{n_{0}} (u) = a_{0}$, we have $p(u) \in \cappello{a_{0}}_{\fue_{n_0}}\subseteq O$.
\end{proof}

\begin{proposition}
	\label{notcohesive}
$\mathfrak M, \mathfrak U$ have the property that each nonempty open set contains a nonempty clopen subset.
In particular they are not cohesive.
\end{proposition}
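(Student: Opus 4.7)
My approach is to handle $\mathfrak{U}$ and $\mathfrak{M}$ uniformly by producing, inside an arbitrary nonempty open set $V$, a clopen subset of the form $U\cap W$ where $W\in\{\mathfrak{U},\mathfrak{M}\}$ and $U=\bigcup_{a\in B}\cappello{a}_{\fue_m}$ for some $B\in\branches(\fus_m)$. Writing $V=O\cap W$ for some open $O\subseteq\quotofpre$ (necessarily nonempty), \Cref{branchesareclopen} guarantees that such a $U$ is clopen in $\quotofpre$, so $U\cap W$ is automatically clopen in $W$; the content lies in arranging $U\subseteq O$ and $U\cap W\ne\emptyset$.

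For the first step I would shrink $O$ using the density of $\mathfrak{L}\cap\mathfrak{U}$ in $\quotofpre$ established in \Cref{allendpointsaredense}: pick $y\in O\cap\mathfrak{L}\cap\mathfrak{U}$, so that $\{y\}$ is a singleton connected component of $\quotofpre$ contained in $O$. Applying \Cref{branchforcomponent} to $\{y\}$ and $O$ yields $m\in\N$ and $B\in\branches(\fus_m)$ with $\{y\}\subseteq U:=\bigcup_{a\in B}\cappello{a}_{\fue_m}\subseteq O$. When $W=\mathfrak{U}$ this already completes the argument, since $y\in U\cap\mathfrak{U}\subseteq V$ and $U\cap\mathfrak{U}$ is the desired nonempty clopen subset of $V$.

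For $W=\mathfrak{M}$ a second step is needed to produce a point of $\mathfrak{M}$ inside $U$. Here I would invoke \Cref{arcsproperty} with $\fhi=\fue_m$ and $a=\min B\le\max B=a'$, obtaining an arc component $J$ of $\quotofpre$ whose endpoints lie in $\interior(\cappello{\min B}_{\fue_m})$ and $\interior(\cappello{\max B}_{\fue_m})$ respectively. Since $\cappello{\fus_m}_{\fue_m}$ is a regular quasi-partition the interiors of distinct cells are disjoint, so the two endpoints of $J$ are distinct and $\max J\in\mathfrak{U}\setminus\mathfrak{L}=\mathfrak{M}$. Because $U$ is clopen, $J$ is connected, and $J\cap U\ne\emptyset$ (its endpoints lie in $U$), connectedness of $J$ forces $J\subseteq U$; therefore $\max J\in U\cap\mathfrak{M}\subseteq V$.

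The non-cohesiveness statement is immediate from the first half: since $\mathfrak{U}$ and $\mathfrak{M}$ are nonempty and every nonempty open subset contains a nonempty clopen subset, no neighborhood of any point avoids all nonempty clopen subsets, contradicting the defining property of a cohesive space. I do not anticipate a serious obstacle in this argument; the only point requiring a bit of care is the degenerate case $|B|=1$ (where $\min B=\max B$), but the proof of \Cref{arcsproperty} still yields a genuine non-degenerate arc $J$ (corresponding to $\ell=0$), so the argument goes through unchanged.
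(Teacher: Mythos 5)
Your argument is correct. For $\mathfrak U$ it is essentially the paper's proof: the paper simply cites the density of $\mathfrak L\cap\mathfrak U$ (\Cref{allendpointsaredense}) together with \Cref{singlezerodimensional}, and your use of \Cref{branchforcomponent} around a point of $O\cap\mathfrak L\cap\mathfrak U$ is exactly what underlies that citation. For $\mathfrak M$ the two arguments diverge in how they force an arc component into the open set $O$. The paper first shrinks $O$ to a $\le^{\prespace/R^{\prespace}}$-convex open set, invokes \Cref{weakcharacthm} to get an arc component with both endpoints in $O$, concludes $J\subseteq O$ by convexity, and only then wraps $J$ in a clopen set via \Cref{branchforcomponent}. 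You reverse the order: you first manufacture the clopen set $U=\bigcup_{a\in B}\cappello{a}_{\fue_m}\subseteq O$ around a singleton component, then apply \Cref{arcsproperty} to $\min B\le\max B$ and use clopenness of $U$ plus connectedness of $J$ in place of the convexity trick. Since the forward direction of \Cref{weakcharacthm} is itself deduced from \Cref{arcsproperty}, your route works one level closer to the combinatorics and has the aesthetic advantage of treating $\mathfrak U$ and $\mathfrak M$ with the same clopen set $U$; the paper's route avoids having to argue that the arc lies inside a prescribed clopen set, at the cost of the convexity reduction. Two small remarks: the appeal to disjointness of interiors of distinct cells of the regular quasi-partition to get distinct endpoints is unnecessary, since \Cref{arcsproperty} already asserts that $J$ is an \emph{arc} component, hence nondegenerate (this also disposes of your $\Card{B}=1$ worry directly from the statement rather than the proof of that lemma); and one should note, as you implicitly do, that $\max J\in\mathfrak U$ because connected components are exactly the maximal chains of $\le^{\prespace/R^{\prespace}}$, so the $\preceq$-maximum of a component is $\preceq$-maximal in all of $\quotofpre$.
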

\begin{proof}
The result for $\mathfrak U$ follows from Propositions \ref{allendpointsaredense} and \ref{singlezerodimensional}.

Let $O$ be an open subset of $\quotofpre$ such that $O \cap \mathfrak M \neq \emptyset$.
Up to taking a subset we can assume $O$ is $\le^{\prespace/R^{\prespace}}$-convex.
By \Cref{weakcharacthm} there exists an arc component $J$ of $\quotofpre$ whose endpoints both belong to $O$, so by  $\le^{\prespace/R^{\prespace}}$-convexity, $J \subseteq O$.
By \Cref{branchforcomponent} there exist $n \in \N$ and $B \in \branches(\fus_{n})$ such that  $J \subseteq \bigcup_{a \in B} \cappello{a}_{\fue_n}\subseteq O$.
Since $\bigcup_{a\in B} \cappello{a}_{\fue_n}$ is clopen in $ \quotofpre $ by \Cref{branchesareclopen}, it follows that $
\bigcup_{a\in B} \cappello{a}_{\fue_n}
\cap \mathfrak M$ is clopen in $ \mathfrak M $, and it is nonempty as it contains $\max J$.
\end{proof}

Finally we look at $\mathrm{E}(\quotofpre) = \mathfrak L \cup \mathfrak U$.

\begin{proposition}
	\label{nottotsep}
The spaces $\mathrm{E}(\quotofpre)$ and $\mathfrak L \symdif \mathfrak U $ are not totally separated.
In fact, in $\mathfrak L \symdif \mathfrak U $ the quasi-component of each point has cardinality $2$.
\end{proposition}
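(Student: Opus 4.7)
The plan is to reduce to showing that in $\mathfrak L \symdif \mathfrak U$ the quasi-component of each point has cardinality exactly two: the non-total-separation of $\mathrm E(\quotofpre)$ will then follow, since any clopen subset of $\mathrm E(\quotofpre)$ restricts to a clopen subset of $\mathfrak L \symdif \mathfrak U$. I fix $x \in \mathfrak L \symdif \mathfrak U$ and denote by $y$ the other endpoint of its arc component $J$; the task splits into an upper bound $\mathrm{QC}(x) \subseteq \{x, y\}$ and a lower bound $y \in \mathrm{QC}(x)$.

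The upper bound will be immediate: for any $z \in \mathfrak L \symdif \mathfrak U \setminus \{x, y\}$, $z$ belongs to an arc component distinct from $J$, so by \Cref{branchforcomponent} there will be $m \in \N$ and $B \in \branches(P_m)$ with $J \subseteq \bigcup_{a \in B} \cappello{a}_{\fue_m}$ and this union missing $z$; by \Cref{branchesareclopen} the union is clopen in $\quotofpre$, and its trace on $\mathfrak L \symdif \mathfrak U$ will separate $x$ from $z$.

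For the lower bound I will argue by contradiction, assuming some clopen $C \subseteq \mathfrak L \symdif \mathfrak U$ has $x \in C$ and $y \notin C$. Pick opens $U, V \subseteq \quotofpre$ with $x \in U$, $y \in V$, $U \cap (\mathfrak L \symdif \mathfrak U) \subseteq C$, $V \cap (\mathfrak L \symdif \mathfrak U) \subseteq \mathfrak L \symdif \mathfrak U \setminus C$, and $\mathfrak L \symdif \mathfrak U \subseteq U \cup V$. Since $\mathfrak L \symdif \mathfrak U$ is dense in $\quotofpre$ by \Cref{allendpointsaredense} and $U \cap V \cap (\mathfrak L \symdif \mathfrak U) = \emptyset$, this will force $U \cap V = \emptyset$. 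Setting $F_1 = \closure(C)$ and $F_2 = \closure(\mathfrak L \symdif \mathfrak U \setminus C)$ in $\quotofpre$, density gives $F_1 \cup F_2 = \quotofpre$; clopenness of $C$ in $\mathfrak L \symdif \mathfrak U$ gives $F_1 \cap (\mathfrak L \symdif \mathfrak U) = C$ and analogously $F_2 \cap (\mathfrak L \symdif \mathfrak U) = \mathfrak L \symdif \mathfrak U \setminus C$, so $F_1 \cap F_2$ is disjoint from $\mathfrak L \symdif \mathfrak U$; finally openness of $C$ and its complement in $\mathfrak L \symdif \mathfrak U$ places $x \in F_1 \setminus F_2$ and $y \in F_2 \setminus F_1$. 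The strategy is then to prove $F_1 \cap F_2$ is dense in $\quotofpre$: being closed it must equal $\quotofpre$, contradicting disjointness from the nonempty $\mathfrak L \symdif \mathfrak U$.

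To show density, I will use that the sets $\interior(\cappello{a}_{\fue_n})$ form a $\pi$-basis of $\quotofpre$ (by \Cref{generalprop} together with the irreducibility of $p$ from \Cref{lemusualtwo} and the density of singleton $R^{\prespace}$-classes), so it suffices to show each such set meets $F_1 \cap F_2$. Applying \Cref{arcsproperty} with $a' = a$ will yield a non-degenerate arc $J' \subseteq \cappello{a}_{\fue_n}$ with both endpoints in $\interior(\cappello{a}_{\fue_n})$; the image chain of such a $J'$ in $P_n$ is just $\{a\}$, so $J'$ lies in the cell and each internal point of $J'$ lies in $\interior(\cappello{a}_{\fue_n})$. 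If the endpoints of $J'$ lie in different $F_i$'s, connectedness of $J'$ produces the desired intersection point inside $\interior(\cappello{a}_{\fue_n})$. The hard part will be the remaining case in which every short arc in $\cappello{a}_{\fue_n}$ has both endpoints on the same side of the $C$-partition: I will then import the transition by locating, in the branch of $P_n$ containing $a$, a pair $a \leq a'$ of cells across which the coloring changes, applying \Cref{arcsproperty} to get a transition arc between them, and invoking \Cref{weakcharacthm} together with a refinement of $P_n$ to push the crossing of $F_1$ and $F_2$ into the interior of the given cell.
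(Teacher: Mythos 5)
Your upper bound (separating points of distinct components via \Cref{branchforcomponent} and \Cref{branchesareclopen}) is fine, as is the reduction of the statement about $\mathrm{E}(\quotofpre)$ to the statement about $\mathfrak L \symdif \mathfrak U$, and your ``easy case'' observation that an arc $J'$ given by \Cref{arcsproperty} with $a=a'$ lies entirely in $\interior(\cappello{a}_{\fue_n})$ is correct (via \Cref{clmboundary}). The genuine gap is in the lower bound: density of $F_1\cap F_2$ in all of $\quotofpre$ is the wrong target, and the ``hard case'' you defer cannot be carried out. The obstruction is that the transition between $C$ and its complement need not propagate away from the component $J$ of $x$ and $y$. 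Concretely, let $W$ be a nonempty clopen subset of $\quotofpre$ disjoint from $J$ and avoiding $y$ (such $W$ exists by \Cref{branchforcomponent,branchesareclopen} applied to any other component); replacing $C$ by $C\cup\bigl(W\cap(\mathfrak L\symdif\mathfrak U)\bigr)$ yields another clopen set separating $x$ from $y$ for which $F_2\cap W=\emptyset$, so $F_1\cap F_2$ misses $W$ entirely. Thus for such a $C$ every cell inside $W$ is monochromatic, there is no pair of cells in its branch ``across which the coloring changes,'' and no refinement or application of \Cref{weakcharacthm} can manufacture a crossing there. Any valid proof must instead extract the contradiction locally, near $J$ itself.

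That localization is exactly what the paper does, and it requires a tool your proposal does not supply. Writing $U=O\cap(\mathfrak L\symdif\mathfrak U)$ for a clopen neighborhood $U$ of $x$ with $O$ open in $\quotofpre$, the paper first shows that $J\subseteq\closure(O)$: if not, the nested-branches construction in the proof of \Cref{notzerodim} produces a point of $\closure_{\mathfrak M}(O\cap\mathfrak M)\setminus O$, i.e.\ a boundary point of $U$ in $\mathfrak L\symdif\mathfrak U$, contradicting clopenness. Then, if the other endpoint $z$ of $J$ satisfied $z\in\closure(O)\setminus O$, every neighborhood $V$ of $z$ would meet $O$, and density of $\mathfrak M$ (\Cref{allendpointsaredense}) would put a point of $U$ in $V$, again contradicting clopenness; hence $z\in U$. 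The first of these two steps --- forcing the whole arc into $\closure(O)$ --- is the substantive content missing from your argument, and without something playing its role the lower bound does not go through.
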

\begin{proof}
Let $x\in \mathfrak L \symdif \mathfrak U $, say $x\in \mathfrak M $ and let $z$ be the least element of the connected component $J$ of $x$ in $ \quotofpre $.
Let $U$ be a clopen neighborhood of $x$ in $ \mathfrak L \symdif \mathfrak U $, and let $O$ be open in $ \quotofpre $ such that $U=O\cap ( \mathfrak L \symdif \mathfrak U )$.

If $J\nsubseteq \closure (O)$, from the proof of \Cref{notzerodim} it follows that there exists some $y\in \closure_{ \mathfrak M } \left (
O\cap  \mathfrak M  \right ) \setminus O$, so
\begin{multline*}
\emptyset\ne \closure_{ \mathfrak M } \left (
O\cap  \mathfrak M  \right ) \setminus O\subseteq \closure_{ \mathfrak L \symdif \mathfrak U } \left (
O\cap  \mathfrak M  \right ) \setminus O\subseteq \\
\subseteq \closure_{ \mathfrak L \symdif \mathfrak U } (O\cap ( \mathfrak L \symdif \mathfrak U ))\setminus (O\cap ( \mathfrak L \symdif \mathfrak U ))=\partial_{ \mathfrak L \symdif \mathfrak U }(U),
\end{multline*}
contradicting the fact that $U$ is clopen in $ \mathfrak L \symdif \mathfrak U $.

If $J\subseteq \closure (O)$ but $z\notin O$, given any open neighborhood $V$ of $z$ in $ \quotofpre $, by \Cref{weakcharacthm} there is some $w\in  \mathfrak M \cap O\cap V$, so $w\in U\cap V$.
This implies that $z\in \closure_{ \mathfrak L \symdif \mathfrak U }(U)\setminus U$, contradicting again the fact that $U$ is clopen in $ \mathfrak L \symdif \mathfrak U $.

Therefore the intersection of all clopen neighborhoods of $x$ in $ \mathfrak L \symdif \mathfrak U $ also contains $z$.
On the other hand any two points belonging to distinct components of $\quotofpre$ can obviously be separated by clopen sets, so the quasi-component of $x$ in $\mathfrak L \symdif \mathfrak U $ is $ \set{x, z} $.
\end{proof}

Since almost zero-dimensional, $T_0$ spaces are totally separated, it follows that the spaces $ \mathfrak L \symdif \mathfrak U$ and $\mathrm{E}(\quotofpre)$ are not almost zero-dimensional.
This should be contrasted with \Cref{propluazd}.

We sum up what we know about the spaces of endpoints of the \F fence.

\begin{theorem}
	\label{riassunto}
\leavevmode
\begin{enumerate}[label=\upshape (\roman*)]
\item
	\label{singlearebaire}
$\mathfrak L \cap \mathfrak U$ is homeomorphic to the Baire space $\N^{\N}$.
\item
	\label{allendpoints}
$\mathrm{E}(\quotofpre)$ is Polish and not totally separated.
\item
	\label{maximalrecap}
$\mathfrak U$ is $\nicefrac{1}{2}$-homogeneous, Polish, almost zero-dimensional, one-dimensional and not cohesive.
\item
\label{asexample} \sloppy
$\mathfrak M$ is homogeneous, strongly $\sigma$-complete, almost zero-dimensional, one-dimensional and not cohesive.
\end{enumerate}
\end{theorem}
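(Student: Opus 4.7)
The plan is to assemble the four items mostly by appealing to earlier results of the section, with a single substantive new argument needed for part \ref{singlearebaire}. For \ref{allendpoints}, Polish-ness is \Cref{singlepolish} and lack of total separation is \Cref{nottotsep}. For \ref{maximalrecap}, $\nicefrac{1}{2}$-homogeneity is \Cref{2orbits}, Polish-ness is \Cref{maxarepolish}, almost zero-dimensionality is \Cref{propluazd}, and non-cohesiveness is \Cref{notcohesive}; one-dimensionality of $\mathfrak U$ I would obtain from monotonicity of inductive dimension for subspaces of separable metric spaces, sandwiching $\mathfrak U$ between the one-dimensional $\mathfrak M$ (\Cref{notzerodim}) and the one-dimensional $\quotofpre$ (the latter being at most one-dimensional because it embeds into the Cantor fence by \Cref{smoothembedd}). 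Analogously for \ref{asexample}: homogeneity is \Cref{homogeneous} with $n=1$, strong $\sigma$-completeness is \Cref{stronglysigmacomplete}, one-dimensionality is \Cref{notzerodim}, and non-cohesiveness is \Cref{notcohesive}; almost zero-dimensionality passes from $\mathfrak U$ to the subspace $\mathfrak M$ since the defining property (each point has a neighborhood basis of closed sets that are intersections of clopen sets) is visibly inherited by subspaces.

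The core of the proof is \ref{singlearebaire}. Here I will invoke the Alexandrov--Urysohn characterization of the Baire space: a topological space is homeomorphic to $\N^{\N}$ precisely when it is a nonempty Polish zero-dimensional space in which every compact subset has empty interior. Three of the four conditions are already in hand: $\mathfrak L \cap \mathfrak U$ is nonempty and dense in $\quotofpre$ by \Cref{allendpointsaredense}, Polish by \Cref{singlepolish}, and zero-dimensional by \Cref{singlezerodimensional}.

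The remaining property --- that every compact subset of $\mathfrak L \cap \mathfrak U$ has empty interior in $\mathfrak L \cap \mathfrak U$ --- will be the main obstacle, and the key idea is to exploit the simultaneous density of both $\mathfrak L \cap \mathfrak U$ and $\mathfrak M$ in $\quotofpre$. Using the clopen-in-$\quotofpre$ neighborhood basis from \Cref{singlezerodimensional}, it suffices to show that for every nonempty clopen subset $U$ of $\quotofpre$ meeting $\mathfrak L \cap \mathfrak U$, the set $K := U \cap (\mathfrak L \cap \mathfrak U)$ is not compact. If $K$ were compact, it would be closed in the Hausdorff space $\quotofpre$; but by \Cref{allendpointsaredense} $K$ is dense in $U$, so $K$ would contain $U$, forcing $U \subseteq \mathfrak L \cap \mathfrak U$. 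However, \Cref{allendpointsaredense} also gives density of $\mathfrak M$, so $U \cap \mathfrak M \ne \emptyset$; as $\mathfrak M = \mathfrak U \setminus \mathfrak L$ is disjoint from $\mathfrak L$, this contradicts $U \subseteq \mathfrak L \cap \mathfrak U$. The main conceptual step is to recognize that the two-fold density statement of \Cref{allendpointsaredense} is exactly what provides the nowhere-local-compactness required by Alexandrov--Urysohn; once that is identified, the argument is short.
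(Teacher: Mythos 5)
Your proposal is correct and follows essentially the same route as the paper: parts \ref{allendpoints}--\ref{asexample} are assembled from the same earlier results (with the same implicit appeals to monotonicity of dimension and heredity of almost zero-dimensionality), and for \ref{singlearebaire} the paper likewise invokes the Alexandrov--Urysohn characterization of $\N^{\N}$ and derives the failure of local compactness from the simultaneous density and codensity of $\mathfrak L \cap \mathfrak U$ supplied by \Cref{allendpointsaredense}. The only cosmetic difference is that you run the compactness argument inside $\quotofpre$ using the clopen neighborhood basis of \Cref{singlezerodimensional}, whereas the paper runs the same ``compact $\Rightarrow$ closed, dense and closed $\Rightarrow$ everything, contradicting codensity'' argument with open subsets of $\mathfrak U$.
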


\begin{proof}
\leavevmode
\begin{enumerate}[label=\upshape (\roman*)]
\item
By \Cref{singlepolish} and \Cref{singlezerodimensional}, $\mathfrak L \cap \mathfrak U$ is Polish and zero-dimensional.
By \cite{MR1321597}*{Theorem 7.7} it is enough to show that every compact subset of $ \mathfrak L \cap \mathfrak U $ has empty interior.
So let $K$ be such set, and suppose toward contradiction that there is an open subset $O$ of $\mathfrak U $ such that $\emptyset \neq O\cap \mathfrak L \cap \mathfrak U =O \cap \mathfrak L \subseteq K$.
Recall that, by \Cref{allendpointsaredense}, $ \mathfrak L \cap \mathfrak U $ is dense and codense in $\mathfrak U$.
Then $O \setminus ( \mathfrak L \cap \mathfrak U )=O\setminus K$ is open in $ \mathfrak U $.
Therefore, by denseness of $ \mathfrak L \cap \mathfrak U $, it follows that $O\setminus ( \mathfrak L \cap \mathfrak U )=\emptyset $, contradicting codenseness.
\item
This holds by \Cref{maxarepolish} and \Cref{nottotsep}.
\item
This holds by \Cref{2orbits}, \Cref{maxarepolish}, and \Cref{propluazd,notzerodim,notcohesive}.
\item
This holds by \Cref{homogeneous}, \Cref{stronglysigmacomplete}, and \Cref{propluazd,notzerodim,notcohesive}.
\end{enumerate}
\end{proof}

A space with the properties listed in \ref{asexample} was first exhibited in \cite{Dijkstra2006} as a counterexample to a question by Dijkstra and van Mill.
We do not know however whether the two spaces are homeomorphic.

\begin{question}
Is $\mathfrak M$ homeomorphic to the space in \cite{Dijkstra2006}?
\end{question}

\begin{bibdiv}
\begin{biblist}

\bib{MR3712972}{article}{
      author={Acosta, Gerardo},
      author={Hoehn, Logan~C.},
      author={Pacheco~Ju\'{a}rez, Yaziel},
       title={Homogeneity degree of fans},
        date={2017},
        ISSN={0166-8641},
     journal={Topology Appl.},
      volume={231},
       pages={320\ndash 328},
         url={https://doi.org/10.1016/j.topol.2017.09.002},
      review={\MR{3712972}},
}

\bib{MR785749}{book}{
      author={Arkhangel'ski\u{\i}, A.~V.},
      author={Ponomarev, V.~I.},
       title={Fundamentals of general topology},
      series={Mathematics and its Applications},
   publisher={D. Reidel Publishing Co., Dordrecht},
        date={1984},
        ISBN={90-277-1355-3},
      review={\MR{785749}},
}

\bib{BassoThesis}{thesis}{
      author={Basso, Gianluca},
       title={Compact metrizable structures via projective {F}raïssé theory,
  with an application to the study of fences},
        type={Ph.D. Thesis},
organization={Université de Lausanne and Università di Torino},
        date={2020},
}

\bib{Basso}{article}{
      author={Basso, Gianluca},
      author={Camerlo, Riccardo},
       title={Arcs, hypercubes, and graphs as quotients of projective
  {F}ra\"{\i}ss\'{e} limits},
        date={2017},
        ISSN={0139-9918},
     journal={Math. Slovaca},
      volume={67},
      number={6},
       pages={1281\ndash 1294},
         url={https://doi.org/10.1515/ms-2017-0051},
      review={\MR{3739359}},
}

\bib{Bartos2015}{article}{
      author={Barto\v{s}ov\'{a}, Dana},
      author={Kwiatkowska, Aleksandra},
       title={Lelek fan from a projective {F}ra\"{\i}ss\'{e} limit},
        date={2015},
        ISSN={0016-2736},
     journal={Fund. Math.},
      volume={231},
      number={1},
       pages={57\ndash 79},
         url={https://doi.org/10.4064/fm231-1-4},
      review={\MR{3361235}},
}

\bib{Bartos2017}{article}{
      author={Barto\v{s}ov\'{a}, Dana},
      author={Kwiatkowska, Aleksandra},
       title={Gowers' {R}amsey theorem with multiple operations and dynamics of
  the homeomorphism group of the {L}elek fan},
        date={2017},
        ISSN={0097-3165},
     journal={J. Combin. Theory Ser. A},
      volume={150},
       pages={108\ndash 136},
         url={https://doi.org/10.1016/j.jcta.2017.02.002},
      review={\MR{3645569}},
}

\bib{MR3939568}{article}{
      author={Barto\v{s}ov\'{a}, Dana},
      author={Kwiatkowska, Aleksandra},
       title={The universal minimal flow of the homeomorphism group of the
  {L}elek fan},
        date={2019},
        ISSN={0002-9947},
     journal={Trans. Amer. Math. Soc.},
      volume={371},
      number={10},
       pages={6995\ndash 7027},
         url={https://doi.org/10.1090/tran/7548},
      review={\MR{3939568}},
}

\bib{MR991691}{article}{
      author={Bula, Witold~D.},
      author={Oversteegen, Lex~G.},
       title={A characterization of smooth {C}antor bouquets},
        date={1990},
        ISSN={0002-9939},
     journal={Proc. Amer. Math. Soc.},
      volume={108},
      number={2},
       pages={529\ndash 534},
         url={https://doi.org/10.2307/2048305},
      review={\MR{991691}},
}

\bib{Camerl2010}{article}{
      author={Camerlo, Riccardo},
       title={Characterising quotients of projective {F}ra\"{\i}ss\'{e}
  limits},
        date={2010},
        ISSN={0166-8641},
     journal={Topology Appl.},
      volume={157},
      number={12},
       pages={1980\ndash 1989},
         url={https://doi.org/10.1016/j.topol.2010.04.012},
      review={\MR{2646431}},
}

\bib{Carruth1968}{article}{
      author={Carruth, J.~H.},
       title={A note on partially ordered compacta},
        date={1968},
        ISSN={0030-8730},
     journal={Pacific J. Math.},
      volume={24},
       pages={229\ndash 231},
         url={http://projecteuclid.org/euclid.pjm/1102991453},
      review={\MR{222852}},
}

\bib{MR1020278}{article}{
      author={Charatonik, J.~J.},
      author={Charatonik, W.~J.},
       title={Images of the {C}antor fan},
        date={1989},
        ISSN={0166-8641},
     journal={Topology Appl.},
      volume={33},
      number={2},
       pages={163\ndash 172},
         url={https://doi.org/10.1016/S0166-8641(89)80005-7},
      review={\MR{1020278}},
}

\bib{MR0227944}{article}{
      author={Charatonik, J.~J.},
       title={On fans},
        date={1967},
     journal={Dissertationes Math. Rozprawy Mat.},
      volume={54},
       pages={39},
      review={\MR{0227944}},
}

\bib{MR1002079}{article}{
      author={Charatonik, W\l odzimierz~J.},
       title={The {L}elek fan is unique},
        date={1989},
        ISSN={0362-1588},
     journal={Houston J. Math.},
      volume={15},
      number={1},
       pages={27\ndash 34},
      review={\MR{1002079}},
}

\bib{Dijkstra2006}{article}{
      author={Dijkstra, Jan~J.},
       title={A homogeneous space that is one-dimensional but not cohesive},
        date={2006},
        ISSN={0362-1588},
     journal={Houston J. Math.},
      volume={32},
      number={4},
       pages={1093\ndash 1099},
      review={\MR{2268471}},
}

\bib{Dijkstra2010}{article}{
      author={Dijkstra, Jan~J.},
      author={van Mill, Jan},
       title={Erdős space and homeomorphism groups of manifolds},
        date={2010},
        ISSN={0065-9266},
     journal={Mem. Amer. Math. Soc.},
      volume={208},
      number={979},
       pages={vi+62},
         url={https://doi.org/10.1090/S0065-9266-10-00579-X},
      review={\MR{2742005}},
}

\bib{Irwin2006}{article}{
      author={Irwin, Trevor},
      author={Solecki, S{\l}awomir},
       title={Projective {F}ra\"{\i}ss\'{e} limits and the pseudo-arc},
        date={2006},
        ISSN={0002-9947},
     journal={Trans. Amer. Math. Soc.},
      volume={358},
      number={7},
       pages={3077\ndash 3096},
         url={https://doi.org/10.1090/S0002-9947-06-03928-6},
      review={\MR{2216259}},
}

\bib{MR1321597}{book}{
      author={Kechris, Alexander~S.},
       title={Classical descriptive set theory},
      series={Graduate Texts in Mathematics},
   publisher={Springer-Verlag, New York},
        date={1995},
      volume={156},
        ISBN={0-387-94374-9},
         url={https://doi.org/10.1007/978-1-4612-4190-4},
      review={\MR{1321597}},
}

\bib{MR1391294}{article}{
      author={Kawamura, Kazuhiro},
      author={Oversteegen, Lex~G.},
      author={Tymchatyn, E.~D.},
       title={On homogeneous totally disconnected {$1$}-dimensional spaces},
        date={1996},
        ISSN={0016-2736},
     journal={Fund. Math.},
      volume={150},
      number={2},
       pages={97\ndash 112},
      review={\MR{1391294}},
}

\bib{Kuratowski1968}{book}{
      author={Kuratowski, K.},
       title={Topology. {V}ol. {II}},
      series={New edition, revised and augmented. Translated from the French by
  A. Kirkor},
   publisher={Academic Press, New York-London; Pa\'{n}stwowe Wydawnictwo
  Naukowe Polish Scientific Publishers, Warsaw},
        date={1968},
      review={\MR{0259835}},
}

\bib{MR133806}{article}{
      author={Lelek, A.},
       title={On plane dendroids and their end points in the classical sense},
        date={1960/61},
        ISSN={0016-2736},
     journal={Fund. Math.},
      volume={49},
       pages={301\ndash 319},
         url={https://doi.org/10.4064/fm-49-3-301-319},
      review={\MR{133806}},
}

\bib{panagi2017}{misc}{
      author={Panagiotopoulos, Aristotelis},
			title={Compact spaces as quotients of projective Fra\"isse limits},
			 date={2017},
        note={preprint: arXiv:1601.04392v2},
}

\bib{Rosens1982}{book}{
      author={Rosenstein, Joseph~G.},
       title={Linear orderings},
      series={Pure and Applied Mathematics},
   publisher={Academic Press, Inc. [Harcourt Brace Jovanovich, Publishers], New
  York-London},
        date={1982},
      volume={98},
        ISBN={0-12-597680-1},
      review={\MR{662564}},
}

\end{biblist}
\end{bibdiv}

\end{document}